\long\def\@savemarbox#1#2{\global\setbox#1\vtop{\hsize\marginparwidth 
  \@parboxrestore\tiny\raggedright #2}}
\newcommand\lref[1]{\ref{#1}%
\@ifundefined{r@DisplaY #1}{}{ (#1)}}
\newcommand\fakelabel[2]{\@bsphack\if@filesw {\let\thepage\relax
   \newcommand\protect{\noexpand\noexpand\noexpand}%
\xdef\@gtempa{\write\@auxout{\string
      \newlabel{#1}{{#2}{\thepage}}}}}\@gtempa
   \if@nobreak \ifvmode\nobreak\fi\fi\fi\@esphack}
\def\SL@margintext#1{{\showlabelsetlabel{\tiny\{\SL@prlabelname{#1}\}}}}
\def\Empty{}
\newcommand\oplabel[1]{
  \def\OpArg{#1} \ifx \OpArg\Empty {} \else
        \label{#1}
  \fi}
\newtheorem{theoremSt}{Theorem}[section]
\newtheorem{exampleSt}[theoremSt]{Example}
\newtheorem{exerciseSt}[theoremSt]{Exercise}
\newcommand\MakeStEnv[1]{
  \newenvironment{#1}[1]{
  \begin{#1St} \oplabel{##1}%
  \global\def\CrntSt{\thetheoremSt}%
}{ 
  \end{#1St} }
  \newenvironment{#1+}[1]{
  \begin{#1St} \label{##1}%
  \label{DisplaY ##1}%
  \global\def\CrntSt{\thetheoremSt}%
  \def\Labl{##1}\ifx\Labl\Empty{} \else {\em (\Labl)\,}\fi%
}{ 
  \end{#1St} }
}
\long\def\realfig#1#2{
\begin{figure}[htbp]
\includegraphics{#1}
\caption[#1]{#2}
\oplabel{#1}
\end{figure}}
\newlength{\saveu}
\newenvironment{pf*}[1]{%
 \begin{proof}[#1]%
}{ 
 \end{proof}
}
\newcommand{\finishproof}[1]{ 
  \def\FPArg{#1}
  \ifx\FPArg\Empty
        \newcommand\FPArg{\CrntSt}  \fi
  \smallbreak\noindent\makebox[\textwidth]{\hfill\fbox{\FPArg}}
  \medbreak\noindent
}
\newcommand\BB{{\mathcal B}}
\newcommand\CC{{\mathcal C}}
\newcommand\EE{{\mathcal E}}
\newcommand\FF{{\mathcal F}}
\newcommand\GG{{\mathcal G}}
\newcommand\HH{{\mathcal H}}
\newcommand\LL{{\mathcal L}}
\newcommand\MM{{\mathcal M}}
\newcommand\NN{{\mathcal N}}
\newcommand\PP{{\mathcal P}}
\newcommand\QQ{{\mathcal Q}}
\newcommand\RR{{\mathcal R}}
\newcommand\TT{{\mathcal T}}
\newcommand\UU{{\mathcal U}}
\newcommand\VV{{\mathcal V}}
\newcommand\WW{{\mathcal W}}
\newcommand\XX{{\mathcal X}}
\newcommand\YY{{\mathcal Y}}
\newcommand\ZZ{{\mathcal Z}}
\newcommand\PMF{{\PP\kern-2pt\MM\FF}}
\newcommand\PML{{\PP\kern-2pt\MM\LL}}
\newcommand\ep{\epsilon}
\newcommand\union{\cup}
\newcommand\intersect{\cap}
\newcommand\bbR{{\mathord{\text{I\kern-2pt R}}}}        
\newcommand\bbH{{\mathord{\text{I\kern-2pt H}}}}        
\newcommand\R{{\mathbb R}}
\newcommand\bigrightarrow[1]{\hbox to #1{\rightarrowfill}}
\newcommand\bigleftarrow[1]{\hbox to #1{\leftarrowfill}}
\newcommand\boundary{\partial}
\newcommand\semidir{\mathrel{\hbox{\vrule depth-.03ex height1.1ex\kern-0.15em$\times$}}}
\newcommand{\ssm}{\smallsetminus}
\newcommand{\diam}{\operatorname{diam}}
\numberwithin{equation}{section}
\def\subsection{\@startsection{subsection}{2}%
  \z@{.5\linespacing\@plus.7\linespacing}{.5em}%
  {\normalfont\bfseries\centering}}
\def\section{\@startsection{section}{1}%
  \z@{.7\linespacing\@plus\linespacing}{.5\linespacing}%
  {\normalfont\large\bfseries\centering}}
\def\subsubsection{\@startsection{subsubsection}{3}%
  \z@{.5\linespacing\@plus.7\linespacing}{-.5em}%
  {\normalfont\bfseries}}
\newcommand{\dist}{\operatorname{dist}}
\newcommand{\fsubd}{\mathrel{{\scriptstyle\searrow}\kern-1ex^d\kern0.5ex}}
\newcommand{\bsubd}{\mathrel{{\scriptstyle\swarrow}\kern-1.6ex^d\kern0.8ex}}
\newcommand{\fsubeq}{\mathrel{\raise-.7ex\hbox{$\overset{\searrow}{=}$}}}
\newcommand{\bsubeq}{\mathrel{\raise-.7ex\hbox{$\overset{\swarrow}{=}$}}}
\newcommand{\tsh}[1]{\left\{\kern-.9ex\left\{#1\right\}\kern-.9ex\right\}}
\newcommand{\Tsh}[2]{\tsh{#2}_{#1}}
\newcommand\Teich{{\mathcal T}}
\newcommand{\transverse}{\pitchfork}
\newcommand{\orth}{\perp}
\newcommand{\nest}{\sqsubset}
\newcommand{\move}{\mathrm{Move}}
\newcommand\Ext{\operatorname{{\mathbf E}}}
\newcommand{\Trans}{\mathrm{{\mathbf T}}}
\newcommand{\delete}{\mathrm{del}}
\newcommand{\Trim}{\mathrm{Trim}}
\newcommand{\res}{\mathrm{Res}}
\newcommand{\hd}{\widehat{d}}
\newcommand{\hpi}{\widehat{\pi}}
\newcommand{\hrho}{\widehat{\rho}}
\newcommand{\symdiff}{\bigtriangleup}
\newcommand{\MCG}{\mathrm{MCG}}
\newcommand{\rel}{\mathrm{Rel}}
\newcommand{\cuco}[1]{{\mathcal #1}}
\newcommand{\OL}{\overleftarrow}
\newcommand{\OR}{\overrightarrow}
\newcommand{\pt}{{\mathbf p}}
\newcommand{\tup}[1]{\vec{#1}}
\newcommand{\ignore}[2]{\Tsh{#2}{#1}}
\newcommand{\propnest}{\sqsubsetneq}
\newtheorem{thmi}{Theorem}
\newtheorem{cori}[thmi]{Corollary}
\newtheorem{rem}{Remark}
\newtheorem{claim}{Claim}
\begin{document}

\title{Stable cubulations, bicombings and barycenters}
\author[M. G. Durham]{Matthew G. Durham}
	\address{Department of Mathematics, University of California, Riverside, CA}
	\email{mdurham@ucr.edu}
\author[Y. N. Minsky]{Yair N. Minsky}
	\address{Department of Mathematics, Yale University, New Haven, CT}
	\email{yair.minsky@yale.edu}
\author[A. Sisto]{Alessandro Sisto}
	\address{Department of Mathematics, Heriot-Watt University, Edinburgh, UK}
	\email{a.sisto@hw.ac.uk}
	
\date{\today}

\begin{abstract}
We prove that the hierarchical hulls of finite sets of points in mapping class groups and Teichm\"uller spaces are stably approximated by a CAT(0) cube complexes, strengthening a result of Behrstock-Hagen-Sisto.  As applications, we prove that mapping class groups are semihyperbolic and Teichm\"uller spaces are coarsely equivariantly bicombable, and both admit stable coarse barycenters.  Our results apply to the broader class of ``colorable" hierarchically hyperbolic spaces and groups.  
\end{abstract}

\maketitle



%
%
%
%
%
%
%
%
%
%

\section{Introduction}

Much of the coarse structure of mapping class groups has the flavor of CAT(0) geometry, in
spite of the fact that mapping class groups have no geometric actions on CAT(0) spaces
(Bridson \cite{bridson:noCAT0}). Manifestations of this include the relatively hyperbolic structure associated to curve complexes (Masur-Minsky \cite{MM_I}), and the equivariant embedding into
finite products of quasi-trees found by Bestvina-Bromberg-Fujiwara
\cite{BBF:qtree-products}.

A notion of ``hulls'' of finite sets in mapping class groups was
introduced in \cite{BKMM:qirigid}, and these were more recently shown in \cite{BHS:quasi} to be approximated in a
uniform way by finite CAT(0) cube complexes, see also the alternative proof given in \cite{Bow:cubulation}.  Our goal in this paper is to refine this
construction to make it {\em stable}, in the sense that perturbation of the input data
gives rise to bounded change in the cubical structure.  As initial applications, we give a
construction for equivariant barycenters and a proof that mapping class groups are bicombable. 

As in \cite{BHS:quasi}, the proof works in a more general context of {\em hierarchically
  hyperbolic groups}, a class of groups (and spaces) introduced by Behrstock-Hagen-Sisto
\cite{HHS_I,HHS_II} which are endowed with a structure similar to the hierarchical family
of curve complexes associated to a surface \cite{MM_II}. See Subsection \ref{subsec:HHSery} below for the definition of a hierarchically hyperbolic space (HHS).
 
week.

Our main result, stated informally, is the following: 
\begin{thmi}{}\label{thm:stable cubulation informal} 
  In a colorable HHS $(\XX,\mathfrak S)$, the coarse hull $H_\theta(F)$ of any finite set 
  $F$ can be approximated by a finite CAT(0) cube complex whose dimension is bounded by
  the complexity of $(\XX,\mathfrak S)$, in such a way that a bounded change in $F$ corresponds to a change
  of the cubical structure by a bounded number of hyperplane deletions and insertions.
\end{thmi}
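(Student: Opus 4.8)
\medskip
\noindent\textbf{Sketch of the intended argument.}
The plan is to realize $H_\theta(F)$ as the ``domain'' of a wall space and to take $\QQ(F)$ to be the CAT(0) cube complex dual to it via Sageev's construction, following the strategy of \cite{BHS:quasi}; the novelty is to make every choice canonical up to bounded ambiguity and then to estimate how the dual complex varies with $F$. First I would review the structure of the coarse hull: by the distance formula only finitely many $U\in\mathfrak S$ are \emph{relevant}, i.e.\ satisfy $\diam_{\CC U}\pi_U(F)>\theta$, and up to bounded error $x\in H_\theta(F)$ precisely when $\pi_U(x)$ is $\theta$-close to the coarse convex hull of $\pi_U(F)$ in the $\delta$-hyperbolic space $\CC U$ for every relevant $U$. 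For each relevant $U$ I would approximate the finite set $\pi_U(F)$ by a finite $\R$-tree $T_U$ with quasi-isometry constants depending only on $\delta$ and $|F|$, and then \emph{coarsen} $T_U$ by contracting edges shorter than a second, much larger, threshold; this bounds the number of surviving edges by $O(|F|)$ and makes the combinatorial type of the coarsened tree insensitive to perturbations of $\pi_U(F)$ of size $\ll\theta$. Placing a wall near the midpoint of each surviving edge and pulling back through $\pi_U$ produces a finite family of walls on $H_\theta(F)$. One then checks, just as in \cite{BHS:quasi} and using the HHS consistency, partial realization and bounded geodesic image axioms, that this wall space is consistent, that its dual cube complex $\QQ(F)$ is finite of dimension at most the complexity of $(\XX,\mathfrak S)$, and that it is quasi-isometric to $H_\theta(F)$ with constants depending only on $|F|$ and $(\XX,\mathfrak S)$; colorability is what makes the dimension bound and the uniform estimates go through, since within a color class all domains are pairwise orthogonal, while walls arising from nested or transverse domains can cross only in a restricted way.

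The core of the theorem is the stability clause, which needs the modifications above together with careful threshold bookkeeping. Because a bounded Hausdorff perturbation from $F$ to $F'$ factors through $F\cup F'$ and is realized by $O(|F|+|F'|)$ elementary moves, each adding or deleting one point at bounded distance from the current set, it suffices to compare $\QQ(F)$ with $\QQ(F\cup\{p\})$ when $d_\XX(p,F)\le C$. By the distance formula and coarse Lipschitzness of the $\pi_U$, only $O_{C,\,\XX}(1)$ domains $U$ have $\pi_U(p)$ farther than the smaller threshold from the coarse hull of $\pi_U(F)$; for every other relevant domain, adding $p$ changes neither the coarsened tree nor the partition of $F$ it induces, so contributes no change to the wall space, whereas each of the $O(1)$ affected domains contributes only $O(|F|)$ inserted or deleted walls. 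A parallel observation handles the domains whose relevance flips under the move: these have $\diam\pi_U(F)$ within $O(C)$ of $\theta$, hence produce no walls once the trees are coarsened at the larger threshold. One must also verify that this coarsening does not spoil the quasi-isometry $\QQ(F)\simeq H_\theta(F)$, using the standard fact that the coarse hulls taken with any two sufficiently large thresholds lie within bounded Hausdorff distance. Since walls correspond to hyperplanes of the dual complex, these $O(1)$ changes to the wall space translate into $O(1)$ hyperplane deletions and insertions, which gives the conclusion.

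The step I expect to be the main obstacle is the one where the two demands on the construction collide: to obtain stability, the per-domain tree-and-wall data must depend in a controlled, coarsely Lipschitz way on the configuration $\pi_U(F)$, yet the globally assembled walls must still satisfy the consistency and dimension hypotheses that make Sageev's construction produce a finite CAT(0) cube complex quasi-isometric to $H_\theta(F)$. Building genuinely perturbation-stable wall structures on hyperbolic hulls, and then propagating that stability through the colorable-HHS combinatorics --- so that colorability organizes which walls can interact under a small change of $F$, not merely how many can pairwise cross --- is the delicate heart of the proof, and the multi-threshold scheme that makes borderline-relevant and barely-significant domains cost nothing is its concrete shadow.
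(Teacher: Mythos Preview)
Your sketch has the right high-level shape but contains two genuine errors, one of which is fatal.

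First, a factual slip: within a BBF color class the domains are pairwise \emph{transverse}, not pairwise orthogonal. This matters because colorability is used in the paper not to bound the dimension (that comes from the orthogonality axiom and is already in \cite{BHS:quasi}) but to invoke the strict Behrstock inequalities of \cite{BBFS} and thereby bound the number of relevant domains that change when $F$ is perturbed. Your account of what colorability buys is therefore off.

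Second, and more seriously, your wall construction cannot produce a quasi-isometry. You place one wall per surviving edge of the coarsened tree $T_U$, giving $O(|F|)$ walls per relevant domain and hence a cube complex $\QQ(F)$ whose size is bounded independently of $\diam(F)$. Such a complex cannot be quasi-isometric to $H_\theta(F)$, which has diameter comparable to $\diam(F)$. In \cite{BHS:quasi} and in the paper, one instead places \emph{evenly spaced subdivisions} along long edges of the tree, so the number of walls coming from $U$ is proportional to $\diam_{\CC U}\pi_U(F)$. But this immediately breaks your stability argument: when $F$ moves by $1$, the Gromov tree $T_U$ moves by $O(1)$ in Hausdorff distance, and the new subdivision points are generically \emph{all} displaced from the old ones. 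You would then be deleting and inserting unboundedly many hyperplanes.

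The paper's resolution, which is the missing idea in your sketch, is that the tree in $\CC U$ must be built so that under a bounded perturbation of $F$ it changes only on a bounded subforest and is \emph{literally identical} elsewhere; a fixed subdivision operator then makes almost all walls identical. Achieving this requires (i) the BBFS-refined projections to show that only boundedly many domains are ``involved'' at all, and (ii) for each involved $U$, a bespoke \emph{stable tree} $T^U_F=T_c\cup T_e$ built from the cluster structure of the $\rho$-points in $\CC U$ rather than from an arbitrary Gromov approximation; the walls come only from $T_e$, whose components are mostly unchanged under perturbation. Your multi-threshold coarsening does not deliver this exact-identity property, and without it the stability clause does not follow.
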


The colorability assumption in Theorem \ref{thm:stable cubulation informal} is apparently quite weak and excludes none of the key examples of HHSes.  In fact, we are not aware of any noncolorable HHGs; see Definition \ref{defn:colorable}.

For the general context of this result, see the discussion in Subsection \ref{subsec:coarse hulls} below, where we also give a more precise statement in Theorem \ref{thm:stable cubulations mid-level}.  See Theorem \ref{thm:stable_cubulations} for the strongest version. Besides mapping class groups, there are several other classes of spaces and groups that are colorably hierarchically hyperbolic, including:

\begin{itemize}
 \item many cubical groups including all right-angled Artin and Coxeter groups \cite{HHS_I,HS:cubical},
 \item Teichm\"uller spaces with either the Teichm\"uller or the Weil-Petersson metric \cite{Rafi:combo,Dur:augmented, EMR:rank},
 \item fundamental groups of closed 3-manifolds without Nil or Sol summands \cite{HHS_II},
 \item groups resulting from various combination and small-cancellation-type theorems \cite{HHS_II,BR:combination,Spriano:combo,RS:combination,HHS:asdim,robbio2020hierarchical},
 \item quotients of mapping class groups by suitable large powers of Dehn twists, and other related quotients \cite{BHMS:kill_twists},
\item extensions of Veech subgroups of mapping class groups \cite{DDLS},
\item the genus-$2$ handlebody group \cite{miller2020stable}. 
\end{itemize}

With the exception of any hyperbolic and cubical examples from above, our main results and its applications are novel for this wide class of objects. 

\subsection{Applications}

We now discuss our two main applications of Theorem \ref{thm:stable cubulation informal}, namely that mapping class groups and Teichm\"uller spaces are bicombable (Corollary \ref{cor:MCG bicombing}) and admit stable barycenters (Corollary \ref{cor:MCG barycenter}).

\subsubsection*{Bicombings and semihyperbolicity}

In CAT(0) spaces, geodesics are unique.  In geodesic Gromov hyperbolic spaces, all geodesics between any pair of points fellow-travel.  In fact, in both of these classes of spaces geodesics are stable under perturbation of their endpoints in the following sense:
\begin{itemize}
\item Given points $x,x',y,y'$ with $d(x,y), d(x',y') \leq 1$, all geodesics between $x,y$ and $x',y'$ fellow-travel in a parametrized fashion.
\end{itemize}

The notion of \textbf{bicombing} of a metric space $X$, introduced by Thurston, generalizes this stability property.  Roughly speaking, a bicombing is a transitive family of uniform quasigeodesics with the above parametrized fellow-traveling property under perturbation of endpoints.  See Subsection \ref{subsec:bicombings thm} for a precise definition.

Bicombability is a quasi-isometry invariant which imposes strong constraints on groups, such as property $FP_{\infty}$, a quadratic isoperimetric inequality, and the Novikov conjecture \cite{AB95, BGS, ECH, GerSh, Storm:Novikov}.  Moreover, bicombings are the key geometric feature of biautomatic structures on groups (where one requires that the bicombing is constructible by a finite state automaton), thereby playing an important role in computational group theory.  It is worth noting that bicombability is decidedly a feature of nonpositive curvature, as amenable groups such as the 3-dimensional Heisenberg group are not bicombable.

The power of our stable cubical models is that they allow us to stably and hierarchically import geometric features of cube complexes into HHSes.  In particular, $\ell^1$-geodesics in the cubical models map to \emph{hierarchy paths} (Definition \ref{defn:hierarchy path}), which are quasigeodesics that are finely attuned to the HHS structure, in that they project to uniform, unparametrized quasigeodesics in every hyperbolic space in the hierarchical structure.  The stability property of the cubulation then implies that carefully chosen $\ell^1$-geodesics give a bicombing:

\begin{thmi}{}\label{thm:bicombing main}
Any colorable HHS $(\cuco X, \mathfrak S)$ admits a coarsely $\mathrm{Aut}(\cuco X,\mathfrak S)$-equivariant, discrete, bounded, quasi-geodesic bicombing by hierarchy paths with uniform constants.
\end{thmi}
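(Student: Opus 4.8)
The plan is to build the bicombing by pushing canonical $\ell^1$-geodesics from the stable cubical models down into $\cuco X$, and to deduce each defining property of a bicombing from the corresponding property of $\mathrm{CAT}(0)$ cube complexes together with the stability statement of Theorem~\ref{thm:stable cubulations mid-level} (in its strongest form, Theorem~\ref{thm:stable_cubulations}). \textbf{Construction of the combing paths.} Given an ordered pair $(x,y)$ in $\cuco X$, apply the stable cubulation theorem to $F=\{x,y\}$. This yields a $\mathrm{CAT}(0)$ cube complex $Q_{xy}$ of dimension bounded by the complexity of $(\cuco X,\mathfrak S)$, together with a map $\Phi_{xy}\colon Q_{xy}\to\cuco X$ realizing a uniform coarse equivalence onto $H_\theta(F)$; since $F\subset H_\theta(F)$, there are vertices $\bar x,\bar y$ of $Q_{xy}$ with $\Phi_{xy}(\bar x),\Phi_{xy}(\bar y)$ within bounded distance of $x,y$. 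Take $c_{xy}$ to be the normal cube path (in the sense of Niblo--Reeves) from $\bar x$ to $\bar y$; it is a canonical $\ell^1$-geodesic, hence a hierarchy path once pushed forward. Define $\gamma_{xy}=\Phi_{xy}\circ c_{xy}$, reparametrized proportionally to $\ell^1$-arclength (subdividing so that consecutive points are at uniformly bounded distance) and extended to be eventually constant, so that $\gamma_{xy}$ is a discrete, bounded path from a bounded neighborhood of $x$ to a bounded neighborhood of $y$.

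\textbf{Quasigeodesic and transitivity properties.} Since hulls are uniformly coarsely embedded in $\cuco X$ (being hierarchically quasiconvex) and $Q_{xy}$ is uniformly coarsely bi-Lipschitz onto $H_\theta(F)$ with the $\ell^1$ metric, $\gamma_{xy}$ is a uniform quasigeodesic; it is a hierarchy path by the fact that $\ell^1$-geodesics of the cubical models map to hierarchy paths (Definition~\ref{defn:hierarchy path}). For the transitivity/consistency axiom one uses that initial segments of normal cube paths are again normal cube paths: if $z$ lies on $\gamma_{xy}$ then $z$ lies coarsely in $H_\theta(\{x,y\})$, so $H_\theta(\{x,z\})$ is coarsely contained in $H_\theta(\{x,y\})$ and, by the stability statement, the cubulation of $\{x,z\}$ agrees with a subcomplex of $Q_{xy}$ up to a bounded number of hyperplane modifications; hence $\gamma_{xz}$ fellow-travels the corresponding initial segment of $\gamma_{xy}$ with uniform constants. (Once the perturbation property below is in hand, such consistency statements can alternatively be obtained by iterating it.)

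\textbf{Stability under perturbation — the main point.} Let $x,x',y,y'\in\cuco X$ with $d(x,x')\le 1$ and $d(y,y')\le 1$. Then $F=\{x,y\}$ and $F'=\{x',y'\}$ are within bounded distance, so the stability conclusion of Theorem~\ref{thm:stable_cubulations} provides an identification of $Q_{xy}$ with $Q_{x'y'}$ outside a bounded set of inserted and deleted hyperplanes, compatible with the approximating maps and matching $\bar x\leftrightarrow\bar x'$, $\bar y\leftrightarrow\bar y'$ up to bounded error. The remaining input is purely cubical: changing a $\mathrm{CAT}(0)$ cube complex of bounded dimension by a bounded number of hyperplane insertions and deletions changes the normal cube path between boundedly perturbed basepoints by a bounded amount, measured in the $\ell^1$ metric of a common cube complex containing both. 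Granting this, $c_{xy}$ and $c_{x'y'}$ $\ell^1$-fellow-travel, and pushing forward by the approximating maps (which agree up to bounded error) shows that $\gamma_{xy}$ and $\gamma_{x'y'}$ fellow-travel in $\cuco X$ in a parametrized fashion, with constants depending only on $(\cuco X,\mathfrak S)$. I expect this cubical stability lemma for normal cube paths, together with the bookkeeping of how the distinguished vertices move through the approximating maps, to be the principal obstacle; the bounded-dimension hypothesis (equivalently the complexity bound) is what prevents a single hyperplane change from cascading into an unbounded change of the normal cube path.

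\textbf{Equivariance and conclusion.} For $g\in\mathrm{Aut}(\cuco X,\mathfrak S)$ we have $gH_\theta(F)=H_\theta(gF)$ up to bounded error, and the cubulation is coarsely $\mathrm{Aut}(\cuco X,\mathfrak S)$-equivariant, so $g\cdot Q_{xy}$ and $Q_{gx,gy}$ differ by a bounded number of hyperplane modifications with $g\bar x\leftrightarrow\overline{gx}$ and $g\bar y\leftrightarrow\overline{gy}$. Since normal cube paths are canonical, $g\cdot c_{xy}$ is the normal cube path of $g\cdot Q_{xy}$, and applying the stability lemma of the previous step again gives that $g\cdot\gamma_{xy}$ and $\gamma_{gx,gy}$ fellow-travel uniformly, i.e. the bicombing is coarsely $\mathrm{Aut}(\cuco X,\mathfrak S)$-equivariant. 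All constants produced depend only on $(\cuco X,\mathfrak S)$, so we obtain a discrete, bounded, quasigeodesic, coarsely equivariant bicombing of $\cuco X$ by hierarchy paths with uniform constants, which is the assertion of the theorem; specializing to mapping class groups and to Teichm\"uller spaces then yields Corollary~\ref{cor:MCG bicombing}.
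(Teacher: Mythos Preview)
Your approach is essentially the paper's, and you have correctly located the crux: the cubical stability lemma you flag as ``the principal obstacle'' is precisely the content of the paper's Section~\ref{sec:normal paths} (Theorem~\ref{thm:stable contraction}). The paper does not take that lemma for granted but proves it, and the proof (Proposition~\ref{fellow travel} and the supporting Lemmas~\ref{Ext Trans Delete}, \ref{easy commute}, \ref{long commute}, \ref{time shift}) is the bulk of the work. So your sketch is accurate as an outline, but what you have written is the easy part; the hard part is exactly what you have deferred.

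Two differences worth noting. First, the paper does not use the raw Niblo--Reeves normal cube path from $\bar x$ to $\bar y$. Instead it runs a symmetric contraction sequence on $\{\bar x,\bar y\}$ toward a barycentric cube, then defines the combing path as the $x$-contraction followed by the reverse of the $y$-contraction (see the definition of $\omega_{x,y}$ in the proof of Theorem~\ref{thm:bicombing general}). This symmetrized construction is what Theorem~\ref{thm:stable contraction} is actually about; it is designed to handle arbitrary finite sets (for the barycenter application) and its stability under hyperplane deletion is what gets proved. Your direct-path version would need its own stability analysis, which is presumably similar but is not what the paper carries out. Second, your ``transitivity/consistency'' paragraph is unnecessary: the paper's Definition~\ref{defn:bicombing} requires only the quasi-geodesic and fellow-traveling properties, and your sketch of transitivity (via $H_\theta(\{x,z\})\subset H_\theta(\{x,y\})$ and ``subcomplex up to bounded hyperplane modifications'') is not supported by the stability theorem as stated, which compares hulls of sets at Hausdorff distance~$1$, not nested hulls.

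Finally, the hierarchy-path conclusion is not automatic from ``$\ell^1$-geodesics map to hierarchy paths''; the paper deduces it from the fact that $\Phi_F$ is $K$-\emph{median} (Theorem~\ref{thm:qi_to_cube_cplx}(\ref{item:median map})), which forces $\pi_V(\Omega_{x,y}(j))$ to lie close to a $\CC(V)$-geodesic between $\pi_V(\Omega_{x,y}(i))$ and $\pi_V(\Omega_{x,y}(k))$ for $i\le j\le k$. You should invoke that property explicitly rather than cite the informal slogan from the introduction.
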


If the action by automorphisms is free, then coarse equivariance can be upgraded to equivariance.  By the definition of semihyperbolicity \cite{AB95}, we obtain:

\begin{cori}{}\label{cor:semihyp}
Colorable hierarchically hyperbolic groups are semihyperbolic.
\end{cori}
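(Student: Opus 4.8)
The plan is to deduce this immediately from Theorem \ref{thm:bicombing main}. Let $(G,\mathfrak S)$ be a colorable hierarchically hyperbolic group. By definition $G$, equipped with a word metric for a finite generating set, is then a colorable HHS on which $G$ acts by HHS automorphisms via left multiplication; this action is metrically proper, cocompact, vertex-transitive, and --- the feature I will use --- free.

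First I would apply Theorem \ref{thm:bicombing main} to $G$ to obtain a discrete, bounded, quasi-geodesic bicombing $\sigma = \{\sigma_{g,h}\}_{g,h \in G}$ of $G$ which is coarsely $\mathrm{Aut}(G,\mathfrak S)$-equivariant, hence coarsely $G$-equivariant. By our definition of a bicombing (Subsection \ref{subsec:bicombings thm}), ``bounded'' already encodes the two-sided fellow-traveller property under unit perturbation of the endpoints and ``quasi-geodesic'' encodes uniform quasigeodesic constants, so $\sigma$ supplies all the data of a semihyperbolic structure on $G$ in the sense of Alonso--Bridson \cite{AB95}, with the single exception that their combing is required to be honestly $G$-equivariant rather than only coarsely so.

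Second I would close this gap by the standard fundamental-domain argument, as indicated in the remark preceding the statement. Using freeness and vertex-transitivity of the action, set $\sigma'_{g,h} := g\cdot \sigma_{e,\, g^{-1}h}$ for all $g,h \in G$; then $\sigma'$ is $G$-equivariant on the nose. Since $\sigma$ was coarsely $G$-equivariant, each $\sigma'_{g,h}$ stays within uniformly bounded distance of $\sigma_{g,h}$ in a parametrized sense, so $\sigma'$ is again a discrete, bounded, quasi-geodesic bicombing, with constants depending only on those of $\sigma$ and on the coarseness constant of its equivariance; reparametrizing the $\sigma'_{g,h}$ as combing paths $\Z_{\geq 0}\to G$ that are eventually constant at their terminal points, this is exactly a semihyperbolic structure on $G$, so $G$ is semihyperbolic. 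The argument is formal given Theorem \ref{thm:bicombing main}; the only point needing care --- and the only place where \emph{freeness} of the action, rather than mere properness and cocompactness, is used --- is checking that this averaging step preserves the quasigeodesic and fellow-traveller constants, not merely the coarse type of the combing.
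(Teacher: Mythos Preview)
Your proposal is correct and follows exactly the approach indicated in the paper: apply Theorem \ref{thm:bicombing main}, then use freeness of the $G$-action to upgrade coarse equivariance to genuine equivariance via $\sigma'_{g,h} = g\cdot\sigma_{e,g^{-1}h}$, yielding a semihyperbolic structure in the sense of \cite{AB95}. The paper compresses this into a single sentence preceding the corollary, and you have spelled out the details that sentence elides.

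One small caveat on phrasing: the paper's definition of an HHG is a group acting properly and coboundedly by automorphisms on \emph{some} HHS $(\XX,\mathfrak S)$, not a priori on its own Cayley graph. It is standard (and implicit in the paper's usage) that one can pull the HHS structure back along an orbit map so that $G$ with a word metric is itself a colorable HHS with a free $G$-action, but strictly speaking this is a short additional step rather than the literal definition.
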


Note that semihyperbolicity has several novel consequences for HHGs.  Besides novel consequences of bicombability, such as property $FP_{\infty}$,  these include solvability of the conjugacy problem and the fact that abelian subgroups are undistorted \cite{AB95}.

While many HHSes were known to be bicombable for other reasons, e.g. many are CAT(0), this produces bicombings for many new examples, such as extensions of Veech subgroups of mapping class groups.

Our main application is: 

\begin{cori}{}\label{cor:MCG bicombing}
For any finite type surface $\Sigma$, its mapping class group $\MCG(\Sigma)$ is semihyperbolic and its Teichm\"uller space $\mathrm{Teich}(\Sigma)$ with either the Teichm\"uller metric or the Weil-Petersson metric is coarsely $\MCG(\Sigma)$-equivariantly bicombable by hierarchy paths with uniform constants.
\end{cori}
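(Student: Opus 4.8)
The plan is to deduce the statement directly from Theorem \ref{thm:bicombing main} and Corollary \ref{cor:semihyp}, the only work being to record that mapping class groups and Teichm\"uller spaces are \emph{colorable} hierarchically hyperbolic objects on which $\MCG(\Sigma)$ acts by automorphisms. First I would recall the relevant structures: $\MCG(\Sigma)$ is a hierarchically hyperbolic group whose index set $\mathfrak S$ consists of the (isotopy classes of) essential subsurfaces of $\Sigma$, with coordinate spaces the associated curve complexes \cite{HHS_I,HHS_II}; and $\mathrm{Teich}(\Sigma)$ is a hierarchically hyperbolic space with respect to the same index set, for the Teichm\"uller metric by \cite{Rafi:combo,EMR:rank} and for the Weil--Petersson metric by \cite{Dur:augmented} (via Brock's identification with the pants graph / augmented Teichm\"uller space). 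In each case $\MCG(\Sigma)$ acts by HHS automorphisms: on itself in the group case, and on marked complex structures in the Teichm\"uller case, permuting subsurfaces and acting isometrically on the curve complexes.

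Next I would invoke colorability (Definition \ref{defn:colorable}). As recorded in the introduction, all of these are colorable HHSes: the subsurface index set $\mathfrak S$ admits an $\MCG(\Sigma)$-invariant coloring with a number of color classes bounded in terms of the complexity of $\Sigma$, each class consisting of pairwise non-transverse subsurfaces. This is precisely the combinatorial input underlying the cubulation of \cite{BHS:quasi}, and the invariance of the coloring is what will make the resulting bicombing equivariant.

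With this in place the corollary is immediate. Since $\MCG(\Sigma)$ is a colorable hierarchically hyperbolic group, Corollary \ref{cor:semihyp} gives that it is semihyperbolic (and in particular bicombable). Applying Theorem \ref{thm:bicombing main} to $(\mathrm{Teich}(\Sigma),\mathfrak S)$, with either metric, yields a discrete, bounded, quasi-geodesic bicombing by hierarchy paths (Definition \ref{defn:hierarchy path}) which is coarsely $\mathrm{Aut}(\mathrm{Teich}(\Sigma),\mathfrak S)$-equivariant, hence a fortiori coarsely $\MCG(\Sigma)$-equivariant; the constants depend only on the HHS structure, i.e.\ only on $\Sigma$, which is the asserted uniformity. (One does not get semihyperbolicity of $\mathrm{Teich}(\Sigma)$ itself, consistent with the statement, since $\MCG(\Sigma)$ does not act cocompactly there.)

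The main points requiring care are bookkeeping rather than substance. One is making the coloring of $\mathfrak S$ genuinely $\MCG(\Sigma)$-equivariant with the claimed bound on the number of colors, so that the constants and the dimension bound of Theorem \ref{thm:stable cubulation informal} come out purely in terms of $\Sigma$. The other is the Weil--Petersson case, where $\mathrm{Teich}(\Sigma)$ is incomplete and one must confirm that the HHS axioms (properness of the coordinate spaces, realization and uniqueness) really do hold in that metric; this is exactly what \cite{Dur:augmented} supplies. Beyond these verifications there is nothing new to prove here: all the genuine difficulty is already absorbed into Theorem \ref{thm:bicombing main}.
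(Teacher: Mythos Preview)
Your approach is correct and matches the paper's: the corollary is treated there as an immediate consequence of Theorem \ref{thm:bicombing main} and Corollary \ref{cor:semihyp}, once one has recorded that $\MCG(\Sigma)$ and $\mathrm{Teich}(\Sigma)$ (with either metric) are colorable HHSes on which $\MCG(\Sigma)$ acts by automorphisms, with colorability coming from \cite{BBF}.

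One slip to fix: by Definition \ref{defn:colorable}, each color class $\mathfrak S_i$ must consist of pairwise \emph{transverse} domains, not pairwise non-transverse ones; and the group need only permute the set of color classes, not fix the coloring. Your sentence ``each class consisting of pairwise non-transverse subsurfaces'' has this backwards. This does not affect the logic of the deduction, since you are ultimately just citing \cite{BBF} for the existence of such a coloring, but the description should be corrected.
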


Note that our notion of hierarchy path here is more general than the hierarchy paths produced in \cite{MM_II, Dur:augmented}.

We remark that semihyperbolicity of $\MCG(\Sigma)$
follows from work in a preprint of Hamenst\"adt \cite{Ham09}.  The result for $\Teich(\Sigma)$ is new,
though we were informed by M. Kapovich and K. Rafi that they know of a different
construction for bicombing  $\Teich(\Sigma)$.  Note that $\Teich(\Sigma)$ with the
Weil-Petersson metric is bicombable since its completion is CAT(0)
\cite{Wolpert:curvature, Tromba:curvature, bridson-haefliger}, though we note that it is unknown whether Weil-Petersson geodesics are hierarchy paths.  Combability of $\MCG(\Sigma)$ follows from work of Mosher \cite{Mosher:automatic}.

 Notably, our bicombing construction applies to both mapping class groups and Teichm\"uller spaces simultaneously.  Moreover, our bicombings are relatively straight-forward applications of our more powerful stable cubulation construction.  See Subsection \ref{subsec:proof discussion} below for a discussion.

\subsubsection*{Stable Barycenters}

Another key feature of nonpositively curved spaces is that bounded sets admit (coarse)
\emph{barycenters}. Here, we think of barycenters simply as maps assigning a point to any
finite subset. Some more properties are required to make this notion meaningful, such as \emph{stability}, which requires the barycenter to vary a bounded amount when the finite set varies a bounded amount, and \emph{coarse equivariance} when a group action is present; see Subsection \ref{subsec:barycenters thm}.

In CAT(0) spaces there are a number of useful notions of barycenter which are equivariant
and stable, for example center-of-mass constructions and circumcenters. 
Coarse barycenters are useful in the context of groups for understanding centralizers and
solving the conjugacy problem for torsion elements and subgroups.  Notably, 
Gromov hyperbolic spaces admit (coarse) barycenters:
a coarse barycenter of a finite set $F$ in a hyperbolic space $X$ can be taken
to be one of the standard CAT(0) barycenters in the CAT(0) space which models the hull of $F$ in $X$, i.e., a simplicial tree.  See Subsection \ref{subsec:coarse hulls} for a discussion of these ideas in the context of the this paper.

We should mention that coarse barycenters for triples of points are used to define coarse medians in the sense of Bowditch \cite{Bow:coarse_median}, thus playing a central role in the theory of coarse median spaces and its many applications.  However it is unclear how to construct barycenters even for pairs of points in a coarse median space, and stability properties appear just as difficult to obtain.

Barycenters in CAT(0) spaces are not in general well-behaved under quasi-isometries. Using Theorem \ref{thm:stable cubulation informal} and a construction reminiscent of Niblo-Reeves' normal paths \cite{NibloReeves}, we are able to prove that most HHSes admit equivariant coarse barycenters, which are coarsely invariant under HHS automorphisms:

\begin{thmi}{}\label{thm:barycenter main}
Let $(\cuco X, \mathfrak S)$ be a colorable HHS.  Then $\cuco X$ admits coarsely
$\mathrm{Aut}(\cuco X,\mathfrak S)$-equivariant stable barycenters for $k$ points, for any $k\ge 1$. 
\end{thmi}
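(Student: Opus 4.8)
The plan is to derive Theorem \ref{thm:barycenter main} as a direct consequence of the stable cubulation result (Theorem \ref{thm:stable cubulation informal}, in its precise form Theorem \ref{thm:stable_cubulations}) by pushing the construction of a barycenter \emph{inside} the cube complex model and then transferring it back. Given a finite set $F\subset\cuco X$ with $|F|=k$, form the coarse hull $H_\theta(F)$ and let $Q=Q(F)$ be the finite CAT(0) cube complex approximating it, with a uniform quasi-isometry $\Phi_F\colon Q(F)\to H_\theta(F)$ (coarsely surjective, with constants depending only on the HHS data). The images of the points of $F$ give a finite set $\bar F\subset Q(F)$ of $k$ points, up to bounded error. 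Now I would fix, once and for all, a choice of \emph{combinatorial barycenter} of a finite set of vertices in a CAT(0) cube complex — the natural candidate being the ``majority vertex'' determined by the hyperplanes: for each hyperplane $\mathfrak h$ of $Q(F)$, each point of $\bar F$ lies on one of the two sides, and one takes the halfspace chosen by a strict majority of the points of $\bar F$ (breaking ties by a fixed rule). Because these majority halfspace choices are pairwise consistent (any two halfspaces selected by majorities among $k$ points must intersect, since a majority of $k$ lies in each), they determine a nonempty vertex, which we call $b(\bar F)\in Q(F)$. Set $\mathrm{bar}(F):=\Phi_F(b(\bar F))\in\cuco X$, well-defined up to uniform bounded error.

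The two properties to verify are \emph{stability} and \emph{coarse equivariance}. For coarse equivariance: if $g\in\mathrm{Aut}(\cuco X,\mathfrak S)$, then $gF$ has coarse hull $gH_\theta(F)$, and by the (coarse) naturality of the cubulation construction — which is part of the strong statement of Theorem \ref{thm:stable_cubulations}, since the construction depends only on the HHS data — the cube complex $Q(gF)$ agrees with $Q(F)$ up to a uniformly bounded number of hyperplane insertions/deletions and a cubical isomorphism, intertwining $\bar F \mapsto \overline{gF}$ up to bounded error. The majority-vertex rule is preserved by cubical isomorphisms and changes by a bounded amount under a bounded number of hyperplane modifications (adding or removing one hyperplane moves the majority vertex by at most one edge across that hyperplane, or leaves it unchanged), so $b(\overline{gF})$ is within bounded distance of the image of $b(\bar F)$; applying $\Phi$ and using coarse equivariance of the models gives $g\cdot\mathrm{bar}(F)\qeq \mathrm{bar}(gF)$. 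For stability: if $d_{\mathrm{Haus}}$ or the matching between $F$ and $F'$ is bounded, then $H_\theta(F)$ and $H_\theta(F')$ coarsely coincide, and again by Theorem \ref{thm:stable_cubulations} the cube complexes $Q(F)$ and $Q(F')$ differ by a bounded number of hyperplane deletions and insertions, with $\bar F$ and $\bar{F'}$ matched up to bounded error. The key elementary lemma is then: \emph{the majority vertex of a $k$-point set in a finite CAT(0) cube complex moves a bounded amount (depending only on $k$ and the number of hyperplane changes) when one perturbs the points by a bounded amount or alters the hyperplane set by a bounded number of insertions/deletions.} Perturbing a point by distance $\le R$ changes its side of at most $R$-many hyperplanes, hence flips at most $R$ majority decisions per point; across $k$ points this is $\le kR$ hyperplanes whose majority can flip, so the new majority vertex is within combinatorial distance $\le kR$ plus the number of inserted/deleted hyperplanes. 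Transferring through the uniform quasi-isometries $\Phi_F,\Phi_{F'}$ yields $\mathrm{bar}(F)\qeq \mathrm{bar}(F')$ with constants depending only on the HHS data and on $k$.

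A couple of technical points need care. First, the points of $\bar F$ in $Q(F)$ are only defined up to bounded error, and strict majorities can be unstable when the vote is nearly tied; to handle this I would either (a) pass to the barycenter of a bounded neighborhood / use a ``supermajority'' threshold so that small perturbations cannot flip a decision unless the decision was already fragile in a controlled way, or (b) observe that even if a single hyperplane's majority is ambiguous, the set of vertices consistent with all the \emph{unambiguous} majorities is a nonempty cube of bounded dimension (bounded by $k$, in fact), and pick any vertex of it — the bounded-dimension bound is exactly the complexity bound supplied by Theorem \ref{thm:stable cubulation informal}. Second, one must confirm that the cubical model's quasi-isometry constants and the count of hyperplane modifications are genuinely uniform over all finite $F$ of a fixed cardinality $k$ — but this is precisely what the ``stable'' in stable cubulation provides. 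I expect the \textbf{main obstacle} to be not the cube-complex combinatorics (which is the routine part above) but rather bookkeeping the transfer between the coarse geometry of $\cuco X$ and the cube complexes uniformly: specifically, ensuring that ``bounded change in $F$'' $\Rightarrow$ ``bounded change in $\bar F\subset Q$'' is compatible with the hyperplane-insertion/deletion description rather than merely with Hausdorff distance in $\cuco X$, so that the majority vertex's motion is genuinely controlled. Once the precise form of Theorem \ref{thm:stable_cubulations} is set up to deliver exactly this compatibility, the barycenter construction and its two properties follow as described.
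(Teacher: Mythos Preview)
Your approach differs from the paper's. Rather than a majority-vote barycenter in the cube complex, the paper builds in Section~\ref{sec:normal paths} an iterative \emph{contraction sequence} (Theorem~\ref{thm:stable contraction}, a symmetrized, multi-point variant of Niblo--Reeves normal paths): the points $\psi_F(F)\subset\QQ_F$ are moved across transitional hyperplanes step by step until they all lie in a single cube, and the barycenter is $\Phi_F$ applied to any point of that terminal cube. Stability under hyperplane deletion is proved directly for this sequence (item~\eqref{item:deletion_fellow} there), and then combined with Theorem~\ref{thm:stable_cubulations} via Proposition~\ref{prop:stable contractions in HHS}. The same machinery is what yields the bicombing for $k=2$.

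For \emph{odd} $k$ your majority-vote idea does work, and is arguably simpler: strict majorities always exist; the majority vertex is well-defined, permutation invariant, and preserved under hyperplane deletion (deletion does not change which side of any surviving hyperplane a vertex lies on); and the exact commutativity of the left side of diagram~\eqref{eq:Phi diagram} makes the images of $\psi_F(F)$ and $\psi_{F'}(F')$ coincide in $\QQ_0$, so their majority vertices agree there.

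For \emph{even} $k$, however, there is a genuine gap. Take $k=2$: every hyperplane of $\QQ_F$ that separates $\psi_F(x)$ from $\psi_F(y)$ is a tie, and the number of such hyperplanes grows with $d_{\cuco X}(x,y)$. Your fix~(b) claims the set of vertices consistent with all unambiguous majorities is a cube of bounded dimension and that this suffices---but this confuses bounded dimension with bounded diameter: for $k=2$ the unambiguous majorities constrain only those hyperplanes with both points on one side, leaving a convex subcomplex whose hyperplanes are exactly the separating ones; its dimension is bounded (by the HHS complexity) but its diameter is not. Fix~(a) fares no better, since no supermajority threshold strictly above $\tfrac12$ is ever met when $k=2$. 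Any permutation-invariant, coarsely equivariant tie-breaking rule for $k=2$ is already a stable midpoint construction, which is the very thing to be produced. The paper's contraction sequence sidesteps this by being an iterative symmetric process that does not need to resolve hyperplanes individually; for $k=2$ it is, in effect, a stable midpoint construction whose stability under hyperplane deletion is established directly.
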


We remark that the coarse barycenter we produce for a set $F$ is contained in the hull of $F$.

As with Theorem \ref{thm:bicombing main}, Theorem \ref{thm:barycenter main} can be applied
to mapping class groups and Teichm\"uller spaces:

\begin{cori}{}\label{cor:MCG barycenter}
For any finite type surface $\Sigma$, its mapping class group $\MCG(\Sigma)$ and
Teichm\"uller space $\Teich(\Sigma)$ admit coarsely $\MCG(\Sigma)$-equivariant stable barycenters for $k$ points, for any
$k\ge 1$. 
\end{cori}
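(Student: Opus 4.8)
The plan is to obtain Corollary \ref{cor:MCG barycenter} as a direct specialization of Theorem \ref{thm:barycenter main}; the content lies entirely in checking that $\MCG(\Sigma)$ and $\Teich(\Sigma)$ satisfy that theorem's hypotheses. First I would recall the standard hierarchically hyperbolic structure on $\MCG(\Sigma)$ from \cite{HHS_I,HHS_II}: the index set $\mathfrak S$ is the collection of isotopy classes of essential subsurfaces of $\Sigma$, the coordinate spaces are the associated curve graphs, and the projections are the Masur--Minsky subsurface projections. This structure is colorable in the sense of Definition \ref{defn:colorable}: since $\Sigma$ has only finitely many topological types of subsurface and bounded complexity, one can partition $\mathfrak S$ into finitely many classes so that any two domains in the same class are non-orthogonal (i.e.\ the corresponding subsurfaces are not disjoint). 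The same index set, coordinate spaces and projections also give hierarchically hyperbolic structures on $\Teich(\Sigma)$ with the Teichm\"uller metric \cite{Rafi:combo,EMR:rank} and with the Weil--Petersson metric \cite{Dur:augmented}, so each of these is likewise a colorable HHS.

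Next I would check that $\MCG(\Sigma)$ acts by automorphisms of each of these structures. On $\MCG(\Sigma)$ itself this is the action by left multiplication, which preserves the structure by construction. On $\Teich(\Sigma)$ the mapping class group acts isometrically for either metric; its natural action on isotopy classes of subsurfaces permutes $\mathfrak S$, and because the curve-graph coordinate spaces and the subsurface projections are functorial with respect to homeomorphisms of $\Sigma$, this combinatorial action is compatible with the isometric action on $\Teich(\Sigma)$. Thus in each case we obtain a homomorphism from $\MCG(\Sigma)$ into the automorphism group of the relevant HHS structure. Applying Theorem \ref{thm:barycenter main} to $\MCG(\Sigma)$ and to $\Teich(\Sigma)$ (with either metric) then yields, for every $k\ge 1$, a stable barycenter map on $k$-point subsets that is coarsely equivariant under the full automorphism group; restricting this equivariance along the homomorphism just described gives exactly the coarsely $\MCG(\Sigma)$-equivariant stable barycenters asserted, with the barycenter of a finite set $F$ contained in its hull $H_\theta(F)$ as remarked after Theorem \ref{thm:barycenter main}.

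The one point that requires care — the nearest thing here to an obstacle — is making the hierarchically hyperbolic structure on $\Teich(\Sigma)$ genuinely $\MCG(\Sigma)$-equivariant, i.e.\ verifying that the mapping class group acts not merely isometrically but by HHS automorphisms in the precise sense demanded by Theorem \ref{thm:barycenter main}. This is routine once the structure is spelled out via subsurface projections, but it must be recorded, since Theorem \ref{thm:barycenter main} supplies equivariance only for the automorphism group of the structure, whereas the corollary demands equivariance for $\MCG(\Sigma)$ acting on Teichm\"uller space.
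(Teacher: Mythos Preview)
Your approach is correct and is exactly what the paper intends: Corollary~\ref{cor:MCG barycenter} is stated without a separate proof because it follows immediately from Theorem~\ref{thm:barycenter main} once one knows that $\MCG(\Sigma)$ and $\Teich(\Sigma)$ (with either metric) are colorable HHSes on which $\MCG(\Sigma)$ acts by automorphisms, which the paper records with citations in Section~\ref{subsec:refined projections} and the introduction.

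One imprecision worth correcting: colorability (Definition~\ref{defn:colorable}) requires that domains in the same color class be pairwise \emph{transverse}, not merely non-orthogonal; in particular, no two domains in a class may be nested. Your justification (``finitely many topological types of subsurface'') does not establish this---indeed there are infinitely many subsurfaces of each type, and one must rule out nesting within each class. The actual colorability of the subsurface poset is a nontrivial result of Bestvina--Bromberg--Fujiwara \cite{BBF}, which the paper cites; you should invoke that rather than sketch an incorrect argument.
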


Corollary \ref{cor:MCG barycenter} is new for arbitrary finite sets of points in $\MCG(\Sigma)$ and $\Teich(\Sigma)$ with the Teichm\"uller metric, even without the stability property.  The corresponding statement for $\Teich(\Sigma)$ with the Weil-Petersson metric is an easy consequence of the fact that its completion is CAT(0).  Corollary \ref{cor:MCG barycenter}, without the stability property, was proven for triples of points in $\MCG(\Sigma)$ by Behrstock-Minsky \cite{BM:centroids}, for orbits of finite order elements of $\MCG(\Sigma)$ in $\MCG(\Sigma)$ by Tao \cite{Tao:conj}, and more generally for orbits of finite subgroups of $\MCG(\Sigma)$ in both $\MCG(\Sigma)$ and $\Teich(\Sigma)$ with the Teichm\"uller metric in \cite{Dur:bary}.

As we were completing this paper, we learned that Haettel-Hoda-Petyt \cite{HHP} have simultaneously and independently proven that HHSes are coarse Helly spaces, in the sense of \cite{CCGHO}.  This property has a number of strong consequences, many of which overlap with the results in this paper.  In particular, they obtain versions of Theorems \ref{thm:bicombing main} and \ref{thm:barycenter main} along with their corollaries, without the colorability assumption and the hierarchy path conclusion.

Their approach and constructions are very different from ours, using results from the theory of coarse Helly and injective metric spaces, whereas our work relies mostly on hyperbolic and cubical geometry.

\subsection{Coarse hulls and their models}\label{subsec:coarse hulls}
Given the technical nature of many of the proofs in this paper, 
we include here an extended but simplified discussion of the ideas that go into our
constructions.  The propositions stated in this section will not, however, be used
elsewhere in the paper. 

Consider first the notion of a {\em convex hull} in a CAT(0) space. The convex hull of a
finite set $F$ is well-controlled, and in particular the map $F \mapsto hull(F)$ is
1-lipschitz with respect to the Hausdorff metric on sets. We are interested in
generalizing this notion to more coarse hulls (which we will just denote by $hull(F)$ in
each case) in more general spaces.

As a first motivating example, consider the Euclidean plane,  $X=\R^2$ with the $\ell^2$
metric. The convex hull of two points $hull(\{x,y\})$ is just the unique geodesic between
them. If, on the other hand,  we endow $\R^2$ with the $\ell^1$ metric than the convex hull is the axis-parallel
rectangle spanned by $x$ and $y$.  Note that $(\R^2,\ell^1)$ is not CAT(0) but is
a product of CAT(0) spaces, and this hull is a product of hulls in the CAT(0) factors. 
 See Figure \ref{fig:planehull}. This simple idea is a model for a useful construction in
the HHS context. 

\begin{figure}
\includegraphics[width=0.5\textwidth]{./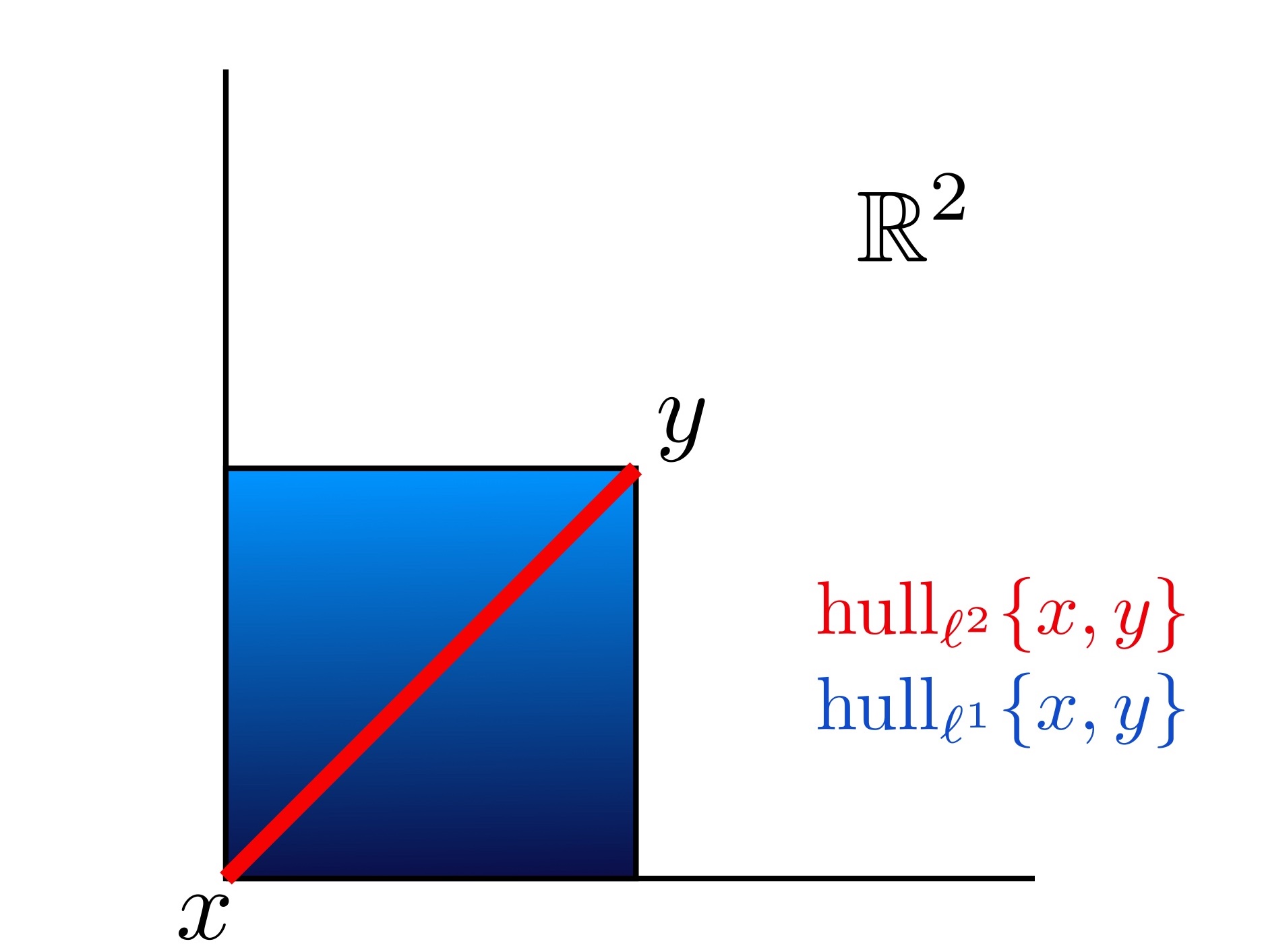}
\caption{A cartoon of the $\ell^2$-hull (red) and $\ell^1$-hull (blue) of two points in
  $\mathbb{R}^2$.  The $\ell^1$-hull reflects the intrinsic product structure of the space.}\label{fig:planehull}
\end{figure}

We can think of an HHS as (coarsely) embedded in a product of hyperbolic spaces, in such a way
that it is composed of products of certain of these factors, intersecting and nesting in a
complicated fashion. The reader familiar with the foundational example, namely
Masur-Minsky's hierarchy of curve graphs for mapping class groups \cite{MM_I, MM_II}, will
lose nothing by keeping it in mind during the ensuing discussion. In that setting,
Behrstock-Kleiner-Minsky-Mosher \cite{BKMM:qirigid} introduced a notion of hull which is
essentially a coarse pullback of convex hulls in each hyperbolic factor (see Subsection
\ref{subsec:HHSery}). Behrstock-Hagen-Sisto \cite{BHS:quasi} proved, in the general HHS setting,
that these hulls are quasi-isometrically modeled by finite CAT(0) cubical complexes.

Their result is a partial generalization of the situation in Gromov hyperbolic spaces, where Gromov proved that hulls of finite sets of points are quasi-isometrically modeled by finite simplicial trees \cite{Gromov87}.  However, in the setting of hyperbolic spaces, the modeling trees satisfy additional strong stability properties under perturbation of the set of input points; see Proposition \ref{prop:hyperhull} below.

Our main theorem (in increasing specificity, Theorems \ref{thm:stable cubulation informal}, \ref{thm:stable cubulations mid-level} and \ref{thm:stable_cubulations}) endows the modeling cube complexes from \cite{BHS:quasi} with a generalization of the stability properties that Gromov's modeling trees enjoy.

Before giving a full account of our results and an overview of their proofs, it will be beneficial to discuss the situation in hyperbolic spaces and cubical complexes.  We will see that our results are a common generalization of the situations from these motivating examples.  

\subsubsection*{Hulls in trees and cube complexes}

Let $X$ be a simplicial tree.  Then the convex hull of any finite set of vertices $F \subset X^0$ is the subtree $T_F$ of $X$ spanned by $F$.  Moreover, the subtree $T_F$ is stable under small perturbations of $F$, in the following sense (see Figure \ref{fig:treehull}):
\begin{proposition}{prop:treehull}
Let $X$ be a simplicial tree.  If $F,F' \subset X^0$ satisfy $\# F' = \# F = k$ and $d_{\mathrm{Haus}}(F,F') \leq 1$, then the intersection of their hulls $T_0 = T_F \cap T_{F'}$ is itself a subtree with both $T_F \ssm T_0$ and $T_{F'} \ssm T_0$ a union of at most $k$ subtrees each of diameter at most 1.
\end{proposition}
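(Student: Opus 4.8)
The plan is to distill from the hypotheses one uniform estimate — that nearest-point projection onto $T_{F'}$ displaces each point of $T_F$ by at most $1$ and carries it into $T_0$ — and then to read the structure of $T_F\ssm T_0$ directly off the way the convex subtree $T_0$ sits inside the finite tree $T_F$. The same argument with the roles of $F$ and $F'$ interchanged then handles $T_{F'}\ssm T_0$, and the fact that $T_0$ is a subtree is immediate, since an intersection of convex subsets of a tree is convex (the degenerate case $T_0=\varnothing$ is easily disposed of: the estimate below then forces $T_F$ and $T_{F'}$ to be single vertices).

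First I would record two standard facts. (i) The hull $T_F$ of a finite set $F\subset X^0$ is the finite subtree spanned by $F$, and every valence-one vertex (leaf) of $T_F$ lies in $F$. (ii) For a convex subtree $C\subseteq X$ the function $\dist(\cdot,C)$ is convex along every geodesic, and for any $y$ and any $c\in C$ the gate $\pi_C(y)$ lies on the geodesic $[y,c]$, with $[y,\pi_C(y)]\cap C=\{\pi_C(y)\}$ when $y\notin C$. From these I extract the key claim: \emph{for every $y\in T_F$ one has $\pi_{T_{F'}}(y)\in T_0$ and $\dist\big(y,\pi_{T_{F'}}(y)\big)\le 1$.} The distance bound holds because $\dist(\cdot,T_{F'})$, being convex, attains its maximum over the finite tree $T_F$ at some leaf $\ell$, and $\ell\in F$ satisfies $\dist(\ell,T_{F'})\le\dist(\ell,F')\le1$ by the Hausdorff hypothesis. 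For the membership, pick any $c\in T_0\subseteq T_F\cap T_{F'}$; the geodesic $[y,c]$ lies in $T_F$ by convexity and passes through $\pi_{T_{F'}}(y)$ by the gate property, so $\pi_{T_{F'}}(y)\in T_F\cap T_{F'}=T_0$.

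Next I would analyze the components of $T_F\ssm T_0$. Since $T_0$ is a convex subtree of the tree $T_F$, each component $Y$ has closure $\overline Y$ a subtree meeting $T_0$ in exactly one point $q=q_Y$, namely the common value of $\pi_{T_0}$ on $Y$. Any leaf of $\overline Y$ other than $q$ lies in $Y$, hence away from the closed set $T_0$, so all of its incident edges in $T_F$ lie in $\overline Y$; it is therefore a leaf of $T_F$, and so lies in $F$. Feeding $y\in\overline Y$ into the key claim, the geodesic from $y$ to $\pi_{T_{F'}}(y)\in T_0$ leaves $Y$ through $q$ and meets $T_{F'}$ only at its far endpoint, which forces $\pi_{T_{F'}}(y)=q$; hence $\dist(y,q)\le1$ for all $y\in\overline Y$. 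In particular a leaf $\ell\in F$ of $\overline Y$ has $\dist(\ell,q)=1$: it is an integer, it is $\le1$, and it is nonzero since $\ell\notin T_0\ni q$. If $\overline Y$ had two such leaves $\ell_1\ne\ell_2$, each would be adjacent to $q$ and the geodesic $[\ell_1,\ell_2]$ would pass through $q\in T_0$, contradicting the connectedness of $Y$ inside $T_F\ssm T_0$; the same reasoning rules out $q$ being an interior vertex of $\overline Y$. Hence $\overline Y$ is the single edge $[\ell_Y,q_Y]$ with $\ell_Y\in F$, so $\diam(\overline Y)=1$. Distinct components contain distinct such vertices $\ell_Y\in F$, so there are at most $\#F=k$ of them.

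The step I expect to require the most care is precisely the sharpening from ``$\dist(\cdot,T_{F'})\le1$ on $T_F$'' to ``each component of $T_F\ssm T_0$ is a single edge''. The soft part of the argument only bounds the diameter of a component by $2$; the improvement to $1$ genuinely uses that $X$ is simplicial and $F,F'\subseteq X^0$, so the relevant distances are integers and the ``radius-one tripod'' configuration that would realize diameter close to $2$ in an $\R$-tree cannot occur. I would also want to state carefully the topological input — that a component of the complement of a convex subtree of a tree has subtree closure meeting the subtree in a single frontier point — since the whole component analysis rests on it.
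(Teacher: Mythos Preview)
The paper does not actually prove this proposition; immediately after stating it the authors write ``We will not use this fact, so we leave its proof to the interested reader.'' So there is no paper proof to compare against, and your argument stands on its own.

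Your argument is correct. The two ingredients --- that $\dist(\cdot,T_{F'})$ attains its maximum on $T_F$ at a leaf (hence a point of $F$, giving the bound $\le 1$), and that the gate $\pi_{T_{F'}}(y)$ lies on $[y,c]\subset T_F$ for any $c\in T_0$ (forcing $\pi_{T_{F'}}(y)\in T_0$) --- combine exactly as you say. The deduction that each component $Y$ of $T_F\ssm T_0$ has $\pi_{T_{F'}}\equiv q_Y$ on $\overline Y$, hence radius $\le 1$ about $q_Y$, is clean; the integrality argument then correctly upgrades this to ``$\overline Y$ is a single edge with one endpoint in $F$,'' which gives both the diameter bound and the count. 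Your handling of the degenerate case $T_0=\varnothing$ is also fine: the distance estimate does not use $T_0\ne\varnothing$, and once every point of $T_F$ is within $1$ of the disjoint convex $T_{F'}$, the unique bridge between them forces $T_F$ to be a single vertex. One stylistic remark: the forward reference ``the estimate below'' in your treatment of $T_0=\varnothing$ would read more smoothly if you established the distance bound first (it needs nothing about $T_0$) and only afterward proved the membership $\pi_{T_{F'}}(y)\in T_0$ under the standing assumption $T_0\ne\varnothing$.
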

We will not use this fact, so we leave its proof to the interested reader.

\begin{figure}
\includegraphics[width=0.6\textwidth]{./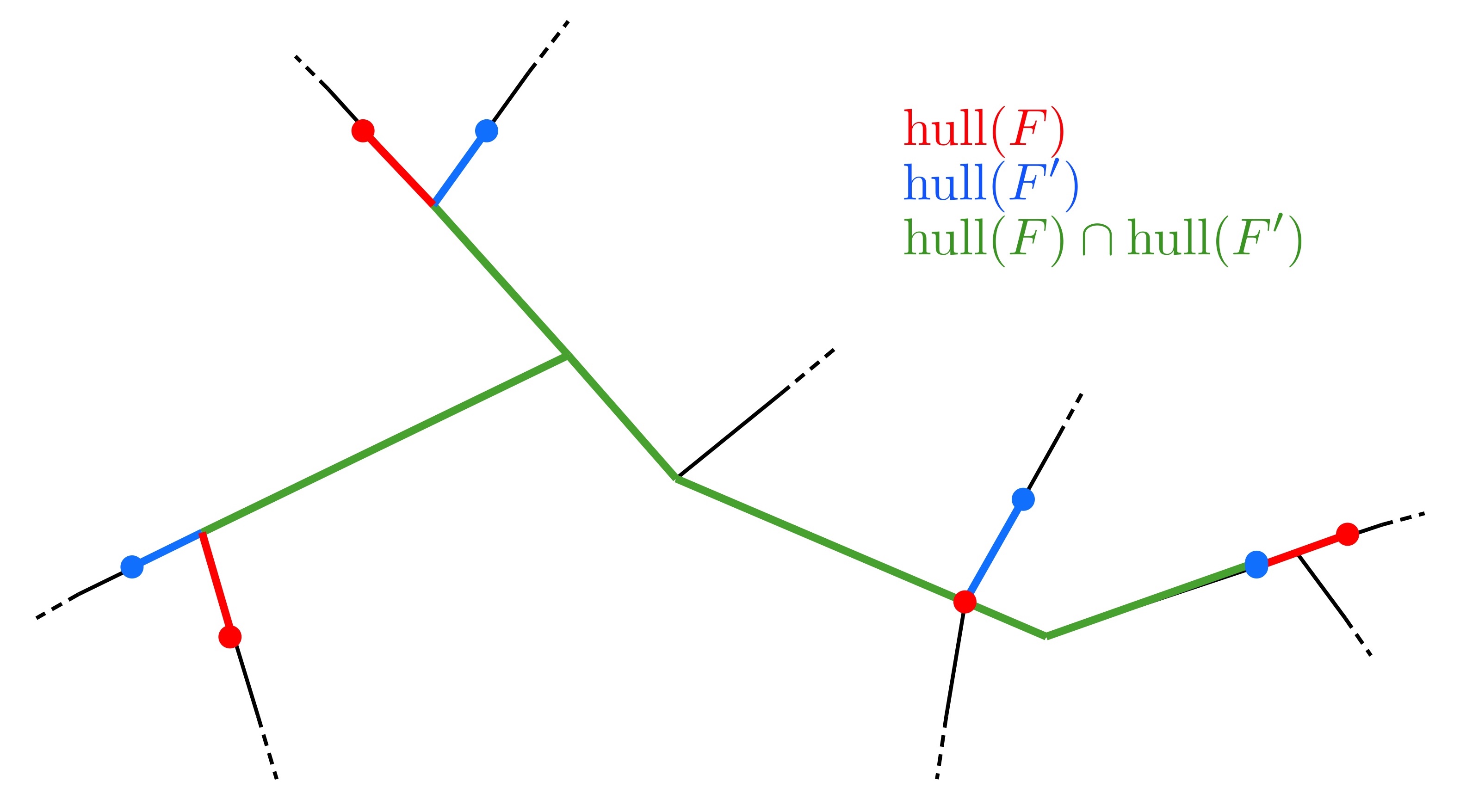}
\caption{Stability of hulls in a tree:  The intersection of $\mathrm{hull}(F)$ (red) and $\mathrm{hull}(F')$ (blue) subtrees is a (green) subtree which can be obtained by deleting the boundedly many complementary red and blue subtrees.}\label{fig:treehull}
\end{figure}

This situation generalizes to when $X$ is a CAT(0) cube complex endowed with the $l^1$-metric (see Subsection \ref{subsec:cube complexes} for the relevant definitions).  Recall that the $l^1$-metric on $X$ is completely determined by a special collection $\HH_X$ of codimension-1 subspaces called \textbf{hyperplanes} (Subsection \ref{subsec:cube complexes}), in the sense that $X$ is precisely the dual cube complex arising from Sageev's cubulation construction \cite{Sageev:cubulation} applied to $\HH_X$ as a wallspace (see Subsection \ref{subsec:walls}).

In the cubical context, the $\ell^1$ convex hull of any finite set of vertices $F \subset X^0$ is the cubical subcomplex $\QQ_F \subset X$ realized as the dual to the hyperplanes $\HH_F$ separating the points in $F$.  In addition, these cubical hulls satisfy the following strong stability property (see Figure \ref{fig:cubehull}): 

\begin{figure}
\includegraphics[width=0.75\textwidth]{./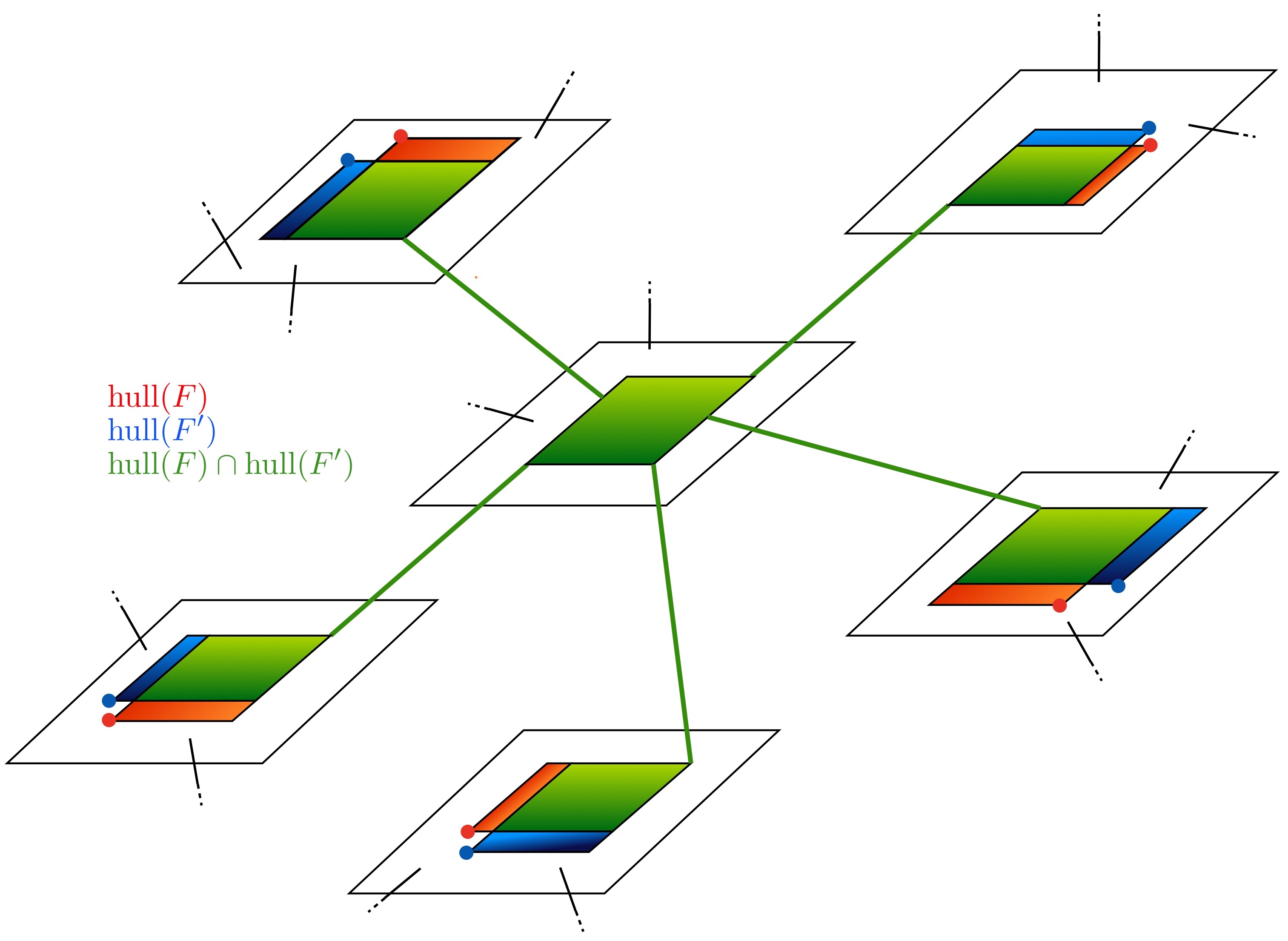}
\caption{Stability of hulls in the universal cover of $S^1 \vee T^2$: The subcomplex dual
  to all hyperplanes common to both $\mathrm{hull}(F)$ (red) and $\mathrm{hull}(F')$
  (blue)
is here realized as the intersection (green) of the $\ell^1$-hulls.}\label{fig:cubehull}
\end{figure}

\begin{proposition}{prop:cubehull}
Let $X$ be a CAT(0) cube complex endowed with the $\ell^1$ metric.  If $F,F' \subset X^0$
satisfy $\# F, \# F'\leq k$ and $d_{\mathrm{Haus}}(F,F') \leq 1$, then there are convex subcomplexes $X_F\subseteq\QQ_F$ and $X_{F'}\subseteq \QQ_{F'}$ both dual to the hyperplanes in $\HH_0 = \HH_F \cap
\HH_{F'}$ and so that $d_{\mathrm{Haus}}(X_F,X_{F'}) \leq 1$.  Moreover, both $\HH_F \ssm \HH_0$ and $\HH_{F'} \ssm \HH_0$ contain at most
$k$ hyperplanes.
\end{proposition}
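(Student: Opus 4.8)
The plan is to build $X_F$ and $X_{F'}$ by truncating the two hulls $\QQ_F,\QQ_{F'}$ along the small families of ``extra'' hyperplanes, and then to bookkeep hyperplanes carefully. Throughout I would use the standard combinatorial features of a CAT(0) cube complex with the $\ell^1$ metric recalled in Subsection \ref{subsec:cube complexes}: each vertex lies strictly on one side of each hyperplane; the $\ell^1$-distance between two points equals the number of hyperplanes separating them; every convex subcomplex $C$ is gated, the hyperplanes separating a point $x$ from its gate $\mathfrak g_C(x)$ being exactly those separating $x$ from $C$; the convex hull $\QQ_E$ of a finite vertex set $E$ is the subcomplex dual to $\HH_E$ (so $\QQ_E$ meets both sides of a hyperplane $g$ iff $g\in\HH_E$, and lies in a single halfspace of $g$ otherwise); and halfspaces, and intersections of a convex subcomplex with finitely many halfspaces, are convex subcomplexes.

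First I would fix maps $\phi\colon F\to F'$ and $\phi'\colon F'\to F$ with $d(x,\phi(x))\le1$ and $d(x',\phi'(x'))\le1$, which exist since $d_{\mathrm{Haus}}(F,F')\le1$ (I may assume $F,F'\neq\emptyset$, else the statement is vacuous), and note $\HH_F\ssm\HH_0=\HH_F\ssm\HH_{F'}$. The first claim is that each $h\in\HH_F\ssm\HH_0$ is the hyperplane $\widehat e$ dual to an edge $e=[y,\phi(y)]$ with $y\in F$: since $h$ does not separate $F'$, all of $F'$ lies in one halfspace $h^{+}$ of $h$; since $h$ separates $F$, some $y\in F$ lies in the opposite halfspace $h^{-}$; then $h$ separates $y$ from $\phi(y)\in h^{+}$, forcing $d(y,\phi(y))=1$, so $y$ and $\phi(y)$ span an edge and $h=\widehat{[y,\phi(y)]}$. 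As every $y\in F\cap h^{-}$ yields this same $h$, the sets $F\cap h^{-}$ with $h\in\HH_F\ssm\HH_0$ are non-empty and pairwise disjoint, so $\#(\HH_F\ssm\HH_0)\le\#F\le k$; symmetrically $\#(\HH_{F'}\ssm\HH_0)\le k$. This proves the last sentence of the proposition.

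Next I would set $X_F:=\QQ_F\cap\bigcap_{h\in\HH_F\ssm\HH_0}h^{+}$, with $h^{+}$ the halfspace of $h$ containing $F'$, and symmetrically $X_{F'}:=\QQ_{F'}\cap\bigcap_{h\in\HH_{F'}\ssm\HH_0}h^{\circ}$, with $h^{\circ}$ the halfspace of $h$ containing $F$; each is then a convex subcomplex of $\QQ_F$, resp.\ $\QQ_{F'}$. To check $X_F$ is dual to $\HH_0$: for any $a'\in F'$ and $h\in\HH_F\ssm\HH_0$, the hyperplane $h$ crosses $\QQ_F$, hence does not separate $a'$ from $\QQ_F$, hence does not separate $a'$ from $\mathfrak g_{\QQ_F}(a')$, so $\mathfrak g_{\QQ_F}(a')\in h^{+}$; therefore $\mathfrak g_{\QQ_F}(a')\in X_F$, and in particular $X_F\neq\emptyset$. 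Given $g\in\HH_0$, pick $a',b'\in F'$ separated by $g$; since $g$ crosses $\QQ_F$, the points $\mathfrak g_{\QQ_F}(a')$ and $\mathfrak g_{\QQ_F}(b')$ of $X_F$ lie on opposite sides of $g$, so $g$ crosses $X_F$. Conversely, hyperplanes in $\HH_F\ssm\HH_0$ do not cross $X_F$ (it sits inside one of their halfspaces), and hyperplanes outside $\HH_F$ do not cross $\QQ_F\supseteq X_F$; so the hyperplanes crossing $X_F$ are exactly $\HH_0$, and symmetrically for $X_{F'}$.

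Finally I would bound $d_{\mathrm{Haus}}(X_F,X_{F'})$ by counting, for $z\in X_F$, the hyperplanes $g$ separating $z$ from $X_{F'}$. If $g\in\HH_0$ it crosses $X_{F'}$, so it cannot separate $z$ from $X_{F'}$. If $g\in\HH_F\ssm\HH_0$, then $g\notin\HH_{F'}$, so $X_{F'}\subseteq\QQ_{F'}$ and $z\in X_F$ both lie in the halfspace of $g$ containing $F'$. If $g\in\HH_{F'}\ssm\HH_0$, then symmetrically $g\notin\HH_F$, so $z\in\QQ_F$ and $X_{F'}$ both lie in the halfspace of $g$ containing $F$. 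Otherwise $g\notin\HH_F\cup\HH_{F'}$, and then $z\in\QQ_F$ lies on the side of $g$ containing $F$ while $X_{F'}\subseteq\QQ_{F'}$ lies on the side containing $F'$, so $g$ separates $z$ from $X_{F'}$ only if $F$ and $F'$ lie on opposite sides of $g$ --- and at most one hyperplane does this, since two would both separate $y$ from $\phi(y)$, against $d(y,\phi(y))\le1$. Hence $d(z,X_{F'})\le1$, and symmetrically $d(z',X_F)\le1$ for every $z'\in X_{F'}$, giving $d_{\mathrm{Haus}}(X_F,X_{F'})\le1$. The step I expect to be the main obstacle is exactly this last one: two convex subcomplexes dual to the same family of hyperplanes can be arbitrarily far apart (a single vertical edge can sit anywhere along an infinite cubical strip), so the correct ``other-side'' halfspaces must be built into the definitions of $X_F$ and $X_{F'}$, and the content of the case analysis is that none of the $\le 2k$ deleted hyperplanes actually separates $X_F$ from $X_{F'}$ --- only the unique hyperplane crossed by neither hull can, accounting for the single unit of Hausdorff distance.
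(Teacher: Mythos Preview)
The paper does not actually prove this proposition: immediately after the statement it says ``Again, we will not use this proposition, so we omit its proof.'' So there is no paper proof to compare against, and your argument must stand on its own.

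Your proof is correct. The bound $\#(\HH_F\ssm\HH_0)\le k$ via the disjointness of the sets $F\cap h^-$ is clean: any $y$ lying in two such sets would be separated from $\phi(y)$ by two distinct hyperplanes, contradicting $d(y,\phi(y))\le1$. The construction of $X_F$ as $\QQ_F$ intersected with the ``$F'$-sides'' of the deleted hyperplanes is exactly the right move, and your verification that $X_F$ is nonempty and dual to $\HH_0$ via gates of points of $F'$ is standard and sound. The final case analysis for the Hausdorff bound is also correct; the key observation that you flag---that at most one hyperplane outside $\HH_F\cup\HH_{F'}$ can separate $F$ from $F'$---is precisely what pins the Hausdorff distance at~$1$ rather than merely bounded. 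One small stylistic point: the sentence ``As every $y\in F\cap h^{-}$ yields this same $h$'' is slightly awkward (it reads as tautological); it would be clearer to say directly that if $y\in F\cap h_1^-\cap F\cap h_2^-$ then both $h_1,h_2$ separate $y$ from $\phi(y)$, forcing $h_1=h_2$.
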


Again, we will not use this proposition, so we omit its proof.

In the cubical structure on a simplicial tree, the hyperplanes correspond to midpoints of
edges.  Hence Proposition \ref{prop:cubehull} generalizes Proposition
\ref{prop:treehull}.  Note that now the diameters of $\QQ_F \ssm X_F$ and
$\QQ_{F'}\ssm X_{F'}$ can be arbitrarily large.
However, since the $\ell^1$ metric on $X$ is completely determined by its defining
hyperplanes, Proposition \ref{prop:cubehull} says that  $\QQ_F$ and $\QQ_{F'}$ are
metrically and combinatorially related,  depending only on $k$ and $X$---and not on $\diam(F)$.  In particular, one can delete boundedly many hyperplanes from the collections $\HH_F$ and $\HH_{F'}$ to generate a common model; see Subsubsection \ref{subsec:hyperplane deletion} for a discussion on hyperplane deletions.

\subsubsection*{Modeling hulls in hyperbolic spaces}

In coarse geometry, e.g., when $X$ is the Cayley graph of a finitely generated group, the notion of geodesic is often wobbly, and so our notion of hull needs to be more flexible.  Moreover, it will often be more fruitful to construct quasi-isometric \textbf{models} of hulls, which we should think of as nice combinatorial objects which coarsely encode the key geometric features of hulls into their combinatorial structure.  The main motivating examples here are hyperbolic spaces, where hulls are modeled by finite simplicial trees.

When $X$ is $\delta$-hyperbolic and $F \subset X$ with $\# F = k$, the right notion of $hull(F)$ is the \textbf{weak hull}, namely the set of all geodesics between points in $F$.  Notice then that the tripod-like $\delta$-slim-triangles condition generalizes to a tree-like slimness for $hull(F)$.  The following proposition is an easy consequence of Gromov's original arguments \cite{Gromov87} (see Figure \ref{fig:hyperhull}):

\begin{figure}
\includegraphics[width=0.75\textwidth]{./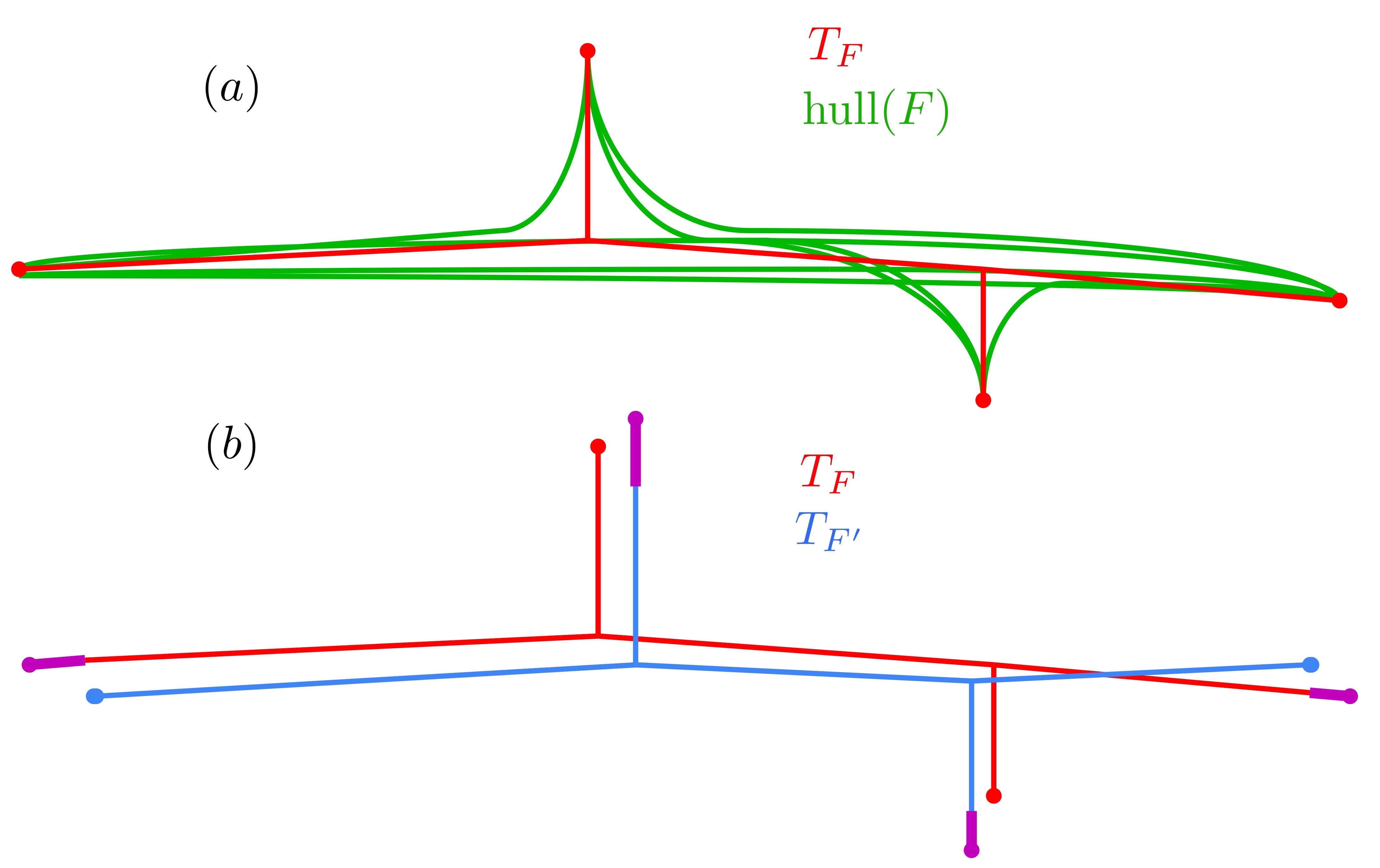}
\caption{Stability of hulls in a hyperbolic space: In (a), the modeling tree (red) for the coarse hull (green) of a finite set $F$.  In (b), the modeling trees $T_F$ and $T_{F'}$ for $\mathrm{hull}(F)$ and $\mathrm{hull}(F')$ are $(1,K)$-quasiisometric after deleting small subtrees (purple).}\label{fig:hyperhull}
\end{figure}

\begin{proposition}{prop:hyperhull}
For any $k \in \mathbb{N}$ and $\delta >0$, there exists $L = L(k, \delta)> 0$ such that the following holds:

Let $X$ be $\delta$-hyperbolic and $F \subset X$ with $\# F = k$.  Then there exists a simplicial tree $T_F$ and a $(1,L)$-quasi-isometric embedding $\phi_F: T_F \rightarrow X$ with $d_{Haus}(\phi_F(T_F), hull(F)) < L$.

Moreover, if $F' \subset X$ with $\# F' \leq k$ and $d_{Haus}(F,F') \leq 1$, then there exists a simplicial tree $T_0$ and a $(1,L)$-quasi-isometric embedding $\phi_0:T_0 \rightarrow X$ so that the diagram

\begin{equation}\label{hyp diagram}
  \begin{tikzcd}
    T_F \arrow[dr,"\phi_F"] \arrow[d,"h_F" left] &  \\
    T_0 \arrow[r,"\phi_0"] & X \\
    T_{F'}\arrow[ur,"\ \ \ \phi_{F'}" below] \arrow[u,"h_{F'}"] & \\
  \end{tikzcd}
  \end{equation}
commutes up to error at most $L$, where $h_F$ and $h_{F'}$ are quotient maps which collapse at most $L$ subtrees each of diameter at most $L$. 
\end{proposition}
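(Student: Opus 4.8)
The plan is to reduce everything to the classical structure theory of $\delta$-hyperbolic spaces, building the tree $T_F$ directly from the combinatorics of the $\binom{k}{2}$ geodesics spanning $F$. First I would recall that in a $\delta$-hyperbolic space, for any three points $a,b,c$ there is an \emph{approximate center} (a point within $4\delta$ of all three sides of the triangle $abc$), and more generally the union of geodesics on a finite set $F$ with $\#F=k$ is $L_0(k,\delta)$-close to a finite metric tree: one takes the comparison tree obtained by iterating the tripod-approximation, or equivalently invokes the fact that a finite geodesic space that is $\delta$-hyperbolic and ``$k$-thin'' embeds $(1,L_0)$-quasi-isometrically into an $\R$-tree (Gromov's tree approximation lemma). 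Declaring $T_F$ to be a simplicial model of that $\R$-tree with vertex set the images of the points of $F$ together with the approximate branch points gives $\phi_F\colon T_F \to X$ a $(1,L)$-quasi-isometric embedding with $d_{Haus}(\phi_F(T_F), hull(F)) < L$. The only care needed is to make the branch-point set canonical up to bounded ambiguity — e.g. always use the approximate centers of the triangles on triples from $F$ — so that the construction is coarsely functorial, which is what the second half needs.

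For the stability statement, fix a bijection-up-to-displacement realizing $d_{Haus}(F,F')\le 1$; so we may pair up $F = \{x_1,\dots,x_k\}$ and $F' = \{x_1',\dots,x_k'\}$ with $d(x_i,x_i')\le 1$ (allowing repetitions if $\#F'<\#F$). The key point is that the combinatorial ``shape'' of the tree — the pattern of which Gromov products $(x_i\cdot x_j)_{x_\ell}$ are large versus small, i.e. the topological type of the spanning tree together with the orders of coincidence of branch points — can only change in a controlled way when the endpoints move by $1$, because each Gromov product changes by at most $1$. So I would let $T_0$ be the subtree of $T_F$ obtained by collapsing every edge whose length is below a threshold $L_1 = L_1(k,\delta)$ (chosen larger than all the ``error'' constants produced so far); equivalently, $T_0$ records only the branch structure that is robust to $1$-perturbation. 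There is a natural collapsing map $h_F\colon T_F \to T_0$, and the analogous collapsing $h_{F'}\colon T_{F'}\to T_0'$, and the combinatorial-stability observation shows $T_0$ and $T_0'$ are isomorphic as metric trees up to additive error $L_1$; absorbing that isomorphism, we get a single tree $T_0$ with both $h_F$ and $h_{F'}$ collapsing at most $L$ subtrees of diameter at most $L$ (the number $2k{-}2$ bounds the edges, hence the subtrees, and the diameter bound is the threshold $L_1$). Finally $\phi_0\colon T_0\to X$ is defined on branch points as the approximate centers, which by construction lie within bounded distance of their images under $\phi_F$ and $\phi_{F'}$ on the corresponding collapsed points; checking that the diagram \eqref{hyp diagram} commutes up to $L$ is then a matter of tracing a vertex of $T_F$ to $X$ two ways and using that both routes land near the same approximate center.

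The main obstacle I expect is the second bullet — making the two modeling trees genuinely share a common quotient rather than merely being abstractly quasi-isometric. The subtlety is that $T_F$ and $T_{F'}$ a priori carry no relation to each other beyond the $1$-perturbation of their defining data, so one has to extract an honest \emph{tree isomorphism} (up to bounded metric error) between their robust parts, and then realize it compatibly with the embeddings into $X$. Concretely, the thing to nail down is that an approximate branch point of the triangle on $x_i,x_j,x_\ell$ stays within bounded distance of an approximate branch point of the triangle on $x_i',x_j',x_\ell'$ — which follows from stability of quasigeodesics (the Morse lemma) plus the $1$-Lipschitz dependence of Gromov products on endpoints — but one must do this uniformly across all triples and check the choices are mutually consistent so that they assemble into a single map $T_0\to X$. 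Once that bookkeeping is set up, the quasi-isometry constants and the bounds on number and diameter of collapsed subtrees all come out as explicit functions of $k$ and $\delta$, and the commutativity of \eqref{hyp diagram} is automatic.
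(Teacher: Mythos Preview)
The paper does not actually supply a proof of this proposition: it is stated in the expository Subsection~\ref{subsec:coarse hulls} as ``an easy consequence of Gromov's original arguments'' and is not used elsewhere (the paper's own stability results, Theorem~\ref{thm:stable tree}, are proved from scratch by a different and more elaborate construction tailored to the hierarchical setting). So there is no paper proof to compare against; your proposal is exactly the kind of direct argument the authors are gesturing at.

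Your approach is sound. The first half is the standard Gromov tree-approximation, and your identification of branch points with approximate centers of triples is the right invariant to track. For the second half, the $1$-Lipschitz dependence of Gromov products on endpoints is indeed the engine: since the metric on the approximating tree is determined (up to $O(\delta)$) by the matrix of Gromov products $(x_i\cdot x_j)_{x_\ell}$, a perturbation of the leaves by $1$ changes every edge length by $O(1)$, so collapsing all edges below a threshold $L_1 \gg k\delta$ yields combinatorially identical trees $T_0 \cong T_0'$. The bound on the number of collapsed subtrees follows from the bound $2k-2$ on the number of edges, and the diameter bound is at most $kL_1$ since a collapsed subtree is a union of short edges. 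The one point you flag as delicate---that the isomorphism $T_0 \cong T_0'$ can be realized compatibly with the maps to $X$---is genuine but not hard once you observe that corresponding long edges of $T_F$ and $T_{F'}$ have images that $O(\delta)$-fellow-travel in $X$ (both being close to the same geodesic $[x_i,x_j]$), so $\phi_0$ can be taken to agree with $\phi_F$ on the surviving long edges.
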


Observe that Proposition \ref{prop:hyperhull} is a generalization of Proposition \ref{prop:treehull}, where $X$ is a tree and we can take the trees $T_F, T_{F'},$ and $T_0$ as before and the maps $\phi_F, \phi_{F'},$ and $\phi_0$ to be inclusions.  The main difference here is that a general hyperbolic space is stably \emph{locally} tree-like, and not a tree itself.  Hence the need for a model for the hulls.

Our main theorem is a common generalization of the stability properties in Proposition
\ref{prop:cubehull} and \ref{prop:hyperhull}.

\subsection{Stable cubical models for hulls in HHSes}

We will deal with \textbf{colorable} hierarchically hyperbolic spaces $(\XX,\mathfrak S)$, which means, for the reader familiar with HHSs, that there exists a decomposition of $\mathfrak S$ into finitely many families $\mathfrak S_i$, so that each $\mathfrak S_i$ is pairwise transverse. Colorable HHSs include mapping class groups and Teichm\"uller spaces of finite-type surfaces. 

In fact, colorability is a rather mild condition which is satisfied by all of the main motivating examples.  Its definition is inspired by Bestvina-Bromberg-Fujiwara's \cite{BBF} proof that curve graphs are finitely colorable; see Subsection \ref{subsec:HHSery} for a discussion.

Given a finite set of points $F \subset \XX$ in an HHS, the standard notions of a hull for
$F$ are very difficult to analyze.  For example, while little is known about geodesics in
the mapping class group, Rafi-Verberne \cite{RV18} proved that geodesics do not always
interact well with the curve graph machinery.  In Teichm\"uller space with the
Teichm\"uller metric, geodesics are unique, but it is an open question of Masur whether
the classical convex hull of a set of 3 points can be the whole space.  Moreover, it is a
result of Rafi that hulls of two points, i.e. geodesics, do not behave stably under
perturbation \cite[Theorem D]{Rafi:hypteich}.  These complications motivate a more
flexible definition of hull in this setting. 

The \textbf{hierarchical hull} of a finite set $F \subset \XX$, which we also denote $hull(F)$, was introduced in \cite{BKMM:qirigid} to study subspaces of the asymptotic cones of the mapping class group, on the way to proving that these groups are quasi-isometrically rigid.  In hyperbolic spaces and cube complexes, the hierarchical hull coincides with the notions of hull discussed above.  In the hierarchical setting, one instead has a notion of projecting $F$ to a family of hyperbolic spaces (e.g., curve graphs of subsurfaces).  In each of these hyperbolic spaces, one then takes the weak hull of the projection---which is coarsely a tree, as above---and the uses certain hierarchical consistency conditions \cite{BKMM:qirigid, HHS_II} to fashion these weak hulls in the various spaces into a hull in the ambient HHS which satisfies certain convexity properties \cite{BKMM:qirigid, HHS_II}.  In particular, the hierarchical hull of $F$ is \emph{hierarchically quasiconvex} \cite{HHS_II} and contains all of the hierarchy paths between points in $F$ \cite{BKMM:qirigid}.

In \cite{BHS:quasi}, Behrstock-Hagen-Sisto proved that the hierarchical hull of a finite set of points is quasi-isometric to a finite CAT(0) cube complex.  Their main observation was that the hierarchical consistency conditions are closely related to the consistency conditions on a wallspace from Sageev's construction of cubical complexes (Subsection \ref{subsec:walls}).  Their idea was to look at points on the modeling trees in the hyperbolic spaces which are unseen by the other projection data.  The preimages of these points under the projection maps turn out to behave like walls in the hull.  See Subsection \ref{subsec:proof discussion} for a sketch of these ideas, and Subsection \ref{subsec:constr} below for a full discussion.

Our main theorem stabilizes their construction, simultaneously generalizing the stability properties from Proposition \ref{prop:hyperhull} for any hyperbolic space and Proposition \ref{prop:cubehull} for cube complexes admitting an HHS structure.  The following is a more detailed version of Theorem \ref{thm:stable cubulation informal}:

\begin{theorem}{thm:stable cubulations mid-level}
Let $(\XX,\mathfrak S)$ be a colorable HHS. Then for each $k$ there exist $K, N$ with the following properties.  For any $F \subset \XX$ with $\#F = k$, there exists a finite CAT(0) cube complex $\QQ_F$ and a $(K,K)$-quasiisometric embedding $\Phi_F: \QQ_F \rightarrow \XX$ with $d_{\mathrm{Haus}}(\Phi_F(\QQ_F), hull(F)) \leq K$.

Moreover, if $F'\subseteq \XX$ is another subset with $\#F \leq k$ and $d_{Haus}(F,F') \le 1$,
there is a finite CAT(0) cube complex $\mathcal Y_0$ and a
$(K,K)$--quasi-isometric embedding $\Phi_0:\QQ_0 \rightarrow \XX$ such that the diagram
  \begin{equation}
  \begin{tikzcd}
    \QQ_F \arrow[dr,"\Phi_F"] \arrow[d,"\eta_F" left] &  \\
    \QQ_0 \arrow[r,"\Phi_0"] & \XX \\
    \QQ_{F'}\arrow[ur,"\ \ \ \Phi_{F'}" below] \arrow[u,"\eta_{F'}"] & \\
  \end{tikzcd}
  \end{equation}
commutes up to error at most $K$, where $\eta_F$ and $\eta_{F'}$ are hyperplane
deletion maps which delete at most $N$ hyperplanes. 
\end{theorem}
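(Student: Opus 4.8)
The plan is to build the cube complex $\QQ_F$ by running Sageev's construction on a carefully chosen wallspace attached to the hierarchical hull $hull(F)$, following the Behrstock-Hagen-Sisto blueprint but tracking the data with enough precision to see what happens under a bounded perturbation of $F$. First I would fix the colorability decomposition $\mathfrak S = \mathfrak S_1 \sqcup \cdots \sqcup \mathfrak S_m$ and, for each relevant domain $W \in \mathfrak S$, record the projection $\pi_W(F)$ in the hyperbolic space $\CC(W)$, replace its weak hull by a uniform modeling tree $T_W$ (Proposition \ref{prop:hyperhull}), and declare the walls of $W$ to be the preimages under $\pi_W$ of a bounded set of ``cut points'' on $T_W$ — namely the points of $T_W$ that are not coarsely shadowed by the projection of any larger domain. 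The finitely many domains $W$ that actually contribute (those whose modeling tree has diameter above a threshold depending on $k$) give a finite wallspace; colorability is what bounds the number of colors and hence, together with the HHS consistency/partial-realization axioms, guarantees that this wallspace has finitely many walls, is a consistent wallspace, and has dual cube complex $\QQ_F$ of dimension at most the complexity of $(\XX,\mathfrak S)$. The quasi-isometric embedding $\Phi_F:\QQ_F \to \XX$ and the estimate $d_{\mathrm{Haus}}(\Phi_F(\QQ_F),hull(F))\le K$ are then imported from \cite{BHS:quasi}, once I check that the specific normalized choices I have made (the modeling trees, the thresholds, the selection of cut points) still satisfy their hypotheses; this costs only uniform constants depending on $k$ and the HHS data.

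The new content is the stability diagram. Given $F'$ with $\#F'\le k$ and $d_{\mathrm{Haus}}(F,F')\le 1$, I would compare the two wallspaces domain by domain. For each $W$, $\pi_W(F)$ and $\pi_W(F')$ are within Hausdorff distance a uniform constant in $\CC(W)$, so Proposition \ref{prop:hyperhull} (applied in $\CC(W)$) gives a common subtree $T_{0,W}$ obtained from each of $T_W$, $T'_W$ by collapsing at most $L$ subtrees of diameter at most $L$. The walls of $W$ that survive this collapse in both trees — i.e. the cut points that persist — form a common sub-wallspace; the walls that are destroyed number at most a bounded amount per domain, and only boundedly many domains are involved, so the total number of discarded walls is bounded by some $N = N(k,\mathfrak S)$. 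Let $\QQ_0$ be the dual cube complex of this common sub-wallspace. Deleting a wall from a wallspace induces a \emph{hyperplane deletion} map on dual cube complexes (Subsubsection \ref{subsec:hyperplane deletion}); composing the at most $N$ deletions that pass from the $F$-wallspace to the common one gives $\eta_F:\QQ_F \to \QQ_0$, and similarly $\eta_{F'}$. One then defines $\Phi_0$ on $\QQ_0$ by the same recipe as $\Phi_F$ (it is again a $(K,K)$-quasi-isometric embedding with image Hausdorff-close to $hull(F)\cup hull(F')$), and checks that $\Phi_0\circ\eta_F$ agrees with $\Phi_F$ up to uniform error and likewise on the $F'$ side, since hyperplane deletion moves a point only across the deleted hyperplanes and each such move corresponds in $\XX$ to crossing a single bounded ``wall region.''

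The main obstacle I anticipate is the bookkeeping needed to make ``the same cut points survive'' precise and uniform: the cut-point selection in domain $W$ depends not just on $T_W$ but on the projections from \emph{all} larger domains, so a perturbation of $F$ can in principle shuffle which points are ``seen'' in a way that cascades through the nesting poset. Controlling this requires showing that the set of relevant domains for $F$ and for $F'$ differ by only a bounded amount, and that for each such domain the ``seen/unseen'' status of a cut point is stable up to moving the point a bounded distance — both of which should follow from the Bounded Geodesic Image axiom, the distance formula, and the fact that a diameter-$1$ change in $F$ changes every $\pi_W(F)$ by a uniform constant, but assembling these into a clean count of discarded walls is the delicate step. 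A secondary technical point is verifying that the normalization of the modeling trees can be made functorial enough that $T_{0,W}$ really serves both $T_W$ and $T'_W$ simultaneously with a single constant $L$; this is where one leans hardest on the precise form of Proposition \ref{prop:hyperhull}. Everything downstream — assembling the wallspaces, invoking Sageev duality, and translating wall deletions into hyperplane deletions — is then routine given the cubical preliminaries in Subsections \ref{subsec:cube complexes}, \ref{subsec:walls}.
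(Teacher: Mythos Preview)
Your architecture is right --- wallspace built from subdivision points on trees in each $\CC(W)$, Sageev duality, the \cite{BHS:quasi} quasi-isometry --- and you have correctly located the crux in the stability of the cut-point data. But the approach you describe has a genuine gap at exactly that crux, and it is one the paper calls out explicitly.

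You propose to use Gromov modeling trees (Proposition~\ref{prop:hyperhull}) for the $T_W$ and to compare $T_W$ with $T'_W$ via the collapse maps that proposition provides. This only tells you that $T_W$ and $T'_W$ are \emph{Hausdorff close} after bounded surgery; it does not make the subdivision points close. The subdivision points are laid out along the edges of $T_W$ at some fixed spacing in the regions not near $\YY^W = \{\rho^U_W : U\in\UU^W(F)\}$. Even a bounded perturbation of the tree (say moving a single branch point) shifts an entire edge and hence shifts \emph{every} subdivision point on that edge by a small but nonzero amount; since the number of such points is comparable to the diameter of $F$, you lose unboundedly many walls. The paper says this directly in the proof discussion: ``it will not do to simply take any Gromov modeling tree, since unboundedly many pieces of it might change in the transition from $F$ to $F'$ when we cut it up using the relative projection data.'' Their remedy is an entirely new tree construction (Theorem~\ref{thm:stable tree}): build clusters from $\YY^W\cup F^W$, organize them via a separation graph, and produce a decomposition $T^W_F = T_c\cup T_e$ with the crucial property that after perturbation \emph{all but boundedly many components of $T_e$ are literally identical} (not merely close) to components of $T'_e$. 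Subdivision points live only on $T_e$, so identical components get identical subdivisions, and the remaining boundedly many components are handled by hand.

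Two further points you are underestimating. First, colorability does not enter where you put it (bounding the number of walls); it enters through the Bestvina--Bromberg--Fujiwara--Sisto stable projections (Theorem~\ref{thm:BBFS}), which are what give you a bound on $|\UU(F)\symdiff\UU(F')|$ and on the number of domains $V$ for which $\YY^V\ne\YY'^V$ (Proposition~\ref{prop:bounded involved}). Without this you cannot bound the number of domains whose trees are affected. Second, even once you have refined the two wallspaces to a common index set, you still need to know that the intersection pattern of halfspaces is preserved, so that the two cube complexes are actually isomorphic rather than merely in bijection on hyperplanes. This is the content of Lemma~\ref{lem:halfspaces_intersect} and Proposition~\ref{prop:isomorphism of cc}, and it requires a case analysis through the HHS consistency inequalities that is not routine.
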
 

See Theorem \ref{thm:stable_cubulations} below for the full version of the theorem, the details of which are necessary for our applications.

\subsection{Sketch of proofs}\label{subsec:proof discussion}

The proof of Theorem \ref{thm:stable_cubulations}, of which Theorem \ref{thm:stable cubulation informal} is an informal version, is contained in Section \ref{sec:stable cubulations} and depends crucially on our work in Section \ref{sec:stable trees}.  Theorems \ref{thm:bicombing main} and \ref{thm:barycenter main} are a consequence of Theorem \ref{thm:stable cubulation informal} and our work in Section \ref{sec:normal paths}.  We now explain the various parts and how they fit together.

In what follows, we will keep our discussion within the context of mapping class groups and its hierarchy of curve graphs \cite{MM_I, MM_II}, though we work in the more general context of HHSes.

Let $F \subset \MCG(\Sigma)$ be a finite subset and consider essential subsurfaces $V \subset \Sigma$ which are not 3-holed spheres. 
Roughly, the \emph{hierarchical hull} of $F$, $hull(F)$, is the set of points of $\MCG(\Sigma)$ whose subsurface projections in each curve graph $\CC(V)$ lie close to the weak hull of the subsurface projection $\pi_V(F)$ of $F$.

In the cubulation construction of \cite{BHS:quasi}, the authors build a wallspace for $hull(F)$.

To do this, they first consider the collection $\UU_F$ of \emph{relevant} subsurfaces $V \subset \Sigma$ for which $\diam_V \pi_V(F) > K$ for some fixed threshold $K>0$.  In each of these subsurfaces, they take a tree $T^V_F$ which coarsely models the hull of $\pi_V(F)$ in $\CC(V)$, as discussed in Subsection \ref{subsec:coarse hulls}.  For each such $V \in \UU_F$,  they then consider the collection of relative projections $\rho^W_V$ of $W \in \UU_F$ to $\CC(V)$, which correspond to the projection of $\partial W$ to $\CC(V)$ and thus are nonempty if $V$ is neither disjoint from nor contained in $W$.  The Bounded Geodesic Image Theorem \cite{MM_II} and certain consistency properties of projections \cite{Beh:thesis, BKMM:qirigid} imply that each $\rho^W_V$ for such $W$ lies uniformly close to the tree $T^V_F$.

They then consider, roughly, the complement $P^V_F$ in $T^V_F$ of a regular neighborhood of these projections, which consists of a number of subtrees of $T^V_F$ which are ``unseen'' by the other subsurfaces in $\UU_F$ which interact with $V$.  Any point in $T^V_F-P^V_F$ cuts $T^V_F$ into two subtrees. The partitions of $hull(F)$ that define the wallspace on $hull(F)$ come from these subdivision points in the $T^V_F$, namely one consider the subspaces of $hull(F)$ whose subsurface projections to $\CC(V)$ lie close to either of the subtrees.

While this construction is useful for studying top-dimensional quasiflats, it is unstable under perturbation of $F$, in that given some other $F'$ with $d_{Haus}(F,F') \leq 1$, then the cubical models $\QQ_F$ and $\QQ_{F'}$ might differ by a number of hyperplanes on the order of $\diam(F)$, which is not bounded. 

The proof of Theorem \ref{thm:stable cubulation informal} involves stabilizing this process in a number of places.  The first step is to robustly stabilize the collection of relevant subsurfaces $\UU_F$ (Proposition \ref{prop:bounded involved}), e.g., so that $|\UU_F \symdiff \UU_{F'}|$ is bounded in terms of the topology of the surface $S$.  We do this by applying work of Bestvina-Bromberg-Fujiwara-Sisto \cite{BBFS}, which allows us to stabilize subsurface projections (Theorem \ref{thm:BBFS}), and then use standard projection complex type arguments.

In Section \ref{sec:stable trees}, we stabilize the modeling trees $T^V_F$ for each $V \in \UU_F$.  Unlike before, it will not do to simply take any Gromov modeling tree, since unboundedly many pieces of it might change in the transition from $F$ to $F'$ when we cut it up using the relative projection data (the $P^V_F$ above).  Instead, we use the newly stabilized relative projection data to build a new stable tree.  We do this by taking a regular neighborhood of the relative projections, which then group into connected components we call \emph{clusters}.  As before, these clusters lie close to any Gromov modeling tree, but we cannot use these trees.  Instead, we define a separation graph for these clusters (Definition \ref{defn:separation graph}), and then prove that the combinatorics of this graph encode how these domain clusters are arranged on \emph{any} Gromov modeling tree.  We then construct our stable tree by connecting clusters both internally and externally via minimal spanning networks in $\CC(V)$.  The stability of the cluster data then is converted into stability of the tree construction in Theorem \ref{thm:stable tree}, which, in particular, says that the set of long edges of two related trees are in bijection and within bounded Hausdorff distance, with most long edges exactly the same.  See Figures \ref{stabletree} and \ref{fig:edgecomponent} below.

In Section \ref{sec:stable cubulations}, we then plug these stable trees into the cubulation machine from \cite{BHS:quasi}.  We must be mindful of how subdivision points change when transitioning from $F$ to $F'$.  In particular, we construct a common refinement of the sets of subdvision points for our two sets $F$ and $F'$ (Proposition \ref{prop:almost bijection of p}), with the delicate nature of this process necessitating the intricacies in the statement and proof of the stable tree theorem (Theorem \ref{thm:stable tree}).  With this in hand, we prove that this common refinement induces an isomorphism between the resulting cubical models for the hulls of both sets (Proposition \ref{prop:isomorphism of cc}); see Figure \ref{fig:trees to cubes}.  This isomorphism depends on a careful hierarchical analysis of when two halfspaces corresponding to two subdivision points intersect (Lemma \ref{lem:halfspaces_intersect}).  The full version of the stable cubulation theorem is achieved in Theorem \ref{thm:stable_cubulations}, which says that the two modeling cube complexes $\QQ_F$ and $\QQ_{F'}$ become isomorphic when we delete a bounded number of hyperplanes from each, with the bound depending only on $|F|$ and $|F'|$.

In Section \ref{sec:normal paths}, we adapt the normal path construction of Niblo-Reeves \cite{NibloReeves} and analyze how it changes under hyperplane deletion.  In particular, for any finite CAT(0) cube complex $\QQ$, we develop a sequence of contractions which take the extremal vertices of $\QQ$ (i.e., its corners) into a ``barycentric'' cube at the ``center'' of $\QQ$, and we prove that this contraction sequence is only boundedly perturbed by hyperplane deletions (Theorem \ref{thm:stable contraction}).

Stability of the cubical model and the contraction sequence easily give the barycenter theorem (Theorem \ref{thm:barycenter main}).  In the context of a bicombing (Theorem \ref{thm:bicombing main}) when $F = \{x,y\}$, we take the bicombing path from $x$ to $y$ to be the image in $\MCG(\Sigma)$ of the path obtained by following the contraction sequence of $x$ to the barycentric cube, and then traversing the contraction sequence from the barycentric cube to $y$ in reverse order.  Once again, stability of the contraction sequence and the cubical models implies that these are uniform quasi-geodesics which fellow-travel in a parametrized fashion; see Figure \ref{fig:bicombing example}.  Both Theorem \ref{thm:barycenter main} and Theorem \ref{thm:bicombing main} are proved in Section \ref{sec:main proofs}.

We note that the current paper would be simplified in a number of places if we were only
aiming at developing a bicombing, which would involve only analyzing the hull of two
points. 
In particular, all our stable trees from Section \ref{sec:stable trees} would be intervals, the lack of branching in the trees simplifies the discussion of partition points in Section \ref{sec:stable cubulations}, and the case of two points simplifies hyperplane separation considerations in Section \ref{sec:normal paths}.  The reader may keep in mind this simpler case first before considering the complications caused by trees.

\subsection{Outline}

In Section \ref{sec:background} we collect some background material.

Section \ref{sec:stable trees} takes place entirely in a fixed hyperbolic space, using methods from coarse hyperbolic geometry but with HHS ends in mind. The main result there is Theorem \ref{thm:stable tree}, and no other result from that section will be used elsewhere.

In Section \ref{sec:stable cubulations}, we prove the precise version of Theorem \ref{thm:stable cubulation informal}, which is Theorem \ref{thm:stable_cubulations}. Again, no other statement from this section will be used elsewhere. In this section, we use the combinatorial geometry of HHSes.

Section \ref{sec:normal paths} uses the tools of cubical geometry, and it is independent from the previous sections. Its main result is Theorem \ref{thm:stable contraction}, which once again is the only result from here needed in the rest of the paper.

Finally, in Section \ref{sec:main proofs} we put all the pieces together, and we prove Theorem \ref{thm:bicombing main} and Theorem \ref{thm:barycenter main}.

\subsection*{Acknowledgements}

Durham would like to thank Daniel Groves for many fruitful and insightful discussions
about bicombing the mapping class group.  Durham was partially supported by an AMS Simons
travel grant and NSF grant DMS-1906487. Minsky was partially supported by NSF grants
DMS-1610827 and DMS-2005328. Sisto  was partially supported by the Swiss
National  Science Foundation (grant \#182186).

\section{Background} \label{sec:background}

In this section, we will collect and record the various facts about cube complexes and hierarchically hyperbolic spaces that we need.

\subsection{CAT(0) cube complexes} \label{subsec:cube complexes}

We will briefly discuss some basic aspects of CAT(0) cubical geometry.  We direct the reader to Sageev's lecture notes \cite{Sageev:pcmi} for details.
\par\smallskip

A \textbf{ cube complex} is a simplicial complex $X$ obtained from a disjoint collection of Euclidean cubes which are glued along their faces by a collection of  Euclidean isometries.  A cube complex is \textbf{non-positively curved} (NPC)  if its vertex links are simplicial flag complexes.  An NPC cube complex is \textbf{CAT(0)} if it is a 1-connected NPC complex.
\par\smallskip

A \textbf{midcube} of an $n$-cube $C \subset X$ is an $(n-1)$-dimensional cube $H' \subset C$ running through the barycenter of $C$ and parallel to one of the faces of $C$.  A \textbf{hyperplane} $H \subset X$ is the maximal codimension-1 subspace obtained by extending $H'$ along the midcubes of any cube $C'$ which meets $C$ along a face intersecting $H'$, and then further extending along cubes adjacent to such $C'$ in the same fashion.  The \textbf{carrier} of $H$ is the union of all of the cubes in $X$ whose intersection with $H$ is a midcube, and it is naturally isomorphic to $H\times [0,1]$.

Equivalently, there is a natural equivalence relation on the set of edges in the 1-skeleton of $X$ generated by relating two edges if they are opposite edges of some square in $X$.  Any hyperplane can be obtained as the collection of midcubes which intersect the edges in a given equivalence class.
\par\smallskip

In this paper, we will be considering \textbf{finite} cube complexes, namely those with finitely many cubes.  
\subsubsection*{Metrics on cube complexes}

There are many interesting metrics one can put on a CAT(0) cube complex $X$. We will be interested in both:

\begin{itemize}

\item the $l^1$ or \textbf{combinatorial} metric, $d_1$,  which is generated by the $l^1$ norm in each cube of $X$, and can be equivalently defined on the 1-skeleton $X^{(1)}$ as the path metric thereon;
\item the \textbf{cubical sup metric}, $d_{\infty}$, which is the metric $d_{\infty}$ generated by the $l^{\infty}$ or sup-norm in each cube in $X$.
\end{itemize}

The following is an easy consequence of the observation that, given $n$, the $l^1$ and $l^{\infty}$ norm on an $n$-cube are bilipschitz equivalent.

\begin{lemma}{lem:different metrics}
For any $n> 0$, there exists $K=K(n)>0$ so that if $X$ is an $n$-dimensional cube complex, then the identity $id: (X, d_1) \rightarrow (X, d_{\infty})$ is a $(K,K)$-quasiisometry.
\end{lemma}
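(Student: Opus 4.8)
The plan is to reduce everything to the elementary comparison of the $\ell^1$ and $\ell^\infty$ norms on a single Euclidean cube, and then propagate it to the global length metrics. On $\R^n$ one has $\|v\|_\infty \le \|v\|_1 \le n\|v\|_\infty$ for every vector $v$, so the identity map of the standard $n$-cube is $n$-bilipschitz from its $\ell^1$-metric to its $\ell^\infty$-metric. The goal is to upgrade this cube-by-cube estimate to the metrics $d_1$ and $d_\infty$ on all of $X$.

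First I would recall that both $d_1$ and $d_\infty$ are, by definition, path (length) metrics on $X$: for a rectifiable path $\gamma$ in $X$, its $d_1$-length (resp. $d_\infty$-length) is obtained by decomposing $\gamma$ into subpaths each contained in a single closed cube and summing the lengths measured with the $\ell^1$ (resp. $\ell^\infty$) norm of that cube — the glueing maps being Euclidean isometries, hence linear and norm-preserving, so the notion is independent of the cube chosen for a face. The distance between two points is then the infimum of the lengths of paths joining them. Any rectifiable path admits such a decomposition (into at most countably many within-cube pieces), and length is additive over it, so this is well-posed.

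Applying the pointwise norm comparison on each subpath and summing then gives $\mathrm{len}_\infty(\gamma) \le \mathrm{len}_1(\gamma) \le n\,\mathrm{len}_\infty(\gamma)$ for every rectifiable path $\gamma$ in $X$. Taking the infimum over all paths from $x$ to $y$ yields $d_\infty(x,y) \le d_1(x,y) \le n\,d_\infty(x,y)$ for all $x,y \in X$, i.e. the identity map $id\colon (X,d_1)\to(X,d_\infty)$ is $n$-bilipschitz. Since it is a bijection of the underlying set, it is in particular a $(K,K)$-quasiisometry with $K=\max(n,1)$, which is the claimed statement; note that finiteness of $X$ is not actually needed for this conclusion.

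There is essentially no genuine obstacle here. The only point requiring a little care is the bookkeeping in the second and third steps — namely that an arbitrary rectifiable path decomposes into within-cube segments so that the cube-wise norm estimates may legitimately be summed and the glueing isometries do not affect the computation — but this is entirely standard for length metrics on polyhedral complexes and can be invoked (e.g. from \cite{bridson-haefliger}) rather than belabored.
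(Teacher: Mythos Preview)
Your argument is correct and is exactly the approach the paper indicates: the paper does not give a detailed proof but simply notes that the lemma ``is an easy consequence of the observation that, given $n$, the $\ell^1$ and $\ell^\infty$ norm on an $n$-cube are bilipschitz equivalent,'' which is precisely what you carry out in detail by passing to length metrics.
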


The differences between these metrics will come up in Sections \ref{sec:normal paths} and \ref{sec:main proofs}.

\subsubsection{Wallspaces and Sageev's construction} \label{subsec:walls}

In Section \ref{sec:stable cubulations}, we will adopt the perspective of obtaining cube complexes as duals to wallspaces.  Wallspaces were first defined by Haglund-Paulin \cite{HP98}; see Hruska-Wise \cite{HW14} for a broader discussion.

Let $Y$ be a nonempty set.  A \textbf{wall} in $Y$ is a pair of subsets $W = \{\OL W,\OR W\}$ where $Y = \OL W \sqcup \OR W$. In this case, $\OL W$ and $\OR W$ are called \textbf{halfspaces}.

\par\smallskip Two points $x,y \in Y$ are \textbf{separated} by a wall $W$ if $x$ is contained in a different halfspace from $y$.

\par\smallskip A \textbf{wallspace} is a set $Y$ with a collection of walls $\WW$ on $Y$ so that the number of walls separating any pair of points is finite.

An \textbf{orientation} on a wallspace $(Y, \WW)$ is an assignment $\sigma$ so that for each $W \in \WW$, we have  $\sigma(W) \in \{\OL W, \OR W\}$.  The orientation $\sigma$ is called \textbf{coherent} if for all $W, W' \in \WW$, we have $\sigma(W) \cap \sigma(W') \neq \emptyset$.  We call $\sigma$ \textbf{canonical} if there exists $x \in X$ so that $x \in \sigma(W)$ for all but finitely many $W \in \WW$.
\par\smallskip

Given a wallspace $(Y, \WW)$, we can consider the cube complex $X(Y, \WW)$ constructed as follows. The 0-cubes of $X(Y, \WW)$ are coherent, canonical orientations of $(Y, \WW)$. Two 0-simplices are connected by a 1-cube if, seen as orientations, they differ on only one wall. Finally, all subcomplexes of the 1-skeleton isomorphic to the 1-skeleton of an $n$-cube cube get filled by an $n$-cube.

Work of Chatterji-Niblo \cite{CN05}, Nica \cite{Nic04}, and Chepoi \cite{Che00}---building off of work of Sageev \cite{Sageev:cubulation}--- gives that $X(Y, \WW)$ is a CAT(0) cube complex.  We call $X(Y, \WW)$ the \textbf{dual cube complex} associated to the wallspace $(Y, \WW)$.

\subsubsection{Hyperplane deletions} \label{subsec:hyperplane deletion}

In Section \ref{sec:normal paths}, we will be interested in understanding how cube complexes change under deletions of hyperplanes, so we will use the alternative perspective of obtaining cube complexes from sets of hyperplanes.  We briefly explain how this works.
\par\smallskip

Let $X$ be a CAT(0) cube complex and $\HH_X$ its (finite) set of hyperplanes.  Then we can identify each hyperplane $H \in \HH_X$ with the two halfspaces into which it cuts $X$.  As such, we can and will think of $(X, \HH_X)$ as a wallspace, and one can show that $X$ is the dual cube complex associated to $(X, \HH_X)$.
\par\smallskip

Given any subset $\HH \subset \HH_X$ of hyperplanes in a cube complex $X$, there is a natural cube complex $X(\HH)$ defined as the dual cube complex associated to the wallspace defined by $\HH$ in $X$.  In particular, each point in $X(\HH)$ is a choice of coherent, canonical orientations of the half-spaces defined by $\HH$, and it follows that $X(\HH)$ embeds into $X(\HH_X) = X$.

\par\smallskip
With this notation, we can now define hyperplane deletions:

\begin{definition}{defn:hyperplane deletions}
Let $X$ be a CAT(0) cube complex obtained with hyperplanes $\HH_X$.  For a finite collection of hyperplanes $\GG \subset \HH_X$, the \textbf{\textbf{hyperplane deletion map}} for $H$ is the map 
$$\res_{\HH_X\ssm\GG}: X \rightarrow X(\HH_X \ssm \GG)$$
 obtained by restriction of orientations,
where $X(\HH_X \ssm \GG)$ is the dual cube complex associated to the wallspace $(X, \HH_X
 \ssm \GG)$.
\end{definition}

Equivalently, the map $\res_{\HH_X\ssm\GG}$ is the quotient map which collapses the $[0,1]$ factor of each of the carriers of the hyperplanes in $\GG$ (recall that the carrier of the hyperplane $H$ is naturally isomorphic to $H\times[0,1]$).
\par\smallskip

\par\smallskip

We also record the following fact, which indicates that
the isomorphism type of the cube complex coming from a wallspace is determined by the
intersection pattern of halfspaces. The proof is elementary.

\begin{lemma}{lem:halfspaces_bijection}
 Let $\mathcal W$, $\mathcal W'$ be wallspaces, and let $\iota:\mathcal H_{\mathcal
   W}\to\mathcal H_{\mathcal W'}$ be a bijection of their halfspaces, which preserves
 complements and disjointness. 

 Denote by $j$ the induced map on walls $\{H,H^c\}\mapsto \{\iota(H),\iota(H)^c\}$ and
by $h(x) = \iota \circ x\circ j^{-1}$ the induced map on orientations. Then $h$, viewed as a map
on $0$-cubes, induces an isomorphism 
$h:\mathcal Y_{\mathcal W}\to \mathcal Y_{\mathcal W'}$
between the corresponding CAT(0) cube complexes.

\end{lemma}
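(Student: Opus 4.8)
The plan is to prove Lemma \ref{lem:halfspaces_bijection} by showing that the bijection $\iota$ of halfspaces transports all of the combinatorial data that defines the dual cube complex $\mathcal Y_{\mathcal W}$ — namely coherent canonical orientations and the adjacency of $0$-cubes — to the corresponding data for $\mathcal Y_{\mathcal W'}$. Since by the discussion preceding the lemma a CAT(0) cube complex dual to a wallspace is completely determined by its $0$-skeleton together with the adjacency relation (``differ on exactly one wall''), with higher cubes filled in automatically, it suffices to produce a bijection $h$ on $0$-cubes that preserves this adjacency; the map $h:\mathcal Y_{\mathcal W}\to\mathcal Y_{\mathcal W'}$ is then a graph isomorphism of $1$-skeleta and hence a cubical isomorphism.

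First I would check that $h = \iota\circ x\circ j^{-1}$ actually sends orientations to orientations: given an orientation $x$ of $\mathcal W$, for each wall $W'=\{\iota(H),\iota(H)^c\}$ of $\mathcal W'$ we have $h(x)(W') = \iota(x(j^{-1}(W')))$, which is one of $\iota(H)$ or $\iota(H)^c$ because $x(j^{-1}(W'))$ is one of $H,H^c$ and $\iota$ preserves complements. Next, coherence: $x$ is coherent iff $x(W)\cap x(W')\neq\emptyset$ for all walls $W,W'$ of $\mathcal W$, and since $\iota$ preserves disjointness (and its inverse does too, $\iota$ being a bijection preserving the disjointness relation in both directions), $h(x)(j(W))\cap h(x)(j(W')) = \iota(x(W))\cap\iota(x(W'))$ is nonempty exactly when $x(W)\cap x(W')$ is; as $j$ is a bijection of walls this shows $h(x)$ is coherent iff $x$ is. Canonicity — that $h(x)$ contains a fixed basepoint of $\mathcal W'$ in all but finitely many halfspaces — is the one point requiring a small remark: the hypotheses as literally stated give a bijection of halfspaces preserving complements and disjointness, and one checks that this forces a bijection of the underlying finiteness structure (a set of halfspaces has finite ``distance'' from a canonical orientation iff the complementary choices form a finite coherent family), so $h$ and $h^{-1}$ carry canonical orientations to canonical orientations. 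I would phrase this as: the canonical orientations are precisely those at finite symmetric difference from some vertex, and $\iota$ preserves symmetric differences of orientations since it preserves complements wal by wall.

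Then the adjacency statement is immediate: two orientations $x,y$ of $\mathcal W$ differ on exactly one wall $W$ iff $h(x),h(y)$ differ on exactly the wall $j(W)$, because $h(x)(W') = h(y)(W')$ iff $x(j^{-1}(W')) = y(j^{-1}(W'))$ and $j$ is a bijection on walls. Hence $h$ restricted to $0$-cubes is a bijection preserving the edge relation in both directions, so it extends uniquely to an isomorphism of the CAT(0) cube complexes, with $h^{-1}$ induced symmetrically by $\iota^{-1}$. The main (and only genuine) obstacle is the bookkeeping around canonicity — making sure the hypothesis ``preserves complements and disjointness'' is strong enough to preserve the canonical/finiteness condition rather than merely the coherent ones; everything else is a direct unwinding of the definitions in Subsection \ref{subsec:walls}, which is why the lemma is stated as elementary.
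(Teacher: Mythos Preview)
Your proposal is correct and follows exactly the elementary unwinding-of-definitions that the paper intends; the paper itself omits the proof, writing only ``The proof is elementary.'' Your flagging of canonicity as the one point needing care is apt, and in the paper's actual applications (Section~\ref{sec:stable cubulations}) the wallspaces arise from finite sets of subdivision points, so every orientation is automatically canonical and the issue evaporates.
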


\subsection{HHS axioms} \label{subsec:HHSery}
We recall from~\cite{HHS_II} the definition of a hierarchically hyperbolic space.

\begin{definition}{defn:HHS} {\rm[HHS]}
The $q$--quasigeodesic space  $(\cuco X,\dist_{\cuco X})$ is a \textbf{\em hierarchically hyperbolic space} if there exists $\delta\geq0$, an index set $\mathfrak S$, and a set $\{\mathcal C W:W\in\mathfrak S\}$ of $\delta$--hyperbolic spaces $(\mathcal C U,\dist_U)$,  such that the following conditions are satisfied:\begin{enumerate}
\item\textbf{(Projections.)}\label{item:dfs_curve_complexes} There is
a set $\{\pi_W: \cuco X\rightarrow2^{\mathcal C W}\mid W\in\mathfrak S\}$
of \textbf{\em projections} sending points in $\cuco X$ to sets of diameter
bounded by some $\xi\geq0$ in the various $\mathcal C W\in\mathfrak S$.
Moreover, there exists $K$ so that for all $W\in\mathfrak S$, the coarse map $\pi_W$ is $(K,K)$--coarsely
Lipschitz and $\pi_W(\cuco X)$ is $K$--quasiconvex in $\mathcal C W$.

 \item \textbf{(Nesting.)} \label{item:dfs_nesting} $\mathfrak S$ is
 equipped with a partial order $\nest$, and either $\mathfrak
 S=\emptyset$ or $\mathfrak S$ contains a unique $\nest$--maximal
 element; when $V\nest W$, we say $V$ is \textbf{\em nested} in $W$.  (We
 emphasize that $W\nest W$ for all $W\in\mathfrak S$.)  For each
 $W\in\mathfrak S$, we denote by $\mathfrak S_W$ the set of
 $V\in\mathfrak S$ such that $V\nest W$.  Moreover, for all $V,W\in\mathfrak S$
 with $V\propnest W$ there is a specified subset
 $\rho^V_W\subset\mathcal C W$ with $\diam_{\mathcal C W}(\rho^V_W)\leq\xi$.
 There is also a \textbf{\em projection} $\rho^W_V: \mathcal C
 W\rightarrow 2^{\mathcal C V}$.  (The similarity in 
notation is justified by viewing $\rho^V_W$ as a coarsely constant map $\mathcal C
 V\rightarrow 2^{\mathcal C W}$.)
 
 \item \textbf{(Orthogonality.)} 
 \label{item:dfs_orthogonal} $\mathfrak S$ has a symmetric and
 anti-reflexive relation called \textbf{\em orthogonality}: we write $V\orth
 W$ when $V,W$ are orthogonal.  Also, whenever $V\nest W$ and $W\orth
 U$, we require that $V\orth U$.  We require that for each
 $T\in\mathfrak S$ and each $U\in\mathfrak S_T$ for which
 $\{V\in\mathfrak S_T\mid V\orth U\}\neq\emptyset$, there exists $W\in
 \mathfrak S_T-\{T\}$, so that whenever $V\orth U$ and $V\nest T$, we
 have $V\nest W$.  Finally, if $V\orth W$, then $V,W$ are not
 $\nest$--comparable.
 
 \item \textbf{(Transversality and consistency.)}
 \label{item:dfs_transversal} If $V,W\in\mathfrak S$ are not
 orthogonal and neither is nested in the other, then we say $V,W$ are
 \textbf{\em transverse}, denoted $V\transverse W$.  There exists
 $\kappa_0\geq 0$ such that if $V\transverse W$, then there are
  sets $\rho^V_W\subseteq\mathcal C W$ and
 $\rho^W_V\subseteq\mathcal C V$ each of diameter at most $\xi$ and 
 satisfying: 
 \begin{equation}\label{eq:transverse consistency}
   \min\left\{\dist_{
 W}(\pi_W(x),\rho^V_W),\dist_{
 V}(\pi_V(x),\rho^W_V)\right\}\leq\kappa_0
 \end{equation}
 for all $x\in\cuco X$.

 For $V,W\in\mathfrak S$ satisfying $V\nest W$ and for all
 $x\in\cuco X$, we have: 
\begin{equation}\label{eq:nested consistency}
\min\left\{\dist_{ W}(\pi_W(x),\rho^V_W),\diam_{\mathcal C
 V}(\pi_V(x)\cup\rho^W_V(\pi_W(x)))\right\}\leq\kappa_0. 
\end{equation}

 The preceding two inequalities are the \textbf{\em consistency inequalities} for points in $\cuco X$.
 
 Finally, if $U\nest V$, then $\dist_W(\rho^U_W,\rho^V_W)\leq\kappa_0$ whenever $W\in\mathfrak S$ satisfies either $V\propnest W$ or $V\transverse W$ and $W\not\orth U$.
 
 \item \textbf{(Finite complexity.)} \label{item:dfs_complexity} There exists $n\geq0$, the \textbf{\em complexity} of $\cuco X$ (with respect to $\mathfrak S$), so that any set of pairwise--$\nest$--comparable elements has cardinality at most $n$.
  
 \item \textbf{(Large links.)} \label{item:dfs_large_link_lemma} There
exist $\lambda\geq1$ and $E\geq\max\{\xi,\kappa_0\}$ such that the following holds.
Let $W\in\mathfrak S$ and let $x,x'\in\cuco X$.  Let
$N=\lambda\dist_{_W}(\pi_W(x),\pi_W(x'))+\lambda$.  Then there exists $\{T_i\}_{i=1,\dots,\lfloor
N\rfloor}\subseteq\mathfrak S_W-\{W\}$ such that for all $T\in\mathfrak
S_W-\{W\}$, either $T\in\mathfrak S_{T_i}$ for some $i$, or $\dist_{
T}(\pi_T(x),\pi_T(x'))<E$.  Also, $\dist_{
W}(\pi_W(x),\rho^{T_i}_W)\leq N$ for each $i$.

 \item \textbf{(Bounded geodesic image.)}
 \label{item:dfs:bounded_geodesic_image} There exists $\kappa_0>0$ such that 
 for all $W\in\mathfrak S$,
 all $V\in\mathfrak S_W-\{W\}$, and all geodesics $\gamma$ of
 $\mathcal C W$, either $\diam_{\mathcal C V}(\rho^W_V(\gamma))\leq \kappa_0$ or
 $\gamma\cap N_{\kappa_0}(\rho^V_W)\neq\emptyset$.
 
 \item \textbf{(Partial Realization.)} \label{item:dfs_partial_realization} There exists a constant $\alpha$ with the following property. Let $\{V_j\}$ be a family of pairwise orthogonal elements of $\mathfrak S$, and let $p_j\in \pi_{V_j}(\cuco X)\subseteq \mathcal C V_j$. Then there exists $x\in \cuco X$ so that:
 \begin{itemize}
 \item $\dist_{V_j}(x,p_j)\leq \alpha$ for all $j$,
 \item for each $j$ and 
 each $V\in\mathfrak S$ with $V_j\nest V$, we have 
 $\dist_{V}(x,\rho^{V_j}_V)\leq\alpha$, and
 \item if $W\transverse V_j$ for some $j$, then $\dist_W(x,\rho^{V_j}_W)\leq\alpha$.
 \end{itemize}

\item\textbf{(Uniqueness.)} For each $\kappa\geq 0$, there exists
$\theta_u=\theta_u(\kappa)$ such that if $x,y\in\cuco X$ and
$\dist_{\cuco X}(x,y)\geq\theta_u$, then there exists $V\in\mathfrak S$ such
that $\dist_V(x,y)\geq \kappa$.\label{item:dfs_uniqueness}
\end{enumerate}
 
We often refer to $\mathfrak S$, together with the nesting
and orthogonality relations, and the projections as a \textbf{\em hierarchically hyperbolic structure} for the space $\cuco
X$. 
\end{definition}

Where it will not cause confusion, given $U\in\mathfrak S$, we will often suppress the projection
map $\pi_U$ when writing distances in $\mathcal C U$, i.e., given $x,y\in\cuco X$ and
$p\in\mathcal C U$  we write
$\dist_U(x,y)$ for $\diam_{\mathcal C U}(\pi_U(x)\cup\pi_U(y))$ and $\dist_U(x,p)$ for
$\diam_{\mathcal C U}(\pi_U(x)\cup\{p\})$.
Given $A\subset \cuco X$ and $U\in\mathfrak S$ 
we let $\pi_{U}(A)$ denote $\displaystyle \bigcup_{a\in A}\pi_{U}(a)$.

There is a natural notion of automorphism of an HHS, which permutes the hyperbolic factor
spaces and preserves all the structure -- see \cite{HHS_II, DHS} for details and
analysis. We let $\mathrm{Aut}({\cuco X}, {\mathfrak S})$ denote the group of HHS automorphisms of $({\cuco X}, {\mathfrak S})$.

We say that a group $G$ is a \textbf{\em hierarchically hyperbolic group} if it acts properly and coboundedly by HHS automorphisms on some HHS $(\XX,\mathfrak S)$.

\subsubsection{Some useful facts}

We now recall results from \cite{HHS_II} that will be useful later on.

\begin{definition}{defn:consistent_tuple}
Let $\kappa\geq0$ and let $\tup b\in\prod_{U\in\mathfrak S}2^{\mathcal C U}$ be a tuple such that for each $U\in\mathfrak S$, 
the $U$--coordinate  $b_U$ has diameter $\leq\kappa$.  Then $\tup b$ is \textbf{\em $\kappa$--consistent} if for all 
$V,W\in\mathfrak S$, we have $$\min\{\dist_V(b_V,\rho^W_V),\dist_W(b_W,\rho^V_W)\}\leq\kappa$$ whenever $V\transverse W$ and 
$$\min\{\dist_W(x,\rho^V_W),\diam_V(b_V\cup\rho^W_V)\}\leq\kappa$$ whenever $V\propnest W$.
\end{definition}

The following is 
\cite[Theorem~4.5]{HHS_II}:

\begin{theorem}{thm:distance_formula}{\rm [Distance Formula]}
Let $(\cuco X,\mathfrak S)$ be a hierarchically hyperbolic space.  Then
there exists $s_0$ such that for all $s\geq s_0$, there exist $C,K$ so
that for all $x,y\in\cuco X$,
$$\dist(x,y)\asymp_{K,C}\sum_{U\in\mathfrak
S}\ignore{\dist_U(x,y)}{s}.$$
\end{theorem}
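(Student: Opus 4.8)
The plan is to reproduce the argument of \cite[Theorem~4.5]{HHS_II}, which transplants the Masur--Minsky distance formula \cite{MM_II} into the present axiomatic setting. Write $\Sigma_s(x,y):=\sum_{U\in\mathfrak S}\{\!\{\dist_U(x,y)\}\!\}_s$ for the right-hand side, so that $\{\!\{t\}\!\}_s$ equals $t$ if $t\ge s$ and $0$ otherwise. The assertion $\dist\asymp_{K,C}\Sigma_s$ packages two inequalities, which I would establish separately, after fixing $s\ge s_0$ with $s_0$ large (at least the constants $E$ and $\kappa_0$ from the Large Links and Bounded Geodesic Image axioms). Both are proved by induction on the complexity $n$ of $(\cuco X,\mathfrak S)$; in the base case $n=1$ the $\nest$--maximal space $\CC S$ is quasi-isometric to $\cuco X$ via $\pi_S$, and $\Sigma_s(x,y)=\{\!\{\dist_S(x,y)\}\!\}_s\asymp\dist(x,y)$ directly.

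For the lower bound $\Sigma_s(x,y)\le K\dist(x,y)+C$, let $S$ be $\nest$--maximal, put $N=\lambda\dist_S(x,y)+\lambda$, and apply the Large Links axiom: it produces $T_1,\dots,T_{\lfloor N\rfloor}\in\mathfrak S_S\setminus\{S\}$ such that every proper domain $T$ with $\dist_T(x,y)\ge s\ (\ge E)$ lies in some $\mathfrak S_{T_i}$. Each $\mathfrak S_{T_i}$, with the restricted projections and relations, underlies an HHS of complexity $<n$ (the factor $\mathbf F_{T_i}$ of the product region $\mathbf P_{T_i}$), so the inductive hypothesis controls $\sum_{U\in\mathfrak S_{T_i}}\{\!\{\dist_U(x,y)\}\!\}_s$, yielding $\Sigma_s(x,y)\le\dist_S(x,y)+\sum_i(\text{controlled term for }T_i)$. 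The delicate point is to keep the constants uniform in $n$ while adding up the $\lfloor N\rfloor\asymp\dist_S(x,y)$ pieces; this is where Bounded Geodesic Image and the consistency inequalities enter, ensuring that a proper domain contributes only where its shadow $\rho^{T}_S$ is traversed by a fixed $\CC S$--geodesic, so the pieces are genuinely spread along that geodesic and their total is comparable to its length, which is $\asymp\dist_S(x,y)\le K\dist(x,y)+K$. In \cite{HHS_II} this bookkeeping is regularised by proving the distance formula simultaneously with the existence of hierarchy paths.

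For the upper bound $\dist(x,y)\le K\Sigma_s(x,y)+C$ one must actually exhibit a path from $x$ to $y$ of the claimed length, and this is the step I expect to be the main obstacle. The crucial input is the Realization Theorem of \cite{HHS_II}: any $\kappa$--consistent tuple (Definition~\ref{defn:consistent_tuple}) is coarsely realized by a point of $\cuco X$, uniquely up to bounded error. Granting it, take a $\CC S$--geodesic $g$ from $\pi_S(x)$ to $\pi_S(y)$, sample it at unit spacing, and use Realization together with Partial Realization to produce a sequence $x=z_0,z_1,\dots,z_m=y$ with $\pi_S(z_j)$ marching along $g$ and with $z_j,z_{j+1}$ differing appreciably only in the domains nested in a single proper $T^{(j)}$ (the domain whose shadow in $\CC S$ is crossed by $g$ between consecutive samples); the Consistency and Bounded Geodesic Image axioms are what guarantee that the intermediate tuples one writes down are $\kappa$--consistent, hence realizable. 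Each sub-path from $z_j$ to $z_{j+1}$ lies coarsely in the product region of $T^{(j)}$, an HHS of complexity $<n$, so by induction it has length $\lesssim\sum_{U\nest T^{(j)}}\{\!\{\dist_U(z_j,z_{j+1})\}\!\}_s$; grouping consecutive steps by their relevant domain, distinct groups have distinct $T^{(j)}$ and therefore pick up disjoint parts of $\Sigma_s$, so concatenation gives $\dist(x,y)\le\sum_j\dist(z_j,z_{j+1})\lesssim\dist_S(x,y)+\sum_{U\ne S}\{\!\{\dist_U(x,y)\}\!\}_s\lesssim\Sigma_s(x,y)$. The hard part throughout is the Realization Theorem itself --- recovering an honest point of $\cuco X$ from consistent projection data --- and, secondarily, verifying that the multiplicative and additive errors do not accumulate as the induction descends through the boundedly many complexity levels.
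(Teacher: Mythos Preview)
The paper does not prove this statement at all: it is quoted verbatim as \cite[Theorem~4.5]{HHS_II} and used as a black box, so there is no ``paper's own proof'' to compare against. Your outline is a reasonable sketch of the argument in \cite{HHS_II} (induction on complexity, Large Links and Bounded Geodesic Image for the lower bound, Realization and hierarchy paths for the upper bound), but for the purposes of this paper no proof is required or expected --- a citation suffices.
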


\noindent (The notation $\ignore{A}{B}$ denotes the quantity which is $A$ if $A\geq B$ and $0$ otherwise.)

We recall the notion of a {\em hierarchical hull}, which 
originates in \cite{BKMM:qirigid} for the setting of mapping class groups, and extends to the HHS
setting in \cite{HHS_II}.
Given a constant $\theta$, for any $F\subset \cuco X$ we define 
\begin{equation}\label{eq:hull-def}
H_\theta(F) = \{ x\in \cuco X  : \forall V\in\mathfrak S \ \pi_V(x) \in
\NN_\theta(hull(\pi_V(F)))  \},
\end{equation}
where $hull(A)$ denotes the union of all geodesics connecting points of $A$.
In words, $H_\theta$ is the set of points whose projections in every hyperbolic factor
space land in a specified neighborhood of the hull of the image of $F$. That this
is a sufficiently non-vacuous is indicated by the following result:

\begin{theorem}{thm:HHS-hull}{\rm [Hull]}
  Let $(\cuco X,\mathfrak S)$ be a hierarchically hyperbolic space.
Given $k$ there exists $\theta,\kappa$ such that, if $F\subset\cuco X$ is a set of
cardinality $k$ then for every $V\in\mathfrak S$ the image $\pi_V(H_\theta(F))$ is
$\kappa$-dense in the hull of $\pi_V(F)$. 
\end{theorem}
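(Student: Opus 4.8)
The plan is to prove the statement by reducing, via the Distance Formula (Theorem \ref{thm:distance_formula}), to a statement about producing, for a given point $p$ lying on a geodesic in some $\mathcal C V$ between projections of $F$, an actual point $x\in \cuco X$ whose projection to $\mathcal C V$ is close to $p$ and whose projections elsewhere stay in the appropriate hulls. First I would fix $V\in\mathfrak S$ and a point $p\in hull(\pi_V(F))$; choosing $a,a'\in F$ with $p$ on a geodesic $[\pi_V(a),\pi_V(a')]$. The goal is to find $x\in H_\theta(F)$ with $\dist_V(x,p)\le\kappa$ for a uniform $\kappa=\kappa(k)$, which gives $\kappa$-density.

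The key step is to define a candidate tuple $\tup b=(b_U)_{U\in\mathfrak S}$ and invoke the realization theorem for consistent tuples (the Realization Theorem from \cite{HHS_II}, which follows from the axioms). The natural definition: set $b_V$ to be the point $p$; for every other $U\in\mathfrak S$, use the Bounded Geodesic Image axiom (item \ref{item:dfs:bounded_geodesic_image}) to decide the coordinate. Concretely, for $U$ with $U\propnest V$, if the geodesic $[\pi_V(a),\pi_V(a')]$ containing $p$ passes within $\kappa_0$ of $\rho^U_V$ near $p$, then the coordinate $b_U$ is forced to lie near $\rho^V_U(p)$; otherwise BGI says $\pi_U$ is coarsely constant along the relevant subsegment, so $b_U$ can be taken equal to $\pi_U(a)$ (or $\pi_U(a')$), which already lies in $hull(\pi_U(F))$. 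For $U$ transverse to $V$ and for $U$ with $V\nest U$ one argues similarly using the consistency inequalities \eqref{eq:transverse consistency}, \eqref{eq:nested consistency} together with the fact that $\pi_U(a),\pi_U(a')\in \pi_U(\cuco X)$. One then checks that $\tup b$ is $\kappa$-consistent for a uniform $\kappa$: this is essentially the content of the partial-realization / consistency axioms combined with BGI, and the only subtlety is uniformity of the constant, which comes from finite complexity (item \ref{item:dfs_complexity}) bounding the length of nesting chains. Realization then produces $x\in\cuco X$ with $\pi_U(x)$ close to $b_U$ for all $U$; in particular $\pi_V(x)$ is near $p$.

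It remains to check $x\in H_\theta(F)$, i.e. $\pi_U(x)\in\NN_\theta(hull(\pi_U(F)))$ for every $U$. For the "constant" coordinates this is immediate since $b_U\in\pi_U(F)\subset hull(\pi_U(F))$. For the coordinates forced to be near $\rho^V_U(p)$, one uses that $p$ lies on a geodesic between $\pi_V(a)$ and $\pi_V(a')$, so by another application of BGI (or the large-links / consistency machinery) the point $\rho^V_U(p)$ lies uniformly close to a geodesic in $\mathcal C U$ between $\pi_U(a)$ and $\pi_U(a')$, hence in a bounded neighborhood of $hull(\pi_U(F))$ — here one may need to pass through an intermediate domain, but finite complexity again bounds the accumulated error. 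Choosing $\theta$ larger than all these accumulated constants finishes the argument.

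The main obstacle I anticipate is establishing the $\kappa$-consistency of $\tup b$ with a constant depending only on $k$ (equivalently only on the HHS constants): the coordinates are defined by a case split governed by BGI, and one must ensure that for every pair $U,W$ the consistency inequality holds no matter which cases $U$ and $W$ fall into. The delicate cases are when neither $U$ nor $W$ is $V$ but they interact with each other and with $V$ in incompatible ways; handling these cleanly likely requires the last clause of the transversality/consistency axiom (the statement $\dist_W(\rho^U_W,\rho^V_W)\le\kappa_0$) and an induction on complexity rather than a direct check. Everything else — the reduction via the Distance Formula and the verification that the realized point lies in $H_\theta(F)$ — should be routine bookkeeping with uniform constants once consistency is in hand.
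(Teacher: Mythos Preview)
The paper does not give its own proof of this statement: Theorem~\ref{thm:HHS-hull} is stated in the background section as a known fact about hierarchical hulls, with the construction attributed to \cite{BKMM:qirigid} and \cite{HHS_II}, and is then used as a black box (for instance in the proof of Lemma~\ref{lem:halfspaces_intersect}). So there is no ``paper's own proof'' to compare against.

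That said, your proposal is in the right spirit but takes the harder road. Building a consistent tuple $\tup b$ by hand, with $b_V=p$ and all other coordinates determined case-by-case via BGI and the consistency inequalities, is doable but --- as you yourself flag --- the pairwise consistency check for domains $U,W$ neither of which is $V$ is genuinely delicate, and the case analysis proliferates. The standard and much cleaner route, and the one implicit in \cite{HHS_II}, is to invoke the existence of hierarchy paths (which the paper records as Definition~\ref{defn:hierarchy path} and whose existence is a theorem in \cite{HHS_II}): pick $a,a'\in F$ with $p$ on a geodesic from $\pi_V(a)$ to $\pi_V(a')$, take a hierarchy path $\gamma$ in $\cuco X$ from $a$ to $a'$, and use that $\pi_V\circ\gamma$ is an unparametrized quasigeodesic to find $x$ on $\gamma$ with $\pi_V(x)$ uniformly close to $p$. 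Since $\pi_U\circ\gamma$ is an unparametrized quasigeodesic in every $\mathcal C U$, the point $x$ automatically lies in $H_\theta(F)$ for suitable $\theta$, and you are done with no consistency bookkeeping at all. Your approach essentially re-derives the content of the hierarchy-path construction inside this proof; it is not wrong, but it duplicates work already packaged in the literature.
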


\subsection{Refined projections and stable subsurface collections}
\label{subsec:refined projections}

We will be working in a broad but restricted class of HHSes:

\begin{definition}{defn:colorable}
Let $(\XX,\mathfrak S)$ be an HHS and let $G < \mathrm{Aut}(\mathfrak S)$.  We say that
$(\XX, \mathfrak S)$ is $G$-\textbf{\em colorable} if there exists a decomposition of
$\mathfrak S$ into finitely many families $\mathfrak S_i$, so that each $\mathfrak S_i$ is
pairwise-$\pitchfork$ and $G$ acts on $\{\mathfrak S_i\}_i$ by permutations. We say that
$(\XX,\mathfrak S)$ is colorable if it is $\mathrm{Aut}(\mathfrak S)$-colorable. 
\end{definition}

The notion of colorability is inspired from \cite{BBF}, who proved that curve graph $\mathcal{C}(\Sigma)$ is finitely colorable for $\Sigma$ of finite type, thus making $\MCG(\Sigma)$ and $\Teich(\Sigma)$ finitely $\MCG(\Sigma)$-colorable HHSes; we sometimes refer to the $\mathfrak S_i$ as \textbf{BBF families}.

For $A,B\subseteq \mathcal C(Y)$, we denote $d_Y(A,B)=diam_{\mathcal C(Y)}(A\cup B)$.
  
\begin{theorem}{thm:BBFS}\cite{BBFS}
Let $(\XX, \mathfrak S)$ be a $G$-colorable HHS for $G < \mathrm{Aut}(\mathfrak S)$ with
standard projections $\hpi_-, \hrho^-_-$.  There exists $\theta>0$ and refined projections
$\pi_-, \rho^-_-$ with the same domains and ranges, respectively, and such that:  
\begin{enumerate}
 \item If $X,Y$ lie in different $\mathfrak S_j$, and $\hrho^X_Y$ is defined, then $\rho^X_Y=\hrho^X_Y$.
 \item If $X,Y\in\mathfrak S_j$ are distinct, then the Hausdorff distance between $\rho^X_Y$ and $\hrho^X_Y$ is at most $\theta$.
 \item If $x\in\XX$ and $Y\in\mathfrak S$, then the Hausdorff distance between $\pi_Y(x)$ and $\hpi_Y(x)$ is at most $\theta$.
 \item If $X,Y,Z \in \mathfrak S_j$ for some $j$ are pairwise distinct and $d_Y(\rho^X_Y, \rho^Z_Y) > \theta$, then $\rho^X_Z=\rho^Y_Z$.
 \item Let $x\in\XX$, and let $Y,Z\in\mathfrak S_j$ for some $j$ be pairwise distinct. If $d_Y(\pi_Y(x), \rho^Z_Y)>\theta$ then $\pi_Z(x)=\rho^Y_Z$.\label{item:strict_Behrstock_X}
\end{enumerate}

Moreover, $(\XX, \mathfrak S)$ equipped with $\pi_-, \rho^-_-$ is an HHS,  $G <
\mathrm{Aut}(\mathfrak S)$, and it is $G$-colorable.

\end{theorem}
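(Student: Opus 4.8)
\textbf{Proof proposal for Theorem \ref{thm:BBFS}.}

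The plan is to follow the projection-complex machinery of Bestvina-Bromberg-Fujiwara-Sisto \cite{BBFS}, applied separately within each BBF family $\mathfrak S_j$. Recall that a $G$-coloring decomposes $\mathfrak S = \bigsqcup_j \mathfrak S_j$ with each $\mathfrak S_j$ pairwise-transverse and $G$ permuting the $\mathfrak S_j$. The key input is that, for a fixed $j$, the hyperbolic spaces $\{\mathcal C Y : Y\in\mathfrak S_j\}$ together with the standard projections $\hrho^X_Y$ (for $X,Y\in\mathfrak S_j$ distinct, necessarily transverse since the family is pairwise-$\pitchfork$) form a \emph{projection complex} in the sense of \cite{BBF}: the projections take values of bounded diameter, satisfy a Behrstock-type inequality, and are coarsely equivariant under the stabilizer of $\mathfrak S_j$ in $G$. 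This last point is where the HHS consistency axioms (the transversality inequality \eqref{eq:transverse consistency} and the final clause of axiom \ref{item:dfs_transversal}) are used to verify the projection-complex axioms. One then invokes the main result of \cite{BBFS}, which modifies the projections $\hrho^X_Y$ to new projections $\rho^X_Y$ that still coarsely agree with the old ones (item (2)), are $\mathrm{Stab}_G(\mathfrak S_j)$-equivariant, and satisfy the \emph{strict} Behrstock inequalities: if $d_Y(\rho^X_Y,\rho^Z_Y)>\theta$ then $\rho^X_Z = \rho^Y_Z$ exactly (item (4)), rather than merely coarsely. Similarly one modifies the coordinate projections $\hpi_Y(x)$ to $\pi_Y(x)$ within each family, keeping them close (item (3)) and upgrading the point-version of the Behrstock inequality to the strict form in item \ref{item:strict_Behrstock_X} and item (5); this is the ``projections of points'' refinement also carried out in \cite{BBFS}. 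For projections $\hrho^X_Y$ with $X,Y$ in \emph{different} families, one simply leaves them unchanged, giving item (1).

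The next step is to glue these family-wise refinements into a single coherent choice of $\pi_-,\rho^-_-$ on all of $\mathfrak S$. Since $G$ permutes the families, one picks a refinement for one representative in each $G$-orbit of families and transports it by the $G$-action to the others; the choices within distinct families do not interfere because the only projections being altered are intra-family ones, and the inter-family projections are untouched. One should check that $G$ still acts by the relevant equivariance: this reduces to the fact that the \cite{BBFS} construction is canonical enough to commute with the (finite-order, up to stabilizer) permutation action, which is exactly how equivariance is obtained in \cite{BBFS} and \cite{BBF}.

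Finally, one must verify that $(\XX,\mathfrak S)$ equipped with the new $\pi_-,\rho^-_-$ is still an HHS. Here the point is that all the HHS axioms in Definition \ref{defn:HHS} are stated in terms of coarse inequalities with additive constants, and items (2) and (3) guarantee the new projections are within uniform Hausdorff distance $\theta$ of the old ones. Hence every axiom for the old structure implies the corresponding axiom for the new one after enlarging the constants by a bounded amount depending only on $\theta$ and the original HHS constants; the nesting/orthogonality relations and index set are unchanged, so axioms \ref{item:dfs_nesting}, \ref{item:dfs_orthogonal}, \ref{item:dfs_complexity} are immediate, and the quantitative axioms \ref{item:dfs_transversal}, \ref{item:dfs_large_link_lemma}, \ref{item:dfs:bounded_geodesic_image}, \ref{item:dfs_partial_realization}, \ref{item:dfs_uniqueness} transfer by the triangle inequality. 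Colorability with the same families is automatic since we never change which family a domain belongs to, and $G$ still permutes them.

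I expect the main obstacle to be the bookkeeping in reconciling the separate family-wise applications of \cite{BBFS} with the $G$-equivariance and with the HHS axioms simultaneously — in particular, checking that the projection-complex axioms genuinely hold within each $\mathfrak S_j$ using only the HHS data (the Behrstock inequality within a pairwise-transverse family follows from \eqref{eq:transverse consistency} together with bounded geodesic image, but the ``coarse equivariance and properness'' hypothesis needed to invoke \cite{BBFS} in its group form requires care), and making sure the constant $\theta$ can be taken uniform over the finitely many families. The rest is a matter of quoting \cite{BBFS} and tracking additive errors.
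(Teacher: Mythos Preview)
Your approach is essentially the same as the paper's: apply the \cite{BBFS} refinement within each pairwise-transverse family $\mathfrak S_j$, leave inter-family projections alone, and observe that the HHS axioms survive bounded perturbation. The one place where the paper is sharper is in its treatment of the point projections $\pi_Y(x)$. Rather than appealing vaguely to a separate ``projections of points'' refinement, the paper enlarges each family to $\mathfrak S_j \cup \XX$, regarding each $x\in\XX$ as a single-point space with trivial projection $\rho^-_x\equiv x$, checks that this enlarged collection still satisfies the \cite{BBF} projection axioms, and then applies \cite[Theorem~4.1]{BBFS} once to the whole thing. This yields items (2)--(5) simultaneously from a single invocation, and in particular gives the exact equality in item \eqref{item:strict_Behrstock_X} with no extra work. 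Your discussion of $G$-equivariance (transporting choices across orbits of families) is more explicit than the paper's, which simply relies on the equivariance built into the \cite{BBFS} construction.
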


\begin{proof}
The idea is to apply the construction from \cite{BBFS} to the standard projections  $\hpi_-, \hrho^-_-$  and distances $\hd_-$ for the sets $\mathfrak S_i \cup \XX$ for each $i$, where we think of $\XX$ as a collection of single point spaces $x=\{x\}$ for each $x \in \XX$.

Given a point $x \in \XX$, we define projections $\rho^-_x$ from domains in $\mathfrak S_i$ and $\XX$ to $x$ as the constant map $\rho^-_x \equiv x$.  It is easily checked that $\mathfrak S_i \cup \XX$, once equipped with the original projections $\hpi_-, \hrho^-_-$ and these additional projections, satisfies the projection axioms from \cite{BBF}.  The existence of projections and distances  $\pi_-, \rho^-_-,d_-$ and that all properties hold for them is then an immediate consequence of \cite[Theorem 4.1]{BBFS}.

Finally, the fact that $(\XX, \mathfrak S)$ equipped with these projections is an HHS follows from the fact that the new projections are bounded distance away from the old ones, by items (1)-(2)-(3).
\end{proof}

\begin{definition}{defn:stable HHS}
We say that a $G$-colorable HHS $(\XX, \mathfrak S)$ with $G < \mathrm{Aut}(\mathfrak S)$ has \textbf{\em stable projections} if it is equipped with the projections provided by Theorem \ref{thm:BBFS}. 
\end{definition}

For the rest of this section, fix a $G$-colorable HHS $(\XX, \mathfrak S)$ with $G <
\mathrm{Aut}(\mathfrak S)$ and with stable projections.  In particular, we assume that the
standard projections for $(\XX,\mathfrak S)$ satisfy the stability properties in Theorem
\ref{thm:BBFS}.

As usual, $d_Y(x_1,x_2)$ denotes $diam_{\mathcal C(Y)}(A_1\cup A_2)$, where
\begin{itemize}
 \item $A_i=\pi_Y(x_i)$ if $x_i\in \XX$,
 \item $A_i=\rho^{x_i}_Y$ if $x_i\in\mathfrak S$ and either $x_i\propnest Y$ or $x_i\transverse Y$.
\end{itemize}

For any pair of points $x,y \in \XX$ and constant $K>0$, we let
$\rel_K(x,y) \subset \mathfrak{S}$ denote the collection of $Y \in
\mathfrak S$ such that $d_Y(x,y)>K$; we also set $\rel^i_K(x,y) = \rel_K(x,y) \cap \mathfrak S_i$.

Let $\theta$ satisfy Theorem \ref{thm:BBFS}(5).  Following \cite{BKMM:qirigid,CLM12, HHS_II}, recall that, for any $K>10\theta$, $\rel^i_K(x,y)$ is a partially ordered set with order $\prec$ so that $X\prec Y$ whenever one of the following equivalent conditions hold:
\begin{multicols}{2}
\begin{itemize}
 \item $d_Y(x,\rho^X_Y)\leq \theta$,
 \item $d_X(\rho^Y_X,y)\leq \theta$,
 \item $d_Y(\rho^X_Y,y)\geq K-\theta$,
 \item $d_X(x,\rho^Y_X)\geq K-\theta$.
\end{itemize}
\end{multicols}

When restricted to $\rel^i_K(x,y)$, the relation $\prec$ becomes a total order.

For a finite set $F \subset
\XX$, we define

$$\rel_K(F) = \bigcup_{x, y\in F} \rel_{K}(x,y) \indent \text{ and } \indent \rel^i_K(F) = \rel_K(F) \cap \mathfrak S_i.$$

The following stability lemma follows directly from the construction
in \cite{BBFS}.

\begin{lemma}{lem:stable interval}
There exists $K\gg 2\theta$ such that whenever $x,y,y'\in \XX$ satisfy
$d{_\XX}(y,y')\le 1$ the following holds. 
For each $i$, we have 
$$\left|\rel^i_K(x,y) \bigtriangleup \rel^i_K(x,y')\right|\leq 2.$$
\end{lemma}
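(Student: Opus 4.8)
The statement is a stability assertion for the ``relevant'' (large-projection) domains of a single BBF family $\mathfrak{S}_i$ when one endpoint moves a bounded amount. The plan is to work entirely inside a fixed BBF family, where the refined projections of Theorem \ref{thm:BBFS} satisfy a \emph{strong Behrstock inequality} (the strict statements in items (4)--(5)): pairwise transversality plus the strong inequality means $\mathfrak{S}_i$ behaves exactly like a collection of domains in a projection complex, and $\rel^i_K(x,y)$ with the order $\prec$ is a totally ordered ``interval'' of domains. So the real content is: moving $y$ to $y'$ with $d_\XX(y,y')\le 1$ can only change the endpoints of this interval, not its interior.

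\textbf{Key steps.} First, fix $K$ much larger than the various constants ($\theta$ from Theorem \ref{thm:BBFS}, the HHS constant $E$, and the coarse Lipschitz constants for the $\pi_Y$); in particular $K > 10\theta$ so that $\prec$ is a total order on each $\rel^i_K(x,y)$, and $K$ large enough that a $1$-step move changes each $d_Y(\cdot,\cdot)$ by less than some fixed amount $\ll K$ (using that $\pi_Y$ is uniformly coarsely Lipschitz, Definition \ref{defn:HHS}(1)). Second, I would show that if $Y \in \rel^i_K(x,y)$ is \emph{not} the $\prec$-maximal element of $\rel^i_K(x,y)$ then $Y \in \rel^i_K(x,y')$: because $Y$ is not maximal, there is $Z \succ Y$ in $\rel^i_K(x,y)$, which by the characterizations of $\prec$ gives $d_Z(x,\rho^Y_Z) \le \theta$ and $d_Y(\rho^Z_Y, y)$ large; the strong inequality (Theorem \ref{thm:BBFS}(5)) then forces $\pi_Z(y) = \rho^Y_Z$ up to bounded error, hence $\pi_Z(y')$ is also within $\theta + O(1)$ of $\rho^Y_Z$, and then the same strong inequality applied at $Z$ (contrapositive) forces $d_Y(y', \rho^Z_Y)$ to be small, i.e. $d_Y(x,y') \ge d_Y(x,\rho^Z_Y) - d_Y(\rho^Z_Y,y') \ge (K-\theta) - O(1) > K$ — wait, that needs $Y \prec Z$ to give $d_Y(\rho^Z_Y,x)$ large, which it does, so $Y\in\rel^i_K(x,y')$. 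Symmetrically, a non-$\prec$-minimal element of $\rel^i_K(x,y)$ stays relevant, but since the minimum of the order on $\rel^i_K(x,y)$ is governed by proximity to $x$ (which hasn't moved), one shows more simply that every element of $\rel^i_K(x,y)$ except possibly the $\prec$-maximal one lies in $\rel^i_K(x,y')$. Running the symmetric argument with the roles of $y,y'$ swapped, every element of $\rel^i_K(x,y')$ except possibly its $\prec$-maximal one lies in $\rel^i_K(x,y)$. Therefore $\rel^i_K(x,y) \symdiff \rel^i_K(x,y')$ is contained in the set of $\prec$-maximal elements of the two intervals, which has at most $2$ elements.

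\textbf{Main obstacle.} The delicate point is handling the $\prec$-maximal (i.e. ``closest to $y$'') end of the interval cleanly. An element near the $y$-end has $d_Y(x,y)$ close to $K$, so a bounded perturbation really can push it below threshold — that's why we allow $2$ exceptions rather than $0$. Making the bookkeeping precise requires the strong Behrstock inequality to ``snap'' $\pi_Z(y)$ exactly onto $\rho^Y_Z$ whenever $Z$ sits $\prec$-above $Y$, so that the perturbation in the $Z$-coordinate is genuinely $O(1)$ and cannot accumulate along the chain; this is exactly what the refined projections from \cite{BBFS} / Theorem \ref{thm:BBFS}(4)--(5) buy us over the raw HHS axioms, and it is the reason the lemma is phrased as ``follows directly from the construction in \cite{BBFS}''. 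The remaining care is purely in choosing $K$ large relative to all additive constants so that every ``$\ge K - \theta - O(1)$'' estimate stays above $K$; I would not grind through the exact inequalities here, only note that the choice is uniform since all HHS and BBF constants are uniform.
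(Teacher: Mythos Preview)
Your approach is essentially the paper's: both use a $\prec$-larger element $Z$ to pin down $\pi_Y(y')$ via the strong Behrstock inequality. The paper phrases it as a contradiction assuming three elements $Y_0 \prec Y_1 \prec Y_2$ in the set difference (the role of $Y_0$ is actually superfluous), while you argue directly that every non-$\prec$-maximal element of $\rel^i_K(x,y)$ survives the perturbation; these are equivalent and each gives $|\rel^i_K(x,y)\ssm\rel^i_K(x,y')|\le 1$, hence symmetric difference $\le 2$.

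However, there is a real gap in your execution. Your final chain
\[
d_Y(x,y') \ge d_Y(x,\rho^Z_Y) - d_Y(\rho^Z_Y, y') \ge (K - \theta) - O(1) > K
\]
is simply false: no choice of $K$ makes $K - \theta - O(1) > K$. The handwave ``choose $K$ large relative to all additive constants'' cannot repair this, because the loss $\theta + O(1)$ does not shrink as $K$ grows. You also have the roles of $Y$ and $Z$ swapped when invoking Theorem~\ref{thm:BBFS}(5): the conclusion you need is $\pi_Y(y) = \rho^Z_Y$, not $\pi_Z(y) = \rho^Y_Z$.

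The fix, and this is exactly what the paper does, is to use that Theorem~\ref{thm:BBFS}(5) gives an \emph{exact} equality of sets, not merely proximity. From $Y \prec Z$ one has $d_Z(\pi_Z(y), \rho^Y_Z) \ge K - \theta > \theta$, so Theorem~\ref{thm:BBFS}(5) yields $\pi_Y(y) = \rho^Z_Y$ exactly. Since $\pi_Z$ is coarsely Lipschitz and $d_\XX(y,y') \le 1$, for $K$ large we also get $d_Z(\pi_Z(y'), \rho^Y_Z) > \theta$, hence $\pi_Y(y') = \rho^Z_Y$ exactly. Now $\pi_Y(y) = \pi_Y(y')$ as sets, so $d_Y(x, y') = d_Y(x, y) > K$ with \emph{no loss whatsoever}, and $Y \in \rel^i_K(x,y')$. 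This exactness is the whole point of passing to the refined projections of \cite{BBFS}, and it is precisely the place where ``up to bounded error'' does not suffice.
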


\begin{proof}
By contradiction, suppose we have distinct elements $Y_0,Y_1,Y_2\in \rel^i_K(x,y)- \rel^i_K(x,y')$, with $Y_0\prec Y_1\prec Y_2$.
If $K>10\theta$, applying the definition of $\prec$ and Theorem \ref{thm:BBFS}(\ref{item:strict_Behrstock_X}), we see that $\pi_{Y_1}(x)=\rho^{Y_0}_{Y_1}$ and $\pi_{Y_1}(y)=\rho^{Y_2}_{Y_1}$. Also, since $\pi_{Y_2}(y),\pi_{Y_2}(y')$ are uniformly close to each other, if $K$ is sufficiently large then we have $d_{Y_2}(Y_1,y)>\theta$ and hence, one again, $\pi_{Y_1}(y')=\rho^{Y_2}_{Y_1}$. But then $d_Y(x,y')=d_Y(x,y)\geq K$, a contradiction with $Y_1\notin \rel^i_K(x,y')$.
\end{proof}

\begin{proposition}{prop:stable family}
Let $K\gg 2\theta$ and $F \subset \XX$ be any finite set. 
There exists $M = M(K, \mathfrak S, |F|)>0$ such that, 
for any $F' \subset \XX$ with $d_{Haus}(F,F') \le 1$ and $|F'|\leq |F|$, we have
$$\left| \rel_K(F) \bigtriangleup  \rel_K(F')\right| < M.$$
\end{proposition}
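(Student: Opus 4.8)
The plan is to reduce Proposition \ref{prop:stable family} to the pairwise stability result Lemma \ref{lem:stable interval}, handling the passage from pairs to finite sets by a telescoping/interpolation argument. First I would recall that by definition $\rel_K(F) = \bigcup_{x,y\in F}\rel_K(x,y)$, so $\rel_K(F)\symdiff\rel_K(F')$ is contained in the union over pairs of symmetric differences; the problem is that $F$ and $F'$ need not be matched up point-by-point in a way compatible with the $\le 1$ Hausdorff bound, and even a matched pair $(x,x'),(y,y')$ is not directly covered by Lemma \ref{lem:stable interval}, which only perturbs \emph{one} endpoint at a time.

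The key step is to fix, using $d_{\mathrm{Haus}}(F,F')\le 1$, a surjection (or pairing) $F'\to F$, $x'\mapsto \bar{x'}$, with $d_{\XX}(x',\bar{x'})\le 1$, and then for each pair $x',y'\in F'$ interpolate: $\rel^i_K(x',y')$ is compared to $\rel^i_K(\bar{x'},y')$ and then to $\rel^i_K(\bar{x'},\bar{y'})$, each step changing one endpoint by at most $1$, so Lemma \ref{lem:stable interval} gives $|\rel^i_K(x',y')\symdiff\rel^i_K(\bar{x'},\bar{y'})|\le 4$ for each color $i$, hence $|\rel_K(x',y')\symdiff\rel_K(\bar{x'},\bar{y'})|\le 4c$ where $c$ is the number of BBF families (a constant depending only on $(\XX,\mathfrak S)$). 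Summing over the at most $\binom{|F'|}{2}\le\binom{|F|}{2}$ pairs, and using that $\{\bar{x'}:x'\in F'\}\subseteq F$ so that $\bigcup_{x',y'}\rel_K(\bar{x'},\bar{y'})\subseteq \rel_K(F)$, we get that $\rel_K(F')$ and $\rel_K(F)$ differ by at most $4c\binom{|F|}{2}$ domains in one direction; the symmetric argument (pairing $F\to F'$, which exists since $d_{\mathrm{Haus}}$ is symmetric) bounds the difference in the other direction. Taking $M = 8c\binom{|F|}{2}+1$, or any convenient explicit bound of this shape, finishes the proof. This gives $M=M(K,\mathfrak S,|F|)$ as required, since $c$ depends only on the colorability structure of $(\XX,\mathfrak S)$ and hence on $\mathfrak S$.

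The main obstacle I anticipate is the interpolation step itself: Lemma \ref{lem:stable interval} is stated only for $\rel^i_K(x,y)$ versus $\rel^i_K(x,y')$, i.e. perturbing the \emph{second} coordinate, so I need to observe (trivially, since $d_Y(x,y)$ is symmetric in $x,y$) that it applies equally when perturbing the first coordinate, and then chain two applications through the intermediate point $\bar{x'}$ with $y'$ fixed and then $\bar{x'}$ fixed with $y'$ perturbed — here one must be slightly careful that the constant $K$ can be chosen uniformly to work for all the pairwise comparisons simultaneously, but since Lemma \ref{lem:stable interval} provides a single $K\gg 2\theta$ independent of the points, this is automatic. A secondary point to check is that one genuinely only needs $|F'|\le|F|$ (not $|F'|=|F|$) and that the pairing $F'\to F$ need only be a function with the distance bound, not a bijection, which is exactly what $d_{\mathrm{Haus}}(F,F')\le 1$ provides. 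None of these points is deep; the proof is essentially a bookkeeping reduction to the pairwise lemma.
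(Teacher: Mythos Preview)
Your proposal is correct and follows essentially the same approach as the paper: reduce to each BBF color, apply Lemma \ref{lem:stable interval} twice to handle a perturbation of both endpoints (the paper phrases this as ``applying Lemma \ref{lem:stable interval} twice, we see that if $d_\XX(x,x'), d_\XX(y,y')\le 1$, then $|\rel^i_K(x,y)- \rel^i_K(x',y')|\leq 4$''), and then sum over the boundedly many pairs in $F$ and $F'$ separately to bound each side of the symmetric difference. The only cosmetic difference is that the paper counts $|F|^2$ ordered pairs rather than $\binom{|F|}{2}$ unordered ones.
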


\begin{proof}
Assume throughout the proof that $K$ is sufficiently large.

Since there are finitely many colors, it suffices to prove the analogous statement for $\rel^i_K(F) \bigtriangleup  \rel^i_K(F')$ for any given $i$.

Applying Lemma \ref{lem:stable interval} twice, we see that if $d_\XX(x,x'), d_\XX(y,y')\le 1 $, then $|\rel^i_K(x,y)- \rel^i_K(x',y')|\leq 4$.

For each of the $|F|^2$ pairs $x,y\in F$, we can choose any $x',y'$ with $d_\XX(x,x'), d_\XX(y,y')\le 1 $, so that there are at most $4|F|^2$ elements of $\rel^i_K(F)=\bigcup_{x,y\in F}\rel^i_K(x,y)$ that are not in $\rel^i_K(F')$, i.e. $\left| \rel^i_K(F) -  \rel^i_K(F')\right|\leq 4|F|^2$. Symmetrically, we have $\left| \rel^i_K(F') -  \rel^i_K(F)\right|\leq 4|F'|^2$, and since $|F'|\leq |F|$ by assumption, we finally get $\left| \rel^i_K(F) \bigtriangleup  \rel^i_K(F')\right| < 8|F|^2$, as required.
\end{proof}

\subsection{Bounding involved domains}\label{subsec:bounding involved domains}

Let $(\XX, \mathfrak S)$ be a $G$-colorable HHS with stable projections for $G <
\mathrm{Aut}(\mathfrak S)$, as provided by Theorem \ref{thm:BBFS}.

Let $F, F' \subset \XX$ with $|F|=
|F'| \leq k$ and $d_{Haus}(F,F') \leq 1$.  We will now prove some stronger stability
results about how the set of relevant domains (and their subdomains) changes between $F$
and $F'$. 

For any $K \gg 2\theta$ as above, let $\UU(F) = \rel_K(F)$ and $\UU(F') = \rel_K(F')$.  Given $V\in \mathfrak S$, let $\UU^V(F) = \{W\in\UU(F): W\nest V\}$ and define $\UU^V(F')$ similarly.

In many of our stability results, we will need to know how domains in $\UU(F)$ may differ from those in $\UU(F')$.  We call such domains \textbf{involved}, and they come in two flavors:

\begin{definition}{defn:involved}
We say that $V \in \UU(F) \cup \UU(F')$ is \textbf{\em involved} in the
transition between $F$ and $F'$ if one of the following holds:
\begin{enumerate}
  \item $\pi_V(F) \ne \pi_V(F')$, or
  \item $\UU^V(F) \ne \UU^V(F')$.
\end{enumerate}
 \end{definition}

\begin{proposition}{prop:bounded involved}
If $K$ is sufficiently large then the following holds.
Given $k>0$ there exists $N_1 = N_1(k, \mathfrak S)>0$ such that, if $|F|,|F'| \le k$ and
$d_{Haus}(F,F') \le 1$, then there are at most $N_1$ domains
$V\in\UU(F) \cup \UU(F')$  involved in the transition between $F$ and $F'$.
\end{proposition}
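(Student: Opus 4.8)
\textbf{Proof strategy for Proposition \ref{prop:bounded involved}.}

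The plan is to bound separately the number of domains satisfying condition (1) of Definition \ref{defn:involved} and those satisfying condition (2), since a domain is involved iff at least one of these holds.

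First I would handle condition (1), namely domains $V$ with $\pi_V(F) \ne \pi_V(F')$. Since $d_{Haus}(F,F') \le 1$, for each $x \in F$ there is $x' \in F'$ with $d_\XX(x,x') \le 1$; the coarse Lipschitz property of $\pi_V$ (HHS axiom \eqref{item:dfs_curve_complexes}) then bounds $d_V(x,x')$ uniformly. So the issue is really that $\pi_V(F)$ and $\pi_V(F')$ differ only by a uniformly bounded perturbation, yet they can still be genuinely unequal; but if $V \in \UU(F) \cup \UU(F')$ then $V$ is $K$-relevant for one of the sets, and I claim that for all but boundedly many such $V$ the projections must in fact coincide. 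This is exactly the kind of statement governed by Theorem \ref{thm:BBFS}: by item (5), once $d_Y(\pi_Y(x), \rho^Z_Y) > \theta$ we get $\pi_Z(x) = \rho^Y_Z$, so the coordinate $\pi_Z(x)$ is pinned down by the combinatorial data and cannot be perturbed. Running the argument of Lemma \ref{lem:stable interval} / Proposition \ref{prop:stable family} for each color $i$ and each pair from $F \cup F'$, and using that the order $\prec$ on $\rel^i_K$ is total, one sees that along each "interval" only the boundedly many extreme domains can have their projections moved; this gives a bound $N(k,\mathfrak S)$ on the number of $V$ of type (1).

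Next I would treat condition (2), domains $V$ with $\UU^V(F) \ne \UU^V(F')$. The key point is that by Proposition \ref{prop:stable family} the symmetric difference $\UU(F) \symdiff \UU(F')$ has cardinality at most $M = M(K,\mathfrak S,k)$. Now if $\UU^V(F) \ne \UU^V(F')$, then there is some $W$ with $W \nest V$ and $W \in \UU(F) \symdiff \UU(F')$ (or, by the already-bounded type (1) case and the consistency of the relevant collections, one reduces to this). There are at most $M$ choices of such $W$, and for each fixed $W$ the domains $V$ with $W \nest V$ that lie in $\UU(F) \cup \UU(F')$ form a $\nest$-chain above $W$, hence have cardinality at most the complexity $n$ of $(\XX,\mathfrak S)$ by axiom \eqref{item:dfs_complexity} (using that any two of them are $\nest$-comparable, being comparable to $W$ via nesting going the same direction, together with transversality/orthogonality constraints to rule out incomparable cases). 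Thus at most $nM$ domains can be of type (2). Setting $N_1 = N + nM$ finishes the count.

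I expect the main obstacle to be the type (2) analysis: controlling how the collections of \emph{subdomains} $\UU^V(F)$ change is subtler than controlling the top-level collection, because a single new relevant subdomain $W$ can a priori be nested in many different $V$'s, and one must use the finite-complexity axiom carefully — possibly together with the orthogonality axiom \eqref{item:dfs_orthogonal} and the colorability to organize the $V$'s above a fixed $W$ into a bounded structure. One must also be careful that "involved" is a symmetric notion ($V$ ranges over $\UU(F) \cup \UU(F')$), so every estimate has to be run in both directions and the hypothesis $|F'| \le |F| \le k$ invoked to keep the bounds symmetric, exactly as in the proof of Proposition \ref{prop:stable family}.
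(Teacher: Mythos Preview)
Your type~(1) strategy matches the paper's: fix $x\in F$ (and a partner $y\in F$ witnessing relevance), work in one BBF color, and use Theorem~\ref{thm:BBFS}(5) to pin $\pi_{V}(x)=\rho^{V'}_{V}$ for any middle domain $V$ in the $\prec$-order, which forces $\pi_V(x)=\pi_V(x')$ for the nearby $x'\in F'$. That part is fine.

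The gap is in type~(2). Your claim that the domains $V\in\UU(F)\cup\UU(F')$ with $W\nest V$ form a $\nest$-chain is false: nothing in the axioms forces two domains $V_1,V_2$ both containing a fixed $W$ to be $\nest$-comparable. In the mapping class group a single annulus $W$ nests in arbitrarily many pairwise-transverse subsurfaces, and within a single BBF color the $V_i$ are \emph{required} to be pairwise transverse, so the putative $\nest$-chain has length~$1$ and the finite-complexity axiom buys you nothing. The mechanism that actually works is the last clause of axiom~(\ref{item:dfs_transversal}): if $W\nest V_1$ and $V_1\transverse V_2$ with $W\not\orth V_2$ (automatic here since $W\nest V_2$), then $d_{V_2}(\rho^{V_1}_{V_2},\rho^W_{V_2})\le\kappa_0$. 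The paper supposes there are three containers $V_1\prec V_2\prec V_3$ in one color and one $\rel_K(x,y)$, uses Theorem~\ref{thm:BBFS} to get $\pi_{V_2}(x)=\rho^{V_1}_{V_2}$ and $\pi_{V_2}(y)=\rho^{V_3}_{V_2}$, and then observes that both of these are $\kappa_0$-close to the single point $\rho^W_{V_2}$ --- contradicting $d_{V_2}(x,y)>K$. So the correct bound is at most $2$ containers per color per pair $(x,y)$, and it comes from consistency of the $\rho$-points, not from nesting depth.
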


\begin{proof}
By Proposition \ref{prop:stable family}, it suffices to bound the number of involved domains in $\UU(F) \cap \UU(F')$. However, we will still have to bound the number of involved domains of type (1) in $\UU(F) \cup \UU(F')$.
We note that, since $F,F'$ lie within Hausdorff distance $1$, up to increasing $K$ we can assume that for each $V\in\UU(F)$ we have $\diam_{\mathcal C V}(\pi_V(F'))\geq K/2$, and similarly for $V\in\UU(F')$.

\vspace{.1in}
\underline{\textbf{Involved of type (1)}:} Let $x \in F$.  We say that $V \in \UU(F) \cup \UU(F')$ is \textbf{exposed to $x$} if $\pi_V(x)$ is not contained in $\pi_V(F')$.
We define exposure for $x\in F'$ similarly (with an abuse, here we are considering $F$ and $F'$ as disjoint, so we should actually define exposure for $x\in F\sqcup F'$).

Observe that $V \in \UU(F) \cup \UU(F')$ satisfies $\pi_V(F) \neq \pi_V(F')$ if and only if $V$ is exposed to some $x$ in either $F$ or $F'$. Hence it suffices to bound the number of exposed domains.

Since $|F|, |F'| \leq k$, we may fix a point $x \in F$ and consider domains $V$ which are exposed to $x$.  The case of domains exposed to points in $F'$ follows from a symmetric argument.

Given $x,V$ as above, there is $y\in F$ so that $d_V(x,y)\geq K/4$ (this is because $\diam_{\mathcal C(V)}(\pi_V(F))\geq K/2$). Since $F$ has at most $k$ elements, we can further assume fix $y$ with said property.

Suppose for a contradiction that there exist domains $V_1, V_2, V_3 \in \left(\UU(F) \cup \UU(F')\right) \cap {\mathfrak S}_i$ which are exposed to $x$, where $\mathfrak S_i$ is the $i^{th}$ BBF family, making the $V_i$ necessarily pairwise transverse (this suffices since there are finitely many BBF families). Up to reordering, we have $V_1 \prec V_2 \prec V_3$ in $\rel_{K/2}(x,y)$.

Since $F$ and $F'$ lie at Hausdorff distance at most $1$, there is a pair $x_2, y_2 \in F'$ so that $d(x,x_2), d(y,y_2) \leq 1$, and necessarily we have $\pi_{V_2}(x) \neq \pi_{V_2}(x_2)$ (as we cannot have the containment ``$\subseteq$'').

Since $d(x,x_2), d(y,y_2) \leq 1$, by taking $K \gg 2\theta$ sufficiently large, we can ensure that $d_{V_i}(x_2, y_2) > 2\theta$ for $i=1, 2, 3$.  Since $V_1 \prec V_2 \prec V_3 \in \rel_{K/2}(x,y)$, we also must have the same order $V_1 \prec V_2 \prec V_3$ in $\rel_{2\theta}(x_2, y_2)$.

Thus by Theorem \ref{thm:BBFS}, it follows that $\pi_{V_2}(x_2) = \rho^{V_1}_{V_2}$.  However, Theorem \ref{thm:BBFS} also implies that $\pi_{V_2}(x) = \rho^{V_1}_{V_2}$.  This contradicts the fact that $\pi_{V_2}(x) \neq \pi_{V_2}(x_2)$, and completes the proof that there is a bound of domains of type (1).

\underline{\textbf{Involved of type (2)}:} Notice that if $W \in \UU(F) \cap \UU(F')$ of type (2), then there necessarily exists an exposed domain $V \in \UU(F) \cup \UU(F')$ of type (1) with $V \nest W$ (by a $\nest$-minimality argument).  We therefore bound the number of such containers $W$ for a fixed exposed domain $V$, of which there is a bounded number by the first part of the proof. 

Suppose for a contradiction that $W_1, W_2, W_3 \in \UU(F) \cap \UU(F')$ with $V \nest W$.  Since $|F|, |F'| \leq k$, we may further assume that there exist $x,y \in F$ for which $W_1, W_2, W_3 \in \rel_K(x,y)$.  Moreover, we may assume that each $W_i \in \mathfrak S_j$, for a fixed BBF family $\mathfrak S_j$.  Finally, up to reordering, we may assume that $W_1 \prec W_2 \prec W_3$ in  $\rel_K(x,y)$.

Theorem \ref{thm:BBFS} then provides that $\pi_{W_2}(x) = \rho^{W_1}_{W_2}$ and $\pi_{W_2}(y) = \rho^{W_3}_{W_2}$.  However, since $V \nest W_i$ for each $i=1,2,3$, we have $d_{W_2}(\rho^{W_1}_{W_2}, \rho^V_{W_2}) < \theta$ and $d_{W_2}(\rho^{W_3}_{W_2}, \rho^V_{W_2}) < \theta$, and so $d_{W_2}(\rho^{W_1}_{W_2}, \rho^{W_3}_{W_2}) < 2\theta$ by the triangle inequality.  But since $d_{W_2}(\rho^{W_1}_{W_2}, \rho^{W_3}_{W_2}) = d_{W_2}(x,y) > K > 2\theta$ by assumption, this is a contradiction.  This completes the proof.
\end{proof}

\begin{rem}\label{rem:rho_can_be_points}
 We conclude this section with a remark on HHS structures that, while not strictly necessary, will allow us to simplify the setup that we have to deal with in Section \ref{sec:stable trees}. The remark is that, given an HHS, we can $\mathrm{Aut}(\cuco{X},\mathfrak S)$--equivariantly change the structure in a way that all $\pi_V(x)$ and $\rho^U_V$ for $U\propnest V$ are points, rather than bounded sets, and that moreover the new structure has stable projections if the old one did. This can be achieved, for example, by replacing each $\CC(V)$ by the nerve of the covering given by subsets of sufficiently large diameter (which is quasi-isometric to $\CC(V)$). In particular, the vertices of the new $\CC(V)$ are labelled by bounded sets, and we can redefine $\pi_V(x)$ to be the vertex labelled by $\pi_V(x)$, and similarly for $\rho^U_V$; all properties required are straightforward to check.
 
 In Section \ref{sec:stable trees}, we will deal with finite subsets of a hyperbolic space, while, if (in Section \ref{sec:stable cubulations}) we did not modify the HHS structure as outlined above, we would have to deal with finite collections of bounded subsets. This is possible, but would make the arguments more opaque.
\end{rem}

\newcommand\ltree{\lambda}
\newcommand\lforest{\lambda'}

\section{Stable trees}\label{sec:stable trees}
In this section we will consider the geometry of trees in a $\delta$-hyperbolic space, in
preparation for arguments that will take place in the individual hyperbolic spaces of our
hierarchical structure. Our main result will be Theorem
\ref{thm:stable tree}, stated below after some preliminary
definitions. This is the only result from this section that will get used later (namely, in Section \ref{sec:stable cubulations}).

Fix a geodesic $\delta$-hyperbolic space $\ZZ$.
For a finite subset $F\subset \ZZ$ let $hull(F)\subset\ZZ$
be the set of geodesics connecting points of $F$. Hyperbolicity tells us that $hull(F)$
can be approximated by a finite tree with accuracy depending only on $\delta$ and the
cardinality $\# F$. To systematize this for the purposes of this section, we make the
following definitions.

Let us fix, a function $\ltree$ which assigns, to any finite subset $F$ of
$\ZZ$, a minimal network spanning $F$. That is, $\ltree$ is a 1-complex with the property
that $\ltree\union F$ is connected, and has minimal length among all such
1-complexes. Minimality implies $\ltree(F)$ is a tree. Let us similarly define 
$\lforest$ which assigns, to any finite collection $A_1,\ldots,A_k$ of subsets of $\ZZ$
a minimal network that spans them. 
That is, $\lforest(A_1,\ldots,A_k)$ is a 1-complex in $\ZZ$ of minimal length with the
property that the quotient of $\lforest\union A_1\union\cdots A_k$ obtained by collapsing
each $A_i$ to a point is connected. Minimality again implies that this collapsed graph is a
tree.  For convenience we assume that $\ltree(\{x_1,\ldots,x_k\}) =
\lforest(\{x_1\},\ldots,\{x_k\})$.

The following lemma illustrates a basic property of hyperbolic spaces, and we omit its
proof. 
\begin{lemma}{basic tree lemma} 
Let $\ZZ$ be a geodesic $\delta$-hyperbolic space and $\ltree$ a minimal network function
as above. Then there exists $\epsilon_0=\epsilon_0(k,\delta)$ so that for all $\epsilon\geq \epsilon_0$ there exists $\ep'>\ep$ such that, if $F\subset \ZZ$ has
cardinality $k$ then
\begin{itemize}
  \item There is 
a $(1,\ep/2)$-quasi-isometry $\ltree(F)\to hull(F)$ which is $\ep/2$-far from the
identity.
\item
For any two points $x,y\in
N_\ep(\ltree(F))$, any geodesic joining them is in $N_{\ep'}(\ltree(F))$.
\end{itemize}

\end{lemma}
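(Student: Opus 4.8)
The plan is to prove the two assertions of Lemma \ref{basic tree lemma} separately, both by routine thin-triangle arguments, treating $hull(F)$ and $\ltree(F)$ as coarsely identical tree-like objects.

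For the first bullet, I would argue by induction on $k=\#F$. The base case $k=2$ is clear: $\ltree(\{x,y\})$ is a geodesic segment, which is literally $hull(\{x,y\})$, so the identity works. For the inductive step, write $F = F_0 \cup \{x_k\}$ with $\#F_0 = k-1$, and let $[y,x_k]$ be a geodesic from a nearest point $y$ of $hull(F_0)$ to $x_k$. Standard $\delta$-hyperbolicity (the tree approximation lemma, e.g.\ \cite{Gromov87} or Bowditch's notes) gives that $hull(F)$ lies within bounded distance (depending on $k,\delta$) of $hull(F_0) \cup [y,x_k]$, and conversely. On the side of minimal networks, $\ltree(F_0) \cup [y,x_k]$ is a spanning network for $F$ of length close to minimal, hence $\ltree(F)$ and $\ltree(F_0)\cup[y,x_k]$ are within bounded Hausdorff distance (a minimal network beats any competitor, and two near-minimal spanning trees of a bounded-cardinality point set in a hyperbolic space coarsely coincide). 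Combining with the inductive $(1,\ep/2)$-quasi-isometry $\ltree(F_0)\to hull(F_0)$ and extending it by a near-isometry on the segment $[y,x_k]$ yields the required map $\ltree(F)\to hull(F)$, with additive constant and displacement controlled by a function of $k$ and $\delta$. Choosing $\ep_0(k,\delta)$ larger than this controlling function makes the displacement at most $\ep/2$ for all $\ep \ge \ep_0$.

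For the second bullet, fix $\ep \ge \ep_0$ and let $x,y \in N_\ep(\ltree(F))$. By the first bullet, $x,y$ lie within $\ep + \ep/2$ of $hull(F)$, and $hull(F)$ is (uniformly, in terms of $k,\delta$) quasi-isometric to a tree, hence $\ep''$-quasiconvex for some $\ep'' = \ep''(\ep,k,\delta)$: any geodesic between two points of $N_{\ep+\ep/2}(hull(F))$ stays in $N_{\ep''}(hull(F))$. This is the usual fact that neighborhoods of hulls in hyperbolic spaces are quasiconvex, proved by projecting the geodesic $[x,y]$ to a concatenation of geodesics in $hull(F)$ and applying thinness of polygons with a bounded number ($O(k)$) of sides. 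Translating back through the first bullet gives that $[x,y] \subset N_{\ep'}(\ltree(F))$ for a suitable $\ep' = \ep'(\ep,k,\delta) > \ep$.

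The main obstacle is the comparison of minimal networks: unlike $hull(F)$, the network $\ltree(F)$ is not canonically built from geodesics between the $x_i$, so one needs the auxiliary fact that in a $\delta$-hyperbolic space any two minimal (or near-minimal) spanning networks of a $k$-point set have Hausdorff distance bounded in terms of $k$ and $\delta$. This follows because any spanning network contains a spanning tree whose edges are uniform quasigeodesics between the points (edge-lengths are bounded by the total length, hence by $k\cdot\diam(F)$ in the worst case, but one only needs the local comparison), and the Steiner-tree-type structure in a thin space is coarsely rigid; the bookkeeping of constants is the part I would be most careful about, but it is exactly the kind of estimate the paper is content to omit.
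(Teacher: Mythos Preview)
The paper does not prove this lemma at all: it states it and writes ``The following lemma illustrates a basic property of hyperbolic spaces, and we omit its proof.'' So there is no argument to compare against, only the question of whether your sketch is sound.

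Your outline is correct and is exactly the kind of standard argument the authors have in mind. The first bullet is the Gromov tree-approximation statement, and your induction on $k$ is the usual route; the second bullet is immediate quasiconvexity of coarse hulls. The one place where your sketch is genuinely loose is the claim that any two near-minimal spanning networks of a $k$-point set are within bounded Hausdorff distance. A cleaner way to handle this, avoiding the vague ``Steiner-tree rigidity'' remark, is to note that every edge of a minimal network $\ltree(F)$ is a geodesic in $\ZZ$ (otherwise shorten it), that $\ltree(F)$ has at most $2k-2$ vertices (by the total-branching bound), and that the inclusion $\ltree(F)\hookrightarrow\ZZ$ is therefore a $(1,C(k,\delta))$-quasi-isometric embedding because any path in $\ltree(F)$ is a concatenation of at most $2k-3$ geodesics and thin-polygon estimates control the defect. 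From this the Hausdorff comparison with $hull(F)$ follows directly, and the comparison of two different minimal networks is then a consequence (both are close to $hull(F)$) rather than a separate lemma. With that adjustment your argument is complete.
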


In the rest of this section we consider the following situation. 
Let a (large but) finite set  $\YY\subset N_{\ep/2}(hull(F))$ be given (see Section \ref{sec:stable cubulations} for what $\YY$ will
be in our setting). 
It is possible to divide $\ltree(F)$ up into a union of
subtrees some of which are close approximations to ``clusters'' in $\YY$ and the rest interconnect
the clusters, but such a construction is not unique, depending on many choices (including
the choice of $\ltree(F)$ itself). Our goal in this
section is to describe a version of this which is stable, in the sense that small changes
in the sets $F$ and $\YY$ only alter the tree and its subtrees in a controlled way -- independently of the
diameter of $F$ or the cardinality of $\YY$. 

{\em Remark:} For convenience in our discussion we allow ourselves to assume that the points of $F$ are
all leaves of $\ltree(F)$. This can be arranged by a slight perturbation, or by
considering each point of $F$ as the endpoint of an additional edge of length 0.

Given $E\gg \ep$, let $\CC_E(\YY\union F)$ be the graph whose edges
connect points of $\YY\union F$ that are at most $E$ apart. Vertex sets of connected components of
$\CC_E$ are called \textbf{clusters}.  We will choose $E$ to be a suitably large multiple of
$\ep'$.  We note that the relation of $E$ and $\ep',\ep$ is the one sensitive part of the argument, and
elsewhere we can be content with order-of-magnitude arguments.

For a simplicial tree $T$, let $d(v)$ be the valence of each vertex $v$ and let $k(T)$ be
the number of leaves, i.e. vertices of valence 1. We have
$\sum_{d(v)>2}( d(v)-2) = k-2$, for example by an Euler characteristic argument. We call this quantity the \textbf{total branching} of $T$.

The following theorem is the main result of this section:

\begin{theorem}{thm:stable tree}
  Given $k,N, \delta$, and $\ep\geq \ep_0(k,\delta)$ as in Lemma \ref{basic tree lemma} there exists $K=K(k, N, \delta)>0$ such that the following holds.
Let $\ZZ$ be a geodesic $\delta$-hyperbolic space and let
$F,\YY\subset \ZZ$ be finite subsets, where $|F|\le k$ and $\YY\subset N_{\ep/2}(hull(F))$.

There exists a metric tree
$T=T(F,\YY)$ with a
decomposition into two forests $T=T_c \union T_e$ intersecting along a
finite set of points, and a map $\Xi=\Xi_{F,\YY}:T(F,\YY)\subset\ZZ$ such that
\begin{enumerate}[$(a)$]
\item\label{item:branching} The total branching of $T$ is bounded by $2k-4$.
  \item\label{item:qi} $\Xi$ is a $(K,K)$--quasi-isometric embedding with image $K$--Hausdorff close to $hull(F)$.
  \item\label{item:qi2} For each component $\tau$ of $T_e$ we have that $\Xi|_\tau$ is a $(1,K)$--quasi-isometric embedding, and an isometry onto $\Xi(\tau)$ endowed with its path metric.
    \item\label{item:bijection} There is a bijection $b$ between components of $T_c$ and
      clusters in $\CC_E(\YY\union F)$, so that each component $\tau$ of $T_c$ is so that $\Xi(\tau)$ is $K$--Hausdorff close to $b(\tau)$. 
\end{enumerate}

Furthermore, if $F',\YY'\subset \ZZ$ and $g\in Isom(\ZZ)$ are such that $|F'|\le n+1$, $\YY'$ is finite, $d_{Haus}(gF,F') \le 1$, and $|g\YY \symdiff \YY'| < N$, then there exists a constant $L = L(N, k, \delta)>0$ and subsets $T_s \subset T_e(F,\YY)$ and $T'_s \subset T_e(F', \YY')$ such that, identifying components of $T_e(F,\YY), T_e(F', \YY')$ with their images in $\ZZ$, we have:
\begin{enumerate}
\item The components of $T_s$ and $T'_s$ are contained in the edges of $T_e(F,\YY)$ and $T_e(F', \YY')$, respectively.
\item The complements $T_e(F,\YY) \ssm T_s$ and $T_e(F', \YY') \ssm T'_s$ have at most $L$ components each of diameter at most $L$.
\item There is a bijective correspondence between the sets of the components of $gT_s, T'_s$.
\item Under this correspondence, all but $L$ components are exactly the same, and the identical components of $T_s$ and $T'_s$ come from the identical components of $T_e(F, \YY)$ and $T_e(F', \YY')$. 
\item The remaining $L$ components of $gT_s$ are each at Hausdorff distance $L$ of the corresponding component in $T'_s$.
\end{enumerate}

\end{theorem}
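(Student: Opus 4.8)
The plan is to build the tree $T(F,\YY)$ in three layers — the clusters, the internal spanning structure of each cluster, and the external connections between clusters — and to make each layer depend only on combinatorial data that is stable under the allowed perturbations. First I would fix a Gromov modeling tree $\ltree(F)$ as in Lemma \ref{basic tree lemma} and observe, using the Bounded Geodesic Image-flavored consistency already built into the situation (the $\YY$ come from relative projections, which lie $\ep/2$-close to $hull(F)$), that each cluster of $\CC_E(\YY\union F)$ is $O(\ep')$-Hausdorff close to a connected subtree of $\ltree(F)$, and that distinct clusters sit on disjoint such subtrees; this gives property \eqref{item:bijection}. The key separation input is that $E$ is chosen a large multiple of $\ep'$, so the ``cut points'' of $\ltree(F)$ that separate one cluster from another are forced, independent of $\diam(F)$. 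From this I would define $T_c$ to be a minimal spanning network $\lforest$ of each cluster (connecting its points inside $\ZZ$) and $T_e$ to be a minimal spanning network interconnecting the clusters and the points of $F$ that are not in any cluster; the metric tree $T$ is the union, with the map $\Xi$ being the inclusion of the network into $\ZZ$. Properties \eqref{item:branching}, \eqref{item:qi}, \eqref{item:qi2} then follow from standard thin-triangles arguments: the total branching of a tree spanning $\le k$ points (here at most $2k$ ``effective'' points, clusters plus leaves) is at most $2k-4$, and a minimal network in a $\delta$-hyperbolic space is a $(1,O(\delta))$-quasigeodesic tree each of whose edges is genuinely geodesic-like.

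Next I would set up the separation-graph combinatorics that makes the construction canonical. The crucial structural claim is that on \emph{any} Gromov modeling tree the cyclic/linear arrangement of the clusters, and hence the ``shape'' of $T_e$ (which clusters are joined by an edge, and in what branching pattern), is determined by the separation graph of Definition \ref{defn:separation graph} — i.e.\ by which triples of clusters have one separating the other two in $\ZZ$. Once this is established, $T_e$ as a combinatorial tree with labeled vertices (= clusters and lone points of $F$) depends only on: the set of clusters, the set $F$, and the separation relation. For the stability half, I would compare the triple $(F,\YY)$ with $(g^{-1}F', g^{-1}\YY')$ after applying the isometry $g$; WLOG $g = \mathrm{id}$. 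By hypothesis $d_{Haus}(F,F')\le 1$ and $|\YY\symdiff\YY'|<N$. I would argue that adding/removing $<N$ points to $\YY$ and moving $F$ by $1$ can only (i) merge or split clusters in at most $O(N)$ places, (ii) create/destroy at most $O(N)$ separation relations, and (iii) change the labeled combinatorial tree $T_e$ in a region of bounded size. Everywhere outside a bounded subforest, the clusters coincide as subsets of $\ZZ$ and the external edges coincide as subsets of $\ZZ$ — because a minimal network between two fixed finite configurations is, up to bounded Hausdorff error, determined by the configurations, and where the configurations literally agree one can choose the same network. This yields the subsets $T_s \subset T_e(F,\YY)$ and $T'_s\subset T_e(F',\YY')$: $T_s$ is the union of those edges of $T_e(F,\YY)$ whose two endpoint-clusters are unaffected and whose connecting geodesic data is unaffected; its complement has $\le L$ components, each of bounded diameter (the affected region, plus bounded ``collar'' pieces near the affected clusters). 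The bijection of components of $gT_s$ and $T'_s$ is then the obvious one, with all but $L$ literally equal and the remaining $\le L$ at Hausdorff distance $\le L$ because they are minimal networks between $1$-close endpoint data.

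I expect the main obstacle to be the one the authors flag themselves: the precise bookkeeping of $E$ versus $\ep'$ and $\ep$, i.e.\ showing that clusters really do sit on disjoint subtrees of every modeling tree with a \emph{uniform} separation gap, so that ``boundedly many clusters are affected'' is actually true rather than ``a number of clusters proportional to $\diam(F)$.'' Concretely, the delicate point is that moving $F$ by $1$ could a priori ripple along the whole tree and shift every cluster slightly; one must show that the \emph{combinatorial} cluster decomposition (which points are grouped together) is locally determined, so that only clusters near the actual perturbation change, while the rest are bitwise identical. This is exactly where the choice of $E$ as a large multiple of $\ep'$ — creating a definite separation between ``being in a cluster'' ($\le E/2$, say) and ``being between clusters'' ($\ge E$) — does the work: it converts the metric perturbation into a combinatorially sparse change. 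A secondary technical nuisance will be the internal structure $T_c$: since we do not claim stability of $T_c$ (only of $T_e$, and only of the long edges $T_s \subset T_e$), I would be careful to route all the ``furthermore'' conclusions through $T_e$ alone, noting that $T_s$ is chosen inside the edges of $T_e$ precisely to avoid the branch points (which live at the cluster-attachment vertices) and hence to avoid any dependence on the unstable $T_c$. The final cleanup — checking \eqref{item:qi}, \eqref{item:qi2}, and the four numbered stability items against the definitions — is then routine thin-triangle estimation with constants absorbed into $K$ and $L$.
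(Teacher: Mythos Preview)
Your overall strategy is the paper's, and you correctly identify the main obstacle (bookkeeping of $E$ versus $\ep'$ so that only boundedly many clusters are disturbed). But the proposal has a genuine gap in how $T_e$ is built, and this gap is exactly where the paper's real work lies.

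You define $T_e$ as \emph{one} minimal spanning network $\lforest(C_1,\ldots,C_m)$ over \emph{all} clusters. Two problems follow. First, your branching argument (``at most $2k$ effective points'') is wrong as stated: the number $m$ of clusters is unbounded in $\diam(F)$, so a spanning network of the clusters is not a priori a tree with $\le 2k$ leaves. Second, and more seriously, a single global minimization is not locally determined: changing one cluster can in principle change the minimizer everywhere, so the sentence ``where the configurations literally agree one can choose the same network'' does not deliver item (4) of the ``furthermore'' (literal identity of all but $L$ components). You need the construction of $T_e$ to be a \emph{deterministic function of local data}, so that unchanged local data gives a literally unchanged piece.

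The paper supplies exactly this missing mechanism. Working in the separation graph $\GG$, it proves (Lemma~\ref{clusterfacts}) that all but at most $2k-2$ clusters are \emph{bivalent} in $\GG$ and do not contain points of $F$; call this set $\EE^0$. By Lemma~\ref{components of E}, the components of $\EE$ are paths along single edges of $\ltree(F)$. One then takes $\VV$ to be the closures of the components of $\GG\setminus\EE^0$: most elements of $\VV$ are single edges $[C,D]$ with $C,D\in\EE^0$, and the remaining boundedly many are bounded-size subgraphs. Now define $T_e=\bigsqcup_{V\in\VV}\lforest(V^0)$, one minimal network \emph{per} $V$. Each piece depends only on the clusters in $V^0$; since a perturbation changes boundedly many clusters and hence boundedly many $V$'s, all other pieces are literally identical (the function $\lforest$ is fixed once and for all). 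This is what yields items (3)--(5). It also repairs the branching count: bivalent clusters become valence-$2$ vertices of $T$, so the only leaves come from $F$ and from the $\le k-2$ non-bivalent, $F$-free clusters, giving $\le 2k-2$ leaves and branching $\le 2k-4$ (Lemma~\ref{minimal networks don't overlap}).

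Two smaller corrections. Shadows of distinct clusters are \emph{not} disjoint in general; Lemma~\ref{shadow intersection} (and Figure~\ref{shadow-overlap}) shows they can overlap, though with bounded diameter and never at leaves --- this is why the $\VV$-decomposition is done in $\GG$ rather than directly in $\ltree(F)$. And every point of $F$ lies in some cluster (it is a vertex of $\CC_E$), so there are no ``lone points of $F$'' to connect separately.
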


\realfig{stabletree}{An example of the stable tree $T =  T_c\union T_e$ provided by
  Theorem \ref{thm:stable tree}.}
  
We call the trees $T(F, \YY)$ \textbf{stable trees}.

\begin{rem}[Coarse equivariance and its proof]\label{rem:equivariance}
 The ``furthermore'' part of Theorem \ref{thm:stable tree} can be interpreted as simultaneously stating two facts. For $g$ the identity, it says that the trees are stable under perturbations of $F$ and $\YY$. Instead, for $F'=gF$ and $\YY'=g\YY$, it says that the construction is coarsely equivariant. In either case, what we have to prove is essentially the following. The construction relies on certain choices, namely the choices of functions $\ltree,\lforest$ as above, and we have to show that these only cause the kinds of perturbations described in the statement of the theorem. From this perspective, it is clear that the proof for a general $g$ is the same as that for $g=1$, as $g T(F, \YY)$ coincides with the tree $T(gF, g\YY)$ constructed based on different choices. To save notation and make the proof more readable, we will prove only the case where $g$ is the identity.
\end{rem}

\subsection{Cluster separation graph}\label{separation graph}
Let $\CC_E = \CC_E(F\union\YY)$ be as above and let $C_1,C_2,C_3 \subset \CC^0_{E}$ be clusters
(i.e. vertex sets of connected components). We say that $C_2$ \textbf{separates $C_1$ 
from $C_3$ in $\ZZ$} if there exists a minimal $\ZZ$-geodesic segment $\sigma$ with
endpoints on 
$C_1$ and $C_3$ which meets the $2\ep'$-neighborhood of $C_2$.

\begin{definition}{defn:separation graph}
Let $\GG_E=\GG_E(F\union\YY)$ be a graph whose vertex set $\GG^0_E$ is the set of clusters of
$\CC_E$, and where
$[C_1,C_2]$ is an edge whenever there is no cluster separating $C_1$ from
$C_2$ in $\ZZ$.  We call $\GG_E$ the \emph{separation graph} for $\CC_E$.
\end{definition}

\begin{lemma}{G connected}
If $E > 4\ep'$, then $\GG_E$ is connected.
\end{lemma}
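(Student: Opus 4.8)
\textbf{Proof plan for Lemma \ref{G connected}.}

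The plan is to show that for any two clusters $C, C'$, there is a path in $\GG_E$ connecting them, by induction on the number of clusters lying ``between'' $C$ and $C'$. The natural measure of betweenness is the following: fix base vertices $c\in C$ and $c'\in C'$ realizing $\dist_\ZZ(C,C')$, fix a $\ZZ$-geodesic $\sigma$ from $c$ to $c'$, and count the clusters $C''$ with $C''$ meeting $N_{2\ep'}(\sigma)$ — equivalently (up to adjusting constants via hyperbolicity), the clusters that separate $C$ from $C'$ in $\ZZ$. If no cluster separates $C$ from $C'$, then $[C,C']$ is an edge of $\GG_E$ by definition and we are done. Otherwise, pick a separating cluster $C''$; I want to argue that $C''$ is ``closer'' to both $C$ and $C'$ than they are to each other, so that induction applies to the pairs $(C, C'')$ and $(C'', C')$, and concatenating the resulting $\GG_E$-paths finishes the proof. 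The key point to make this induction bottom out is that any cluster separating $C$ from $C''$ also separates $C$ from $C'$ (and similarly on the other side), so the separation count strictly decreases; this uses that the geodesic triangle with vertices near $c$, near a point of $C''$ on $\sigma$, and near $c'$ is $\delta$-thin, so a point within $2\ep'$ of the $c$-to-$C''$ side is within $2\ep' + \delta$ (hence within $2\ep'$ after adjusting $E$, or after a harmless loosening of the separation relation) of $\sigma$ itself.

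The one genuinely non-trivial input is the hypothesis $E > 4\ep'$, and this is where I expect the real work to be. The issue is that the notion of ``cluster'' depends on the scale $E$: a minimal geodesic $\sigma$ between $C_1$ and $C_3$ passing through $N_{2\ep'}(C_2)$ is only useful if the points of $\CC_E$ that $\sigma$ must pass near are actually forced to lie in $C_2$ rather than in some other cluster, or — worse — if $\sigma$ slips through a gap. Concretely: since $\YY \subset N_{\ep/2}(hull(F))$ and $hull(F)$ is $\ep/2$-close to $\ltree(F)$ (Lemma \ref{basic tree lemma}), every point of $\sigma$ lies within $O(\ep')$ of $\YY\cup F$ — in fact, walking along $\sigma$ one encounters points of $\YY\cup F$ at spacing at most something comparable to $\ep'$, because $\sigma$ stays near the tree $\ltree(F)$ and the tree is ``filled'' by $\YY\cup F$ at scale controlled by $\ep'$ (this needs to be spelled out: it is really a statement that consecutive points of $\YY\cup F$ along a branch of $\ltree(F)$ are within $2\ep'$ of $\sigma$). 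Once $E > 4\ep'$ one checks that consecutive such points are within $E$ of each other, hence lie in a common cluster, so the clusters met by $\sigma$ form a ``connected chain'' from $C$ to $C'$; the separating cluster $C''$ is then simply the next cluster in this chain after $C$, and by construction no cluster separates $C$ from $C''$ (any such would have to be a cluster met by the sub-geodesic of $\sigma$ strictly before $C''$, but there are none), so $[C, C'']$ is an edge of $\GG_E$. This gives connectivity directly, without even needing the induction — the chain of clusters along $\sigma$, with consecutive ones joined by edges of $\GG_E$, is a path from $C$ to $C'$.

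So the cleanest writeup is: fix $C, C'$, a realizing geodesic $\sigma$; list the clusters $C = D_0, D_1, \dots, D_m = C'$ met by $\sigma$ in the order their nearest points appear along $\sigma$; use $E > 4\ep'$ together with $\YY\subset N_{\ep/2}(hull(F))$ and Lemma \ref{basic tree lemma} to see this list exhausts all clusters within $2\ep'$ of $\sigma$ and that the nearest-point parameters are spaced by less than $E$ wait — one must be a little careful, since two points of $\sigma$ within $E$ of each other need not lie in the same cluster unless they are also close to a common chain of $\YY\cup F$ points; the right statement is that the $\YY\cup F$ points near $\sigma$ form an $E$-chain, and each $D_i$ is the cluster of a consecutive block of them; then observe $[D_{i}, D_{i+1}]$ is an edge of $\GG_E$ because any cluster separating $D_i$ from $D_{i+1}$ would have to be met by $\sigma$ strictly between them, and there is none. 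Concatenating $[D_0,D_1],\dots,[D_{m-1},D_m]$ gives a path in $\GG_E$ from $C$ to $C'$, proving connectivity. The anticipated main obstacle is precisely the bookkeeping in the previous sentence: verifying, with explicit constants, that the quantitative hypotheses ($E > 4\ep'$, $\YY$ in the $\ep/2$-neighborhood of the hull, and the geometry of $\ltree(F)$ from Lemma \ref{basic tree lemma}) genuinely force the $\YY\cup F$-points along $\sigma$ into one $E$-chain with no gaps, so that ``the clusters along $\sigma$'' really is a well-defined finite sequence linking $C$ to $C'$.
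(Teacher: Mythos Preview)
Your second approach—chaining clusters along $\sigma$—rests on a claim that is not just bookkeeping but actually false. You assert that ``the tree is `filled' by $\YY\cup F$ at scale controlled by $\ep'$'' so that the $\YY\cup F$-points along $\sigma$ form an $E$-chain. But the only hypothesis on $\YY$ is the containment $\YY \subset N_{\ep/2}(hull(F))$, which says nothing about density in the other direction. Take $\ZZ = \R$, $F = \{0, 10^6\}$, $\YY = \{5\cdot 10^5\}$: the three points of $\YY \cup F$ are pairwise $5\cdot 10^5$ apart, so there is no $E$-chain along $\sigma$. (Connectivity of $\GG_E$ still holds here, but not for the reason you give.) Your argument that $[D_i,D_{i+1}]$ is an edge also needs more care even absent this: a cluster $B$ separating $D_i$ from $D_{i+1}$ is by definition near \emph{some} minimal geodesic between them, not necessarily near the relevant piece of $\sigma$, and clusters can be large so the endpoints of these geodesics need not be close.

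Your first, inductive, approach is the right idea, and the paper executes it cleanly by inducting on $d_\ZZ(C,C')$ rather than on the separation count—this sidesteps any need for your lemma that ``any cluster separating $C$ from $C''$ also separates $C$ from $C'$.'' If $C, C'$ are not adjacent, pick a separating cluster $B$ and a point $p$ on a minimal geodesic $\sigma$ from $C$ to $C'$ with $d(p,B) \le 2\ep'$. Since distinct clusters are at distance $> E$, the point $p$ lies at least $E - 2\ep'$ from each endpoint of $\sigma$, so
\[
d_\ZZ(B,C) \le 2\ep' + \bigl(d_\ZZ(C,C') - (E - 2\ep')\bigr) = d_\ZZ(C,C') - E + 4\ep',
\]
and symmetrically for $d_\ZZ(B,C')$. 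Since $E > 4\ep'$ this is a strict decrease and induction applies; in the base case $d_\ZZ(C,C') \le 2E - 4\ep'$ the same inequality forces $d_\ZZ(B,C) \le E$, contradicting that $B$ and $C$ are distinct clusters, so $C$ and $C'$ are already adjacent. No hyperbolicity beyond what is packaged in $\ep'$, and no structure of $\YY$ whatsoever, is used.
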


\begin{proof}
Let $C,C' \in \GG^0_E$. If $C \neq C'$, then $d_\ZZ(C,C') > E$.  If
they are not adjacent in $\GG$, then there is a third cluster $B$
separating them in $\ZZ$. Let $\sigma$ be a minimal geodesic connecting $C$ to
$C'$ with $p\in \sigma$ within $2\ep'$ of $B$. Then $p$ is distance at
least $E - 2\ep'$ from each end of $\sigma$ since $B$ is at least $E$
from both $C$ and $C'$. It follows that $d_{\ZZ}(B,C) \le d_{\ZZ}(C,C') - E +
4\ep'$, and similarly for $d_{\ZZ}(B,C')$.

If $d_\ZZ(C,C') \le 2E-4\ep'$ this gives $d_{\ZZ}(B,C) \le E$ which is a contradiction so
$C$ and $C'$ must be connected by an edge in $\GG_E$. For $d_\ZZ(C,C')> 2E-4\ep'$, we have that $d_\ZZ(B,C)$ and $d_\ZZ(B,C')$ are smaller than
$d_\ZZ(C,C')$ by at least $E-4\ep'$, so we can proceed inductively.
\end{proof}

For ease of notation, set $\CC=\CC_E$ and $\GG = \GG_E$.

\begin{definition}{defn:shadow}
For any subset $A$ of $\ZZ$, let its {\em shadow}
$s(A)$ be the subtree of $\ltree(F)$ obtained by taking the convex hull (in
$\ltree(F)$) of all the points in $\lambda(F)$ within distance $\ep$ from points of $A$. For a singleton $\{x\}$ we also write $s(x) :=s(\{x\})$. 
\end{definition}
Note that, since $\YY\union F$ is in $N_\ep(\ltree(F))$ by hypothesis, 
$s(C)\ne \emptyset$ for any non-empty subset $C\subset\YY\union F$. 

The rest of this subsection is devoted to establishing several properties of shadows which
will connect the separation properties of clusters in $\GG$ to separation properties of
their shadows in $\ltree(F)$, thereby allowing us to work with $\GG$ and independently of
$\ltree(F)$.

The next lemma controls how and when shadows of clusters can intersect.

\begin{lemma}{shadow intersection}
Let $E>7\ep$ and let $C, C' \in \GG^0$ be distinct clusters.  Then:
 \begin{enumerate}
 
  \item $s(C) \intersect s(C')$ can contain no leaf of $s(C)$ or $s(C')$;
    \item The diameter of $s(C) \intersect s(C')$ is bounded by a constant depending on
      $\#F$, $E$, and $\epsilon$. 
  \item If at least one of $s(C)$ and $s(C')$ is an interval along an edge of $\ltree(F)$,
    then $s(C) \intersect s(C') = \emptyset$.
  \end{enumerate} 
\end{lemma}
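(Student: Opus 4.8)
The plan is to translate each geometric fact about clusters in $\ZZ$ into a statement about their shadows, using Lemma \ref{basic tree lemma} to control how geodesics behave near $\ltree(F)$. Throughout I write $\lambda = \ltree(F)$, and I use that $\YY\union F \subset N_\ep(\lambda)$ and that any geodesic between two points of $N_\ep(\lambda)$ lies in $N_{\ep'}(\lambda)$.

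\emph{Part (1).} Suppose $v$ is a leaf of $s(C)$ lying in $s(C)\intersect s(C')$. By definition of the shadow, $v$ is the image in $\lambda$ of a point $a\in\lambda(F)$ within $\ep$ of a point $p\in C$, and similarly $v$ (or a nearby vertex of $\lambda(F)$) is within $\ep$ of a point $p'\in C'$; in particular $p,p'$ are within $2\ep + $ (distance along $\lambda$ spanned by the cluster $v$) of each other. The point is that being a leaf of $s(C)$ forces $p$ to be ``extremal'' in the sense that $C$ has no points projecting past $v$ in $\lambda$. I would argue that then a minimal geodesic from $C$ to $C'$ realizing their distance passes within $2\ep'$ of the single-vertex cluster region near $v$, but since $C$ and $C'$ are within $E$ of each other at the shadow $v$, this would make $d_\ZZ(C,C')$ small — and because $C\neq C'$ implies $d_\ZZ(C,C')>E$, combined with $E>7\ep$, this produces a contradiction. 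Concretely: if $v$ is a leaf of $s(C)$ then points of $C$ contributing to $s(C)$ are concentrated near $v$, so $d_\ZZ(C,C')$ is comparable to $d_\lambda$ of the relevant points plus $O(\ep)$, and the leaf condition plus $v\in s(C')$ pins this down to be $\le O(\ep) < E$.

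\emph{Part (2).} The intersection $s(C)\intersect s(C')$ is a subtree of $\lambda$, hence itself a tree; by Part (1) its leaves are not leaves of either $s(C)$ or $s(C')$, so they must be branch points of $s(C)$ or of $s(C')$. Since the total branching of $\lambda$ is bounded in terms of $\#F$, there are at most $O(\#F)$ branch points total, so $s(C)\intersect s(C')$ has boundedly many leaves, hence it is a bounded union of edges/segments. To bound the diameter I would bound the length of each such segment: along a segment of $\lambda$ inside $s(C)\intersect s(C')$, every point of $\lambda(F)$ is within $\ep$ of both a point of $C$ and a point of $C'$; but then for two points $a_1,a_2$ of $\lambda(F)$ far apart along this segment, one gets points $p_1\in C$, $p_2'\in C'$ with $d_\ZZ(p_1,p_2')\le \dist_\lambda(a_1,a_2) + O(\ep')$ going one way and $p_1'\in C', p_2 \in C$ close in the other configuration, forcing — via the fact that distinct clusters are $>E$ apart and a ``back-and-forth'' comparison along the segment — that the segment length is bounded by a constant depending on $\#F$, $E$, $\ep$. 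This back-and-forth comparison (showing a long common segment would let you connect $C$ to $C'$ cheaply) is the main obstacle, and is where the precise relation between $E$ and $\ep'$ matters; I expect to need $E$ larger than a fixed multiple of $\ep'$, consistent with the theorem's hypotheses.

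\emph{Part (3).} Suppose $s(C)$ is an interval along a single edge of $\lambda$, i.e.\ it contains no branch point of $\lambda$ in its interior, hence both its endpoints are either leaves of $\lambda$ or leaves of $s(C)$ that are non-branch points. If $s(C)\intersect s(C')\neq\emptyset$, then by Part (1) the intersection contains no leaf of $s(C)$, so it lies in the interior of the interval $s(C)$; its endpoints are then interior points of the edge of $\lambda$, hence non-branch points of $\lambda$, so they cannot be branch points of $s(C)$ or $s(C')$ either. But by the reasoning in Part (1)--(2), leaves of $s(C)\intersect s(C')$ must be leaves of $s(C)$ or $s(C')$ or branch points of one of them — all excluded — so $s(C)\intersect s(C')$ has no leaves, forcing it to be empty (a nonempty finite subtree has at least one leaf). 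This contradiction gives (3). I would double-check the degenerate cases where $s(C)$ or $s(C')$ is a single point, which are handled directly by Part (1).
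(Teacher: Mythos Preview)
Your overall architecture for (1), (2), (3) is reasonable, and your argument for (3) is essentially the paper's. But there is a genuine gap in (1) and (2): you repeatedly assume that an arbitrary point of $s(C)$ is within $\ep$ (or $O(\ep)$) of $C$. This is false. By definition $s(C)$ is the \emph{convex hull} in $\ltree(F)$ of the $\ep$-close points, so a generic point of $s(C)$ can be far from every point of $C$ --- think of a cluster whose points project to two small regions at opposite ends of a long arc of $\ltree(F)$. Only the \emph{leaves} of $s(C)$ are guaranteed to be $\ep$-close to $C$. In your Part (1) you use this correctly for the leaf of $s(C)$, but then for $C'$ you write ``similarly $v$ is within $\ep$ of a point $p'\in C'$'', which is exactly the unjustified step: $v$ is merely in $s(C')$, not a leaf of $s(C')$. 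Your final estimate ``$\le O(\ep) < E$'' is therefore wrong; the correct bound is of order $E$, namely $2\ep + (E+3\ep)/2$, and the hypothesis $E>7\ep$ is exactly what makes this strictly less than $E$.

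What is missing is the following intermediate claim, which uses that $C$ is \emph{connected} in $\CC_E$: for every $p\in s(C)$ there is a point $q\in s(C)$ with $d_{\ltree(F)}(p,q)\le (E+3\ep)/2$ and $d(q,C)\le\ep$. The proof walks along a $\CC_E$-path in $C$ between two cluster points whose shadows $p$ separates; consecutive points $x_i,x_{i+1}$ on this path have $d(x_i,x_{i+1})\le E$, so $s(\{x_i,x_{i+1}\})$ has diameter $\le E+3\ep$, and $p$ lies in one of these. With this claim in hand, (1) goes through with the arithmetic above, and (2) is proved by observing that on any edge of $\ltree(F)$ of length $>2(E+3\ep)$ contained in $s(C)\cap s(C')$, the $(E+3\ep)/2$-dense sets of ``$\ep$-close-to-$C$'' and ``$\ep$-close-to-$C'$'' points must come within $(E+3\ep)/2$ of each other, again forcing $d(C,C')<E$. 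The bound on the number of edges then comes from the branching bound on $\ltree(F)$, as you say. Your ``back-and-forth comparison'' is gesturing at this, but without the claim you cannot produce the nearby cluster points you need.
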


\realfig{shadow-overlap}{When there is branching, shadows can overlap in their interiors,
  but never at their leaves.}

\begin{proof}
Note first that for any $x\in
  C$, $s(x)$ is a subtree of diameter at most $3\ep$. This is because any two extreme
  points of $s(x)$ are within $\ep$ of $x$, and $\ltree(F)$ is $(1,\ep)$-quasi-isometrically
  embedded. Similarly, for any $x,y\in C$, $\diam_{\ltree(F)}(s(\{x,y\})) \le d(x,y)+3\ep$.

\begin{claim}\label{claim:shadow_close_to_cluster}
 For every $p\in s(C)$, there exists $q\in s(C)$ at distance (in
  $\ltree(F)$) at most $(E+3\ep)/2$, such that $d(q,C)\le \ep$.
\end{claim}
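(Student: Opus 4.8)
The plan is to exploit the fact that $C$ being a \emph{cluster} means it is connected in $\CC_E$, and that this $E$-scale connectivity forces the ``generating set'' of the shadow $s(C)$ — the set of tree points within $\ep$ of $C$ — to be coarsely $E$-dense along $s(C)$ in the tree metric, which is exactly what the claim asserts. Concretely, set $S_C=\{w\in\ltree(F):d_\ZZ(w,C)\le\ep\}$, so that by definition $s(C)$ is the convex hull of $S_C$ inside the tree $\ltree(F)$. Since the convex hull of a set in a tree is the union of the geodesics between its pairs of points, the given $p\in s(C)$ lies on a single tree geodesic $[w_1,w_2]$ with $w_1,w_2\in S_C$; fix $a_1,a_2\in C$ with $d_\ZZ(w_i,a_i)\le\ep$.

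Next I would thread the cluster through the tree. Since $C$ is the vertex set of a connected component of $\CC_E$ and $a_1,a_2\in C$, there is a chain $a_1=c_0,c_1,\dots,c_n=a_2$ of points of $C\subseteq\YY\union F$ with $d_\ZZ(c_i,c_{i+1})\le E$ for every $i$. Each $c_i$ lies in $N_\ep(\ltree(F))$ by hypothesis, so I may pick $w_i\in\ltree(F)$ with $d_\ZZ(w_i,c_i)\le\ep$, taking $w_0=w_1$ and $w_n=w_2$ (legitimate, as $c_0=a_1$ and $c_n=a_2$). Then every $w_i$ lies in $S_C\subseteq s(C)$, the triangle inequality in $\ZZ$ gives $d_\ZZ(w_i,w_{i+1})\le E+2\ep$, and since $\ltree(F)$ is $(1,\ep)$-quasi-isometrically embedded in $\ZZ$ (Lemma \ref{basic tree lemma}) this upgrades to $d_{\ltree(F)}(w_i,w_{i+1})\le E+3\ep$.

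To conclude: the concatenation $\bigcup_{i=0}^{n-1}[w_i,w_{i+1}]$ is a connected subtree of $\ltree(F)$ containing $w_1=w_0$ and $w_2=w_n$, hence it contains the geodesic $[w_1,w_2]$ and in particular $p$. Therefore $p$ lies on one segment $[w_j,w_{j+1}]$, so $p$ is within tree distance $\tfrac12 d_{\ltree(F)}(w_j,w_{j+1})\le (E+3\ep)/2$ of $q:=$ the nearer of $w_j,w_{j+1}$. This $q$ lies in $S_C\subseteq s(C)$ and satisfies $d_\ZZ(q,C)\le\ep$, which is precisely the claim.

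I do not expect a serious obstacle here; the only things to keep straight are the bookkeeping between the ambient metric $d_\ZZ$ and the tree path metric $d_{\ltree(F)}$ (handled uniformly by the $(1,\ep)$-quasi-isometric embedding of $\ltree(F)$) and the point that every intermediate vertex $c_i$ of the cluster-chain genuinely admits a tree representative in $S_C$ (immediate from $\YY\union F\subset N_\ep(\ltree(F))$). The one substantive realization is simply that ``$E$-connectedness of the cluster'' is exactly what bounds how far a point of $s(C)$ can drift from $C$, and it should also be flagged that the resulting estimate — and hence $E$ itself — must be chosen compatibly with $\ep,\ep'$, which is the ``one sensitive part of the argument'' noted earlier in the section.
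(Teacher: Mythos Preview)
Your proposal is correct and follows essentially the same approach as the paper: both arguments use the $E$-connectedness of the cluster $C$ in $\CC_E$ to produce a chain $c_0,\dots,c_n$ in $C$, project it to the tree, and observe that the resulting tree segments have length at most $E+3\ep$ and cover $p$. The paper phrases this via the shadows $s(\{x_i,x_{i+1}\})$ (and splits off a separate easy case when $p$ is already near an extreme shadow $s(x)$), while you handle everything uniformly via the tree segments $[w_i,w_{i+1}]$; the content is the same, modulo a mild notational overload of the symbols $w_1,w_2$.
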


  \begin{proof}
Either $p\in s(x)$ for some $s(x)$ containing an extreme point of $s(C)$, or $p$
  separates some $s(x)$ from $s(y)$, for $x,y\in C$. In the first case $p$ is within
  $3\ep/2$ of a point $q$ for which $d(q,x)\le \ep$ and we are done. In the second case, 
a path in $\CC_E$ from $x$ to $y$ then yields a sequence of points $x_i\in C$ such that
$d(x_i,x_{i+1}) \le E$ and $p$ is contained in one of the shadows $s(\{x_i,x_{i+1}\})$. Since
$\diam_{\ltree(F)}(s(\{x_i,x_{i+1}\}) \le E+3\ep$, we find that $p$ is within $(E+3\ep)/2$ of an
extreme point $q$ of $s(\{x_i,x_{i+1}\})$, so that $d(q,x_i)\le \ep$ or $d(q,x_{i+1})\le
\ep$. The claim follows.
\end{proof}

For (1), suppose that a leaf $p$ of $s(C)$ is in $s(C')$. 
Note that the leaves of $s(C)$ and $s(C')$ are within $\ep$ of $C$ and $C'$, respectively. 
By the previous paragraph, there is a point $q$ of $s(C')$ within $(E+3\ep)/2$ of $p$ which is
$\ep$ close to $C'$
Thus we obtain $d(C,C')\le 2\ep  + (E+3\ep)/2 < E$, so $C=C'$.

For (2), suppose that $s(C)\intersect s(C')$ contains an edge $e$ of length greater than
$2(E+3\ep)$. Claim (\ref{claim:shadow_close_to_cluster}) implies that there is a set $R$
in $s(C)$ consisting of points at distance $\ep$ from $C$ and whose
$(E+3\ep)/2$-neighborhood covers $s(C)$; there is also a similar set $R'$ in $s(C')$. Since $e$ is
in both shadows, it must be that $e\intersect R$ and $e\intersect R'$ both cut $e$ into
intervals of length at most $E+3\ep$. Thus it must be that there is a point $r\in
R\intersect e$ and $r'\in R'\intersect e$ that are distance $(E+3\ep)/2$ apart. Then just
as before we obtain $d(C,C') < E$ so $C=C'$.  Now the number of edges in $s(C)\intersect
s(C')$ is bounded by the total branching of the tree, which depends on $\#F$. This gives
(2).

Finally, for (3), if one of $s(C)$ and $s(C')$ is an interval contained in 
an edge of $\ltree(F)$ then it is easy to see that, if they overlap, then
one must contain a leaf of the other, thereby violating (1).
\end{proof}

\begin{definition}{defn:E}
From now on we set $E=8\ep'$ so that the conclusions of both  Lemmas \ref{G connected} and
\ref{shadow intersection} hold.
\end{definition}

The following lemma connects the separation properties in $\GG$ of a cluster to the separation properties in $\ltree(F)$ of its shadow:

\begin{lemma}{general separation}
Let $C$ be a cluster and $S_1,\ldots,S_k$ be the components of $\ltree(F) \ssm int(s(C))$ which
meet $s(C)$ at a leaf of $s(C)$. Let $G_i$ be the set of clusters $B\in \GG^0\ssm\{C\}$ such
that $s(B)\intersect S_i \ne \emptyset$. Then each $G_i$ is in a distinct component of
$\GG\ssm C$, and moreover the valence of $C$ in $\GG$ at least
$k$. 
\end{lemma}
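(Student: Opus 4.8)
The plan is to relate separation in the cluster graph $\GG$ to separation of shadows in the tree $\ltree(F)$, using the shadow lemmas established just above. The key technical bridge is this: a cluster $B$ separates $C$ from $B'$ in $\ZZ$ (in the sense of Definition \ref{defn:separation graph}) if and only if, roughly speaking, the shadow $s(B)$ lies ``between'' $s(C)$ and $s(B')$ in the tree $\ltree(F)$, i.e. $s(B)$ meets the component of $\ltree(F)\ssm \interior(s(C))$ on the $s(B')$-side and $s(B)$ together with $s(C)$ cuts $s(B')$ off from $s(C)$. The reason is that $\ltree(F)$ is a $(1,\ep)$-quasi-isometry onto $hull(F)$ (Lemma \ref{basic tree lemma}), so $\ZZ$-geodesics between points of $\YY\union F$ track geodesics in $\ltree(F)$ up to the controlled error $\ep'$, and then a $\ZZ$-geodesic from $C$ to $B'$ passes $2\ep'$-close to $B$ precisely when its $\ltree(F)$-shadow passes through $s(B)$. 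One must be careful that shadows can overlap in their interiors when there is branching, but Lemma \ref{shadow intersection}(1) guarantees that a shadow never contains a leaf of another shadow, which is exactly what is needed to make the ``betweenness in the tree'' notion well-posed.

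First I would fix the cluster $C$ and the components $S_1,\dots,S_k$ of $\ltree(F)\ssm\interior(s(C))$ meeting $s(C)$ at a leaf, and let $\ell_i$ be the corresponding leaf of $s(C)$. For any cluster $B\ne C$, its shadow $s(B)$ is disjoint from $\interior(s(C))$ up to the bounded overlap controlled by Lemma \ref{shadow intersection}(2) and cannot contain the leaf $\ell_i$; so $s(B)$, if it meets $S_i$, is essentially contained in the closure of $S_i$ and ``hangs off'' $s(C)$ through the single leaf $\ell_i$. Hence the $G_i$ partition (most of) $\GG^0\ssm\{C\}$. I would then show that if $B\in G_i$ and $B'\in G_j$ with $i\ne j$, then $C$ separates $B$ from $B'$: any $\ZZ$-geodesic from $B$ to $B'$ has $\ltree(F)$-shadow a path from (near) $S_i$ to (near) $S_j$, which must pass through $s(C)$, hence comes $2\ep'$-close to $C$ — here using that $E=8\ep'$ is large enough relative to $\ep'$ and the quasi-isometry constants so that the geodesic genuinely enters the $2\ep'$-neighborhood of $C$ rather than skirting it. Thus $[B,B']$ is not an edge of $\GG$, and more strongly no edge-path in $\GG\ssm C$ can connect $G_i$ to $G_j$: inductively, any cluster $B''$ adjacent in $\GG$ to some $B\in G_i$ must itself have shadow meeting $\bar S_i$ (else $C$ would separate $B$ from $B''$ by the same argument), so $B''\in G_i$; this is where one does a short induction along paths in $\GG\ssm C$ to propagate membership in $G_i$.

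For the valence bound, I need to exhibit, for each $i$, an edge of $\GG$ from $C$ to some cluster in $G_i$. The natural candidate is a cluster $B_i\in G_i$ whose shadow is ``closest'' to $\ell_i$ along $S_i$ — more precisely, among clusters with $s(B)\cap S_i\ne\emptyset$ choose one minimizing distance in $\ltree(F)$ from $s(B)$ to $\ell_i$. I would then check that no cluster separates $C$ from $B_i$ in $\ZZ$: a separating cluster $B''$ would have to have $s(B'')$ lying between $s(C)$ and $s(B_i)$ in the tree, forcing $s(B'')\cap S_i\ne\emptyset$ and $s(B'')$ strictly closer to $\ell_i$ than $s(B_i)$, contradicting minimality — modulo the usual $\ep'$-fuzz, which is absorbed by the largeness of $E$. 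Each $G_i$ is nonempty by construction (the $S_i$ meet $\ltree(F)$ nontrivially and $\YY\union F\subset N_\ep(\ltree(F))$, so each $S_i$ contains points of $\YY\union F$ and hence lies in the shadow of some cluster), so we get $k$ distinct neighbors of $C$, one per $G_i$, giving valence $\ge k$.

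The main obstacle is the quantitative bookkeeping in the betweenness-of-shadows argument: shadows genuinely overlap at branch points, geodesics in $\ZZ$ only coarsely track tree geodesics, and the threshold $E=8\ep'$ has to be shown large enough that ``$s(B)$ meets $S_i$'' and ``a $\ZZ$-geodesic from $C$ to $B'$ comes $2\ep'$-close to $B$'' really are logically equivalent rather than merely coarsely related. I expect the cleanest route is to prove once and for all a ``geodesic-shadow lemma'': for clusters $A,A'$ and any minimal $\ZZ$-geodesic $\sigma$ from $A$ to $A'$, the set of clusters $B$ with $\sigma\cap N_{2\ep'}(B)\ne\emptyset$ is exactly the set of clusters whose shadow $s(B)$ meets the $\ltree(F)$-geodesic from $s(A)$ to $s(A')$, up to a bounded (depending on $\#F,E,\ep$) ambiguity coming from the overlap estimate in Lemma \ref{shadow intersection}(2). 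Once that is in hand, the separation and valence statements reduce to the purely combinatorial fact that in a tree, the components of $T\ssm\interior(s(C))$ meeting $s(C)$ at its leaves are separated from one another by $s(C)$, and each such component is ``reached first'' by a well-defined closest cluster.
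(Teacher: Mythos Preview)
Your approach is correct and shares the paper's core idea: shadows of clusters in $G_i$ lie entirely in $S_i$ (disjointly from $s(C)$, via Lemma~\ref{shadow intersection}(1)), so any $\ZZ$-geodesic between clusters on different sides tracks a tree path through the leaf $\ell_i$ of $s(C)$ and hence comes within $\ep+\ep'<2\ep'$ of $C$. The paper streamlines your execution in three ways. First, it shows directly that \emph{any} $A\in G_i$ and \emph{any} $B\notin G_i\cup\{C\}$ are separated by $C$ (same argument: the tree path from $s(A)\subset S_i$ to $s(B)$ must exit $S_i$ through $\ell_i$); this immediately makes $G_i$ a union of components of $\GG\setminus C$, so your inductive propagation along paths is unnecessary. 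Second, for the valence bound the paper does not construct explicit edges: since $\GG$ is connected (Lemma~\ref{G connected}) and the nonempty $G_i$ lie in distinct components of $\GG\setminus C$, the vertex $C$ must have an edge into each, giving valence $\ge k$. This sidesteps your closest-shadow argument and the bidirectional geodesic--shadow lemma you flag as the main obstacle --- only the easy direction is ever used, and the sole quantitative input is $\ep+\ep'<2\ep'$. Third, your justification for $G_i\ne\emptyset$ is not quite right (the inclusion $\YY\cup F\subset N_\ep(\ltree(F))$ does not force $S_i$ to contain points of $\YY\cup F$); the correct reason is that $S_i$, being a complementary subtree attached at a single point, must contain a leaf of $\ltree(F)$, and leaves of $\ltree(F)$ are points of $F$.
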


\realfig{clustervalence}{The cluster $C$ is guaranteed valence at least 3 (via $S_1,S_2$
  and $S_3$) by Lemma \ref{general separation}. In this case it has valence 4. }

\begin{proof}
Note that if $B$ is a cluster in $G_i$ then $s(B)$ is actually
disjoint from $s(C)$, since the leaves of $s(C)$ cannot meet $s(B)$
by Lemma \ref{shadow intersection}.  Moreover, there may be
clusters $C'\ne C$ that are not in any $G_i$; their shadows meet
components of $\ltree(F)\ssm int(s(C))$ that do not meet leaves of $s(C)$. 

Let $A\in G_i$ and $B\notin G_i\union\{C\}$.
A minimal geodesic $\sigma $ in $\ZZ$ connecting $A$ to $B$ must be $\ep'$-close to the path in
$\ltree(F)$ connecting $s(A)$ to $s(B)$, and this path passes through a leaf $p$ of $s(C)$ (namely $s(C)\intersect S_i$). Thus there is a point of $C$ within
$\ep+\ep' < 2\ep'$ of 
$\sigma$, so $C$ separates $A$ from $B$ in $\ZZ$. In particular $A$ and $B$ cannot be
adjacent in $\GG$. 

Thus $G_i$ cannot be connected to any vertex in $\GG^0 \ssm (G_i\union\{C\})$, which implies
distinct $G_i$ are in distinct components of $\GG\ssm C$. 

To see that the valence is at least $k$, we must check that each $G_i$
is nonempty. But each $S_i$ must contain a leaf of $\ltree(F)$, which
is a point of $F$, so there must be a cluster whose shadow is in $S_i$. 
\end{proof}

\begin{lemma}{edge bivalent}
  If $e$ is an edge of $\ltree(F)$, the clusters $C$ whose shadows
  $s(C)$ are subintervals of $e$ form a path in $\GG$ whose interior
  vertices are bivalent. The ordering of this path matches the
  ordering of the shadows in $e$.
\end{lemma}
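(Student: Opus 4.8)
The plan is to exploit Lemma \ref{shadow intersection}(3) to show that the clusters whose shadows lie along a fixed edge $e$ are \emph{linearly ordered} by their position in $e$, and then to feed this ordering into Lemma \ref{general separation} to get bivalence of the interior vertices. Fix an edge $e$ of $\ltree(F)$ with a chosen orientation, so that $e$ is isometric to an interval $[0,\ell]$, and let $C_1,\dots,C_m$ be the clusters whose shadows $s(C_j)$ are subintervals of $e$. By Lemma \ref{shadow intersection}(3), any two such shadows are disjoint (an interval-shadow cannot overlap another cluster's shadow without containing a leaf of it, contradicting (1)), so the intervals $s(C_1),\dots,s(C_m)$ are pairwise disjoint subintervals of $[0,\ell]$; order them so that $s(C_1)<s(C_2)<\dots<s(C_m)$ along $e$.

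First I would show consecutive clusters in this order are adjacent in $\GG$. Suppose $C_a$ and $C_b$ are consecutive (no $s(C_j)$ with $a<j<b$... but by the chosen order there is none between them) and that some cluster $B$ separates $C_a$ from $C_b$ in $\ZZ$: a minimal $\ZZ$-geodesic $\sigma$ from $C_a$ to $C_b$ then passes $2\ep'$-close to $B$, and by Lemma \ref{basic tree lemma} (and the argument as in Lemma \ref{general separation}) $\sigma$ stays $\ep'$-close to the $\ltree(F)$-path joining $s(C_a)$ to $s(C_b)$, which is exactly the subinterval of $e$ between them. So $s(B)$ must meet that subinterval; but $s(B)$ cannot be contained in $e$ (nothing lies strictly between $s(C_a)$ and $s(C_b)$), so $s(B)$ is not an interval along $e$, and then Lemma \ref{shadow intersection}(3) (applied to $s(B)$ versus $s(C_a)$, which \emph{is} an interval along $e$) forces $s(B)\cap s(C_a)=\emptyset$, and similarly $s(B)\cap s(C_b)=\emptyset$ --- yet $s(B)$ meeting the closed subinterval between $s(C_a)$ and $s(C_b)$ means $s(B)$ touches a leaf of $s(C_a)$ or of $s(C_b)$, contradicting Lemma \ref{shadow intersection}(1). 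Hence no cluster separates $C_a$ from $C_b$, so $[C_a,C_b]$ is an edge of $\GG$, and the $C_j$ trace out a path $C_1-C_2-\dots-C_m$ in $\GG$ whose ordering matches the ordering of shadows in $e$.

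Next I would show an interior vertex $C_j$ ($1<j<m$) is bivalent. The two leaves of the interval $s(C_j)$ are the two endpoints $p^-,p^+$ of the subinterval, and $\ltree(F)\ssm\interior(s(C_j))$ has exactly two components $S^-\ni p^-$ and $S^+\ni p^+$ meeting $s(C_j)$ at a leaf (since $s(C_j)\subset e$ and $e$ is an edge, removing the open interval from the tree splits it into the piece on the $0$-side and the piece on the $\ell$-side). Lemma \ref{general separation} then says each of $S^-$ and $S^+$ gives a distinct component of $\GG\ssm C_j$, so $C_j$ has valence $\ge 2$; conversely any cluster $B\ne C_j$ adjacent to $C_j$ has $s(B)$ disjoint from $s(C_j)$ (again by Lemma \ref{shadow intersection}(1), since $s(C_j)$ is an interval so $s(B)$ cannot meet a leaf of it and by (3) cannot meet $s(C_j)$ at all unless $s(B)\subset e$, in which case it's some $C_i$, $i\ne j$), so $s(B)$ lies in $S^-$ or $S^+$; and by Lemma \ref{general separation} the neighbors on the $S^-$ side all lie in one component of $\GG\ssm C_j$ and those on the $S^+$ side in another, and within the path the unique neighbor with shadow in $S^-$ is $C_{j-1}$ and with shadow in $S^+$ is $C_{j+1}$. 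The only step with any subtlety is the separation argument in the previous paragraph --- pinning down that a would-be separating cluster $B$ must have its shadow contained in $e$ and hence equal to some $C_i$, ruling out branching that detours off $e$; this is where the combination of parts (1) and (3) of Lemma \ref{shadow intersection} does the real work, and is the main obstacle. The matching of orderings is then immediate from the construction. $\qed$
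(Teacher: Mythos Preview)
Your argument has two genuine gaps.

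\textbf{Adjacency.} The sentence ``So $s(B)$ must meet that subinterval'' is not justified. You know that $\sigma$ comes $2\ep'$-close to some $b\in B$ and that $\sigma$ stays $\ep'$-close to the tree path in $\lambda(F)$, but shadows are defined via $\ep$-closeness with $\ep<\ep'$; all you actually get is a point $b'\in s(B)$ lying within roughly $5\ep'$ (in the tree metric) of the set $s(C_a)\cup I\cup s(C_b)$, not that $s(B)$ meets $I$. This is repairable: since $s(B)$ is disjoint from $s(C_a)$ and $s(C_b)$ by Lemma~\ref{shadow intersection}(3), either $s(B)$ meets $I$ (and your connectivity argument finishes, forcing $s(B)\subset I\subset e$), or $s(B)$ lies entirely on one side, say to the left of $s(C_a)$. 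In that case the tree geodesic from $b'$ to the tree path must cross the left leaf $\ell$ of $s(C_a)$, so $d_{\lambda(F)}(b',\ell)<5\ep'$; since $\ell$ is $\ep$-close to $C_a$, this yields $d(B,C_a)<8\ep'=E$, contradicting that $B$ is a distinct cluster. The paper avoids this geometric computation entirely: it observes that every $B\ne C_i,C_{i+1}$ has shadow in $A_i^-$ or $A_{i+1}^+$ (nothing lies between), and then uses the separation in $\GG$ provided by Lemma~\ref{general separation} together with connectivity of $\GG$ to force any $\GG$-path from $C_i$ to $C_{i+1}$ to be the single edge.

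\textbf{Bivalence.} Your argument is genuinely incomplete. You correctly place every neighbor of $C_j$ into $S^-$ or $S^+$, but the sentence ``within the path the unique neighbor with shadow in $S^-$ is $C_{j-1}$'' only identifies the neighbors \emph{among the $C_i$}; it does not rule out other clusters $D$ with $s(D)\subset S^-$ being adjacent to $C_j$. What is missing is exactly the step the paper supplies: for any $D\notin\{C_{j-1},C_j,C_{j+1}\}$ with (say) $s(D)\subset S^-$, one has $s(D)\subset A_{j-1}^-$ while $s(C_j)\subset A_{j-1}^+$, so Lemma~\ref{general separation} gives that $C_{j-1}$ separates $D$ from $C_j$ in $\GG$, whence there is no edge $[D,C_j]$. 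Knowing that the $S^-$-side neighbors all lie in one component of $\GG\setminus C_j$ is not enough---being in the same component does not prevent several of them from being adjacent to $C_j$.
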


\realfig{bivalent-sequence}{Clusters with shadows on an edge in $\ltree(F)$ give rise to a
  path of bivalent vertices in $\GG$.}

\begin{proof}
Let $\{C_1, \dots, C_l\}$ be the set of clusters whose shadows are subintervals of $e$.
By Lemma \ref{shadow intersection}, $s(C_i) \cap s(C_j) = \emptyset$ for all $i,j$.  We
may therefore assume that their indices correspond to the order they appear along $e$ in
$\ltree(F)$.

The complement $\ltree(F) \setminus int(s(C_i))$ has two components for each $i$, labeled
$A_i^-$ and $A_i^+$ so that $A_i^-$ contains $s(C_{i-1})$ when $i>0$ and 
$A_i^+$ contains $s(C_{i+1})$ when $i<l$. By our ordering no shadows lie between $s(C_i)$
and $s(C_{i+1})$.  Lemma \ref{general separation} implies that
$C_i$ separates (in $\GG$) the clusters whose shadows lie in $A_i^-$ from those in
$A_i^+$. In particular no $B$ can separate $C_i$ from $C_{i+1}$ in $\GG$, so they are
adjacent and we obtain a path $C_1,\ldots,C_l$ in $\GG$. Moreover for $1<i<l$ we can see that $C_i$ is bivalent as
follows: if $D \in \CC \setminus \{C_{i-1}, C_i, C_{i+1}\}$, then one of $C_{i-1}$ or
$C_{i+1}$ separates $D$ from $C_i$ in $\GG$, again by Lemma \ref{general separation}, and
so there can be no edge $[C_i,D]$ and the valence of $C_i$ is exactly 2. 

\end{proof}

\begin{lemma}{detect good cluster}
If $C$ has valence 2 in $\GG$ but $s(C)$ is not an interval inside an
edge of $\ltree(F)$, then $C$ contains a point of $F$. 
\end{lemma}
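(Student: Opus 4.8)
The plan is to argue contrapositively: assuming $C$ contains no point of $F$, we will produce a third branch at $C$ in $\GG$, contradicting that $C$ is bivalent, OR conclude that $s(C)$ must in fact lie inside an edge of $\ltree(F)$. The key geometric input is the shadow $s(C)\subset\ltree(F)$ together with Lemma \ref{general separation}, which tells us that the valence of $C$ in $\GG$ is at least the number of components of $\ltree(F)\ssm \interior(s(C))$ that meet $s(C)$ at a \emph{leaf} of $s(C)$. So the strategy is to count leaves of $s(C)$ and the components of the complement hanging off them.

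First I would recall that every leaf of $\ltree(F)$ is a point of $F$ (by the Remark preceding the theorem, we arranged $F$ to consist of leaves of $\ltree(F)$). Since $C$ contains no point of $F$, the shadow $s(C)$ contains no leaf of $\ltree(F)$: indeed each leaf of $\ltree(F)$ lies within $\ep$ only of clusters containing that point of $F$, and $s(C)$ is the convex hull in $\ltree(F)$ of points of $\lambda(F)$ within $\ep$ of $C$, hence every leaf of $s(C)$ is either a leaf of $\ltree(F)$ or a branch point of $\ltree(F)$ or an interior point of an edge; the first case is excluded. Now suppose toward a contradiction that $s(C)$ is not contained in a single edge of $\ltree(F)$. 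Then $s(C)$ contains a branch point $v$ of $\ltree(F)$ in its interior, OR $s(C)$ is a nondegenerate subtree spanning at least two edges; in either case the subtree $s(C)$ has at least two leaves, and since none of these leaves is a leaf of $\ltree(F)$, each leaf $p_i$ of $s(C)$ has the property that the component $S_i$ of $\ltree(F)\ssm\interior(s(C))$ attached at $p_i$ is nonempty and contains at least one leaf of $\ltree(F)$ — here I use that a subtree omitting all leaves of $\ltree(F)$ has, hanging off each of its own leaves, a nontrivial complementary piece reaching the leaves of $\ltree(F)$. Counting: if $s(C)$ has $m\ge 2$ leaves then there are $m$ such components $S_1,\dots,S_m$, and by Lemma \ref{general separation} each contains the shadow of a distinct cluster in a distinct component of $\GG\ssm C$, so $C$ has valence $\ge m$ in $\GG$.

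The remaining point is to get $m\ge 3$ rather than merely $m\ge 2$, since valence exactly $2$ is allowed. The hard part — and where care is needed — is the case where $s(C)$ has exactly two leaves, i.e. $s(C)$ is a (topological) arc in $\ltree(F)$ that is \emph{not} inside a single edge, so it passes through at least one branch point $v$ of $\ltree(F)$ in its interior. At such a $v$ there is a third edge-direction $e_v$ of $\ltree(F)$ leaving $\interior(s(C))$; the corresponding complementary component $S_3$ is nonempty and contains a leaf of $\ltree(F)$, hence the shadow of some cluster $B$. But $v$ is an interior point of $s(C)$, not a leaf, so Lemma \ref{general separation} as literally stated (which only guarantees separation through leaves of $s(C)$) does not immediately apply — I would instead argue directly: any minimal $\ZZ$-geodesic from a point of $B$ to a point of any cluster $A$ whose shadow lies on the $s(C)$-side must, after fellow-travelling the $\ltree(F)$-path (Lemma \ref{basic tree lemma}), pass $2\ep'$-close to $v$, and $v$ is within $\ep+\,\mathrm{(}$cluster-to-shadow bound from Claim \ref{claim:shadow_close_to_cluster}$\mathrm{)}$ of a point of $C$; choosing $E$ large enough (as fixed in Definition \ref{defn:E}) makes this $\le 2\ep'$, so $C$ separates $B$ from $A$ in $\ZZ$, placing $B$ in a third component of $\GG\ssm C$. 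Thus $C$ has valence $\ge 3$ in $\GG$, contradicting bivalence. Hence $s(C)$ must be an interval inside an edge of $\ltree(F)$, completing the contrapositive; equivalently, if $C$ is bivalent and $s(C)$ is not such an interval, $C$ contains a point of $F$.

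I expect the main obstacle to be exactly this last step: upgrading the leaf-based separation statement of Lemma \ref{general separation} to detect the branch point $v$ in the interior of $s(C)$, and verifying the metric bookkeeping ($\ep$ versus $\ep'$ versus $E=8\ep'$) so that a $\ZZ$-geodesic forced near $v$ is genuinely within $2\ep'$ of $C$. Everything else is a routine unwinding of the definitions of shadow, cluster, and separation graph.
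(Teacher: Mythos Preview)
Your contrapositive setup is sound, and you correctly isolate the delicate case: $s(C)$ an arc passing through a branch point $v$ of $\ltree(F)$ in its interior. You are right to flag the metric bookkeeping as the obstacle, and indeed your proposed argument there does not go through.

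The issue is that the bound runs the wrong way. You want $v$ to lie within roughly $\ep'$ of $C$ so that a geodesic fellow-travelling through $v$ lands in the $2\ep'$-neighborhood of $C$. But Claim~\ref{claim:shadow_close_to_cluster} only gives $d(v,C) \le (E+3\ep)/2 + \ep$, which with $E = 8\ep'$ is roughly $4\ep'$; increasing $E$ makes this \emph{larger}, not smaller, since $E$ appears additively. So the route through $v$ fails.

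The fix is to avoid $v$ entirely. Let $p_1, p_2$ be the two leaves of $s(C)$ (both interior to $\ltree(F)$ by your first step) and let $S_3$ be a complementary component attached at $v$. Any cluster $B \ne C$ with $s(B)$ meeting $S_3$ cannot also meet $S_1$ or $S_2$, since that would force $p_1$ or $p_2$ into $s(B)$, contradicting Lemma~\ref{shadow intersection}(1); hence $B \notin G_1 \cup G_2$. Now the proof of Lemma~\ref{general separation} already shows that no vertex of $G_i$ is adjacent in $\GG$ to anything outside $G_i \cup \{C\}$ --- the separating geodesic passes near the leaf $p_i$, which \emph{is} within $\ep$ of $C$. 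So $B$ lies in a component of $\GG \ssm C$ disjoint from both $G_1$ and $G_2$. Since $S_3$ contains a leaf of $\ltree(F)$, such $B$ exists; since $\GG$ is connected (Lemma~\ref{G connected}), three components of $\GG\ssm C$ force valence $\ge 3$.

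For comparison, the paper argues directly rather than contrapositively: assuming $s(C)$ has at least three leaves, at most two can be interior to $\ltree(F)$ (else valence $\ge 3$ by Lemma~\ref{general separation}), so one is a leaf of $\ltree(F)$, hence a point of $F$, which must then lie in $C$. The paper's opening sentence asserts that ``at least three leaves'' follows directly from $s(C)$ not being an interval in an edge, so it too glosses over the two-leaf case you identified; once that case is handled as above, the two arguments are equivalent rearrangements.
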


\begin{proof}
If $s(C)$ is not an interval in an edge of $\ltree(F)$, it has a branch point and hence at least 3 leaves. At most
two of these can be interior to $\ltree(F)$, because otherwise $C$
would have valence at least 3 in $\GG$ by Lemma \ref{general separation}.

Thus $s(C)$ contains a leaf $q$ of $\ltree(F)$, which is a point of
$F$. This means $d(q,C)\le\ep < E$ (notice that, since $q$ is a leaf, it lies in the convex hull of a subset of $\ltree(F)$ only if it lies in the subset).  Hence, we have $q\in C$. 
\end{proof}

\subsubsection*{Structure of bivalent clusters}

Let $\EE^0$ denote the set of clusters $C\in\GG^0$ which have valence 2 in $\GG$ and do not
contain a point of $F$. Lemma \ref{detect good cluster} implies that each $C\in\EE^0$ has
shadow inside an edge of $\ltree(F)$. 

The next lemma gives that almost all clusters are bivalent:

\begin{lemma}{clusterfacts}
 $\#(\GG^0\ssm \EE^0) \le 2k-2$.
\end{lemma}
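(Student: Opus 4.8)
The statement asserts that all but at most $2k-2$ clusters of $\CC_E$ are bivalent-in-$\GG$-and-$F$-free; equivalently, that the separation graph $\GG$ has few branch vertices, few leaves, and few bivalent vertices carrying points of $F$. The natural strategy is to feed the structural lemmas just proved into an Euler-characteristic / total-branching count on $\GG$, exactly as one counts branch vertices of a tree with $k$ leaves. The one subtlety is that $\GG$ is only known to be connected (Lemma \ref{G connected}), not a tree, so I would first either pass to a spanning tree of $\GG$ or — more in the spirit of the shadow machinery — transport the count to $\ltree(F)$, whose total branching is $\le k-2$ since $|F|\le k$ and (by the Remark) the points of $F$ are leaves.

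First I would partition $\GG^0\ssm\EE^0$ into three types: (i) clusters $C$ containing a point of $F$; (ii) clusters $C$ of valence $\ge 3$ in $\GG$; (iii) clusters $C$ of valence $1$ in $\GG$. For type (i) there are at most $k$ such clusters, since $|F|\le k$ and each point of $F$ lies in exactly one cluster. By Lemma \ref{detect good cluster}, every cluster not of type (i) either is bivalent with shadow inside an edge (hence in $\EE^0$) or has valence $\ne 2$; so types (ii) and (iii) together with (i) really do exhaust $\GG^0\ssm\EE^0$. It remains to bound the number of high-valence and valence-one vertices of $\GG$.

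The key step is Lemma \ref{general separation}: if $C$ has valence $m$ in $\GG$ then the shadow $s(C)$ meets, at distinct leaves, at least $m$ distinct components of $\ltree(F)\ssm\mathrm{int}(s(C))$ — in other words $s(C)$ has at least $m$ leaves that are interior points of $\ltree(F)$ (plus possibly a point of $F$). More precisely, the lemma gives $m$ leaves of $s(C)$ through which pass the paths to $m$ distinct $\GG$-components; each such leaf is a branch point (or interior point with a dangling interval removed) of $\ltree(F)$. Assigning to each branch-or-leaf cluster $C$ the set of leaves of $s(C)$ that are interior to $\ltree(F)$, Lemma \ref{shadow intersection}(1) guarantees these leaf-sets are pairwise disjoint across distinct clusters (two shadows cannot share a leaf of either). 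Hence $\sum_{C}(\#\{\text{interior leaves of }s(C)\})$ is at most the number of interior vertices of $\ltree(F)$ that can serve as shadow-leaves, which is controlled by the total branching of $\ltree(F)$, namely $\le k-2$. A cluster of valence $m$ in $\GG$ contributes at least $\max(m,1)$ to this sum when $m\ge 2$ (from Lemma \ref{general separation}) and — for the valence-one clusters not containing a point of $F$ — at least one interior leaf as well (if its shadow had no interior leaf, its only leaf would be a point of $F$, putting it in type (i); and if its shadow were an interval inside an edge it would be in $\EE^0$ by definition, contradicting valence... one must check this case: a valence-one cluster with shadow inside an edge is excluded because the two endpoints of that edge would force neighbors on both sides unless $C$ is at the extreme end of $e$, i.e. $s(C)$ contains a leaf of $\ltree(F)$ — again type (i)). Summing: $\sum_{C\in\GG^0\ssm\EE^0,\ C\text{ not type (i)}} 1 \le \sum (\#\text{interior leaves}) \le k-2$ for the $\ge 3$-valent ones (where each contributes $\ge 2$, so really $\le (k-2)/1$, giving room to spare), plus at most $k-2$ more for... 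I would tally the two contributions carefully to land on $\le k-2$ for types (ii)+(iii) and $\le k$ for type (i), for a total $\le 2k-2$.

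The main obstacle I anticipate is the bookkeeping that makes the valence-one clusters and the "extreme" clusters on an edge behave: Lemma \ref{general separation} gives a lower bound on valence but I need to be sure a valence-$1$ or valence-$2$ cluster that is \emph{not} in $\EE^0$ genuinely consumes a distinct interior branch point or an $F$-point. The cleanest way to close this is probably to work directly with $\ltree(F)$: decompose $\ltree(F)$ along the shadows $s(C)$ for $C\notin\EE^0$ (which are pairwise non-overlapping at leaves by Lemma \ref{shadow intersection}), observe via Lemmas \ref{general separation} and \ref{edge bivalent} that $\EE^0$-clusters account for all the "middle-of-edge" pieces, and then note that the pieces $s(C)$ for $C\notin\EE^0$ are in bijection with the vertices of the tree $\ltree(F)$ obtained by collapsing each such shadow, a tree with $\le k$ leaves and hence $\le 2k-2$ vertices total. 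That collapsed-tree picture is, I believe, the crispest route and avoids case-chasing on $\GG$ directly.
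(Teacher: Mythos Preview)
Your plan has the right overall shape---split $\GG^0\ssm\EE^0$ into clusters carrying an $F$-point (at most $k$) versus clusters of valence $\ne 2$, and transport the second count to $\ltree(F)$---but the key step does not go through. You invoke Lemma~\ref{general separation} as though it said: if $C$ has valence $m$ in $\GG$ then $s(C)$ has at least $m$ leaves interior to $\ltree(F)$. The lemma only gives the reverse inequality (valence is at least the number of such leaves), and the converse is false. For a concrete obstruction, take $\ltree(F)$ a tripod with center $v$ and put one $\YY$-point $d_i$ in each edge at distance well beyond $2\ep'$ from $v$; then each $D_i=\{d_i\}$ has $s(D_i)$ a short interval inside its edge with exactly two interior leaves, yet $D_i$ is adjacent in $\GG$ to both other $D_j$'s (no cluster lies near the geodesic through $v$) as well as to the next cluster along $e_i$, giving valence $3$. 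So a high-valence cluster need not ``consume'' many interior points of $\ltree(F)$, and your sum $\sum (\#\text{interior leaves of }s(C))$ is not bounded by the total branching of $\ltree(F)$ in the way you want: interior leaves of shadows are arbitrary edge points, not branch points.

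Your fallback ``collapsed tree'' route has the same difficulty you flag but do not resolve: Lemma~\ref{shadow intersection}(1) forbids two shadows from sharing a leaf, but it explicitly allows them to overlap in their interiors (see Figure~\ref{shadow-overlap}). So collapsing the non-$\EE^0$ shadows does not obviously produce a tree, and the claimed bijection with vertices of a collapsed $\ltree(F)$ is not well-defined.

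The paper argues differently. It first claims a dichotomy: a cluster $C\in\GG^0\ssm\EE^0$ either contains a point of $F$, or $s(C)$ contains a branch point of $\ltree(F)$. For the second type the paper does not try to assign distinct branch points to distinct clusters; instead it groups such shadows into maximal connected unions $W\subset\ltree(F)$. By Lemma~\ref{shadow intersection}(1) the leaves of each $s(C_i)$ in such a union must all be leaves of $W$ and pairwise disjoint, so $m$ shadows contribute at least $2m$ leaves to $W$, giving $m\le\lfloor k(W)/2\rfloor=\lfloor (b(W)+2)/2\rfloor\le b(W)$ since $b(W)\ge 1$. Summing over the (disjoint) unions $W$ bounds the total by $b(\ltree(F))\le k-2$. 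The leaf-counting on the unions $W$ is the device that handles the overlapping-shadows issue, and it is the missing ingredient in your plan.
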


\begin{proof}
For a cluster $C\in\GG^0\ssm\EE^0$, either $C$ contains a point of $F$, or $s(C)$ contains
a branch point of $\ltree(F)$. There are at most $k$ clusters of the former type. The number of clusters of the latter type is bounded by the total branching $b(\ltree(F))$, but to show this we must contend
with the fact that shadows can overlap.

Let $W\subset \ltree(F)$ be a connected union of shadows $s(C_1),\ldots,s(C_m)$, each of which
contains a branch point. By Lemma \ref{shadow intersection}, no leaf of $s(C_i)$ can be in $s(C_j)$ for
$i\ne j$. Hence all leaves of $s(C_i)$ must be leaves of $W$ and disjoint from each
other. Since each $s(C_i)$ has at least two leaves, we have
$$
m \le \lfloor k(W)/2 \rfloor = \lfloor (b(W)+2)/2 \rfloor
$$
where $k(W)$ is the number of leaves and $b(W)$ is the total branching of $W$. Since
$b(W)\ge 1$, this implies $m \le b(W)$. Summing over all such $W$ we find that the number of
clusters with branch points in their shadows is bounded by $b(\ltree(F))$, or $k-2$. The
desired inequality follows. 
\end{proof}

Let $\EE$ be the subgraph of $\GG$ induced on the vertices $\EE^0$. 

\begin{lemma}{components of E}
Let $\EE_1,\ldots\EE_m$ be the components of $\EE$. 
For each $\EE_i$ there is an edge $e_i$ of $\ltree(F)$ such that $\EE_i$ is a path $C_1, \dots, C_{r_i}$ in $\GG$ consisting of all
elements of $\EE$ whose shadows lie in the interior of $e_i$; the edges $e_i$ are distinct.
\end{lemma}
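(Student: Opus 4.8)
The plan is to show that the components of $\EE$ are precisely the nonempty sets $Q_e := \{C \in \EE^0 : s(C) \subseteq \mathrm{int}(e)\}$, as $e$ ranges over the finitely many edges of $\ltree(F)$, and that each such set is a sub-path of the path $P_e$ furnished by Lemma \ref{edge bivalent}. First I would sharpen Lemma \ref{detect good cluster} to the statement: if $C \in \EE^0$ then $s(C)$ lies in the open interior of a unique edge $e(C)$. Recall that every vertex of $\ltree(F)$ is either a leaf (a point of $F$) or a branch point of valence $\geq 3$. If $s(C)$ met a leaf $q$, then since $q$ is extremal in $\ltree(F)$ we would get $q\in C$, so $C$ would contain a point of $F$; if $s(C)$ met a branch point $v$, then $C$ would be $\GG$-adjacent to the nearest cluster in each of the $\geq 2$ subtrees hanging off $v$, as well as to a cluster on the remaining side of $s(C)$, so Lemma \ref{general separation} would give $C$ valence $\geq 3$. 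Either way this contradicts $C\in\EE^0$, so $s(C)\subseteq\mathrm{int}(e(C))$. Hence $\EE^0=\bigsqcup_e Q_e$, with distinct edges contributing disjoint $Q_e$, which gives the ``edges $e_i$ distinct'' clause.

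Next I would identify $Q_e$ inside $P_e$. By Lemma \ref{edge bivalent} the clusters whose shadow is a subinterval of $e$ form a path $P_e=(C_1,\dots,C_l)$ in $\GG$, ordered along $e$, with bivalent interior, and by Lemma \ref{shadow intersection}(3) their shadows are pairwise disjoint. Thus each interior vertex $C_i$ ($1<i<l$) is flanked along $e$ by $s(C_{i-1})$ and $s(C_{i+1})$, so $s(C_i)\subseteq\mathrm{int}(e)$; being bivalent and (its shadow containing no leaf) containing no point of $F$, $C_i\in Q_e$. Since $Q_e\subseteq\{C_1,\dots,C_l\}$ by the previous paragraph, $Q_e$ is a contiguous sub-path of $P_e$; it is therefore connected in $\GG$, and, because nonadjacent vertices of $P_e$ are separated by an intermediate one (Lemma \ref{general separation}), the subgraph of $\EE$ it induces is exactly a path.

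The remaining point would be that every $\EE$-neighbour of a cluster $C\in Q_e$ again lies in $Q_e$, which makes each $Q_e$ a full component of $\EE$. If $C$ is interior to $P_e$ this is immediate: its two $\GG$-neighbours are $C_{i\pm1}\in P_e$, and a shadow that is a positive-length subinterval of $e$ and also lies in $\mathrm{int}(e(D))$ forces $e(D)=e$. So suppose $C$ is an endpoint of $P_e$, extreme on the side of an endpoint $v$ of $e$, and let $D$ be its $\GG$-neighbour outside $P_e$; I would show $D\notin\EE^0$. The vertex $v$ is not a leaf (else the cluster containing $v$ is $D$, and it contains a point of $F$), so $v$ is a branch point. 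If $v\in s(D)$ we are done, since $s(D)$ then contains a vertex. Otherwise $s(D)$ lies in one subtree $\Sigma$ hanging off $v$ and misses $v$; then, because $C$ and $D$ are $\GG$-adjacent and $v$ lies on the $\ltree(F)$-path from $s(C)$ to $s(D)$, no cluster has $v$ in its shadow and nothing sits between $s(C)$ and $v$ or between $v$ and $s(D)$, so $D$ is $\GG$-adjacent to the nearest cluster in each of the $\geq 2$ subtrees hanging off $v$ other than $\Sigma$, and to a cluster on its far side, giving $D$ valence $\geq 3$ and again $D\notin\EE^0$. Combining the three steps, each component of $\EE$ equals $Q_{e_i}$ for a distinct edge $e_i$ and is a path in $\GG$.

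I expect the technical heart to be the last step: converting combinatorial adjacency $C,D$ into the metric statement that nothing (in particular no shadow containing $v$) sits on the tree-path between $s(C)$ and $s(D)$, and then that $D$ genuinely ``sees'' each branch at $v$. These are of the same flavour as the estimates in Lemmas \ref{G connected}, \ref{shadow intersection} and \ref{general separation} — a minimal $\ZZ$-geodesic shadows the corresponding $\ltree(F)$-path, and Claim \ref{claim:shadow_close_to_cluster} controls how far the points of a shadow are from its cluster — so the real work lies in tuning the slack constants against the fixed choice $E=8\ep'$ of Definition \ref{defn:E}. The one genuinely new observation is that an $\EE^0$-cluster at the end of a $P_e$-path against a branch point must be ``capped'' by a non-$\EE^0$ cluster — its shadow containing that branch point, since otherwise it would be forced to see too many branches — and this is exactly what prevents an $\EE$-component from leaking across a branch point of $\ltree(F)$ into a neighbouring edge.
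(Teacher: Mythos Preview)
Your setup and Steps 1--2 are fine, and the overall strategy of showing that each $Q_e$ is a full component of $\EE$ is reasonable. The gap is in Step 3, and it is not just a matter of tuning constants.

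You try to prove that the outside neighbour $D$ of an endpoint $C\in Q_e$ has valence $\ge 3$ by exhibiting three neighbours: $C$, something on the far side, and a nearest cluster $B'$ in a third subtree at $v$. The weak link is the adjacency $D\sim B'$. Your justification (``no cluster has $v$ in its shadow and nothing sits between $s(C)$ and $v$ or between $v$ and $s(D)$'') does not go through: a cluster $E$ can have $v$ in its shadow without separating $C$ from $D$ --- if $s(E)$ branches at $v$ into two edges other than $e,e'$, then the path $s(C)\to v\to s(D)$ meets $s(E)$ only at $v$, which is \emph{not} a leaf of $s(E)$, so the proof of Lemma~\ref{general separation} does not apply and you get no contradiction with $C\sim D$. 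More generally, even without shadow considerations, a cluster within $2\ep'$ of the $D$--$B'$ geodesic near $v$ is only within about $4\ep'$ of the $C$--$D$ geodesic (you lose $2\ep'$ passing through $v$), so you cannot transfer the non-separation from one pair to the other with the fixed threshold $2\ep'$. In fact $C$ itself could separate $D$ from $B'$ if $s(C)$ happens to be close enough to $v$.

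The paper avoids all of this by arguing differently: rather than showing $D$ has high valence, it supposes both $C,D\in\EE^0$ and uses Lemma~\ref{general separation} plus connectivity of $\GG$ to get a contradiction. The branch point $v$ produces a cluster $B$ in a third subtree lying simultaneously in the $D$-side of $\GG\smallsetminus C$ and the $C$-side of $\GG\smallsetminus D$. Since $C$ and $D$ are bivalent and adjacent, the edge $[C,D]$ is the \emph{only} edge from $C$ into its $D$-side and the only edge from $D$ into its $C$-side; hence any path $B\to C$ must pass through $D$ first and any path $B\to D$ must pass through $C$ first, which is impossible for a simple path. This argument needs no adjacency of $D$ to any particular cluster and no constant-transfer across $v$.
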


\begin{proof}
Since each cluster $D \in \EE_i$ is a bivalent vertex of $\GG$ with shadow in an edge of
$\ltree(F)$ by Lemma \ref{detect good cluster}, and Lemma \ref{edge bivalent} implies that all
such clusters with shadows on a given edge $e \in \ltree(F)$ form a path in $\GG$, it suffices
to prove that no two such edge paths of bivalent clusters in $\GG$ are directly connected
by an edge.

Suppose $C, D \in \EE_i$ are connected by an edge in $\GG$ but $s(C)$ and $s(D)$ are not
contained in a single edge of $\ltree(F)$.  Since $s(C) \cap s(D) = \emptyset$, we may label the components of $\ltree(F)\ssm s(C)$ and $\ltree(F)\ssm s(D)$ by $\gamma_{\pm}$ and $\delta_\pm$, respectively, so that $s(C)\subset \delta_-$ and $s(D) \subset
\gamma_-$. Then the intersection $\gamma_-\intersect \delta_-$
contains a vertex $v$ of $\lambda(F)$ of valence at least $3$.

By Lemma \ref{general separation}, $\GG\ssm C$ is divided into subgraphs $\GG(\gamma_\pm)$ 
spanned by clusters whose shadows are in
$\gamma_\pm$ respectively and are separated by $C$, and similarly 
$\GG(\delta_\pm)$ are separated by $D$ respectively. In particular note $C\in
\GG(\delta_-)$ and $D\in \GG(\gamma_-)$. 

Since $v$ has valence at least 3, there is a component of $\ltree(F)\ssm
\{v\}$ that meets neither $s(C)$ or $s(D)$. A leaf of this
component is in the shadow of a cluster $B$ which is therefore in
$\GG(\gamma_-)\intersect \GG(\delta_-)$.

Since $\GG$ is connected, $B$ is
connected to $C$ within $\GG(\gamma_-)$ and to $D$ within $\GG(\delta_-)$. Since $C$ and
$D$ are bivalent and by hypothesis adjacent in $\GG$, the edge between them is the only edge connecting
$C$ to $\GG(\gamma_-)$, and the only edge connecting $D$ to $\GG(\delta_-)$. Hence any path from $B$ to $C$ must pass through
this edge and must therefore meet $D$ first. Reversing the roles of $C$ and $D$ we obtain a contradiction.
\end{proof}

\subsection{Constructing the stable tree}
In this subsection, we construct our stable tree $T(F,\YY)$  from the structure of
$\GG$ without referring to $\ltree(F)$ directly. 
In Proposition \ref{the forest for the trees} below, we prove it is quasi-isometric to $\ltree(F)$.

\begin{figure}
\includegraphics[width=0.75\textwidth]{./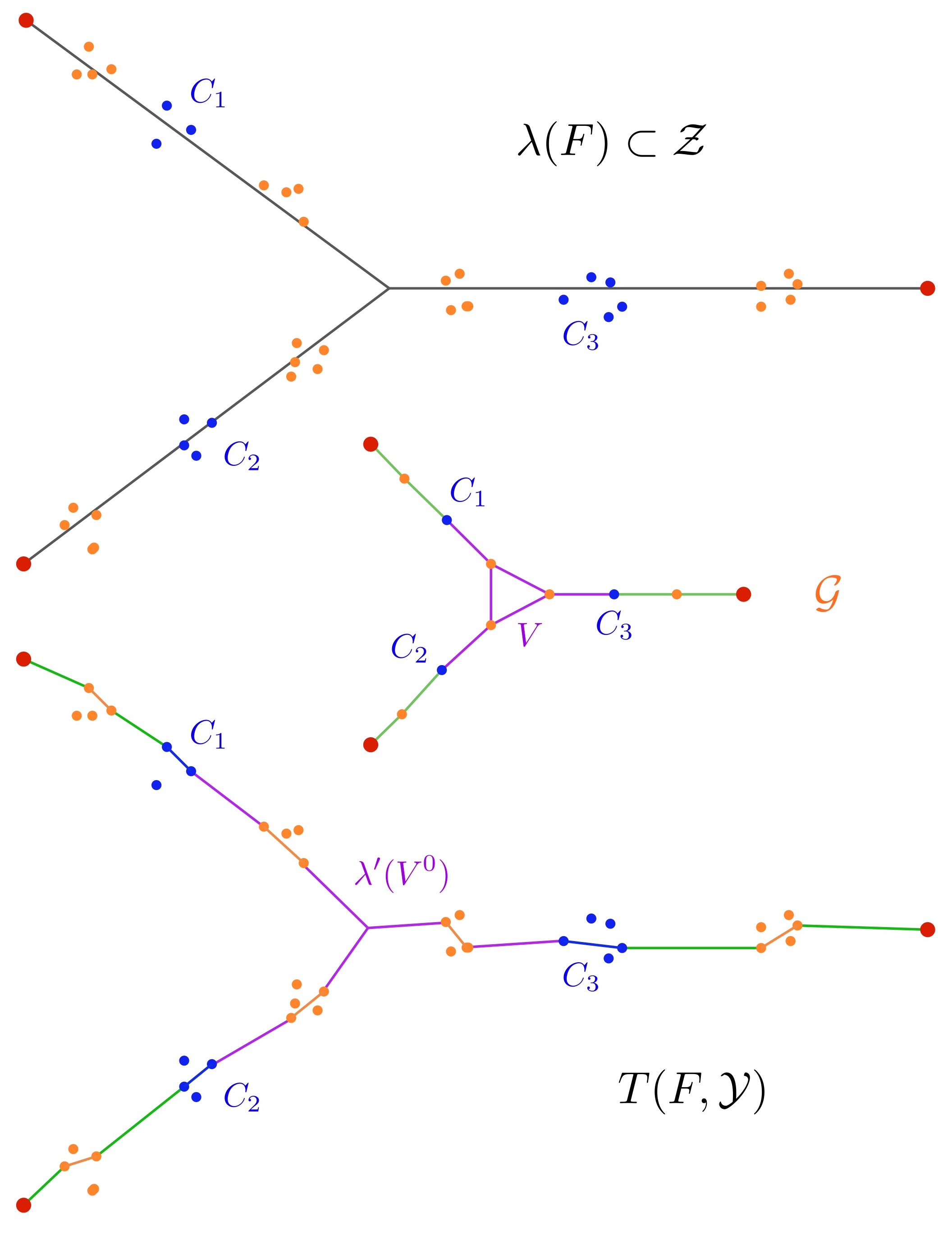}
\caption{The construction of a stable tree.  Note that each complementary component of $\GG \ssm \EE^0$ determines multiple components of $T_e$, e.g., the lavender forest $\lambda'(V^0)$ determined by the component $V$ whose boundary is the bivalent clusters $C_1,C_2,C_3$.  Each cluster $C$ then determines a single component $\mu(C)$ of $T_c$ by connecting the points $r(C) = C \cap (T_e \cup F)$.}\label{fig:edgecomponent}
\end{figure}

\subsubsection*{The two forests}

Now let us proceed to define the forests $T_c(F,\YY)$ and $T_e(F,\YY)$. We let
$\GG=\GG_E(F\cup\YY)$ be as above. 

Let $\VV$ denote the set of closures of connected components of $\GG\ssm \EE^0$. Thus each element of
$\VV$ is a subgraph connected to the rest of $\GG$ along vertices in $\EE^0$. 
For each $V\in \VV$ let $V^0$ denote its vertex set, which is a collection of clusters.
Some elements of $\VV$ are single edges $[C,D]$ where
$C,D\in\EE^0$, and others are subgraphs containing vertices in $\GG^0\ssm\EE^0$, and we
note they are not necessarily trees, though Lemma \ref{clusterfacts} bounds their size.

For each $V\in \VV$, let $\lforest(V^0)$ be the minimal network defined
in the beginning of this section, where the
elements of $V^0$ are interpreted as sets of clusters in $\ZZ$.

Now define
$$T_e = T_e(F,\YY) = \bigsqcup_{V \in \VV} \lforest(V^0).$$

\begin{rem}
 The forest $T_e$ is a disjoint union of copies of forests each contained in $\ZZ$. It is important to note, however, that these trees might in fact intersect in $\ZZ$. With a slight abuse, we will conflate the abstract copies of the $\lforest(V^0)$ that constitute $T_e$ and their ``concrete'' counterparts in $\ZZ$. Similar comments apply to $T_c$ below. Since the map $\Xi:T\to \ZZ$ is just going to be the identity on all the components of $T_e$ and $T_c$, we will allow ourselves to regard $T$ as a subset of $\ZZ$ for purposes that do not require understanding the metric of $T$, e.g. when measuring the Hausdorff distance between (the image in $\ZZ$ of) a subset of $T$ and a subset of $\ZZ$.
\end{rem}

Note that $T_e$ is a forest whose leaves are points of clusters.

Collapsing clusters to points, $T_e$ becomes a connected network $N$,
by the definition of $\VV$. This connected network is a union of trees joined at points that correspond to vertices of $\EE^0$. Since any vertex in $\EE^0$ disconnects $\GG$, each of these join points disconnects $N$, so that we see that $N$ is a tree.

Now for each cluster $C \in \GG^0$, we consider the set of points $r(C) = C\intersect (T_e
\union F)$.  We let $\mu(C)$ denote the tree $\ltree(r(C))$, and define
$$T_c =T_c(F,\YY) = \bigsqcup_{C \in \GG^0} \mu(C).$$

\subsubsection*{The tree}
We now define  $T(F,\YY) = T_c(F,\YY) \union T_e(F,\YY)$, or $T=T_c\union
T_e$ for short. Note that $T$ is a tree because as above collapsing the subtrees of $T_c$
to points yields a tree; see Figure \ref{fig:edgecomponent}.

\begin{lemma}{minimal networks don't overlap}
Let $T=T(F,\YY) = T_c\union T_e$.   
\begin{enumerate}
\item The total branching $b=b(T)$ is bounded by $2k-4$, and the leaves of $T$ are contained in $F\cup \YY$.
\item $\mu(C)\subseteq N_{O(\ep)}(C)$, so that $T_c\subset \mathcal{N}_{O(\ep)}(\YY \cup F)$.
\item For all $p \in T_e$, we have $d_{\ZZ}(p, \YY \cup F) \geq \frac{1}{b}d_T(p,\partial T_e) -
  O(\ep)$.
\end{enumerate}
\end{lemma}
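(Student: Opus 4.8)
The plan is to establish the three parts in order, since part (2) feeds into part (3). Throughout I will use the notation $b = b(T)$ for the total branching and repeatedly invoke Lemma \ref{basic tree lemma}, Lemma \ref{shadow intersection}, Lemma \ref{clusterfacts}, Lemma \ref{components of E}, and the defining properties of the minimal network functions $\ltree, \lforest$.

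\textbf{Part (1): branching and leaves.} The key observation is that collapsing each $\mu(C)$ in $T$ to a point yields the tree $N$ (the network obtained from $T_e$ by collapsing clusters), so $b(T)$ is controlled by $b(N)$ plus the total branching contributed internally by the $\mu(C)$'s and the $\lforest(V^0)$'s. First I would note that $N$ is a tree whose leaves correspond to the valence-$1$ vertices of $\GG$; by Lemma \ref{detect good cluster} and Lemma \ref{general separation}, a leaf-cluster of $\GG$ either contains a point of $F$ or has a shadow with a leaf of $\ltree(F)$ — in either case the leaf of $N$ ``carries'' a point of $F$. Since $|F| \le k$, the number of leaves of $N$ is at most $k$, giving $b(N) \le k-2$. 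The internal branching of each $\lforest(V^0)$ and each $\mu(C)$ is accounted for by the fact that these are minimal networks spanning cluster-sets, and their leaves are points of clusters (hence of $\YY \cup F$); the total extra branching they introduce, summed over all $V$ and $C$, is again bounded in terms of $k$ via Lemma \ref{clusterfacts} (which bounds $\#(\GG^0 \ssm \EE^0)$ and hence $\#\VV$) — and one checks the bookkeeping gives the stated $2k-4$. For the leaves: every leaf of $T$ is a leaf either of some $\mu(C)$ or of some $\lforest(V^0)$, and in both cases the minimal network function produces leaves at the spanned points, which lie in clusters, hence in $\YY \cup F$. The main subtlety here is the bookkeeping to get the precise constant $2k-4$ rather than just ``bounded in terms of $k$''; I would do this by an Euler-characteristic/leaf count on $T$ directly, noting that after collapsing $T_c$ the leaves of $T$ are exactly the leaves of $N$ that survive, and re-expanding only adds balanced branching.

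\textbf{Part (2): $T_c$ is close to $\YY \cup F$.} Fix a cluster $C$. By definition $\mu(C) = \ltree(r(C))$ where $r(C) = C \cap (T_e \cup F)$, a finite subset of the cluster $C \subset \ZZ$. Every point of $r(C)$ lies in $C$, and $C$ is a cluster of $\CC_E$, so any two points of $C$ are connected by a path in $\CC_E$ with steps of length $\le E$; hence $\diam_\ZZ(C)$ is bounded — wait, that is false in general, so instead I use the shadow: by Claim \ref{claim:shadow_close_to_cluster} every point of $s(C)$ is within $(E+3\ep)/2$ of a point $\ep$-close to $C$, and more to the point, $\ltree(r(C))$ spans points all lying in $C$, so it lies within $N_{O(\ep)}$ of the image of the geodesics between them — and since those geodesics stay $\ep'$-close to $\ltree(F)$, which is $\ep$-close to $\YY \cup F$, and the points themselves lie in $C \subset \YY \cup F$, hyperbolicity (thin triangles, or rather thin polygons applied to the spanning network) gives $\mu(C) \subset N_{O(\ep)}(C)$. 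Taking the union over all $C$ and using $C \subset \YY \cup F$ yields $T_c \subset \nbd_{O(\ep)}(\YY \cup F)$. The cleanest argument: $\ltree(r(C))$ has length comparable to the minimal network on $r(C) \subset C$, and every point on it is $O(\ep)$-close to some geodesic between two points of $C$, which by repeated thinness is $O(\ep)$-close to $C$ itself since consecutive cluster points are $\le E = O(\ep')$ apart along a $\CC_E$-path.

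\textbf{Part (3): points of $T_e$ are far from $\YY \cup F$.} This is the main obstacle and the heart of the lemma. Fix $p \in T_e$, so $p$ lies in some $\lforest(V^0)$ with $V \in \VV$. The point is that $T_e$ consists of the ``edge'' forests that interconnect clusters, and by construction these are built from clusters in $\GG^0 \ssm \EE^0$ together with the bivalent-cluster paths of $\EE$ lying on single edges of $\ltree(F)$; crucially, no point of $\YY \cup F$ can be close to the interior of such an interconnecting segment without being in one of the two bounding clusters. Concretely, I would argue: if $q \in \YY \cup F$ were within $\eta$ of $p$, then $s(q)$ meets $\ltree(F)$ near the portion of $\ltree(F)$ that $p$'s network-segment shadows; but the segment of $T_e$ through $p$ lies, via Lemma \ref{components of E}, along a definite edge $e_i$ of $\ltree(F)$ (or in a $V$ with bounded complexity, Lemma \ref{clusterfacts}), strictly between the shadows of the two bounding clusters, so $q$ would have to lie in a cluster adjacent to both — forcing it into $T_e$ itself or into a bounding cluster, a contradiction with being $\eta$-close to an interior point at distance $\ge \eta$. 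Quantitatively, the factor $1/b$ arises because a path in $T$ from $p$ to $\partial T_e$ can pass through up to $b$ branch segments, and the argument must be iterated/amortized along the network: the distance $d_\ZZ(p, \YY\cup F)$ is bounded below by the $\ZZ$-distance from $p$ to the bounding clusters, which is bounded below by $d_T(p,\partial T_e)/b - O(\ep)$ because $\Xi$ restricted to each of the $\le b$ components of $T_e$ is a $(1,O(\ep))$-quasi-isometric embedding (part \ref{item:qi2} of the theorem, or rather the version proved here) and the images can overlap but only $b$-fold. I would make this precise by decomposing the $T$-geodesic from $p$ to $\partial T_e$ into the $\le b$ maximal sub-arcs lying in single components of $T_e$, applying the $(1,O(\ep))$ estimate on each, and using that any $q \in \YY \cup F$ near $p$ lies in a cluster (hence a $\partial T_e$-point up to $O(\ep)$), so realizes a path to $\partial T_e$ of the claimed length. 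The delicate point — and where I expect to spend the most effort — is ruling out that a faraway cluster's shadow wanders back near $p$; this is exactly what Lemma \ref{shadow intersection}(3) and Lemma \ref{general separation} are for, since they guarantee that shadows on an edge are disjoint and separated in $\GG$, so the cluster structure genuinely partitions the edge and no ``extra'' cluster intrudes.
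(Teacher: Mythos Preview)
Your proposal has a genuine gap in Part (3), which is the heart of the lemma.

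The paper's proof of (3) is a \emph{surgery argument using the minimality of $\lforest(V^0)$}, and you never invoke minimality. Here is the paper's idea: suppose $p\in\lforest(V^0)$ and $d_\ZZ(p,C_1)=t$ for some cluster $C_1$. Take the path $\alpha$ in $\lforest(V^0)$ from $p$ to a cluster $C_2\in V^0$; since $\alpha$ has at most $b$ branch points, its longest unbranched subsegment $\alpha'$ has $|\alpha'|\ge \frac1b d_T(p,\partial\lforest(V^0))$. If $|\alpha'|>t$, delete $\alpha'$ and attach instead a geodesic of length $t$ from $p$ to $C_1$: this still connects the clusters of $V^0$ (since $C_1$ is a cluster) but has strictly smaller total length, contradicting minimality. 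Thus $t\ge|\alpha'|$. The case $C_1\notin V^0$ reduces to this one by showing $s(V)\cap s(C_1)=\emptyset$, so a $\ZZ$-geodesic from $p$ to $C_1$ must first pass $O(\ep)$-close to some $C_2\in V^0$.

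Your argument instead tries to use that components of $T_e$ are $(1,O(\ep))$-quasi-isometrically embedded together with shadow-separation. This does not work as written. First, the claim ``$\le b$ components of $T_e$'' is false: $T_e=\bigsqcup_{V\in\VV}\lforest(V^0)$, and $|\VV|$ is not bounded by $b$ (the single-edge pieces $[C,D]$ with $C,D\in\EE^0$ along a long path in $\EE$ are unboundedly many). Second, even granting a $(1,O(\ep))$-embedding of the component containing $p$, this gives you a lower bound on $d_\ZZ(p,\partial T_e)$, not on $d_\ZZ(p,\YY\cup F)$: you have not ruled out that some point of $\YY\cup F$ lies close to $p$ in $\ZZ$ without being close in $T$. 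Your shadow argument sketches how to force such a point into a ``bounding cluster'', but that is exactly what $\partial T_e$ is---so you have only reduced to bounding $d_\ZZ(p,\partial T_e)$ from below, and your $1/b$ bookkeeping for that step is incorrect for the reason above. The missing idea is minimality: without it there is no obstruction to the network taking a long detour in $T$ while remaining close to some cluster in $\ZZ$.

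For Part (1), the paper's route is cleaner than yours: it directly counts leaves of $T$. A leaf of $T$ is either (a) a point of $F$ inside some $\mu(C)$, giving at most $k$ leaves, or (b) a single point of $\partial T_e$ in a cluster $C$ with no $F$-point and only one incident $T_e$-subtree. Type (b) clusters must lie in $\GG^0\ssm\EE^0$ (every $\EE^0$-cluster belongs to two elements of $\VV$), and Lemma \ref{clusterfacts} bounds those without $F$-points by $k-2$. Hence $\le 2k-2$ leaves, so $b(T)\le 2k-4$. Your detour through the collapsed tree $N$ and ``internal branching'' of the $\lforest(V^0)$ and $\mu(C)$ is unnecessary and the bookkeeping you describe does not obviously produce the precise constant. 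Part (2) is essentially right in spirit, though the paper's two-line argument (via the shadow $s(C)$ and Claim \ref{claim:shadow_close_to_cluster}) is more direct than your ``cleanest argument'' at the end.
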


\begin{proof}
To bound $b(T)$, we bound the number of leaves $T$ can have. Leaves of $T$ are leaves of the various components of $T_e$ and $T_c$, and thus can arise in two ways: (a) If a cluster $C$ contains points of $F$, the points in $C \cap F$ can be leaves of
$\mu(C)$ which are also leaves of $T$. There are at most $k$ such points.
(b) If a cluster $C$ contains no points of $F$ and a single point $q$ of $\boundary
T_e$, which is connected to only one subtree of $T_e$, 
then $q$ is a leaf of $T$ (Figure \ref{fig:newleaf}).  All other vertices of $\boundary T_e \union \boundary T_c$ 
have valence at least 2. Notice that we already showed that all leaves of $T$ are contained in $F\cup \YY$.

Clusters of type (b) must be in $\GG^0\ssm\EE^0$ since every cluster in $\EE^0$ 
belongs to two subgraphs in $\VV$, and hence either has two points in $\boundary T_e$ or
two subtrees of $T_e$ meeting at a single point.  The number of clusters in
$\GG^0\ssm\EE^0$ that don't contain points of $F$ was bounded in Lemma \ref{clusterfacts} by $k-2$. 

This gives us a bound of $2k-2$ on the total number of leaves in $T$, which bounds the
total branching by $2k-4$. This proves part (1).

\begin{figure}
\includegraphics[width=1\textwidth]{./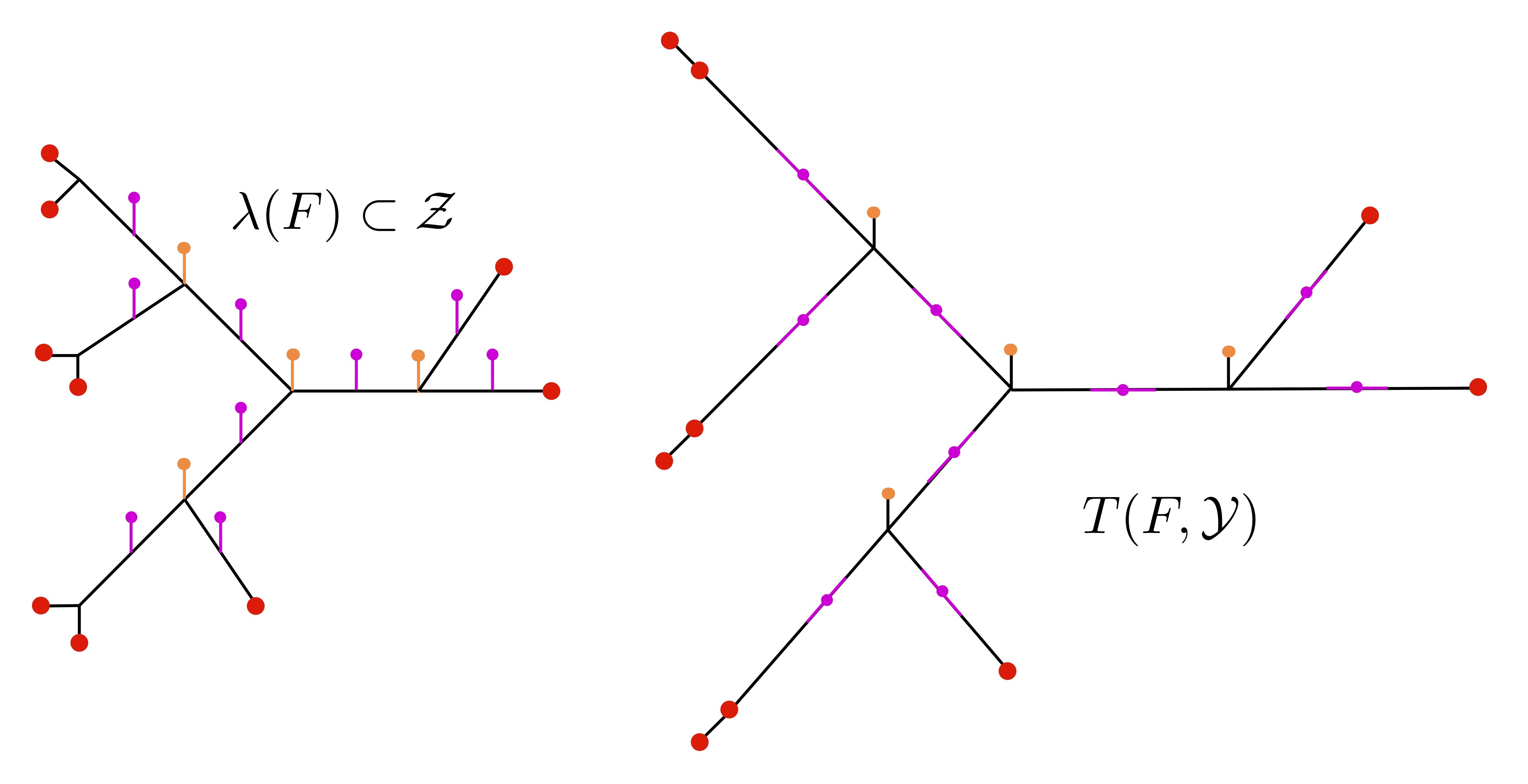}
\caption{The stable tree $T = T(F,\YY)$ may have leaves which are not points in $F$, and some points of $F$ may not be leaves of $T$.  In this example, the ambient space $\ZZ$ is the whole graph on the left, and $T$ is realized geometrically on the right.  The orange cluster points atop spikes at the branch points of the underlying tree create leaves in $T$.  New leaves in $T$ always arise from clusters near branch points of $\lambda(F)$.  The pink bivalent cluster points determine bivalent vertices in $T$, with small neighborhoods thereof folding into the spikes upon inclusion of $T\to \ZZ$.  By contrast, $\lambda(F)$ contains none of the spikes.  Finally, the pairs of nearby points of $F$ in $\lambda(F)$ on the left side of $\ZZ$ form clusters.  The components of $T_e$ connect one point from each pair to a pink cluster, while a component of $T_c$ connects the pair.  As a result, some points of $F$ are not leaves of $T$.}
\label{fig:newleaf}
\end{figure}

\medskip

Now for part (2), consider the minimal network $\mu(C)$ for the cluster $C$.
By Lemma \ref{basic tree lemma} and the definition of
shadows, $\mu(C)$ is within $O(\ep)$ of the shadow of $C$ in $\ltree(F)$, and it follows from Claim \ref{claim:shadow_close_to_cluster} (in the proof of Lemma \ref{shadow intersection}) that every
point of $s(C)$ is within $O(E)$ of $C$. This proves part (2). 

\medskip

For part (3), let $p \in \lforest(V^0) \subset T_e$, where $V\in\VV$, and let the distance
$d_\ZZ(p,\YY\cup F)$ be realized on a point in a cluster $C_1$. Write $d_\ZZ(p,C_1)=t$.

Suppose first that $C_1 \in V^0$.
The quotient of $\lforest(V^0)$ obtained by collapsing the clusters
of $V^0$ to points is a tree by minimality of the network, so 
there is some sequence of components of $\lforest(V^0)$ which connects $p$ to $C_1$, possibly through clusters $C_2, \dots, C_l \in V^0$.

Consider the unique path $\alpha$ in $\lforest(V^0)$ from
$p$ to $C_2$.  The path $\alpha$ branches at no more than $b = b(T)$ points, so let
$\alpha' \subset \alpha$ be the longest unbranched subsegment of $\alpha$.  We thus have
$|\alpha'| \ge \frac{1}{b}d_T(p, \partial \lforest(V^0))$. If $|\alpha'| > t$, we may remove
$\alpha'$ from $\lforest(V^0)$, attach a minimal length path in $\ZZ$ from $p$ to $C_1$ (of
length $t$), and obtain a network with smaller total length than $\lforest(V^0)$ that still 
connects the clusters in $V$.  This would violate the minimality of
$\lforest(V^0)$, so we must have $t\geq |\alpha'|$, and therefore
$t \geq \frac{1}{b}d_T(p, \partial \lforest(V^0))$, as required. 

Now consider the possibility that $C_1$ is a cluster outside of $V$.  Let $s(V)$ denote
the shadow of the union of clusters $s(\bigcup_{A\in V^0} A)$, which is the same as the hull in
$\ltree(F)$ of the shadows $\{s(A) : A\in V^0\}$. We claim that $s(V) \intersect s(C)=\emptyset$ for every
$C\in \GG^0\ssm V^0$.

Recall from Lemmas \ref{shadow intersection} and \ref{general separation} that the shadow
$s(C)$ for each $C\in \EE^0$ is disjoint from all other cluster shadows,
and that the separation of shadows by $s(C)$ in $\ltree(F)$ is the same as the separation of
the corresponding vertices in $\GG$ by $C\in \GG^0$.
In particular, if $V\in\VV$ and $C\in\EE^0$ then all vertices $D\in V^0$ (other than $C$
itself if $C$ happens to lie in $V^0$) have shadows
$s(D)$ on one side of $s(C)$. Any $V_1\ne V_2$ in $\VV$ are separated in $\GG$ by some
$C\in\EE^0$, including the case when $C$ is the common vertex of $V_1$ and $V_2$. Thus the
shadows $s(V_1)$ and $s(V_2)$ are either disjoint or overlap exactly on $s(C)$ for this
common vertex $C$. The claim follows.

Now applying Lemma \ref{basic tree lemma} again we find that $\lforest(V^0)$
is in an $O(\ep)$ neighborhood
of $s(V)$. Thus
if $C_1$ is a cluster in $\GG^0\ssm V^0$ then  any $\ZZ$-geodesic from $p$ to $C_1$ has an $O(\ep)$
fellow traveling path in $\ltree(F)$ which must exit $s(V)$ before it arrives at $s(C_1)$. It
follows that $d(p,C_1) \ge d(p,C_2) - O(\ep)$, for some $C_2\in V^0$. This reduces to the
previous case.
\end{proof}

We are now ready to prove that our stable tree $T(F,\YY)$ coarsely behaves like $\ltree(F)$.  Unlike Gromov's trees, stable trees quasi-isometrically embed with multiplicative constants possibly larger than $1$; see Figure \ref{fig:pathologies}.  This is an inconvenient fact for what follows and later in Section \ref{sec:stable cubulations}.

\begin{figure}
\includegraphics[width=1\textwidth]{./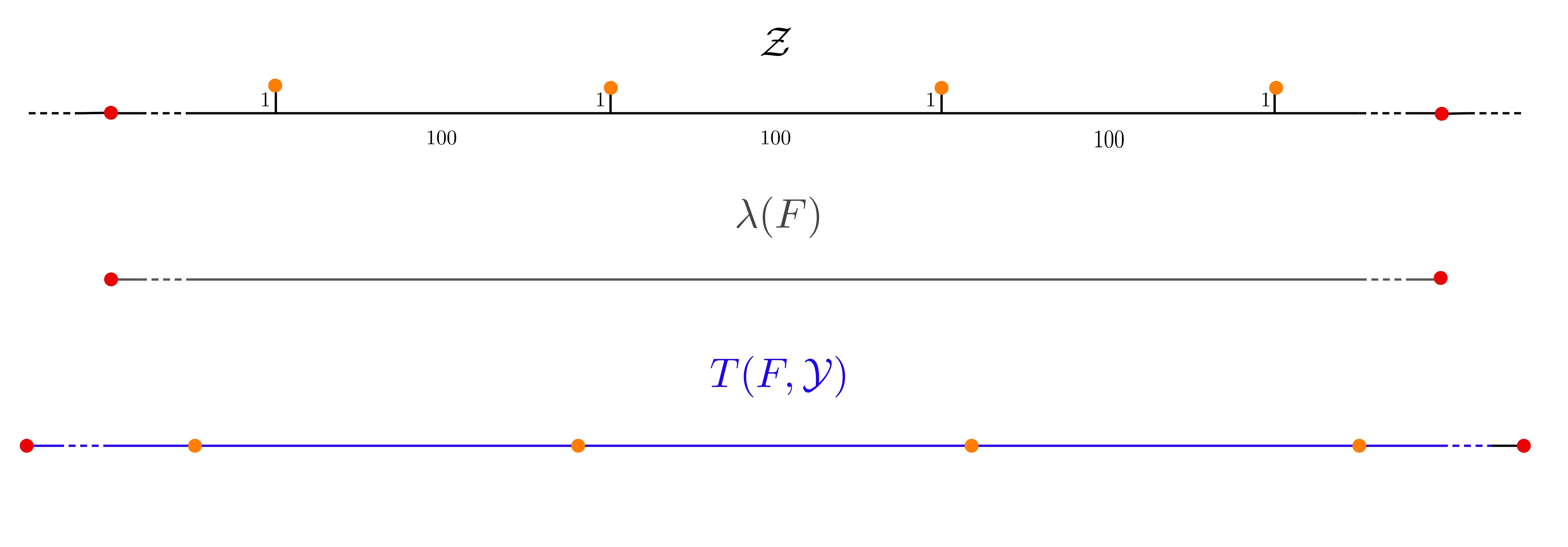}
\caption{A basic, complicating example.  The ambient space $\ZZ$ is a biinfinite line, which we think of as $10$-hyperbolic.  The cluster points $\YY$ (in orange) sit on spikes of height $1$ at distance $100$ from each other.  The two points in $F$ are very far apart.  The spanning tree $\lambda(F)$ for $F$ is a long line.  The stable tree $T(F,\YY)$ is also abstractly a long line composed of segments of length $102$.  The natural map $T(F,\ZZ) \to \ZZ$ folds the ends of these segments onto the spikes and is therefore not an embedding.  It is a quasi-isometric embedding, but the multiplicative constant is at least $\frac{102}{100}$.}\label{fig:pathologies}
\end{figure}

\begin{proposition}{the forest for the trees}
The natural map $T(F,\YY)\to \ZZ$ is a $(K_1,K_1)$--quasi-isometric embedding, and $T(F,\YY)$ lies within Hausdorff distance $K_1$ of $\ltree(F)$, where $K_1 = K_1(k,\delta, \epsilon)$.
\end{proposition}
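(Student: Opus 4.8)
The plan is to establish the quasi-isometric embedding and Hausdorff estimate by comparing $T(F,\YY)$ to $\ltree(F)$ piece by piece, using the shadow machinery built up in the preceding subsections. The key observation is that both forests $T_c$ and $T_e$ are built from minimal networks on clusters, and by Lemma \ref{basic tree lemma} each such minimal network lies $O(\epsilon)$-close to the shadow (in $\ltree(F)$) of the union of clusters it spans. So the first step is to assemble, from Lemma \ref{minimal networks don't overlap}(2) and (3) together with Claim \ref{claim:shadow_close_to_cluster}, a two-sided comparison: every point of $T$ is within $O(E)$ of $\YY\cup F \subset N_\epsilon(\ltree(F))$, hence $T \subset N_{K_1}(\ltree(F))$; and conversely, every point of $\ltree(F)$ lies in the shadow of some cluster, hence $O(\epsilon)$-close to a point of the corresponding $\mu(C) \subset T_c$, giving $\ltree(F)\subset N_{K_1}(T)$. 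This handles the Hausdorff bound.

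For the quasi-isometric embedding, the lower bound $d_\ZZ(\Xi p, \Xi q) \leq d_T(p,q)$ is immediate since $\Xi$ is the identity on each component and minimal networks in $\ZZ$ can only be shorter than the corresponding abstract tree distance (one must be slightly careful across the join points, but these are single points of clusters, so no real loss occurs). The substantive direction is the coarse lower bound on $d_\ZZ$ in terms of $d_T$. I would argue as follows: given $p,q \in T$, consider the geodesic-like path in $T$ between them; it passes through an alternating sequence of components of $T_c$ and $T_e$, crossing clusters in between. The components of $T_e$ contribute length that is genuinely seen in $\ZZ$: by Lemma \ref{minimal networks don't overlap}(3), $d_\ZZ(x,\YY\cup F) \geq \frac1b d_T(x,\partial T_e) - O(\epsilon)$, so a long unbranched segment of a $T_e$-component cannot be folded up in $\ZZ$ by more than a factor of $b$ plus additive error. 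The components of $T_c$ have $\ZZ$-diameter $O(E)$ by part (2), so they contribute only boundedly — and crucially there are at most $2k-2$ clusters outside $\EE^0$ by Lemma \ref{clusterfacts}, while the bivalent clusters in $\EE^0$ strung along a single edge of $\ltree(F)$ (Lemma \ref{edge bivalent}, Lemma \ref{components of E}) have shadows that march monotonically along that edge, so passing through a long chain of them still corresponds to genuine progress in $\ZZ$. Combining: along the $T$-path from $p$ to $q$, subdivide into maximal unbranched subsegments (at most $b = O(k)$ of them since $b(T)\le 2k-4$), estimate the $\ZZ$-length of each using part (3) for the $T_e$ portions and monotonicity of shadows along edges for the $\EE^0$-chains, and absorb the $T_c$ portions and the $O(\epsilon)$, $O(E)$ errors into the additive constant. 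This yields $d_T(p,q) \leq K_1 \, d_\ZZ(\Xi p, \Xi q) + K_1$.

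The main obstacle I anticipate is the bookkeeping at the interface between $T_c$ and $T_e$, and specifically ruling out "backtracking" in $\ZZ$: a priori the image $\Xi(T)$ could wander back and forth in $\ZZ$ even though the abstract tree path is monotone, since different components of $T_e$ (or $T_c$) can intersect in $\ZZ$ (as flagged in the Remark after the definition of $T_e$). The resolution is that the shadows of distinct clusters outside $\EE^0$ are essentially disjoint (Lemma \ref{shadow intersection}, Lemma \ref{general separation}), and the shadow $s(V)$ of a component $V\in\VV$ is disjoint from $s(C)$ for $C\notin V^0$ (as established inside the proof of Lemma \ref{minimal networks don't overlap}(3)); so the images $\Xi(\text{component})$ are organized in $\ZZ$ the same way the shadows are organized in $\ltree(F)$, and a $\ZZ$-geodesic between $\Xi p$ and $\Xi q$ must traverse the corresponding shadows in $\ltree(F)$ in order, up to $\epsilon'$-fellow-traveling from Lemma \ref{basic tree lemma}. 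Once one has this "the images sit in $\ZZ$ like the shadows sit in $\ltree(F)$" principle made precise, the estimate reduces to the already-understood geometry of $\ltree(F)$ plus the finitely-many ($O(k)$) error terms, and all constants depend only on $k$, $\delta$, $\epsilon$ as claimed.
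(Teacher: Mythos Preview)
There is a genuine error in your Hausdorff argument. The claim that every point of $T$ lies within $O(E)$ of $\YY\cup F$ is false: part (3) of Lemma \ref{minimal networks don't overlap} gives a \emph{lower} bound on $d_\ZZ(p,\YY\cup F)$ for $p\in T_e$, not an upper bound, so a point deep inside a long $T_e$-edge is genuinely far from $\YY\cup F$. Likewise, ``every point of $\ltree(F)$ lies in the shadow of some cluster'' fails as soon as $\ltree(F)$ has a long edge with few nearby $\YY$-points (take $\YY=\emptyset$ and $|F|=2$): the gaps between cluster shadows along such an edge are exactly what the $T_e$-components are there to fill. The correct statement, and the one the paper uses, is that each component of $T_c$ and $T_e$ lies within $O(\epsilon)$ of its \emph{shadow} in $\ltree(F)$ (via Lemma \ref{basic tree lemma}, since each is a minimal network on sets close to $\ltree(F)$), not within $O(E)$ of $\YY\cup F$.

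Your quasi-isometry argument also does not close as written. Breaking the $T$-geodesic at branch points of $T$ gives $O(k)$ segments, but each such segment can still traverse an unbounded number of $T_c$/$T_e$ alternations (the points of $\partial T_e=\partial T_c$ are typically bivalent in $T$), so the backtracking problem persists within a segment; Lemma \ref{minimal networks don't overlap}(3) controls distance to $\YY\cup F$, not the embedding of a single $T_e$-segment. The resolution you sketch in your last paragraph --- that the shadows of the pieces sit in $\ltree(F)$ with controlled overlap --- is exactly right, and the paper makes this the main argument rather than a rescue step. Concretely: each component of $T_c$ and $T_e$ is $(1,O(\epsilon))$-quasi-isometric to its shadow; within a fixed $V\in\VV$ the shadows of the (boundedly many, by Lemma \ref{clusterfacts}) components of $\lforest(V^0)$ and of the $\mu(C)$ for $C\in V^0$ have pairwise bounded overlap (a surgery argument for the former, and Lemma \ref{minimal networks don't overlap}(3) applied the right way for the latter); hence each subtree $T^V$ maps quasi-isometrically into $\ltree(F)$ with uniform constants; and distinct $T^V$ land in distinct complementary regions of the $\EE^0$-shadows in $\ltree(F)$, so these assemble into a global quasi-isometry $T\to\ltree(F)$. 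The Hausdorff bound then comes for free.
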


\begin{proof}
It follows from Lemma \ref{basic tree lemma} that each component of
$T_c$ and $T_e$ is $(1,O(\epsilon))$-quasi-isometric to its shadow in $\ltree(F)$, and
moreover is within Hausdorff distance $O(\epsilon)$ of its shadow
in $\ltree(F)$. 

Consider two distinct clusters $C_1,C_2 \in \GG^0$, and their shadows.
By Lemma \ref{shadow intersection}-(2), the shadow intersection $s(C_1)\intersect s(C_2)$ has uniformly bounded
diameter. If $C_1$ and $C_2$ belong to different pieces $V_1,V_2 \in \VV$, then there is a
cluster $D\in \EE^0$ separating $V_1$ from $V_2$ in $\mathcal G$. If $D$ is not equal to either $C_i$ then
its shadow separates $s(C_1)$ from $s(C_2)$ by Lemma \ref{general separation}, and hence
the shadows are disjoint. If $C_1=D$, say, then again the shadows are disjoint, by Lemma
\ref{shadow intersection}-(3).

In particular, the clusters in $\EE^0$ have pairwise disjoint shadows, and moreover by
Lemma \ref{general separation} their separation properties in the graph $\GG$ are
preserved in $\lambda(F)$ (that is: if $C_2$ separates $C_1$ from $C_3$ in $\GG$ then
$s(C_2)$ separates $s(C_1)$ from $s(C_3)$ in $\lambda(F)$). This means that any $V\in \VV$ is associated to a complementary component $c(V)$ 
of the shadows of $\EE^0$ in $\lambda(F)$, in the following way. For every $C\in \EE_0$, any cluster in $V^0-\{C\}$ has shadow contained in one of the two components of $\lambda(F)-s(C)$, by Lemma \ref{general separation}. We let $c(V)$ be the intersection of all these components. Notice that if $V\neq V'$ then $c(V)\neq c(V')$, since in that case some $C\in\EE_0$ will separate $V$ from $V'$ in $\mathcal G$.

We now study overlaps of the shadows of the various relevant subtrees of $T$, showing that said overlaps are bounded.

Let $V\in\VV$ be the closure of a component of $\GG\ssm\EE^0$. If $\lambda_1,\lambda_2$
are distinct components of $\lambda'(V^0)$, we claim that their shadows in $\lambda(F)$ have
an intersection of bounded diameter. 

Indeed, if the shadows of $\lambda_1$ and $\lambda_2$ had overlap of size $\gg \epsilon$, then
$\lambda_1$ would contain points within $O(\epsilon)$ of $\lambda_2$, at distance $\gg
\epsilon$ from each other, and with no branch point of either $\lambda_1$ or $\lambda_2$
within $O(\epsilon)$ of the geodesic in $\lambda_1$ connecting the two points (this uses
the bound on the branching of $T$). A simple surgery would then reduce the total length of
$\lambda'(V^0)$, contradicting its minimality.

Now consider a component $\lambda_1$ of $\lambda'(V^0)$ and one of the clusters $C$ in $V$. We claim
their shadows also have bounded-diameter intersection. 
Lemma \ref{minimal networks don't overlap} tells us that any point of $\lambda_1$ within $d$
of $\mu(C)$ is within $O(d)$ of the boundary of $\lambda_1$. This proves the claim. 

Note that the number of clusters in $V$, and
therefore the number of components of $\lambda'(V^0)$, 
are bounded via Lemma \ref{clusterfacts}. Thus the subtree $T^V$ comprised of
$\lambda'(V^0)$ together with all the components of $T_c$ associated to clusters in $V$ has
a decomposition into a bounded number of subtrees, and a map to $\lambda(F)$ (using the
shadows) which is a $(1,O(\ep))$-quasi-isometric embedding on each subtree and such that
the images of distinct subtrees have bounded overlap. Under these circumstances it follows
that the map $T^V \to \lambda(F)$ is a $(k,k)$--quasi-isometric embedding, where $k$
depends on these bounds. Moreover, the image of this map must, up to bounded error, lie in
the component of $\lambda(F)$ minus the shadows of those clusters in
$\EE^0$ that separate $V$ from the rest of $\GG$ (by the preservation of separation
properties noted above).

It follows that these maps piece together to give a $(K_1,K_1)$--quasi-isometry for $K_1=K_1(k, \delta, \epsilon)$. This
completes the proof.
\end{proof}

\subsection{Proof of Theorem \ref{thm:stable tree}} \label{subsec:stable tree 1}

Property \ref{item:branching}, the branching bound on $T(F,\YY)$, was proved in Lemma \ref{minimal networks
don't overlap}.

Property \ref{item:qi}, the quasi-isometry, is given by Proposition \ref{the
  forest for the trees}.
  
Property \ref{item:qi2} follows from the construction of $T_e$ and Lemma \ref{basic tree lemma}.

Regarding property \ref{item:bijection}, we have a natural bijection between components of $T_c$ and clusters by construction, and each component $\mu(C)$ is contained in a controlled neighborhood of the corresponding cluster $C$ by Lemma \ref{minimal networks don't overlap}, where by controlled we mean that the corresponding constant depends on $\delta,\epsilon,$ and $E$. We are left to argue that $C$ lies in a controlled neighborhood of $\mu(C)$. This is equivalent to showing that $s(C)$ lies in a controlled neighborhood of $\mu(C)$. If this was not true then, in view of the bound on the total branching of $\lambda(F)$, we would have that $s(C)$ contains an interval $I$ in an edge of $\lambda(F)$ of length $\gg E$ not contained in a controlled neighborhood of $\mu(C)$. From Proposition \ref{the forest for the trees} we know that $T$ lies within controlled Hausdorff distance of $\lambda(F)$, so that $I$ is contained in a union of controlled neighborhoods of the $\mu(C')$ for $C'\neq C$, and controlled neighborhoods of the components of $T_e$. Neighborhoods of the latter type cannot contain points in $I$ far from its endpoints by Lemma \ref{minimal networks don't overlap}-(3), and the same holds true for neighborhoods of the former type in view of Lemma \ref{shadow intersection}-(1), a contradiction. Therefore, $s(C)$ and $C$ are contained in a controlled neighborhood of $\mu(C)$, as required.

We now prove the ``furthermore'' part of the statement. Recall that, for the reasons explained in Remark \ref{rem:equivariance}, we only treat the case that $g$ is the identity.
Let 
$(F',\YY')$ be a second configuration differing from $(F,\YY)$ as in the
statement. We name the constructions arising from $(F',\YY')$ by
$\CC'$, $\GG'$, $\EE'$, $\VV'$, etc. Also, we denote $T=T(F,\YY)$ and $T'=T(F',\YY')$, and similarly for $T_c,T_e$, $T'_c,T'_e$. Set $N = \#\left(\YY \symdiff \YY'\right)$. 

\

\textbf{Claim 1}: The cardinality $\left|\GG^0 \bigtriangleup (\GG')^0\right|$ is bounded in terms of $k$, $\delta$, and $N$.

\begin{proof}
A cluster $C$ is in the symmetric difference $\GG^0 \bigtriangleup (\GG')^0$
only if it is within $E$ of a point of $F\union F'\union (\YY
\bigtriangleup  \YY')$, of which there are at most $2k+2 + N$. 
Now each point of a cluster in $\CC$ is within $\ep$ of some
point in $\ltree(F)$, and there is a number $R$ depending only on the
total branching of $\ltree(F)$ such that among any $R$ points in $\ltree(F)$ within a ball of radius
$(E+\ep)$ (in $\lambda(F)$), there must be two which are less than $\ep$
apart (and the same is true for $\CC'$ and $\ltree(F')$). Thus if there are more than $(2k+2 + N)R$ elements
in $\GG^0 \bigtriangleup (\GG')^0$  then two are closer than $E$ apart,
which is a contradiction, proving Claim 1.
\end{proof}

\textbf{Claim 2}: The symmetric difference of the edge sets of
$\GG$ and $\GG'$ has cardinality bounded in terms of $k$, $\delta$, and $N$.

\begin{proof}
By Lemma \ref{clusterfacts}, the maximal valence of any vertex of
$\GG$ is bounded, and so the number of edges incident to elements
of $\GG^0 \bigtriangleup (\GG')^0$ is bounded. Therefore it suffices to
consider the case where
$C,D \in \GG^0 \cap (\GG')^0$ with $[C,D]$ an edge in $\GG$ but not in
$\GG'$.  This implies there is a $B' \in (\GG')^0 \ssm \GG^0$ separating $C$
from $D$ when no such cluster in $\GG^0$ did so before.

Since $B'$ separates $C$ from $D$, 
there is a point $q\in B' \cap \left(\left(\YY'\symdiff \YY\right) \union \left(F' \symdiff F\right)\right)$
which lies at distance at most $\ep$ from a $\ZZ$-geodesic $\gamma$ joining
$C$ and $D$. The shadow $s(q)$ on $\ltree(F)$ must therefore be in a
$10\ep$-neighborhood of the interval in $\ltree(F)$ between $s(C)$ and $s(D)$.

Each such $q$ can only affect a bounded number of such edges $(C,D)$ in this
way, because the shadows of edges in each component of $\EE$ are arranged sequentially
and disjointly along edges of $\ltree(F)$ by Lemma \ref{components of E}, and $\GG^0\ssm \EE^0$ is bounded (again by Lemma \ref{clusterfacts}).
Since there are only boundedly many such $q$, this bounds the number of edges in the symmetric difference, completing the proof of Claim 2.
\end{proof}

\medskip

Let $\pi_0(\TT)$ denote the set of components of a forest $\TT$. Since the components of $T_c$
and $T'_c$ correspond to the elements of $\GG^0$ and $(\GG')^0$ respectively, Claim 1 gives us a
bound on $|\pi_0(T_c) \symdiff \pi_0(T'_c)|$. 

Similarly, by Claim 2, there is a bound on the number of collections of clusters in $\VV \symdiff
\VV'$, and this gives us a bound, say $K$,
on $|\pi_0(T_e)\symdiff \pi_0(T'_e)|$.

We now increase $K$ in a controlled way a few times, with the result of each step depending only on $k, \delta, \epsilon$. 

By item \ref{item:qi}, we can increase $K$ to ensure that $d_{Haus}(T,T') < K$. By Lemma \ref{lem:branch_close} (below) we can further assume that $d_{Haus}(\mathcal B\cup T_c\cup F,\mathcal B'\cup T'_c\cup F')\leq K$, where $\BB$ and $\BB'$ are the sets of branch points of $T$ and $T'$. We have to be careful in using \ref{lem:branch_close} because the sets of leaves of $T$ and $T'$ need not be within bounded Hausdorff distance of each other, since they might contain more than $F$ and $F'$ (see Figure \ref{fig:newleaf}). However, we can apply the lemma after slightly modifying $T$ and $T'$ by adding spikes of length, say, $1$ to $T$ and $T'$ to ensure that the sets of leaves of the new trees that we obtain do lie within controlled Hausdorff distance. Such spikes only need to be added close to $T_c$ and $T'_c$ by part (1) of Lemma \ref{minimal networks don't overlap}, yielding the required Hausdorff distance estimate.  We also note that the number of spikes added is controlled by Claim 1. 

We can then increase $K$ once more to ensure that $T_c\cup (\YY'-\YY)$ and $T'_c\cup(\YY-\YY')$ also lie at Hausdorff distance bounded by $K$; this can be done since $T_c, \YY$ and $T'_c, \YY'$ are at bounded Hausdorff distance by Lemma \ref{minimal networks don't overlap}(2).  Finally, we also require that $K> K_1$ as in Proposition \ref{the forest for the trees}.

Now let $\sigma = \sigma(K, \delta)$ be the fellow-traveling constant for $(1,K)$-quasigeodesics with endpoints at distance at most $K$ in a $\delta$-hyperbolic space.  This constant will be relevant later because geodesics in our trees $T,T'$ are $(1,K)$-quasigeodesics in $\ZZ$ by item \ref{item:qi} and our choice of $K$.

For ease of notation, we will refer to components in $\mathcal U = \pi_0(T_{e}) \cap \pi_0(T'_{e})$ as ``unchanged'' components, and the remaining components as ``changed''.  We note that there are at most $K$ changed components in each of $T_{e}, T'_{e}$.

For each component $E \in \pi_0(T_e) \ssm \pi_0(T'_e)$, let $E_{\YY'} = hull_E(E \cap N_K(\YY'))$, where the hull of this intersection is taken in the tree $E$, while the neighborhood is taken in $\ZZ$.  Now define  $\displaystyle \RR = \bigcup_{E \in \pi_0(T_e) \ssm \pi_0(T'_e)} E_{\YY'}$, and define $\RR'$ similarly.
We now collect the ``unstable parts'' of the trees along with the unchanged parts; set $$U = T_c \cup \RR \cup \UU \cup \mathcal B \cup F\indent \text{ and } \indent U' = T'_c \cup \RR' \cup \UU\cup \mathcal B'\cup F'.$$ 
Note that we included $F$ and $F'$ in these sets to ensure that they lie within bounded Hausdorff distance of each other, but this is inconsequential for the purposes of considering the complementary forests, which is what we want to do next.

Let $K'=K'(K,N,k)\geq 10K$ so that the following hold:
\begin{itemize}
\item[(i)] $d_{Haus}(T, T')<K$ and $d_{Haus}(U, U') < 10K$,
\item[(ii)] $\left| \# \pi_0(T\ssm U)\right|< K'$ and $\left| \# \pi_0(T'\ssm U')\right| < K'$,
\item[(iii)] $(T \ssm U) \subset T_e$ and $(T' \ssm U') \subset T'_e$, and
\item[(iv)] $\left| \# \pi_0(T_e \cap U)\ssm\UU\right|< K'$ and $\left| \# \pi_0(T'_e \cap U'))\ssm \UU'\right|< K'$, and
\item[(v)] for any $C \in (\pi_0(T_e \cap U)\cup \pi_0(T'_e \cap U'))\ssm \UU$, we have $\diam C < K'.$
\end{itemize}

Regarding property (i), the ``$10$'' in ``$10K$'' is there to keep into account the fact that we took hulls.

Property (ii) can be shown observing that the intersection of $U$ with each changed component can only have a bounded number of components because of the bound $N$ on $|\YY\symdiff \YY'|$ and the bound on $|\BB|$ given by Lemma \ref{minimal networks don't overlap}. This same observation shows item (iv).  Property (iii) holds by construction.

Property (v) is non-trivial for $\RR$ and $\RR'$, in which case it holds since $C$ is a union of boundedly many components, each of bounded diameter. In particular, any set $E_{\mathcal Y'}$ is a union of hulls in $E$ of intersections with balls centered at an element of $\YY'$, which have bounded diameter, and the union consists of at most $N$ elements.

Now let $L_1 = L_1(K', \delta), L_2 = L_2(K', \delta)>0$ be the constants given by Lemma \ref{lem:finding close components} (below) with $T,U$ and $T',U'$ satisfying the conditions of that lemma via (i) and (ii).

Let $\LL \subset \pi_0(T \ssm U)$ be the set of all components of $T \ssm U$ of diameter greater than $L_1$.

Define $\LL' \subset \pi_0(T' \ssm U')$ to be the set of components of $T'\ssm U'$ that lie within Hausdorff distance $L_2$ of an element of $\LL$.  Since there are at most $K'$ components of $T\ssm U$, this bounds the cardinality of $\pi_0(T \ssm U) \ssm \LL$, and any component in this set has diameter at most $L_1$.  Similarly, the numbers and diameters of the components of $(T' \ssm U')$ not in $\LL'$ are also bounded, this time the bound on the diameter being $L_3$ by the moreover part of Lemma \ref{lem:finding close components}.

Lemma \ref{lem:finding close components} provides a bijection $\rho: \LL \to \LL'$ which sends any component in $\LL$ to the unique component in $\LL'$ which is within Hausdorff distance $L_2$.
That is, for any $C \in \pi_0(\LL)$, we have $d_{Haus}(C, \rho(C)) < L_2$.

Now set $T_{s,0}$ to be the union of all elements of $\LL \cup \UU$, and define $T'_{s,0}$ similarly. We observe that we have by construction and (iii) above that $T_{s,0} \subset T_e$ and $T'_{s,0} \subset T'_e$.  Moreover, we have that the number of components of $T_e \ssm T_{s,0}$ and their diameters are bounded by $2K'(L_1 + 10L_2+K'+1)$, and similarly for $T'_e \ssm T'_{s,0}$. In fact, the number of such components is bounded by $2K'$, since each is a union of 

\begin{itemize}
\item changed components of $T_e\cap U$, and there are at most $K'$ of those by (iv), and
\item components of $T\ssm U$ of diameter at most $L_1$, and again there are at most $K'$ of those.
\end{itemize}

The bound on the diameter also follows from this description.

To obtain the sets $T_s$ and $T'_s$ required by the theorem, it suffices now to remove the branch points from the unchanged components contained in $T_{s,0}$ and $T'_{s,0}$ to ensure (1) (which at this point is not satisfied only because of the unchanged components, since we included the branch points in $U$ and $U'$), while all other properties have been checked above.\qed

\subsubsection*{Two supporting lemmas}
The following two lemmas were used in the proof of Theorem \ref{thm:stable tree} above.  To simplify notation, we will not distinguish between a tree quasi-isometrically embedded in a metric space, and the image of said tree.

\begin{lemma}{lem:branch_close}
  For each $K, \delta$ there exists $L_0$ such that the following holds. 
   Let $T,T'$ be trees $(K,K)$-quasi-isometrically embedded in the $\delta$-hyperbolic metric space $\ZZ$, with $d_{Haus}(F_0,F'_0)\leq K$, where $F_0$ and $F'_0$ are the sets of leaves of $T$ and $T'$ respectively.
   Then the sets of branch points $\BB$ and $\BB'$ of $T$ and $T'$ satisfy $d_{Haus}(\BB\cup F_0,\BB'\cup F'_0)\leq L_0$
  \end{lemma}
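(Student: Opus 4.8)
The statement is essentially a stability principle for branch points of quasi-trees: if two quasi-isometrically embedded trees have Hausdorff-close leaf sets, their branch-point sets (together with the leaves) are Hausdorff close. I would prove this by the standard ``approximate median'' characterization of branch points. In a tree $T$, a point $b$ is a branch point (or leaf, in the degenerate sense) precisely when there are three leaves $x,y,z$ of $T$ whose median in $T$ is $b$; more usefully, for \emph{any} interior branch point $b$ there are three leaves $x,y,z$ with $b = m_T(x,y,z)$ and each of the three arcs from $b$ to $x,y,z$ has positive length. Transporting this through the $(K,K)$-quasi-isometric embedding $T\hookrightarrow\ZZ$, the image of $b$ is within $O(K,\delta)$ of the coarse median (Gromov product center) of $x,y,z$ as points of $\ZZ$; this is the key geometric input, and it is exactly the statement that in a $\delta$-hyperbolic space a $(K,K)$-quasigeodesic triangle has a coarsely well-defined center, within distance depending only on $K$ and $\delta$ of the thin-triangle center.

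\textbf{Key steps, in order.} First, I would record the coarse-median lemma: in $\ZZ$ there is a coarse median $m_\ZZ$ defined on triples of points, coarsely equal (within $O(K,\delta)$) to the median-in-$T$ for any triple of points lying on a $(K,K)$-quasi-isometrically embedded tree $T$; this follows from slimness of quasigeodesic triangles (the $\sigma$-fellow-traveling constant already invoked in the main proof). Second, given a branch point $b\in\BB$, pick leaves $x,y,z\in F_0$ realizing $b$ as a median in $T$ with all three legs long (length $\ge$ some definite amount; if $T$ has a branch point, such a configuration exists, and if all legs from a given branch point were short one can push outward to genuine leaves at the cost of bounded error). Then $m_\ZZ(x,y,z)$ is within $O(K,\delta)$ of $b$. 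Third, use $d_{\mathrm{Haus}}(F_0,F_0')\le K$ to replace $x,y,z$ by leaves $x',y',z'\in F_0'$ with $d_\ZZ(x,x'),d_\ZZ(y,y'),d_\ZZ(z,z')\le K$; since $m_\ZZ$ is coarsely Lipschitz (again by hyperbolicity), $m_\ZZ(x',y',z')$ is within $O(K,\delta)$ of $m_\ZZ(x,y,z)$, hence of $b$. Fourth — and this is the one real subtlety — observe that $m_{T'}(x',y',z')$ is within $O(K,\delta)$ of $m_\ZZ(x',y',z')$, and $m_{T'}(x',y',z')$ either is a branch point of $T'$ or lies on the arc between two of the leaves $x',y',z'$, hence within the path-length, and therefore within $\ZZ$-distance $O(K)$, of a point of $F_0'$ (a leaf) — in the tree $T'$ a median of three points is either a branch point or lies on a segment one of whose endpoints is one of the three points. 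Either way $b\in \BB'\cup F_0'$ up to error $L_0=L_0(K,\delta)$. Applying the symmetric argument, every point of $\BB\cup F_0$ is within $L_0$ of $\BB'\cup F_0'$ and vice versa (leaves of $T$ are within $K$ of $F_0'\subseteq \BB'\cup F_0'$ by hypothesis), which is the claim.

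\textbf{Main obstacle.} The delicate point is Step four: a median $m_{T'}(x',y',z')$ need not be a branch point of $T'$ — the three images may ``line up'' so the median sits in the middle of an edge. I would handle this by the dichotomy above: either the median is a genuine branch point (done) or two of the three legs have length zero, meaning the median coincides with the third transported leaf up to the arc between it and a median, which is bounded by the path-length from that leaf, giving proximity to $F_0'$. Getting the constants to depend only on $K$ and $\delta$ (not on $|F_0|$ or diameters) requires care, but all the relevant estimates — slimness of quasigeodesic triangles, coarse Lipschitzness of the coarse median, stability of quasigeodesics — are uniform in exactly those two parameters, so no unbounded dependence creeps in. A secondary bookkeeping point is that $T$ may have valence-$\ge 3$ vertices all of whose incident leaves are ``close together'' in $\ZZ$ even though they are far apart in $T$; but this cannot happen, since $T\hookrightarrow\ZZ$ is a quasi-isometric embedding, so $\ZZ$-distance controls $T$-distance from below, and the long-leg configuration needed in Step three is automatically available.
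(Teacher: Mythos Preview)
Your approach matches the paper's, which gives only a one-line sketch invoking exactly the same Gromov-product/coarse-median characterization of $\BB\cup F_0$ and leaves the details to the reader. One cleanup for your Step~4: the median in a tree of three (not necessarily distinct) \emph{leaves} always lies in $\BB'\cup F_0'$ on the nose --- if the three leaves are distinct the median has three distinct directions emanating from it and hence valence $\ge 3$, while if two coincide the median equals that leaf --- so your worry about the median landing mid-edge cannot occur, and no extra error term is needed there.
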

  
  \begin{proof}
   The set $\BB\cup F_0$ can be coarsely characterized as the set of points $x$ of $T$ so that there are $f_1,f_2,f_3$ in $F_0$ (not necessarily distinct) with the property that the Gromov product at $x$ between any $f_i$ and $f_j$ is small, and similarly for $T'$.  We leave the details to the reader.
  \end{proof}

 \begin{lemma}{lem:finding close components}
  For each $K$ there exist $L_1,L_2, L_3$ so that the following holds. 
   Let $T,T'$ be trees $(K,K)$-quasi-isometrically embedded in the metric space $\ZZ$, with $d_{Haus}(T,T')\leq K$. Also, let $U\subseteq T$, $U'\subseteq T'$ be sub-forests so that $d_{Haus}(U,U')\leq K$, and so that all branch points of $T$ (resp. $T'$) are contained in $U$ (resp. $U'$).
   
   Then for each component $C$ of $T\ssm U$ of diameter at least $L_1$ there exists a unique component $C'$ of $T'\ssm U'$ within Hausdorff distance $L_2$ of $C$. Moreover, every component $C'$ of $T'\ssm U'$ of diameter at least $L_3$ arises in this way. 
  \end{lemma}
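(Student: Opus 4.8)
The ambient space $\ZZ$ only serves to tie $T$ and $T'$ together, so the first move is to replace it by a direct comparison of the two trees. Using $d_{Haus}(T,T')\le K$ I would build coarse maps $F\colon T\to T'$ and $\bar F\colon T'\to T$ by sending a point to a nearby point of the other tree; routine estimates show that $F,\bar F$ are $(K_2,K_2)$--quasi-isometries for some $K_2=K_2(K)$ and are coarse inverses of one another up to additive error $K_2$ (here one uses that $T,T'$ are $(K,K)$--quasi-isometrically embedded). Two further facts I would record, with all constants depending only on $K$: (i) using $d_{Haus}(U,U')\le K$, the maps coarsely intertwine the functions $\dist_T(\cdot,U)$ and $\dist_{T'}(\cdot,U')$, i.e. there is $A=A(K)$ with $\tfrac1A\dist_T(x,U)-A\le\dist_{T'}(F(x),U')\le A\dist_T(x,U)+A$ and symmetrically for $\bar F$; and (ii) the Morse lemma in trees: since $T,T'$ are $0$--hyperbolic, for any geodesic arc $[a,b]\subset T$ the set $F([a,b])$ lies within Hausdorff distance $L_0=L_0(K_2)$ of the geodesic arc $[F(a),F(b)]\subset T'$, and likewise for $\bar F$. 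Every constant below depends only on $K$.

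Now fix a component $C$ of $T\ssm U$. Since all branch points of $T$ lie in $U$, $C$ is a geodesic arc $[a,b]$ whose interior misses $U$, so each of $a,b$ lies in $U$ or is a leaf of $T$; consequently any geodesic from the midpoint $m$ of $C$ to a point of $U$ exits $C$ through $a$ or $b$, giving $\dist_T(m,U)\ge\tfrac12\diam_T C$. Choosing a large threshold $L_4=L_4(K)$, set $C_\dagger=\{x\in C:\dist_T(x,U)\ge L_4\}$; an elementary analysis of $\dist_T(\cdot,U)$ along the arc (this is exactly where the "endpoint in $U$ or a leaf" dichotomy is used) shows $C_\dagger$ is a sub-arc with $C\ssm C_\dagger$ a union of at most two sub-arcs of length $\le L_4$, so $d_{Haus}^T(C,C_\dagger)\le L_4$, and for $\diam C\ge L_1$ (with $L_1$ large relative to $L_4$) $C_\dagger$ is nonempty with $\diam_T C_\dagger\ge\diam_T C-2L_4$. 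By (i), $F(C_\dagger)$ lies at distance $\ge\tfrac1A L_4-A$ from $U'$, which we arrange to exceed $L_0$; then (ii) together with a connectedness argument forces $F(C_\dagger)$ into a single component $C'$ of $T'\ssm U'$, indeed into its "core" $C'_\dagger=\{z\in C':\dist_{T'}(z,U')\ge L_4'\}$ with $L_4':=\tfrac1A L_4-A$. Running the same with $\bar F$ puts $\bar F(C'_\dagger)$ inside $C$ itself — not merely inside some component of $T\ssm U$ — because $\bar F F$ moves points of $C_\dagger$ only a bounded amount while the $K_2$--neighbourhood of $C_\dagger$ stays in $C$. Combining the two containments with $F\bar F\approx\mathrm{id}$, $\bar F F\approx\mathrm{id}$, and $d_{Haus}^T(C,C_\dagger)\le L_4$ and its $T'$--analogue, I get $d_{Haus}^{T'}(F(C),C')$ bounded, hence $d_{Haus}(C,C')\le L_2$ for a suitable $L_2=L_2(K)$.

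Uniqueness: if $C'_1,C'_2$ are components of $T'\ssm U'$ each within Hausdorff distance $L_2$ of $C$, then each has diameter $\ge\diam C-2L_2$ and hence is "long", and each contains a point within $\sim L_2$ of $m$; so there are $z_i\in C'_i$ at uniformly bounded distance in $T'$, and if $C'_1\ne C'_2$ the geodesic $[z_1,z_2]$ meets $U'$, forcing $z_1$ — and therefore, by its proximity to $m$, the point $m$ itself — to be boundedly close to $U$, contradicting $\dist_T(m,U)\ge\tfrac12\diam_T C$ once $L_1$ is large. For the "moreover", a component $C'$ of $T'\ssm U'$ with $\diam C'\ge L_3:=L_1+2L_2$ is, by the symmetric version of the construction applied to $C'$, within $L_2$ of a component $D$ of $T\ssm U$ with $\diam D\ge L_1$; since the component of $T'\ssm U'$ assigned to $D$ is within $L_2$ of $D$, as is $C'$, uniqueness applied to $D$ forces them to coincide, so $C'$ arises as claimed. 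The main obstacle is the second paragraph — identifying the "cores" $C_\dagger$ (the genuinely stable parts of the components) and keeping straight the slightly different thresholds $L_4,L_4',L_4''$ as one passes back and forth through $F$ and $\bar F$, and checking that $F$ carries a core into a single component of $T'\ssm U'$ rather than several; once that is set up, everything else is routine bookkeeping with quasi-isometry constants and the tree Morse lemma.
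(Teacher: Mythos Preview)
Your proof is correct and follows essentially the same approach as the paper's: both identify the ``core'' of a component $C$ (points far from $U$), argue that this core maps into a single component $C'$ of $T'\ssm U'$ by observing that otherwise a point of $U'$ (hence of $U$) would lie close to the geodesic between two core points, and then run a symmetric back-and-forth to get the Hausdorff bound and the ``moreover''. The paper packages the argument more tersely---stating the key observation directly for pairs of points rather than building the maps $F,\bar F$ and invoking the tree Morse lemma---but the content is the same.
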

  
  \begin{proof}
    We will conflate components of $T\ssm U$ with their closures, so we can talk about their leaves, and similarly for $T'\ssm U'$.
   
   The main observation is that there exists $K_2=K_2(K)$ such that the following holds. Let $C$ be a component of $T\ssm U$ and let $x,y$ be (not necessarily distinct) points in $C$ that, in the metric of $C$, are at least $K_2$ from all the leaves of $C$. Then there exists a unique component $C'$ of $T'\ssm U'$ which is within $K$ of both $x$ and $y$.
   
   To prove this, suppose by contradiction that $K_2\gg K$ and that there are distinct components of $T'\ssm U'$ that contain points $x'$ and $y'$ that are within $K$ of $x$ and $y$, respectively. Then there exists some $p'\in U'$ on the geodesic $[x',y']$ in $T'$ from $x'$ to $y'$. Let $p\in U$ be such that $d(p, p')<K$. Then $p$ lies within $\sigma=\sigma(K)>0$ from the geodesic from $x$ to $y$, since considering points in $T$ that are within $K$ of those along $[x',y']$ yields a quasi-geodesic in $T$. Since $x$ and $y$ are at least $K_2$ from the leaves of $C$, they cannot lie close to any point of $U$, in particular $p$. We can then deduce that either $p$ lies along the geodesic $[x,y]$ in $T$ from $x$ to $y$, or there is a branch point of $T$ along $[x,y]$. In either case, $x$ and $y$ do not lie in the same component of $T\ssm U$, a contradiction. (Recall that $U$ contains all branch points of $T$ by hypothesis.)

   Consider now a component $C$ of $T\ssm U$ of diameter sufficiently large that it contains a point which is at least $K_2$ from all the leaves of $C$. By the observation above, all such points are close to a unique component $C'$ of $T'$, and since the set of all such points has bounded Hausdorff distance from $C$, we have that $C$ is contained in a uniform neighborhood of $C'$. Moreover, if $C$ has sufficiently large diameter, then we can apply the same reasoning to $C'$ and deduce that $C'$ contains points that are within $K$ of a unique component $C''$ of $T-U$, and that $C'$ is contained in a uniform neighborhood of $C''$. But the above observation implies that $C''=C$, and it follows that $C$ and $C'$ lie within uniformly bounded Hausdorff distance.
   
   Finally, the moreover part follows from a similar back-and-forth using the previous part of the statement. Namely $C'$ has sufficiently large diameter and is within bounded Hausdorff distance of a component $C$ of $T\ssm U$, then $C$ also has large diameter and is thus in turn within bounded Hausdorff distance of some component of $T'\ssm U'$, which needs to be $C'$.
  \end{proof}

\section{Stable cubulations}\label{sec:stable cubulations}

Fix a $G$-colorable HHS $(\XX, \mathfrak S)$ for $G < \mathrm{Aut}(\mathfrak S)$, and let
$F \subset \XX$ be a finite set.

In this section, we use the stable trees constructed in Section \ref{sec:stable trees} for the projections of $F$ to the relevant domains to define a wallspace on the hull $H_\theta(F)$. This wallspace can then be plugged into Sageev's machine to produce a cube complex which, by an argument from \cite{BHS:quasi}, coarsely models the hull of $F$ in $\XX$.  Stability of the tree construction then induces stability in the cubulations under perturbations of $F$.  We refer the reader to Subsection \ref{subsec:cube complexes} for some background and references on cube complexes, wallspaces, and hyperplane deletions, as well as to Subsection \ref{subsec:HHSery} for background on HHSs.

\par\smallskip
The main result of this section, and the only statement from this section that we will use in the rest of the paper, is the following precise version of Theorem \ref{thm:stable cubulation informal}:

\begin{theorem}{thm:stable_cubulations}
  Let $(\XX,\mathfrak S)$ be a $G$-colorable HHS for $G < \mathrm{Aut}(\mathfrak S)$.
 Then for each $k$ there exist $K, N$ with the following properties.  To each subset $F\subseteq \XX$ of cardinality at most $k$ one can assign a triple $(\mathcal Y_F,\Phi_F,\psi_F)$ satisfying:
 \begin{enumerate}
  \item $\mathcal Y_F$ is a CAT(0) cube complex of dimension at most the maximal number of pairwise orthogonal domains of $(\XX,\mathfrak S)$,
  \item $\Phi_F:\mathcal Y_F \to H_\theta(F)$ is a $K$--median
    $(K,K)$--quasi-isometry,

  \item $\psi_F:F\to\mathcal (\QQ_F)^{(0)}$ satisfies $d_{\cuco X}(\Phi_F \circ \psi_F(f), f) \leq  K$ for each $f\in F$.
  \end{enumerate}
Moreover, suppose that $F'\subseteq \XX$ is another subset of cardinality at most $k$, $g\in G$,
and $d_{Haus}(gF,F')\leq 1$. Choose any map $\iota_{F}:F\sqcup F'\to F$ so that $\iota_{F}(f)=f$ if $f\in F$ and $d_\XX(g(\iota_{F}(f)),f)\leq 1$ if $f\in F'$. Also, choose a map $\iota_{F'}:F\sqcup F'\to F'$ such that $\iota_{F'}(f)=f$ if $f\in F'$ and $d_\XX(g(f),\iota_{F'}(f))\leq 1$ if $f\in F$. Then the following holds:

There is a third CAT(0) cube complex $\mathcal Y_0$ and $K$--median
$(K,K)$--quasi-isometric embedding $\Phi_0$ such that the diagram
  \begin{equation}\label{eq:Phi diagram}
  \begin{tikzcd}
    & F\arrow[r,"\psi_{F}"]&\QQ_{F} \arrow[dr,"g\circ\Phi_{F}"] \arrow[d,"\eta" left] &  \\
    F\sqcup F' \arrow[ur,"\iota_{F}"]\arrow[dr,"\iota_{F'}\ \ \ " below] & &\QQ_0 \arrow[r,"\Phi_0"] & \XX \\
    & F'\arrow[r,"\psi_{F'}"]&\QQ_{F'}\arrow[ur,"\ \ \ \Phi_{F'}" below] \arrow[u,"\eta'"] & \\
  \end{tikzcd}
  \end{equation}
commutes up to error at most $K$, where $\eta$ and $\eta'$ are hyperplane
deletion maps that delete at most $N$ hyperplanes. The left side commutes exactly, that is, we have $\eta\circ\psi_F\circ\iota_F=\eta'\circ\psi_{F'}\circ\iota_{F'}$.
\end{theorem}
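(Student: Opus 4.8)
The plan is to carry out the wallspace construction of \cite{BHS:quasi} essentially verbatim, but feeding into it the \emph{stable} trees of Section~\ref{sec:stable trees} in place of arbitrary Gromov trees, and then to read off stability of the resulting cube complex from the ``furthermore'' part of Theorem~\ref{thm:stable tree}. First I would fix $K\gg 2\theta$ as in Proposition~\ref{prop:bounded involved} and put $\UU(F)=\rel_K(F)$. For $V\in\UU(F)$ let $\YY^V_F=\{\rho^W_V : W\in\UU(F),\ \rho^W_V\ \text{defined}\}$; since $\diam_W(\pi_W(F))>K$ for $W\in\UU(F)$, the transversality and nesting consistency inequalities together with the bounded geodesic image axiom force each such $\rho^W_V$ to lie within $\ep/2$ of $hull(\pi_V(F))$, so (using Theorem~\ref{thm:HHS-hull} and, for convenience, Remark~\ref{rem:rho_can_be_points}) the hypotheses of Theorem~\ref{thm:stable tree} hold and we obtain a stable tree $T^V_F=T(\pi_V(F),\YY^V_F)=T^V_c\cup T^V_e$. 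I would then pick, along each long edge of the external forest $T^V_e$, a set of \emph{subdivision points} by a rule depending only on that edge and its two terminal clusters (e.g.\ a net at fixed spacing measured from one cluster end), so that identical long edges always receive identical subdivision points. Each subdivision point $p$ cuts $T^V_F$ into two subtrees, and assigning $x\in H_\theta(F)$ to the side that $\pi_V(x)$ is closer to defines a wall $W_{V,p}$ on $H_\theta(F)$; well-definedness (no genuinely ambiguous ``middle'') uses that $p$ sits in $T^V_e$ away from clusters, which by the halfspace-intersection analysis of Lemma~\ref{lem:halfspaces_intersect} are the only loci where other domains can pin a point down. Let $\WW_F$ be the collection of all $W_{V,p}$.

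Next I would check that $(H_\theta(F),\WW_F)$ is a wallspace -- only finitely many walls separate a pair of points, by the distance formula (Theorem~\ref{thm:distance_formula}), finiteness of $\UU(F)$, and the bounded number of subdivision points per unit length of tree -- and set $\QQ_F:=\mathcal Y_F$ equal to its dual CAT(0) cube complex (Subsection~\ref{subsec:walls}). Its dimension is at most the largest number of pairwise-orthogonal elements of $\mathfrak S$, because a family of pairwise-crossing walls must, by Lemma~\ref{lem:halfspaces_intersect}, come from pairwise-orthogonal domains. The map $\Phi_F\colon\QQ_F\to H_\theta(F)$ sends a $0$--cube -- a coherent canonical orientation -- to the point of $\XX$ that the HHS realization theorem produces from the consistent tuple it determines in the factors, and $\psi_F(f)$ is the orientation pointing toward $\pi_V(f)$ in every factor, so $d_\XX(\Phi_F\psi_F(f),f)\le K$. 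That $\Phi_F$ is a $K$--median $(K,K)$--quasi-isometry onto $H_\theta(F)$ is then exactly the argument of \cite{BHS:quasi}: it goes through once one knows that each $T^V_F$ is $(K,K)$--quasi-isometric and Hausdorff-close to $hull(\pi_V(F))$ with clusters correctly placed, which is Theorem~\ref{thm:stable tree} (parts \ref{item:qi}--\ref{item:bijection}).

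For the stability statement I would first, as in Remark~\ref{rem:equivariance}, reduce to $g=1$ by conjugating the whole HHS structure by $g$. The crucial observation is that the ``furthermore'' part of Theorem~\ref{thm:stable tree} applies \emph{in every} domain $V\in\UU(F)\cap\UU(F')$: since $\pi_V$ is coarsely Lipschitz and $d_{Haus}(F,F')\le 1$, the sets $\pi_V(F),\pi_V(F')$ are Hausdorff-$O(1)$-close, and $|\YY^V_F\symdiff\YY^V_{F'}|\le|\UU(F)\symdiff\UU(F')|$ is bounded by Proposition~\ref{prop:stable family}. Combined with Proposition~\ref{prop:bounded involved} (only boundedly many domains are involved) and a projection-complex argument showing that only boundedly many domains have $\YY^V_F\ne\YY^V_{F'}$, the canonical choice of subdivision points upgrades the ``identical long edge'' conclusion of Theorem~\ref{thm:stable tree} to ``identical wall'', so that all but $N$ walls of $\WW_F$ are canonically matched with walls of $\WW_{F'}$, and this matching preserves complements, disjointness and nesting of halfspaces -- this is Proposition~\ref{prop:almost bijection of p}. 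Taking $\WW_0\subseteq\WW_F$ (and, under the matching, $\subseteq\WW_{F'}$) to be the matched walls and $\QQ_0=\mathcal Y_0$ its dual complex, Lemma~\ref{lem:halfspaces_bijection} produces the isomorphism between the two copies of $\QQ_0$, and the deletion maps $\eta,\eta'$ of Definition~\ref{defn:hyperplane deletions} then delete at most $N$ hyperplanes (Proposition~\ref{prop:isomorphism of cc}). Commutativity of \eqref{eq:Phi diagram} up to error $K$ follows because all the $\Phi$'s are realization maps and $\eta,\eta'$ only collapse hyperplane carriers; the left triangle commutes exactly since $\psi_F,\psi_{F'}$ are given by the same ``orient toward the moved point'' rule and the surviving walls are matched compatibly with it.

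The hard part will be the two propositions underlying the last paragraph. First, one must choose the subdivision points canonically enough that ``identical long edge'' really does give ``identical wall'', while separately showing that the $L$ exceptional long edges of Theorem~\ref{thm:stable tree} (which are only Hausdorff-close, not equal) contribute only boundedly many unmatched walls -- this forces the intricate form of the stable tree theorem. Second, and most delicately, one must determine precisely when two halfspaces intersect, including the cases where they come from two transverse or two nested domains, and prove this intersection pattern is governed by local HHS data that the wall-matching transports; this is the content of Lemma~\ref{lem:halfspaces_intersect}. Verifying that $\Phi_F$ is a median quasi-isometry onto $H_\theta(F)$ is a secondary obstacle, essentially bookkeeping inherited from \cite{BHS:quasi}.
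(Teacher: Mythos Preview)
Your plan is the paper's: stable trees feed the wallspace construction of \cite{BHS:quasi}, Theorem~\ref{thm:qi_to_cube_cplx} gives items (1)--(3), and stability comes from Propositions~\ref{prop:almost bijection of p} and~\ref{prop:isomorphism of cc}. One step would fail as written, however. You set $\YY^V_F=\{\rho^W_V : W\in\UU(F),\ \rho^W_V\text{ defined}\}$, including transverse $W$; the paper restricts to $W\propnest V$. This matters because Proposition~\ref{prop:bounded involved} bounds only those $V$ with $\pi_V(F)\ne\pi_V(F')$ or $\UU^V(F)\ne\UU^V(F')$, and with the paper's choice these are exactly the $V$ where the input to Theorem~\ref{thm:stable tree} changes, so only boundedly many stable trees are perturbed and the total wall discrepancy is bounded. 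With your $\YY^V$, a single $W\in\UU(F)\symdiff\UU(F')$ transverse to many $V\in\UU(F)\cap\UU(F')$ perturbs $\YY^V_F$ in all of them, and there is no bound on that number --- a domain can be transverse to arbitrarily many elements of $\UU(F)$, so no ``projection-complex argument'' rescues this. The fix is simply to restrict $\YY^V$ to nested $\rho$'s; the transverse ones are not needed as tree input (Proposition~\ref{prop:rhos_close_to_leaves}(1) already pins them near leaves of $T^V_F$) and they enter the argument only through the halfspace analysis of Lemma~\ref{lem:halfspaces_intersect}.

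One smaller misattribution: the bijection of Proposition~\ref{prop:almost bijection of p} is on subdivision points and half-\emph{trees}; translating that into preservation of disjointness of half\emph{spaces} in $H_\theta(F)$ (as needed for Lemma~\ref{lem:halfspaces_bijection}) is precisely the case-by-case work of Proposition~\ref{prop:isomorphism of cc}, driven by Lemma~\ref{lem:halfspaces_intersect}.
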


The notion of $K$--median refers to the coarse median structure on $\cuco{X}$ in the sense
of \cite{Bow:coarse_median}, and we only define it later where it is needed, since we
obtain it directly from \cite{BHS:quasi}.

 The remainder of this section is devoted to the proof of this theorem.

\subsubsection*{Standing assumptions} Throughout this section, we fix an HHS $(\cuco{X},\mathfrak S)$ as in Theorem \ref{thm:stable_cubulations}, which in light of Theorem \ref{thm:BBFS} we can assume to have stable projections, and furthermore we can assume that it has the property that all $\pi_V(x)$ and all $\rho^U_V$ for $U\propnest V$ are single points, see Remark \ref{rem:rho_can_be_points}.

\subsection{Subdivision sets for stable trees} \label{subsec:subdivision}

In this subsection, we establish a formalism for subdividing trees.  In Subsection \ref{subsec:constr}
 these subdivision points will give the walls in our cubulation.  This mostly follows the strategy of \cite{BHS:quasi}, except that we need to take greater care in making choices for the subdivision.

\begin{definition}{defn:subdivision}
Let $M'>M>0$.  An $(M, M')$-\textbf{\emph{subdivision}} of a tree $T$ is a collection of points $\pt(T) \subset T$ satisfying:

\begin{enumerate}
\item The points $\pt(T)$ are contained in the interiors of edges of $T$.  We set $\pt(e) = \pt(T) \cap int(e)$ for each edge $e$ of $T$.
\item The $\frac{M'}{2}$-neighborhood of $\pt(e) \cup \partial e$ covers $e$.
\item All points of $\pt(e) \cup \partial e$ are at least $M$ apart in $e$.
\end{enumerate}

In other words: The spacing between points of $\pt(e)\union \partial e$ along $e$ is at least $M$ and strictly less than $M'$.

We additionally say that $\pt(T)$ is \textbf {\emph{ $(M,M')$-evenly-spaced}} if $M' \ge 8M$ and the spacing between successive points of $\pt(e)$ is exactly $M$ for each edge $e$.

\end{definition}

We will specify $M, M'$ later, though for now it suffices to assume they are large relative to the various HHS constants.

We fix, once and for all, a \textbf{subdivision operator} $\wp_{M,M'}$ which to any tree $T$
associates a fixed $(M,M')$-evenly spaced subdivision $\wp_{M,M'}(T)$ of (the edges of)
$T$.  Often the constants $M, M'$ are fixed, and we simply write $\wp(T)$. Similarly we
can define $\wp_{M,M'}$ on a forest as the union of subdivisions $\wp_{M,M'}$ on its components.

We now explain how to associate a collection of subdivisions on stable trees to a set of points $F \subset \cuco X$.

Fix some large $K$ (depending on $M$, to be specified later) and write $\UU(F) = \rel_K(F)$ for any finite set $F \subset \XX$.

For any $V\in\mathfrak S$, define
$$
\YY^V = \{\rho^W_V: W\in\UU(F), W\propnest V \}, 
$$

and
$$
F^V = \pi_V(F).
$$
Note that whenever $K$ is larger than the bounded Geodesic image constant $\kappa_0$, we have that $\YY^V$ is contained in the $\kappa_0$--neighborhood of the hull of $F^V$ in $\mathcal C(V)$. This ensures that $\YY^V$ satisfies the requirements of Theorem \ref{thm:stable tree} for any $\ep\geq 2 \kappa_0$. We fix such $\epsilon$ as in Theorem \ref{thm:stable tree}, and we will always apply that theorem with this $\ep$.

Note that if $V\notin \UU(F)$ then $\YY^V\union F^V$ has uniformly bounded diameter.

Let $V \in \UU(F)$.  From $V$, we get corresponding sets of projections $F^V, \YY^V$ in $\CC(V)$, which we may consider independently of the set $F$.  Doing so, we obtain from Theorem \ref{thm:stable tree} a fixed
stable tree
$$T^V_F := T(F^V,\YY^V)$$
and we denote its decomposition $T_c^V \union T_e^V.$

\begin{figure}
\includegraphics[width=0.75\textwidth]{./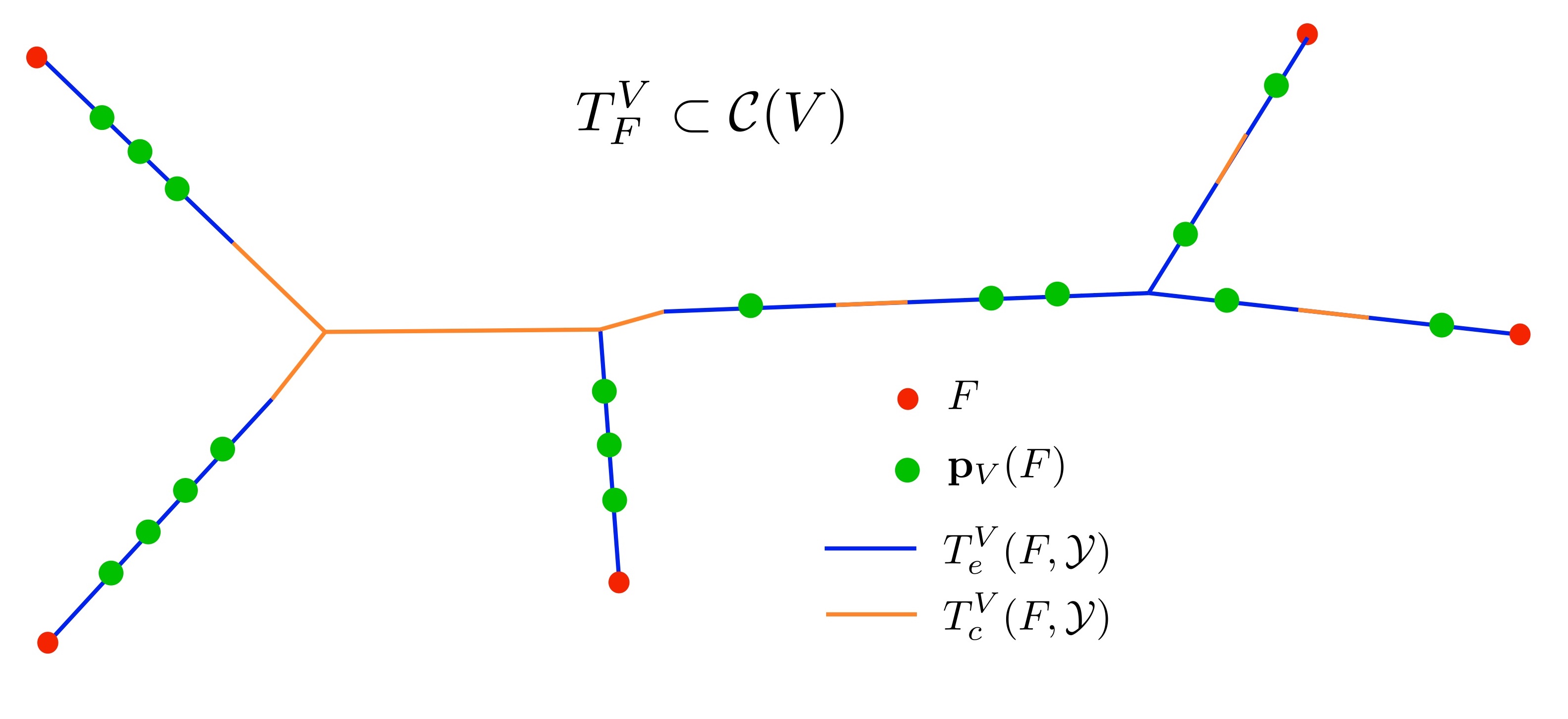}
\caption{An evenly-spaced subdivision of the stable tree $T^V_F$.  Note that subdivision points are far away from leaves, components of $T_c$, and branch points.}\label{fig:subdivision}
\end{figure}

Let $M'>8M>0$ be subdivision constants (to be specified later).
Applying $\wp_{M,M'}$ to each forest $T^V_e$ in
$T^V_F$, we call the resulting subdivision $\wp^V(F)$.

\begin{rem}
 We emphasize that the distance between the subdivision points is measured in the various trees themselves, not in the corresponding $\CC(V)$ where the trees quasi-isometrically embed. This is something that will require care throughout this section, but the fact that the trees are quasi-isometrically embedded in the corresponding $\CC(V)$ will ensure that no real issues arise.
\end{rem}

 The disjoint union over all $V\in\UU(F)$
gives us
$$\wp \equiv \wp(F) = \bigsqcup_{V \in \UU(F)} \wp^V(F).$$

More generally, we will consider $(M,M')$ subdivisions of the forests $T^V_e$ which are
not necessarily obtained from our subdivision operator $\wp_{M,M'}$, and in particular may
not be evenly spaced (they will arise by taking subsets). In this case if we name the full
configuration $\pt$ we will again denote the restriction to $T^V_e$ as $\pt_V(F)$.

\begin{rem}\label{rem:sub points and edges}
Note that our choice of constants $M' > 8M$ give that any point $p \in \wp^V(F)$ is at least distance $4M$ from any  point $\rho^W_V\in\YY^V$ for $W \nest V$ or leaf $f \in F^V$.
\end{rem}

 \subsection{The cube complex}\label{subsec:constr}

Following the scheme of \cite{BHS:quasi}, given a  union $\pt$ of $(M',M)$ subdivisions
as above we now describe a cube complex $\QQ_{F,\pt}$ and a map
$$
\Phi_{F,\pt}: \QQ_{F,\pt} \to \XX
$$
which turns out to be a quasi-isometric embedding whose image is within bounded Hausdorff distance of $H_{\theta}(F)$, when the constants $M,M',K$ are chosen suitably (see below).

To build the cube complex $\QQ_{F,\pt}$, we build a wallspace
structure on the hull $H_\theta(F)$, in which each $p\in \pt$
corresponds to a wall, i.e. a partition of $H_\theta(F)$ into two
sets.

 For each $V\in\mathfrak S$, we let $\beta^V_F:\CC(V)\to T^V_F$ be, roughly, a closest-point projection map to $T^V_F$, and more precisely any fixed map so that $d_V(x,\Xi(\beta^V_F(x)))$ is minimal for all $x\in \CC(V)$, where we recall that $\Xi$ is the quasi-isometric embedding of $T^V_F$ in $\CC(V)$.  With slight abuse of notation, for $x\in \XX$ we write $\beta^V_F(x)$ for $\beta^V_F(\pi_V(x))$.
 
 \begin{rem}
  To save notation, we will often omit the map $\Xi$, thereby identifying points and subsets of some $T^V_F$ with their image in $\CC(V)$ (as one often does with quasi-geodesics). For example, for $p\in \beta^V(F)$ and $x\in \CC(V)$ we will write $d_V(\beta^V_F(p),x)$ rather than $d_V(\Xi(\beta^V_F(p)),x)$. 
 \end{rem}
 
 Given $p\in\pt_V(F)$,  let $T^V_{p,+}$ denote one of the components of $T^V_F-\{p\}$, and let $T^V_{p,-}$ be the union of the other component and $\{p\}$ (we
arbitrarily choose the first component).  Let
$$ W^V_{p,\pm}=(\beta^V_F)^{-1}(T^V_{p,\pm})\cap H_{\theta}(F).$$
Note that $W^V_{p,+}$ and $W^V_{p,-}$ form a partition of $H_\theta(F)$. 
Let $\mathcal L^V_p=\{W^V_{p,+},W^V_{p,-}\}$ be the wall associated to
$p$. We call $T^V_{p,\pm}$ the \textbf{half-trees} associated to the wall $\mathcal L^V_p$.

Let $\QQ=\QQ_{F,\pt}$ be the CAT(0) cube complex dual to the wallspace $\{\mathcal L^V_p\}$.

\bigskip

To define $\Phi_F = \Phi_{F,\pt}:\cuco Y\to \cuco X$, note that it suffices to define
$\Phi_F$ on the $0$--skeleton of $\cuco Y$.  Let $x\in\cuco Y^{(0)}$;
we view $x$ as a coherent orientation of the walls $\mathcal
L^V_p$ (see Subsection \ref{subsec:walls}). That is, for each $p\in\pt$, we have
$x(p)$ equal to one of $ W^V_{p,+}$ or $W^V_{p,-}$.

Coarsely, we would like to define $\Phi(x)$ by
$$
\Phi(x) \approx \bigcap_{p\in\pt} x(p).
$$
This is done in \cite{BHS:quasi} by considering the projections of
$x(p)$ to the factor trees $T^Y_F$. That is, we set
$$
S_{p,Y}(x) =  {\rm hull}\left(\beta^Y_F(x(p))\right)
$$
where $Y\in \mathfrak S$ and hull denotes convex hull in the tree $T^Y_F$.  
Note that $p\in\pt_V(F)$ for some $V$, which is typically different from $Y$. 

We now define the intersection 
\begin{equation}\label{eq:bY def}
  b_Y=b_Y(x)=\bigcap_{p\in\pt} S_{p,Y}(x).
\end{equation}
These  $b_Y$ will serve as (coarse) coordinates for the map $\Phi$, as given by the theorem that we state below, after we explain how it can be extracted from \cite{BHS:quasi}.

\subsubsection*{Remarks on the construction in \cite{BHS:quasi}}
 We now summarize various results proven in \cite{BHS:quasi} regarding cubulations of hulls in HHSs. We note that the construction in \cite{BHS:quasi} of the CAT(0) cube complexes approximating hulls is the same as the one we just described, with one difference. The difference is that, rather than using $T^V_F$ to approximate the hull of $\pi_V(F)$ in $\CC(V)$, in \cite{BHS:quasi} the authors use any choice of tree contained in $\CC(V)$ that uniformly approximates the hull and is quasi-isometrically embedded with multiplicative constant 1. This choice is not due to the fact that the multiplicative constant being 1 is needed for the arguments, but more simply due to the fact that one such tree exists, and so it is more convenient to use it. For our purposes, we have to be more careful in the choice of the tree, and as a result we cannot guarantee that the multiplicative constant is 1 with our construction (recall Figure \ref{fig:pathologies}). However, this does not affect the arguments of \cite{BHS:quasi}, except that the subdivision constant $M$ has to be chosen large compared to the quasi-isometric embedding constants, so that the subdivision points are sufficiently far apart in the various hyperbolic spaces $\CC(V)$.

Another remark about the statement below is that the constant dependencies that we give below are not explicitly stated in \cite{BHS:quasi}, but can be recovered as follows. In \cite{BHS:quasi}, the constant $M$ is chosen large compared to various HHS constants and $k=|F|$ throughout Section 2, so that the construction has all the stated properties for any sufficiently large $M$. Regarding $M'$, in \cite{BHS:quasi} it is taken to be $10Mk$, as can be seen from point (4) of the construction of the walls in Subsection 2.1. The reason for the constant $10Mk$ is that one can choose subdivision points that make the diameter of the complementary components at most that quantity, but with any other bound one would obtain the same properties (e.g., that the CAT(0) cube complex quasi-isometrically embeds in the HHS), with different constants. Regarding $K$, in \cite{BHS:quasi} it is chosen to be $100Mk$, see again Subsection 2.1, and similar remarks to those regarding $M'$ apply.

\subsubsection*{Properties of the cubulation}

With this in mind, we now state various results about the construction we explained above, and point out where the arguments for those can be found in \cite{BHS:quasi}.

\begin{theorem}{thm:qi_to_cube_cplx}
Given an HHS $(\cuco{X},\mathfrak S)$ and an integer $k$, there exist $M_0\geq 1$ and functions $M'_0:\mathbb R\to \mathbb R$ and $K_0:\mathbb R\to \mathbb R$ with the following property. Whenever $M\geq M_0$, $M'\geq M'_0(M)$, and $K\geq K_0(M)$, there exists $\xi$ so that
for every  $F\subseteq \cuco X$ with $|F|\leq k$, 
the following hold: 
\begin{enumerate}
 \item\cite[Lemma 2.6, paragraph ``Definition of $\mathfrak p_A$'' in proof of Theorem 2.1
 ]{BHS:quasi} For every $x\in \QQ_{F, \pt}^{(0)}$, $b_Y(x)$ is non-empty and
   $$\diam_{\mathcal C(Y)}(b_Y(x))\leq \xi.$$
 \item \cite[Lemma 2.7, paragraph ``Definition of $\mathfrak p_A$'' in proof of Theorem 2.1]{BHS:quasi}\label{item:realization} For every $x\in \QQ_{F, \pt}^{(0)}$, there exists a point in $H_\theta(F)$, denoted $\Phi(x)$, whose
projections to all $\mathcal C(Y)$ are within distance $\xi$ of
$b_Y(x)$.
\item \cite[Theorem 2.1]{BHS:quasi}$\Phi$ is a $\xi$-median $(\xi,\xi)$-quasi-isometry to $H_\theta(F)$. \label{item:median map}
\item \cite[Theorem 2.1]{BHS:quasi} The dimension of $\QQ_{F, \pt}$ is bounded by the maximal number of pairwise orthogonal domains in $\mathfrak S$.
\end{enumerate} 
\end{theorem}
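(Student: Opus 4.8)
The plan is to observe that the construction of $\QQ_{F,\pt}$ and $\Phi_{F,\pt}$ described above coincides, up to one inessential modification, with the cubulation of hulls carried out in \cite[Section~2]{BHS:quasi}, and then to quote the relevant statements from that paper while tracking the constant dependencies as explained in the remarks preceding the theorem. Concretely, in \cite{BHS:quasi} one fixes for each $V\in\UU(F)$ a tree $\hat T^V\subseteq\CC(V)$ that uniformly approximates $\mathrm{hull}(\pi_V(F))$ and is quasi-isometrically embedded with multiplicative constant exactly $1$; we instead use the stable tree $T^V_F=T(\pi_V(F),\YY^V)$ of Theorem~\ref{thm:stable tree}, which by part~\ref{item:qi} of that theorem is a $(K,K)$--quasi-isometric embedding whose image is $K$--Hausdorff close to $\mathrm{hull}(\pi_V(F))$, but whose multiplicative constant may exceed $1$ (cf.\ Figure~\ref{fig:pathologies}). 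The only place this multiplicative constant enters the arguments of \cite{BHS:quasi} is in guaranteeing that successive subdivision points --- which lie at distance at least $M$ inside the tree --- remain uniformly far apart in $\CC(V)$; since our trees are $(K,K)$--quasi-isometrically embedded, it suffices to take $M_0$ large relative to this $K$, after which the same separation holds in $\CC(V)$. Every other argument in \cite[Section~2]{BHS:quasi} uses only the uniform quasiconvexity of the trees and their Hausdorff-approximation of the hulls, together with the HHS axioms, so it transfers verbatim, with $\xi$, $M'_0$, $K_0$ obtained from the corresponding functions in \cite{BHS:quasi} by substituting our constant $K$ wherever the tree-approximation constants appear.

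Granting this, each of (1)--(4) becomes a direct citation. For (1): since $F$ is finite, $\UU(F)=\rel_K(F)$ is finite and each $T^V_F$ is a finite metric tree, so $\pt$ is finite; the subtrees $S_{p,Y}(x)=\mathrm{hull}(\beta^Y_F(x(p)))$ of $T^Y_F$ pairwise intersect, because coherence of the orientation $x$ gives a point of $H_\theta(F)$ in $x(p)\cap x(q)$ whose $\beta^Y_F$--image lies in both, so by the Helly property for subtrees $b_Y(x)=\bigcap_p S_{p,Y}(x)\neq\emptyset$, and the diameter bound is \cite[Lemma~2.6]{BHS:quasi}. For (2): one checks that the tuple $(b_Y(x))_{Y\in\mathfrak S}$ is $\kappa$--consistent in the sense of Definition~\ref{defn:consistent_tuple} for a uniform $\kappa$ --- here the Bounded Geodesic Image axiom, together with the fact that $\YY^V$ records exactly the relative projections $\rho^W_V$, is what controls the off-diagonal terms --- and then one applies the realization theorem for consistent tuples; this is \cite[Lemma~2.7]{BHS:quasi}. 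For (3): coarse Lipschitzness of $\Phi$, the fact that $\Phi(\QQ_{F,\pt})$ is $\xi$--Hausdorff close to $H_\theta(F)$, the lower quasi-isometry bound (via the Distance Formula, Theorem~\ref{thm:distance_formula}, applied to the coordinates $b_Y$), and the $\xi$--median property are all established in the proof of \cite[Theorem~2.1]{BHS:quasi}. For (4): a $d$--cube of $\QQ_{F,\pt}$ corresponds to $d$ pairwise-crossing walls $\mathcal L^{V_i}_{p_i}$; two walls from the same domain are nested, since their half-trees are nested in that tree, so the $V_i$ are pairwise distinct, and a Bounded-Geodesic-Image/consistency argument forces them to be pairwise orthogonal, bounding $d$ by the maximal number of pairwise orthogonal elements of $\mathfrak S$ --- again \cite[Theorem~2.1]{BHS:quasi}.

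The only genuine (if light) work is the bookkeeping in the first paragraph: confirming that no argument of \cite[Section~2]{BHS:quasi} secretly relies on the multiplicative constant being $1$ beyond the spacing of subdivision points, so that replacing $\hat T^V$ by $T^V_F$ and enlarging the threshold for $M$ accordingly leaves all conclusions intact with suitably adjusted constants. This is also the step where the precise dependency $M\geq M_0$, $M'\geq M'_0(M)$, $K\geq K_0(M)$, $\xi=\xi(M)$ is pinned down, by reading off from \cite{BHS:quasi} that $M$ must dominate the HHS constants, $k$, and the quasi-isometry constant $K$ from Theorem~\ref{thm:stable tree}, while $M'$ and $K$ may then be taken to be any sufficiently large functions of $M$.
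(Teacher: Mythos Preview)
Your proposal is correct and matches the paper's approach exactly: the paper gives no formal proof of this theorem, instead relying on the preceding ``Remarks on the construction in \cite{BHS:quasi}'' to explain that the only difference from \cite{BHS:quasi} is the choice of tree (stable trees rather than trees with multiplicative constant $1$), that this is harmless provided $M$ is chosen large relative to the quasi-isometry constants, and that the constant dependencies can be read off from \cite{BHS:quasi}---after which each item is a direct citation. Your elaborations on the mechanisms behind items (1)--(4) (Helly, consistency plus realization, distance formula, orthogonality of crossing walls) go slightly beyond what the paper writes out but are accurate and in the same spirit.
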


We will also need some more technical properties of the trees $T^V_F$ and projections $\rho^U_V$ related to the HHS consistency axioms.
\begin{proposition}{prop:rhos_close_to_leaves}
 Given an HHS $(\cuco{X},\mathfrak S)$ and an integer $k$ there exists $\kappa$ so that
 given $K$  sufficiently large (depending only on $(\XX,\mathfrak S)$) we have: 
 \begin{enumerate}
  \item \cite[Lemma 2.3]{BHS:quasi} if $U,V\in\UU(F)$ and $U\transverse V$, then $\rho^U_V$ lies $\kappa$-close in $\CC(V)$ to a point of $\pi_V(F)$.
  \item  \cite[Lemma 2.5]{BHS:quasi} if $U,V\in\UU(F)$, $V\propnest U$, and $q\in \pt(U)$, then $\rho^U_V(q)$ lies $\kappa$-close in $\CC(V)$ to a point of $\pi_V(F)$.
 \end{enumerate}

\end{proposition}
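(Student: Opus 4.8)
The plan is to treat the two items separately. Item~(1) will be a direct consequence of the transversality consistency inequality, while item~(2) will combine the bounded geodesic image axiom, the nested consistency inequality, the spacing of subdivision points guaranteed by Remark~\ref{rem:sub points and edges}, and the fact that the stable tree $T^U_F$ (Theorem~\ref{thm:stable tree}) lies within bounded Hausdorff distance of $hull(F^U)$ (Proposition~\ref{the forest for the trees}).

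For item~(1): since $U\in\UU(F)=\rel_K(F)$ there are $x,y\in F$ with $d_U(x,y)>K$. Recall $\rho^U_V$ is defined because $U\transverse V$. Applying the transversality consistency inequality \eqref{eq:transverse consistency} to $x$ and to $y$, if both $d_U(\pi_U(x),\rho^V_U)\le\kappa_0$ and $d_U(\pi_U(y),\rho^V_U)\le\kappa_0$ held, then $d_U(x,y)\le 2\kappa_0<K$, a contradiction; so for one of them, say $x$, the other alternative holds, namely $d_V(\pi_V(x),\rho^U_V)\le\kappa_0$. Thus $\rho^U_V$ is $\kappa_0$-close to $\pi_V(x)\in\pi_V(F)$, and $\kappa=\kappa_0$ suffices here.

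For item~(2): let $q\in\pt(U)$, lying on an edge of the external forest $T^U_e\subset T^U_F$, with $V\propnest U$. By Remark~\ref{rem:sub points and edges}, $q$ is at tree-distance $\ge 4M$ from every leaf of $T^U_F$ (these lie in $F^U\cup\YY^U$ by Lemma~\ref{minimal networks don't overlap}) and from $\rho^V_U\in\YY^U$; in particular, choosing $M$ large relative to $\delta$, $\kappa_0$, and the quasi-isometry constants of $\Xi\colon T^U_F\to\CC(U)$, the point $q$ is deep inside $T^U_F$, hence (using that the spikes of $T^U_F$ have bounded length by Lemma~\ref{minimal networks don't overlap}) it is within bounded distance in $\CC(U)$ of a point $q'$ on a geodesic $[\pi_U(x),\pi_U(y)]$ with $x,y\in F$, with $d_U(q',\rho^V_U)\gg\kappa_0$. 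I would then argue that one of the two endpoints, say $\pi_U(x)$, is not separated from $q'$ by $\rho^V_U$ along $[\pi_U(x),\pi_U(y)]$; thinness of the relevant triangle then forces the subsegment $[\pi_U(x),q']$ to avoid $N_{\kappa_0}(\rho^V_U)$ and also forces $d_U(\pi_U(x),\rho^V_U)>\kappa_0$. The bounded geodesic image axiom of Definition~\ref{defn:HHS} applied to $[\pi_U(x),q']$ gives $\diam_V(\rho^U_V([\pi_U(x),q']))\le\kappa_0$, so $\rho^U_V(q')$ is $\kappa_0$-close to $\rho^U_V(\pi_U(x))$; and the nested consistency inequality \eqref{eq:nested consistency} applied to $x$, using $d_U(\pi_U(x),\rho^V_U)>\kappa_0$, gives $\diam_V(\pi_V(x)\cup\rho^U_V(\pi_U(x)))\le\kappa_0$, so $\rho^U_V(\pi_U(x))$ is $\kappa_0$-close to $\pi_V(x)\in\pi_V(F)$. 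Chaining these estimates, and absorbing a final bounded error to pass from $q'$ back to $q$ (again via bounded geodesic image on a short geodesic $[q',q]$ staying far from $\rho^V_U$), yields a uniform $\kappa$.

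The main obstacle is the geometric positioning step in item~(2): verifying that $q$, being a subdivision point far from every leaf, genuinely sits between two points of $F^U$ (rather than on a spike between an $F^U$-leaf and a $\YY^U$-leaf of the stable tree) and that at least one endpoint $\pi_U(x)$ lies on the same side of $\rho^V_U$ as $q$ with the connecting geodesic staying $\kappa_0$-far from $\rho^V_U$. This is exactly where the Hausdorff-closeness $T^U_F\approx hull(F^U)$, the bounded length of the spikes, and the $\ge 4M$ spacing of Remark~\ref{rem:sub points and edges} must be played against each other, and it dictates taking $M$ sufficiently large compared to all the $(\XX,\mathfrak S)$- and $\delta$-constants; everything else is routine constant bookkeeping, which is why $\kappa$ depends only on $(\XX,\mathfrak S)$ and $k$ while $K$ need only be taken large enough.
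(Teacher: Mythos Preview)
The paper does not prove this proposition; both items are stated with inline citations to \cite[Lemmas~2.3 and 2.5]{BHS:quasi}, and the discussion preceding Theorem~\ref{thm:qi_to_cube_cplx} explains that the present construction is arranged so that those arguments carry over verbatim. Your proposal supplies the argument directly, and it is essentially the standard one (and presumably matches the cited reference): item~(1) is immediate from transversality consistency applied to a pair $x,y\in F$ with $d_U(x,y)>K>2\kappa_0$, and item~(2) proceeds by locating a geodesic $[\pi_U(x),\pi_U(y)]$ passing near $q$, splitting it at a nearby point $q'$ so that one subsegment avoids $N_{\kappa_0}(\rho^V_U)$, and then combining bounded geodesic image along that subsegment with nested consistency at its endpoint. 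The step you flag as the main obstacle is fine: subdivision points lie on edges of $T^U_e$ at distance $\ge M$ from $\partial T^U_e$, so for $M$ large relative to the quasi-isometry constant of Proposition~\ref{the forest for the trees} they sit well inside $hull(F^U)$ and hence near some such geodesic; and since $d_U(q',\rho^V_U)\gg\kappa_0$ while $\diam(\rho^V_U)\le\xi$, the neighborhood $N_{\kappa_0}(\rho^V_U)$ can meet $[\pi_U(x),\pi_U(y)]$ on at most one side of $q'$, giving the required subsegment.
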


\begin{rem}
 In the rest of this section, whenever we use constants $M,M',K$ we will assume that they are chosen as as in Theorem \ref{thm:qi_to_cube_cplx}, and so that all supporting lemmas in \cite{BHS:quasi} apply.  Moreover, we will impose further requirements as needed. We note that the role of $K$ is often hidden in the statements, since it affects the set $\UU(F)$ which in the various statements often only plays a role implicitly.
\end{rem}

\subsection{Deleting subdivision points}

Now we consider how the construction of $\QQ_{F,\pt}$ is affected by the deletion of
points in $\pt$.
If $\pt_0\subset \pt$ there is a hyperplane-deletion map
$h:\QQ_{F,\pt}\to\QQ_{F,\pt_0}$. That is, 
for $x\in \QQ_{F,\pt}$, the image 
$h(x)\in \QQ_{F,\pt_0}$ is just the orientation on the remaining
walls:  $h(x)(p) = x(p)$ for $p\in \pt_0$.   We note that the subdivisions in the following proposition need not be evenly spaced.

\begin{proposition}{point deletion}
 For every $k$, $n$, and $M,M',K$ as in Theorem \ref{thm:qi_to_cube_cplx}, there exist $K'$ such that, if $F$ has
 cardinality at most $k$ and $\pt$ is an $(M,M')$-subdivision,
 and $\pt_0\subset \pt$ satisfies $|\pt\ssm\pt_0| \le n$, then $\pt_0$ is an $(M,M'')$-subdivision satisfying the conclusions of Theorem \ref{thm:qi_to_cube_cplx} and the diagram
  \begin{equation}
    \begin{tikzcd}
    \QQ_{F,\pt} \arrow[dr,"\Phi_{F,\pt}"] \arrow[d,"h" left] &  \\
    \QQ_{F,\pt_0} \arrow[r,"\Phi_{F,\pt_0}"] & \XX
  \end{tikzcd}
  \end{equation}
  commutes up to error $K'$.
\end{proposition}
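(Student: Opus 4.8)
The plan is to verify that deleting at most $n$ subdivision points leaves us in exactly the situation covered by Theorem \ref{thm:qi_to_cube_cplx}, and then to control the comparison map by comparing the coordinate sets $b_Y$. First I would check that $\pt_0$ is still an $(M,M'')$-subdivision for a suitable $M'' = M''(n,M,M')$: property (1) and property (3) in Definition \ref{defn:subdivision} are inherited verbatim by any subset, since they only assert that points lie in edge interiors and are at least $M$ apart. The only thing that can fail is property (2), the covering condition: deleting a point from $\pt(e)\cup\partial e$ can double the largest gap, and deleting $j$ consecutive points along a single edge multiplies the gap by at most $j+1$. Since $|\pt\ssm\pt_0|\le n$, the spacing of $\pt_0(e)\cup\partial e$ is still strictly less than $(n+1)M'$, so $\pt_0$ is an $(M,(n+1)M')$-subdivision. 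Since $K$ and the cardinality bound $k$ are unchanged, Theorem \ref{thm:qi_to_cube_cplx} (applied with $M'_0$ evaluated at the new spacing bound, which we may absorb into the hypotheses, or simply by noting that all the \cite{BHS:quasi} arguments only use that subdivision points are $M$-separated and edges are $M'$-covered) gives that $\QQ_{F,\pt_0}$, $\Phi_{F,\pt_0}$ and the coordinate maps $b_Y(\cdot)$ satisfy conclusions (1)--(4); in particular $\Phi_{F,\pt_0}$ is a $\xi$-median $(\xi,\xi)$-quasi-isometry onto $H_\theta(F)$ and each $b_Y$ has diameter $\le\xi$.

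Next I would estimate the error in the square. Fix $x\in\QQ_{F,\pt}^{(0)}$, viewed as a coherent canonical orientation of $\{\mathcal L^V_p : p\in\pt\}$, and let $h(x)$ be its restriction to $\{\mathcal L^V_p : p\in\pt_0\}$. For each $Y\in\mathfrak S$ we have two intersections of half-trees in $T^Y_F$, namely $b_Y(x)=\bigcap_{p\in\pt}S_{p,Y}(x)$ and $b_Y(h(x))=\bigcap_{p\in\pt_0}S_{p,Y}(x)$; the second is an intersection over a subset of the index set of the first, so $b_Y(x)\subseteq b_Y(h(x))$. The point is that this enlargement is uniformly bounded: $b_Y(x)$ is obtained from $b_Y(h(x))$ by further intersecting with the $\le n$ half-trees $S_{p,Y}(x)$ for $p\in\pt\ssm\pt_0$, and since both $b_Y(x)$ and $b_Y(h(x))$ are nonempty (Theorem \ref{thm:qi_to_cube_cplx}(1)) and $b_Y(h(x))$ has diameter $\le\xi$, we already get $d_{\mathcal C(Y)}(b_Y(x), b_Y(h(x)))\le\xi$ for \emph{every} $Y$. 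By the Distance Formula (Theorem \ref{thm:distance_formula}) applied to the realization points $\Phi_{F,\pt}(x)$ and $\Phi_{F,\pt_0}(h(x))$ — whose $Y$-projections are within $\xi$ of $b_Y(x)$ and $b_Y(h(x))$ respectively, hence within $3\xi$ of each other in every $\mathcal C(Y)$ — there is a threshold $s$ above which no domain contributes, so $d_{\XX}(\Phi_{F,\pt}(x),\Phi_{F,\pt_0}(h(x)))\le K'$ for $K'=K'(k,n,\xi,s)$. (Alternatively, one invokes the uniqueness axiom directly: if the two realization points were more than $\theta_u(3\xi)$ apart, some domain would separate them by more than $3\xi$, a contradiction.) This gives commutativity of the triangle up to $K'$ on the $0$-skeleton, and since $h$ and both $\Phi$'s are coarsely Lipschitz (the $\Phi$'s because they are $(\xi,\xi)$-quasi-isometries, $h$ because it is a hyperplane-deletion map, hence $1$-Lipschitz for $d_1$), it extends to all of $\QQ_{F,\pt}$ with a controlled increase of the constant.

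The main obstacle I anticipate is not the error estimate — which is essentially immediate once one observes the nesting $b_Y(x)\subseteq b_Y(h(x))$ and that the target has bounded diameter — but rather the bookkeeping needed to be sure that Theorem \ref{thm:qi_to_cube_cplx} genuinely applies to $\pt_0$ with constants depending only on $k,n,M,M',K$ and not on $F$. Concretely, one must confirm that the functions $M'_0$ and $K_0$ from Theorem \ref{thm:qi_to_cube_cplx} (equivalently, all the supporting lemmas of \cite{BHS:quasi}) tolerate replacing the evenly-spaced $\wp_{M,M'}$-subdivision by an arbitrary $(M,(n+1)M')$-subdivision; the paper flags exactly this point in the remark following Theorem \ref{thm:qi_to_cube_cplx} (``subdivisions in the following proposition need not be evenly spaced''), so the work is to cite the \cite{BHS:quasi} lemmas in the form that only uses the $(M,M')$-subdivision axioms. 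Once that is in place, choosing $K'$ large enough to dominate $3\xi$ times the number of relevant domains (bounded via the Distance Formula threshold) finishes the proof.
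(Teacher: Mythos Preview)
Your proposal is correct and follows essentially the same argument as the paper: observe the nesting $b_Y(x)\subseteq b_Y(h(x))$, use Theorem \ref{thm:qi_to_cube_cplx}(1) to bound $\diam b_Y(h(x))$ (after checking $\pt_0$ is an $(M,(n+1)M')$-subdivision), and conclude via the Distance Formula. Your write-up is more detailed than the paper's (which dispatches the whole thing in two short paragraphs), but the ideas are identical.
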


\begin{proof}
As above, the map $\Phi_{F,\pt}(x)$ is determined by the coordinates
  $$
  b_U = \bigcap_{p\in \pt} S_{p,U}(x)
  $$
  whereas $\Phi_{F,\pt_0}(h(x))$ is determined by 
  $$
  b_{0,U} = \bigcap_{p\in \pt_0} S_{p,U}(x).
  $$
  Note that $b_U \subset b_{0,U}$, and in the other direction the diameter of $b_{0,U}$ is
  bounded by Theorem \ref{thm:qi_to_cube_cplx}(1), because the new set $\pt_0$ is
  an $(M,M''')$-subdivision for some $M''\geq M'$ (and in particular $M''\geq M'_0(M)$, so that the conclusion of Theorem \ref{thm:qi_to_cube_cplx} holds for $\pt_0$) depending only on $M,M'$ and the number $n$ of deletions.

  The bound on $d(\Phi_{F,\pt},\Phi_{F,\pt_0}\circ h)$ then follows
  from the distance formula (Theorem \ref{thm:distance_formula}), since the coordinates of $\Phi_{F,\pt_0}(h(x))$ and $\Phi_{F,\pt}(x)$ coarsely coincide with $b_U$ and $b_{0,U}$.
\end{proof}

\subsection{Intersection conditions}
Recall that Lemma \ref{lem:halfspaces_bijection} explains how a bijection between halfspaces that preserves intersection properties induces an isomorphism of the corresponding cube complexes. In view of this, we are interested in knowing when two of our halfspaces intersect.

We fix the setup of Subsection \ref{subsec:constr}. The next lemma is the main technical support for Proposition \ref{prop:isomorphism of cc} below:

\begin{lemma}{lem:halfspaces_intersect}
There exists $M_1$, depending on $(\XX,\mathfrak S)$ and $|F|$, so that the following holds.  Let $\pt$ be an $(M,M')$-subdivision, with $M\geq M_1$.
 Consider two halfspaces $W^V_{p,\sigma},W^Z_{q,\tau}$, with associated half-trees $T^V_{p,\sigma}, T^Z_{q,\tau}$, where $p \in \pt_V(F), q\in \pt_Z(F)$ and $\tau, \sigma \in \{\pm\}$.
 
Then $W^V_{p,\sigma},W^Z_{q,\tau}$ intersect if and only if one of the following holds, up to switching the roles of the half-spaces:
 \begin{enumerate}
 
   \item $V\orth Z$,
  \item $V=Z$, and $T^V_{p,\sigma}\cap T^Z_{q,\tau}\neq \emptyset$,
   \item $V\transverse Z$, and $T^Z_{q,\tau}$ contains $\beta^Z_F(\rho^V_Z)$,
  \item $V\propnest Z$, and $T^Z_{q,\tau}$ contains $\beta^Z_F(\rho^V_Z)$, 
  \item $V\propnest Z$, and $T^V_{p,\sigma}$ contains $\beta^V_F(\rho^Z_V(q))$.

 \end{enumerate}
\end{lemma}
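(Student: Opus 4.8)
The plan is to unwind the definitions and then prove ``$\Leftarrow$'' by a realization argument and ``$\Rightarrow$'' using the consistency axioms. By construction, $W^V_{p,\sigma}\cap W^Z_{q,\tau}\neq\emptyset$ precisely when there is $x\in H_\theta(F)$ with $\beta^V_F(x)\in T^V_{p,\sigma}$ and $\beta^Z_F(x)\in T^Z_{q,\tau}$. Before splitting into cases I would record two robustness facts, valid once $M_1$ is large relative to the HHS constants (in particular $\kappa_0$ and the quasi-isometry constants of the embeddings $\Xi$) and $M\ge M_1$. First, each of $\rho^V_Z$, $\rho^Z_V$, $\rho^Z_V(q)$, $\rho^V_Z(p)$ lies uniformly close to $T^Z_F$ resp.\ $T^V_F$: for the transverse relative projections this is Proposition \ref{prop:rhos_close_to_leaves}(1), which moreover puts them near a \emph{leaf}; for $\rho^Z_V(q)$ this is Proposition \ref{prop:rhos_close_to_leaves}(2), again near a leaf; and $\rho^V_Z$ lies near $hull(\pi_Z(F))$, which is Hausdorff-close to $T^Z_F$, since $\rho^V_Z\in\YY^Z$. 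Second, by Remark \ref{rem:sub points and edges} every subdivision point is at distance $\ge 4M$ in its tree from any leaf of $F$ and from any $\rho^W_\bullet\in\YY^\bullet$. Consequently, whenever a point of $T^Z_F$ lies within $O(\kappa_0)$ of $\beta^Z_F(\rho^V_Z)$ (or of one of the other projections above) and is known to lie on a prescribed side of $q$, then $\beta^Z_F(\rho^V_Z)$ itself lies on that same side of $q$; this is the mechanism by which ``coarsely contains'' is upgraded to ``contains''.

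For ``$\Leftarrow$'' I would argue case by case, in each case producing a consistent tuple of projections whose $V$-- and $Z$--coordinates sit deep inside $T^V_{p,\sigma}$ and $T^Z_{q,\tau}$, filling in the remaining coordinates by pushing forward along the $\rho$'s as in \cite{BHS:quasi}, and then invoking realization (Theorem \ref{thm:qi_to_cube_cplx}(2) applied to the induced orientation) to obtain a point $x\in H_\theta(F)$ with the prescribed $\beta$-images. In case (1), $V\orth Z$, there is no constraint between the two coordinates, so the partial realization axiom \ref{item:dfs_partial_realization} lets us choose them independently and the halfspaces always meet. In case (2) we take the $V$--coordinate near a point of the non-empty subtree $T^V_{p,\sigma}\cap T^Z_{q,\tau}$ that is at distance $\ge M/2$ from $p$ and from $q$; such a point exists because successive subdivision points are $\ge M$ apart. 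In cases (3) and (4) the hypothesis is $\beta^Z_F(\rho^V_Z)\in T^Z_{q,\tau}$, so we place the $Z$--coordinate at $\rho^V_Z$ and the $V$--coordinate anywhere deep in $T^V_{p,\sigma}$; the pair is automatically consistent by \eqref{eq:transverse consistency} resp.\ \eqref{eq:nested consistency} since the $Z$--coordinate sits on $\rho^V_Z$. In case (5), if $\beta^Z_F(\rho^V_Z)\in T^Z_{q,\tau}$ we reduce to case (4); otherwise $\beta^Z_F(\rho^V_Z)\in T^Z_{q,-\tau}$, so we place the $Z$--coordinate at a point of $T^Z_{q,\tau}$ far from $q$ (hence far from $\rho^V_Z$), observe that the $\mathcal C(Z)$--geodesic from there to $q$ fellow-travels the tree geodesic and so misses $N_{\kappa_0}(\rho^V_Z)$, apply bounded geodesic image (axiom \ref{item:dfs:bounded_geodesic_image}) to get $\rho^Z_V$ coarsely constant $\approx\rho^Z_V(q)$ along it, and place the $V$--coordinate at $\rho^Z_V(q)\in T^V_{p,\sigma}$, making the pair nesting-consistent.

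For ``$\Rightarrow$'', suppose $x\in H_\theta(F)$ realizes the intersection. Since $\orth$, $=$, $\transverse$, $\propnest$ are exhaustive and mutually exclusive it suffices to deduce condition (1), (2), (3), or (4)/(5) in the respective case, and (1), (2) are immediate. If $V\transverse Z$, then \eqref{eq:transverse consistency} for $x$ gives that $\pi_Z(x)$ is $\kappa_0$--close to $\rho^V_Z$ or $\pi_V(x)$ is $\kappa_0$--close to $\rho^Z_V$; in the first case $\beta^Z_F(x)$ is within $O(\kappa_0)$ of $\beta^Z_F(\rho^V_Z)$, and since $\beta^Z_F(x)\in T^Z_{q,\tau}$ cannot be $O(\kappa_0)$--close to $q$ (which is $\ge 4M$ from $\rho^V_Z$), we get $\beta^Z_F(\rho^V_Z)\in T^Z_{q,\tau}$, i.e.\ (3); the second case gives (3) with roles switched. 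If $V\propnest Z$, then \eqref{eq:nested consistency} gives that $\pi_Z(x)$ is $\kappa_0$--close to $\rho^V_Z$ — yielding (4) as above — or $\pi_V(x)$ is $\kappa_0$--close to $\rho^Z_V(\pi_Z(x))$; in this last case, if (4) holds we are done, and otherwise $\beta^Z_F(\rho^V_Z)\in T^Z_{q,-\tau}$, so the tree geodesic from $\beta^Z_F(x)$ to $q$, and hence the fellow-travelling $\mathcal C(Z)$--geodesic from $\pi_Z(x)$ to $q$, avoids $N_{\kappa_0}(\rho^V_Z)$; bounded geodesic image forces $\rho^Z_V(\pi_Z(x))\approx\rho^Z_V(q)$, so $\beta^V_F(x)$ lies within $O(\kappa_0)$ of $\beta^V_F(\rho^Z_V(q))$, which by Proposition \ref{prop:rhos_close_to_leaves}(2) is near a leaf of $T^V_F$ and thus $\ge 4M$ from $p$, whence $\beta^V_F(\rho^Z_V(q))$ lies on the same side of $p$ as $\beta^V_F(x)$, giving (5). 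I expect the main obstacle to be this last bounded-geodesic-image step: one must verify that the tree geodesics genuinely fellow-travel the $\mathcal C(\cdot)$--geodesics (using that $\Xi$ is a quasi-isometric embedding and that $q,\pi_Z(x)$ lie near the tree), and that after applying BGI the resulting coarse point still falls unambiguously on the correct side of the subdivision point — which is exactly what Remark \ref{rem:sub points and edges} and the choice of large $M$ provide.
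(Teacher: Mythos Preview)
Your ``$\Rightarrow$'' direction and the BGI manipulations in case (5) are essentially the paper's argument and are correct. The gap is in the ``$\Leftarrow$'' direction, in how you produce the witnessing point $x\in H_\theta(F)$.

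Theorem \ref{thm:qi_to_cube_cplx}(2) takes as input a $0$-cube of $\QQ_{F,\pt}$, i.e.\ a coherent orientation of \emph{all} walls, not a pair of coordinates in $\CC(V)$ and $\CC(Z)$; ``filling in the remaining coordinates by pushing forward along the $\rho$'s'' is not a procedure available in this setup, and the partial-realization axiom you invoke in case (1) only gives $x\in\XX$, not $x\in H_\theta(F)$. Concretely: in cases (3) and (4) you place the $Z$-coordinate at $\rho^V_Z$ and the $V$-coordinate ``anywhere deep in $T^V_{p,\sigma}$''. If one tries to realize this by finding $x\in H_\theta(F)$ with $\pi_Z(x)$ near $\rho^V_Z$, the consistency minimum is already achieved on the $Z$-side and imposes no constraint whatsoever on $\pi_V(x)$, so nothing forces $\beta^V_F(x)$ into $T^V_{p,\sigma}$.

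The paper works asymmetrically and avoids building any tuple. From Theorem \ref{thm:HHS-hull} and the quasi-isometric embedding of the stable trees it first records that $\beta^V_F(H_\theta(F))$ is $D$-dense in $T^V_F$, with $D=D(\XX,\mathfrak S,|F|)$, and takes $M>10D$. Then in cases (2)--(5) it chooses $x\in H_\theta(F)$ by density in the tree where a \emph{free} choice is wanted --- e.g.\ in case (3), $\beta^V_F(x)\in T^V_{p,\sigma}$ with $d_{T^V_F}(\beta^V_F(x),\beta^V_F(\rho^Z_V))>M/2$; in case (5), $\beta^Z_F(x)\in T^Z_{q,\tau}$ with $d_{T^Z_F}(\beta^Z_F(x),q)\le D$ --- and then applies the consistency inequality in the informative direction (e.g.\ $d_V(\pi_V(x),\rho^Z_V)>\kappa_0$ forces $d_Z(\pi_Z(x),\rho^V_Z)<\kappa_0$) together with BGI to push the \emph{other} coordinate into the required half-tree. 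The point is to realize via density in the tree where consistency does \emph{not} trivially hold, so that consistency becomes an actual constraint on the second coordinate. Case (1) is cited from \cite[Lemma 2.13]{BHS:quasi} rather than re-derived from partial realization.
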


We note that the last 3 cases of the lemma boil down to the consistency inequalities for HHSs (and could even be seen as interpretations thereof).

\begin{proof}
In an effort to enhance readability of the proof, we will make coarse, comparative arguments which keep track of dependencies of constants and their relative size, instead of precise quantities.  

It follows from Theorem \ref{thm:HHS-hull} and the fact the the various $T^V_F$ are quasi-isometrically embedded (Theorem \ref{thm:stable tree}) that the image $\beta^V_F(H_\theta(F))$ is
$D$-dense in the tree $T^V_F$ (with respect to its path metric), where $D$ only depends on the HHS structure and $|F|$.
We may assume that $M$ has been chosen greater than
$10D$. Moreover, throughout the proof we will further specify conditions on $M$, requiring it to be suitably larger than other constants appearing in the argument.  It is important to notice that, in each case, these constants depend only on $(\XX,\mathfrak S)$ and $|F|$.  We also remark that we need to be careful when comparing distances in the trees $T^V_F$ and the ambient spaces $\CC(V)$.

{\bf Case $V\orth Z$.} The fact that in this case all pairs of halfspaces intersect is \cite[Lemma 2.13]{BHS:quasi}.
\par\smallskip

{\bf Case $V=Z$.} If the halfspaces intersect, then any point $x$ in their intersection
has $\beta^V_F(x)\in T^V_{p,\sigma}\cap T^V_{q,\tau}$. Conversely, suppose $T^V_{p,\sigma}\cap
T^V_{q,\tau}\neq \emptyset$. If the intersection contains both $p$ and $q$, then
$$\diam_{T^V_F}(T^V_{p, \sigma} \cap T^V_{q, \tau}) \geq d_{T^V_F}(p,q) \geq M.$$
If not, then the intersection contains all of $T^V_{p,\sigma}$ or all of $T^V_{q,\tau}$,
and again
$$\diam_{T^V_F}(T^V_{p, \sigma} \cap T^V_{q, \tau})  \geq M.$$

\begin{figure}
\includegraphics[width=0.65\textwidth]{./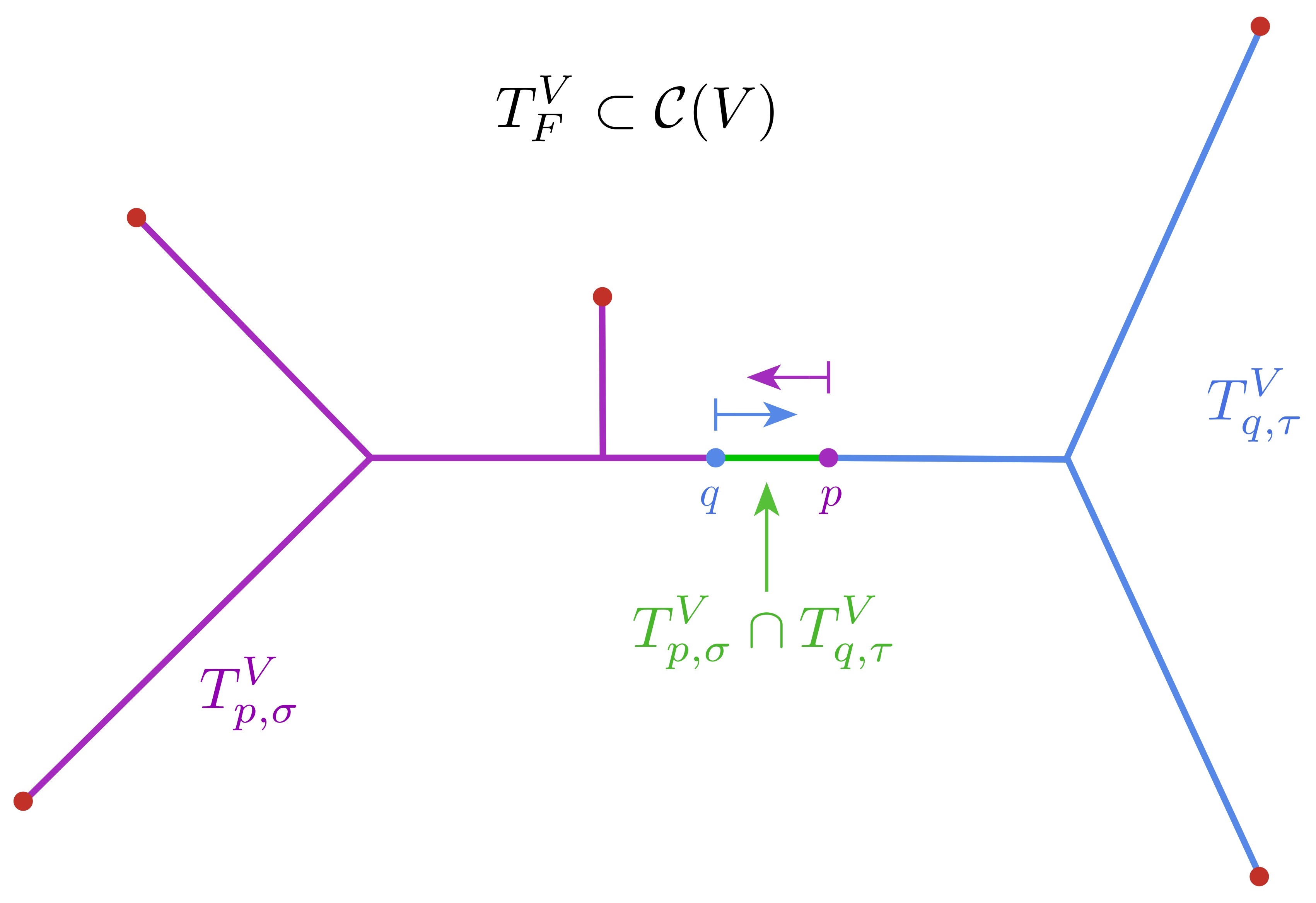}
\caption{The first possibility in case (2), when $V=Z$, where $p$ and $q$ choose different ends of $T^V_F$, and necessarily $\displaystyle \diam(T^V_{p, \sigma} \cap T^V_{q, \tau}) \geq d_{T^V_F}(p,q) \geq M.$}\label{fig:V=Z1}
\end{figure}

\begin{figure}
\includegraphics[width=0.65\textwidth]{./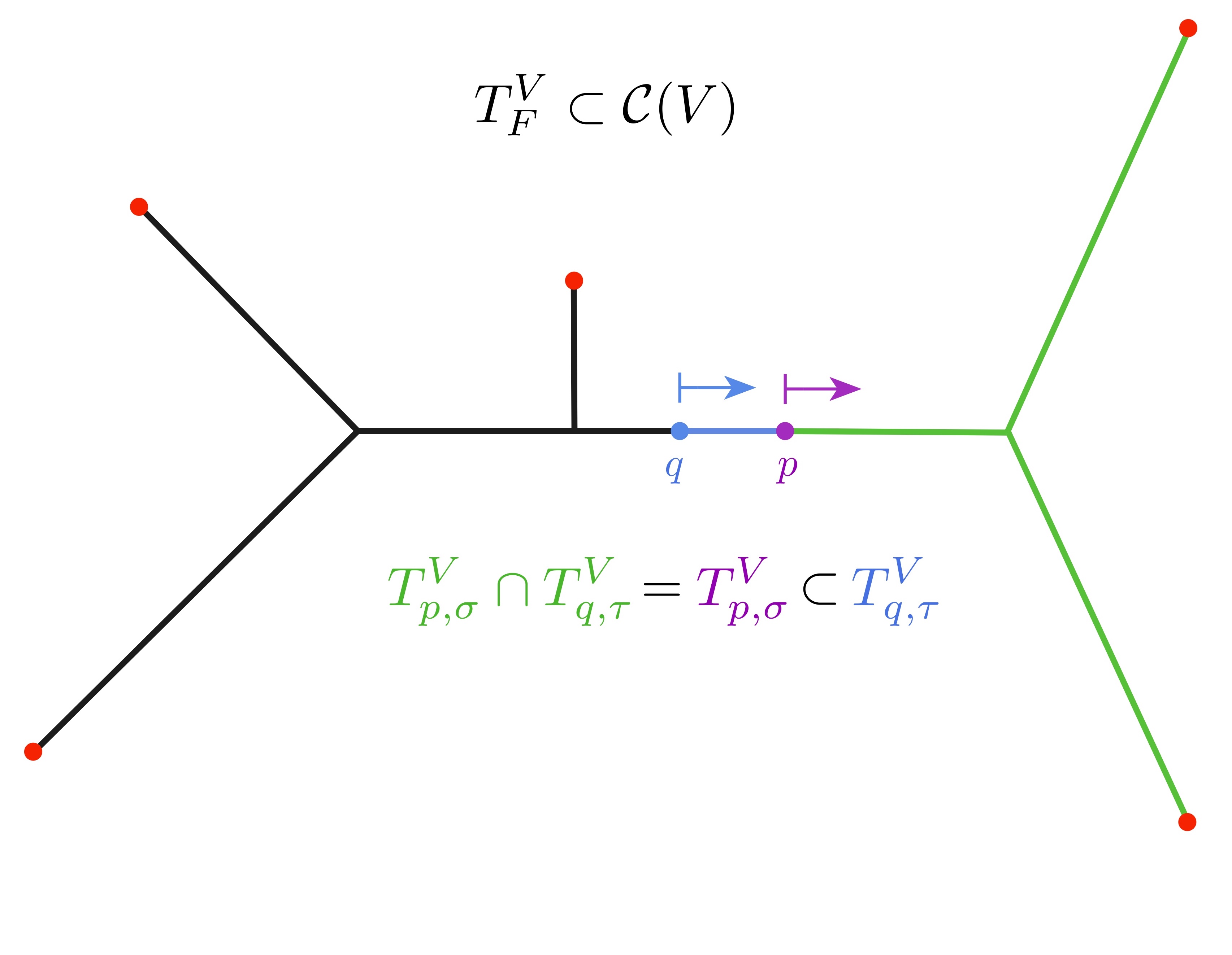}
\caption{The other possibility in case (2), when $V=Z$, where $p$ and $q$ now choose the same ends of $T^V_F$.}\label{fig:V=Z2}
\end{figure}

Since $\beta^V_F(H_{\theta}(F))$ is $D$-dense in $T^V_F$ and $M>D$, 
we can then find $x\in H_{\theta}(F)$ with $\beta^V_F(x) \in T^V_{p,\sigma}\cap T^Z_{q,\tau}$, so that $x \in W^V_p \cap W^Z_q$, as required.

\par\smallskip
{\bf Case $V\transverse Z$.}
If the halfspaces $W^V_{p,\sigma}$ and $W^Z_{q,\tau}$ intersect, then any point $x$ in
their intersection has $\beta^V_F(x)\in T^V_{p,\sigma}$ and $\beta^Z_F(x)\in
T^Z_{q,\tau}$. We claim that
$$ \beta^Z_F(\rho^V_Z)\notin T^Z_{q,\tau} \implies d_Z(\rho^V_Z, \pi_Z(x)) > \kappa_0,$$
where $\kappa_0$ is the constant in the transverse consistency inequality (\ref{eq:transverse consistency}),
and similarly with the roles of $Z$ and $V$ interchanged. Indeed, 
recall that Proposition \ref{prop:rhos_close_to_leaves}(1) says that (assuming $M$ is
sufficiently large) $\rho^V_Z$ lies within $\kappa$ of a leaf of $T^Z_F$, where $\kappa$
only depends on the HHS structure and $|F|$. Since $q$ is at least $M$ away from the
leaves, and we can assume assume that $M$ is sufficiently large compared to $\kappa$ and the quasi-isometric embedding constants of $T^Z_F$, we see that if $\beta^Z_F(\rho^V_Z)$ is not in $T^Z_{q,\tau}$
then it must be at least $M/2$ from it (as measured in the metric of $T^Z_F$). Thus $\beta^Z_F(\rho^V_Z)$ is at least $M/2$ from $\beta^Z_F(x)$ in $T^Z_F$, and
since we can assume that $M$ is sufficiently large compared to $\kappa$, the quasi-isometric embedding constants of $T^Z_F$, and the distance between $\pi_Z(x)$ and (the image in $\CC(Z)$ of) $T^Z_F$, we have the desired inequality. The same holds with $V$ and $Z$
interchanged.

However, the transverse consistency inequality (\ref{eq:transverse consistency}) says that
we cannot have both $d_Z(\rho^V_Z, \pi_Z(x)) > \kappa_0$ and $d_V(\rho^Z_V, \pi_V(x)) >
\kappa_0$. Hence one of
$\beta^Z_F(\rho^V_Z)\in T^Z_{q,\tau}$
or 
$\beta^V_F(\rho^Z_V)\in T^V_{p,\sigma}$
must hold, which is what we wanted to prove.

Conversely, suppose that $T^Z_{q,\tau}$ contains $\beta^Z_F(\rho^V_Z)$. Because
$\beta^V_F(H_\theta(F))$ is $D$-dense in $T^V_F$, $p$ is far from any leaf of $T^Z_F$, and $\beta^Z_F(\rho^V_Z)$ is close to a leaf (Proposition \ref{prop:rhos_close_to_leaves}(1)),
we may choose $x\in H_\theta(F)$ with $\beta^V_F(x)\in T^V_{p,\sigma}$ and $d_{T^V_F}(\beta^V_F(x), \beta^V_F(\rho^Z_V))
> M/2$ (again, with distance in $T^V_F$).

Then $x \in W^V_{p,\sigma}$ by construction, and we claim also $x\in W^Z_{q,\tau}$.
Indeed, for $M$ sufficiently large we have $d_V(x, \rho^Z_V)>\kappa_0$, so that transverse consistency (\ref{eq:transverse consistency}) implies that
$d_Z(\pi_Z(x), \rho^V_Z) < \kappa_0$. Again for $M$ sufficiently large, this gives $d_{T^Z_F}(\beta^Z_F(x),\beta^Z_F(\rho^V_Z))\leq M/2$, and since  the $M/2$-neighborhood in $T^Z_F$ of
$\beta^Z_F(\rho^V_Z)$ is contained in $T^Z_{q,\tau}$, we conclude 
$\beta^Z_F(x)\in T^Z_{q,\tau}$.

\begin{figure}
\includegraphics[width=0.75\textwidth]{./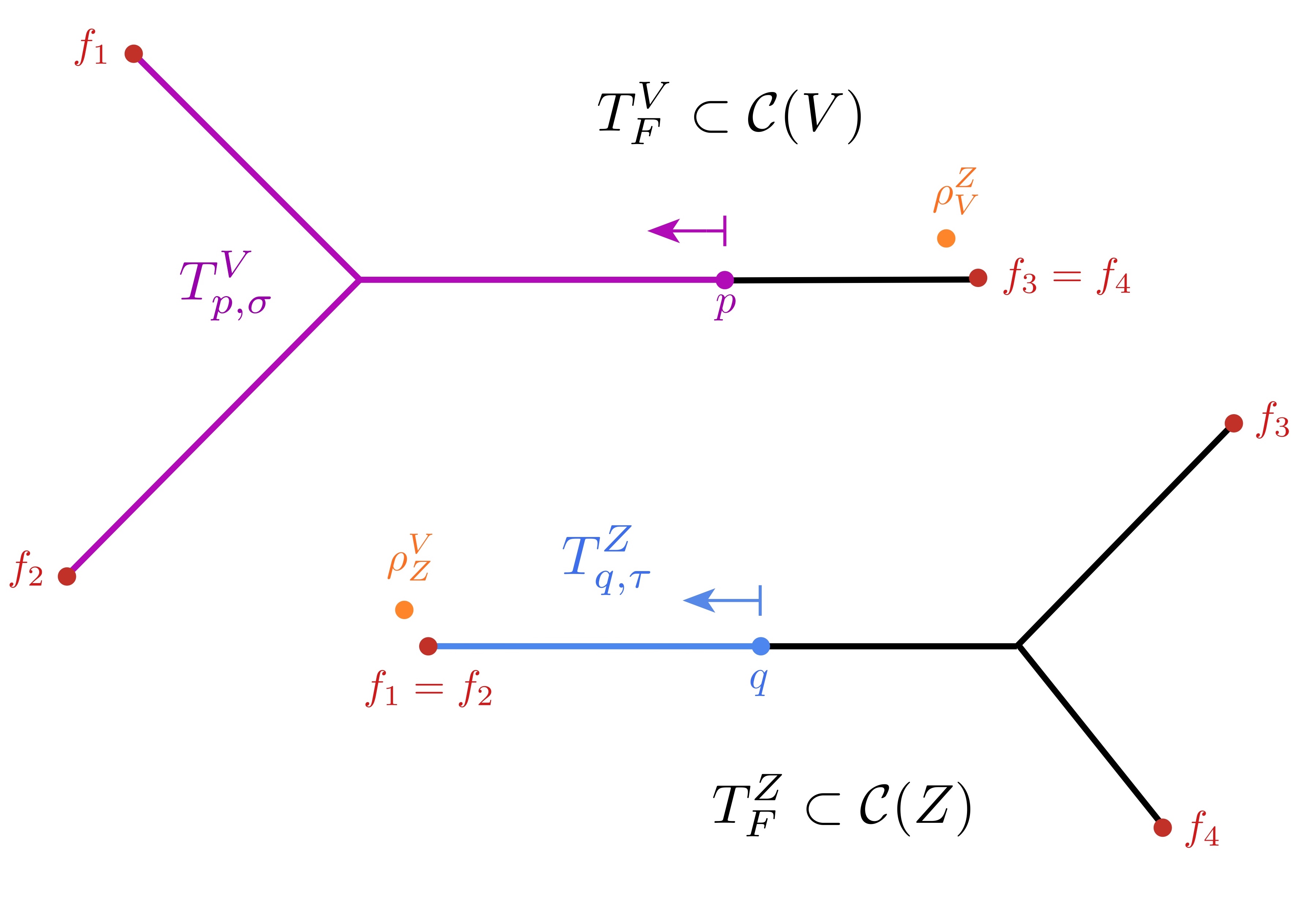}
\caption{Case (3), when $V\pitchfork Z$.  }\label{fig:VtransZ}
\end{figure}

\par\smallskip

{\bf Case $V\propnest Z$.} Cases (4) and (5) are similar to the previous case, instead using the nested consistency inequality \eqref{eq:nested consistency}.

Let $x \in W^V_{p,\sigma} \cap W^Z_{q,\tau}$.
Because the partition points in $T^Z_F$ are $M$-evenly spaced by assumption, we have again that either 
\begin{itemize}
\item $T^Z_{q,\tau}$ contains the $M/2$-neighborhood of $\beta^Z_F(\rho^V_Z)$, or 
\item $d_{T^Z_F}(T^Z_{q,\tau}, \beta^Z_F(\rho^V_Z))>M/2$.
\end{itemize}
In the first case, we are done (obtaining case (4)). 
In the second case, for $M$ sufficiently large, we have $d_V(\pi_V(x),\rho^Z_V(\pi_Z(x)))<\kappa_0$ by the nested
consistency inequality (\ref{eq:nested consistency}). 
Moreover, for $M$ sufficiently large, there is a geodesic (in $\CC(Z)$) from $q$ to
$\pi_Z(x)$ which is farther than $\kappa_0$ from $\rho^V_Z$, where $\kappa_0$ is the constant in the bounded geodesic image property (Axiom
(\ref{item:dfs:bounded_geodesic_image}) in Definition \ref{defn:HHS}). In fact, the distance from $\pi_Z(x)$ to $T^Z_{q,\tau}$ is bounded in terms of $(\XX,\mathfrak S)$ and $|F|$, as is the quasiconvexity constant (of the image in $\CC(Z)$) of $T^Z_{q,\tau}$, and thus so is the distance of any point along this geodesic from $T^Z_{q,\tau}$.

On the other hand, we can estimate $d_{Z}(T^Z_{q,\tau}, \rho^V_Z)$ in terms of $M$ and the quasi-isometric embedding constants of $T^Z_F$ (which is independent of $M$).
Hence by choosing $M$ large enough, we may use the bounded geodesic image property to conclude that
$d_V(\rho^Z_V(\pi_Z(x)),\rho^Z_V(q))<\kappa_0$. Since $\pi_V(x)$ is within distance of $T^V_{p,\sigma}$ which is bounded in terms of $(\XX,\mathfrak S)$ and $|F|$, we see that
$d_V(\beta^V_F(\rho^Z_V(q)),T^V_{p,\sigma})<M/2$ for $M$ sufficiently large. However, this implies that
$\beta^V_F(\rho^Z_V(q))$ is in fact contained in $T^V_{p,\sigma}$, since
$\rho^Z_V(q)$ is within $\kappa$ of a leaf of $T^V_F$ by Proposition
\ref{prop:rhos_close_to_leaves}(2).  Hence we are done in either case. 

\begin{figure}
\includegraphics[width=0.75\textwidth]{./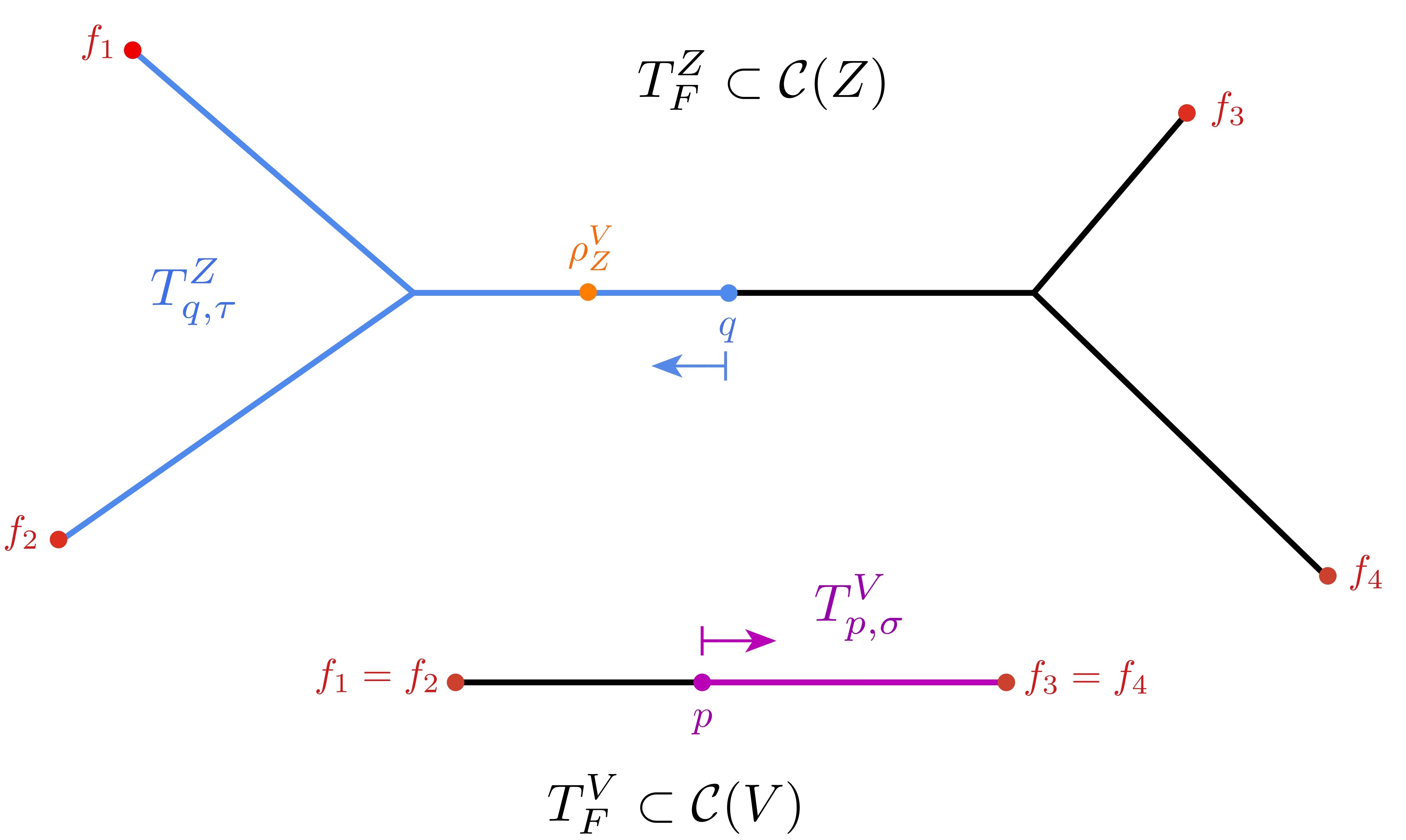}
\caption{Case (4), when $V \propnest Z$ and $T^Z_{q, \tau}$ contains $\beta^Z_F(\rho^V_Z)$.}\label{fig:VnestZ1}
\end{figure}
Now in the converse direction, suppose first that (4) holds, namely that $T^Z_{q,\tau}$
contains $\beta^Z_F(\rho^V_Z)$. Again by the $M$-even spacing of the partition we know that
 $T^Z_{q,\tau}$ contains the $M/2$-neighborhood (in $T^V_F$) of $\beta^Z_F(\rho^V_Z)$.
Using $D$-density of $\beta^V_F(H_\theta(F))$ in $T^V_F$ and the fact that $p$ is far from any leaf of $T^V_F$,
we can choose $x\in H_\theta(F)$ with $\beta^V_F(x)\in T^V_{p,\sigma}$ and $\beta^V_F(x)$
at least $M/2$ from a leaf of $T^V_{p, \sigma}$.

We claim that this implies that 
$\beta^Z_F(x)$ has to lie within $M/2$ of $\beta^Z_F(\rho^V_Z)$, as usual for $M$ large. If not, another
application of the bounded geodesic image axiom would imply that $\rho^Z_V(\pi_Z(x))$
is within $\kappa_0$ of a leaf, and the nested consistency inequality again gives that 
$\rho^Z_V(\pi_Z(x))$ and $\pi_V(x)$ are within $\kappa_0$ of each other, showing that $\pi_V(x)$ lies within $2\kappa_0$ of a leaf.  Since the constants involved in the preceding argument depend only on $(\XX,\mathfrak S)$ and $|F|$, it follows that the distance (in $T^V_F$) between $\beta^V_F(x)$ and a leaf is bounded independently of $M$, contradicting the choice of $x$.

As a result, we
find that  $\beta^Z_F(x)$ is in $ T^Z_{q,\tau}$ and hence $x \in W^V_{p,\sigma} \cap W^Z_{q,\tau}$, as required.

Suppose now as in (5) that $T^V_{p,\sigma}$ contains $\beta^V_F(\rho^Z_V(q))$, and assume
also that (4) does not hold so that $T^Z_{q,\sigma}$ avoids the $M/2$ neighborhood of
$\beta^V_F(\rho^Z_V)$ in $T^V_F$. In this
case, we can take $x\in H_\theta(F)$ with $\beta^Z_F(x)\in T^Z_{q,\tau}$ and
$d_{T^Z_F}(\beta^Z_F(x),q) \leq D$.   Then since $\beta^Z_F(\rho^Z_V)$ is at least $M/2$ from the
geodesic in $T^Z_F$ between $\beta^Z_F(x)$ and $q$, and since $T^Z_F$ is quasi-isometrically embedded in $\CC(Z)$, we may apply the bounded geodesic image axiom and the nested consistency inequality to obtain that
$d_V(\beta^V_F(x), \beta^V_F(\rho^Z_V(q)))<M/2$, where as before we choose $M$ as large as necessary.
Since $\rho^Z_V(q)$ lies within
$\kappa $ of a leaf of $T^V_F$ by Proposition \ref{prop:rhos_close_to_leaves}(2) with $\kappa$ depending only on $\mathfrak S$ and $|F|$,
the $M/2$-neighborhood of $\beta^V_F(\rho^Z_V(q))$ in $T^V_F$ must be contained in $T^V_{p,\sigma}$, and hence $\beta^V_F(x)$ is
contained in $T^V_{p,\sigma}$, showing $x \in W^V_{p,\sigma} \cap W^Z_{q,\tau}$ as required. 
\begin{figure}
\includegraphics[width=0.75\textwidth]{./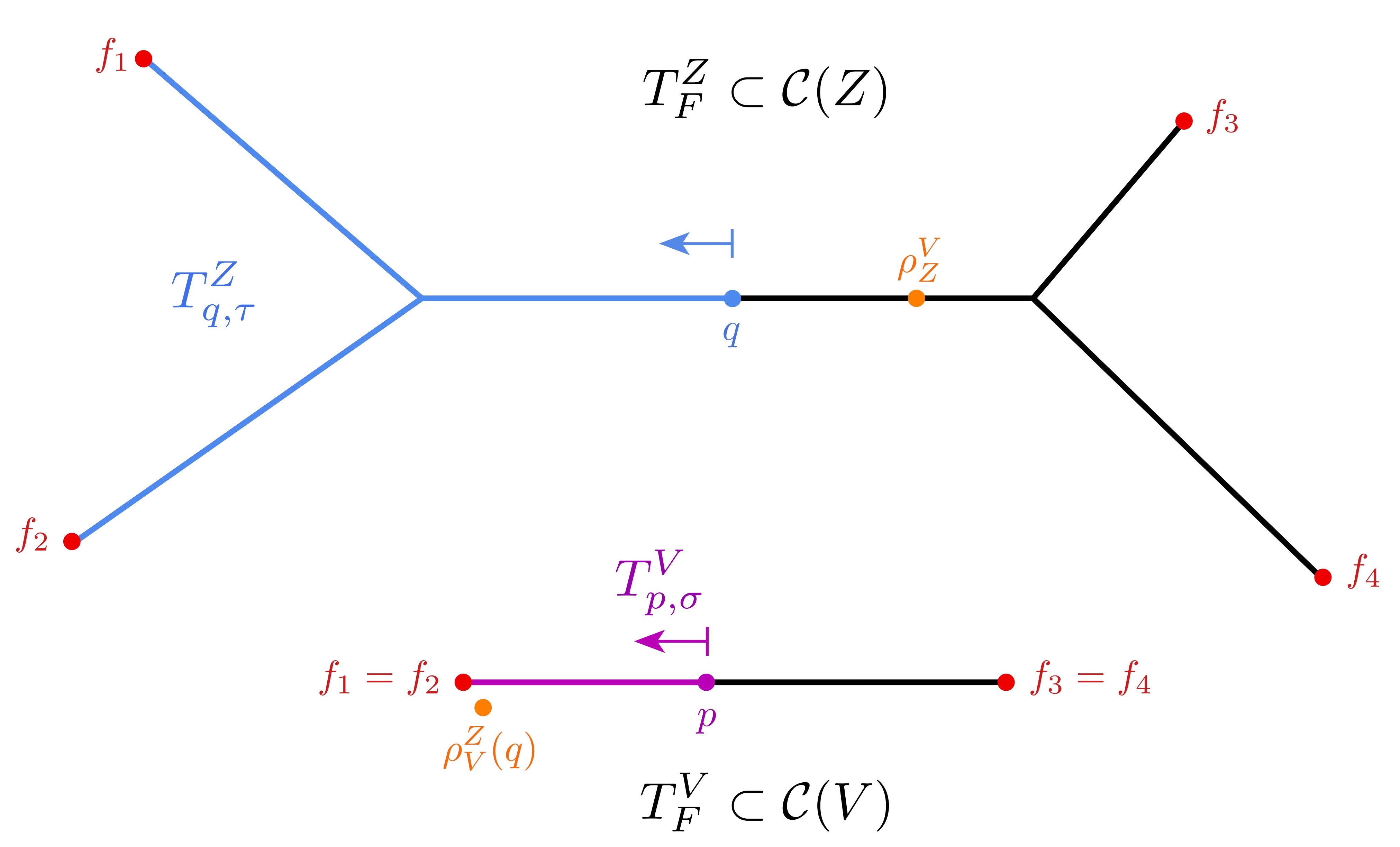}
\caption{A cartoon for case (5), when $V \propnest Z$ and $T^V_{p, \sigma}$ contains $\beta^V_F(\rho^Z_V(q))$.}\label{fig:VnestZ2}
\end{figure}
\par\smallskip
\end{proof}

\subsection{Refining the subdivisions}\label{subsec:comparing divisions}

In this subsection, we analyze the difference between $(M,M')$-evenly spaced subdivisions $\pt=\wp(F)$
and $\pt'=\wp(F')$ (using the fixed subdivision mechanism $\wp$ of Section \ref{subsec:subdivision}).
The main result here, Proposition \ref{prop:almost bijection of p}, is that there are bounded refinements $\pt_0$ and $\pt'_0$ which are related by an ``order-preserving'' bijection $j: \pt_0 \rightarrow \pt'_0$ (in the sense specified below) which only moves points a bounded distance.  In the next subsection, we prove that these refined subdivisions determine isomorphic cube complexes when put through our cubulation machine.

\subsubsection*{Coarse separation.}
We will need the following short discussion about coarse separation in quasi-isometrically embedded
trees. Suppose $j:T\to X$ is a $(\chi,\chi)$-quasi-isometric embedding of a tree $T$ into a
$\delta$-hyperbolic space $X$, and $p\in T$ is a point inside an edge, distance more than
$\mu>0$ from the endpoints. Let  $T_{p,\pm}(\mu)$ be the two components of the complement
of a $\mu$-neighborhood of $p$. For any $\ep$ there is a $\mu(\ep,\delta,\chi)$ such that the
images $j(T_{p,\pm}(\mu))$
are at least $\ep$ apart from each other. We want to compare this separation for two
nearby trees:

\begin{lemma}{lem:half-trees}
Let $T_1,T_2$ be trees with $(\chi,\chi)$-quasi-isometric embeddings $u_i:T_i \to X$ into a
$\delta$-hyperbolic space $X$, whose images are within Hausdorff distance $\epsilon$. There
exists $\mu_0 = \mu_0(\delta,\chi,\epsilon) > 10\ep$ so that for all $\mu\geq m_0$ the following holds. 

If $p_i\in T_i$ are points with
$d(u_1(p_1),u_2(p_2)) \le \mu$ and each $p_i$ is in a segment $e_i$ contained in an edge of $T_i$ and so that $u_i|_{e_i}$ is a $(1,\chi)$--quasi-isometric embedding. Moreover, assume that $p_i$ lies at distance more
than $2\mu$ from the endpoints of $e_i$. Then the labels of the components can be chosen so that
\begin{itemize}
 \item $u_1( (T_1)_{p_1,+}(2\mu))$ is in an
    $\ep$-neighborhood of $u_2( (T_2)_{p_2,+})$,
    
 \item $u_1( (T_1)_{p_1,+}(2\mu))$ is $\mu$-far from $u_2( (T_2)_{p_2,-})$, and
 \item the same holds swapping $+$ and $-$.
\end{itemize}
\end{lemma}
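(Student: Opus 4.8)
The statement is a local (two-edge) comparison of coarse separation, so the proof should reduce everything to a single picture near $p_1$ and $p_2$ and then invoke thin triangles. First I would fix $\mu_0$ large compared to $\delta,\chi,\epsilon$; the precise size will be dictated by the three claims below, and the key inequalities are all of "$\mu$ beats a constant depending only on $\delta,\chi,\epsilon$" type, so order-of-magnitude bookkeeping suffices. Since $u_i|_{e_i}$ is a $(1,\chi)$-quasi-isometric embedding and $p_i$ is at distance $>2\mu$ from $\partial e_i$, the images $u_i(p_i - \mu)$ and $u_i(p_i + \mu)$ are roughly $2\mu$ apart in $X$ (up to additive $\chi$), so for $\mu \gg \chi,\delta,\epsilon$ the two rays $u_i((T_i)_{p_i,\pm}(2\mu))$ are genuinely far apart: more precisely, any geodesic in $X$ from a point of $u_i((T_i)_{p_i,+}(2\mu))$ to a point of $u_i((T_i)_{p_i,-}(2\mu))$ must pass within, say, $10\delta+\chi$ of $u_i(p_i)$. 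This is the basic "$p_i$ coarsely separates $T_i$" fact; I would prove it by noting that in a tree the geodesic from $T_{p,+}$ to $T_{p,-}$ passes through $p$, and that $u_i$ sends tree-geodesics to $(\chi,\chi)$-quasigeodesics, which $\delta$-fellow-travel actual geodesics.

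Next, the matching of labels. Since $d(u_1(p_1),u_2(p_2))\le\mu$ and the Hausdorff distance between the images is $\le\epsilon$, pick a point $x_1^+ \in T_1$ on the "$+$ side" at distance exactly $2\mu + 1$ from $p_1$ inside $e_1$ (possible as $p_1$ is $>2\mu$ from $\partial e_1$, after perhaps slightly enlarging the constant so there is room — or pick it just outside the $2\mu$-neighborhood). Its image is within $\epsilon$ of some $u_2(y)$, $y\in T_2$; since $u_2$ is a quasi-isometric embedding and $d(u_1(x_1^+),u_2(p_2)) \gtrsim 2\mu - \mu - \epsilon = \mu - \epsilon$ is large, $y$ lies outside a large neighborhood of $p_2$, hence in one of the two components $(T_2)_{p_2,\pm}$; declare that one to be "$+$". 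I would then argue: any point $z \in (T_1)_{p_1,+}(2\mu)$ has its image within $\epsilon$ of some $u_2(w)$, and I claim $w$ is on the same side as $y$. Indeed the tree-geodesic in $T_1$ from $z$ to $x_1^+$ stays on the $+$ side, away from the $2\mu$-neighborhood of $p_1$, so its image is a quasigeodesic staying (coarsely) away from $u_1(p_1)$ hence from $u_2(p_2)$; if $w$ and $y$ were on opposite sides of $p_2$, the $T_2$-geodesic from $w$ to $y$ would pass through $p_2$, whose image lies within $\mu+\epsilon$ of $u_1(p_1)$, and unwinding the fellow-traveling would force the image of the $z$-to-$x_1^+$ path near $u_1(p_1)$ — contradiction for $\mu$ large. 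This gives the first bullet: $u_1((T_1)_{p_1,+}(2\mu)) \subset N_\epsilon(u_2((T_2)_{p_2,+}))$.

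Finally, the second bullet ($u_1((T_1)_{p_1,+}(2\mu))$ is $\mu$-far from $u_2((T_2)_{p_2,-})$) is then almost immediate: if a point of $u_1((T_1)_{p_1,+}(2\mu))$ were within $\mu$ of $u_2((T_2)_{p_2,-})$, then combining with the first bullet it would be within $\mu+\epsilon$ of both $u_2((T_2)_{p_2,+})$ and $u_2((T_2)_{p_2,-})$; but by the basic separation fact applied to $T_2$ (using that $p_2$ is $>2\mu$ from $\partial e_2$), the sets $u_2((T_2)_{p_2,+})$ and $u_2((T_2)_{p_2,-})$ are far apart unless one comes within a constant of $u_2(p_2)$, forcing the point to lie within a bounded distance of $u_2(p_2)$, hence of $u_1(p_1)$ — but points of $(T_1)_{p_1,+}(2\mu)$ are, by definition and the $(1,\chi)$ bound on $u_1|_{e_1}$, at least $2\mu - \chi$ from $u_1(p_1)$, a contradiction for $\mu$ large. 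The symmetric statements with $+$ and $-$ swapped follow by the same argument with the roles interchanged. The only real obstacle is the label-matching step: keeping straight which constants depend on what, and making sure the "$\mu$ large" choices are consistent across all three bullets and across both trees; I would handle this by fixing all constants at the start in terms of $\delta,\chi,\epsilon$ and checking at the end that $\mu_0$ can be taken to satisfy all the finitely many inequalities simultaneously.
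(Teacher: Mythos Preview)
The paper does not give a proof of this lemma; it is stated as a routine consequence of the coarse-separation paragraph that precedes it, so there is no ``paper's proof'' to compare against. Your overall strategy---establish the basic separation fact, fix the labeling via a reference point $x_1^+$, then run a connectedness/fellow-traveling argument---is exactly the natural one and is almost certainly what the authors have in mind.

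There is, however, a quantitative gap in both bullets that you should close. In the first bullet you conclude that some $z'$ on the $T_1$-geodesic $\gamma_1=[x_1^+,z]$ has image within roughly $\mu + O_{\delta,\chi,\epsilon}(1)$ of $u_1(p_1)$, and then claim a contradiction with $d_{T_1}(z',p_1)\ge 2\mu$. But the $(\chi,\chi)$-bound only gives $d_X(u_1(z'),u_1(p_1))\ge 2\mu/\chi-\chi$, which does not beat $\mu+O(1)$ once $\chi\ge 2$; and you cannot invoke the $(1,\chi)$ bound on $e_1$ since $z'$ need not lie in $e_1$. The same issue recurs in your second-bullet argument: the ``bounded distance of $u_2(p_2)$'' you derive is in fact $\mu + O(1)$ (because $d(u_2(w),u_2(w'))$ can be as large as $\mu+\epsilon$), so comparing it to $2\mu-\chi$ yields no contradiction. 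A clean fix is to use Gromov products instead of raw distance estimates: since $x_1^+$ lies on the $T_1$-geodesic $[p_1,z]$, one has $(u_1(p_1)\,|\,u_1(z))_{u_1(x_1^+)}\le C(\delta,\chi)$ (quasi-geodesics fellow-travel geodesics), and hence
\[
d_X\bigl(u_1(p_1),\,[u_1(x_1^+),u_1(z)]_X\bigr)\ \ge\ d_X(u_1(p_1),u_1(x_1^+))-C'(\delta,\chi)\ \ge\ 2\mu-\chi-C',
\]
the last inequality using the $(1,\chi)$ embedding on $e_1$. This is the lower bound that actually beats $\mu+O(1)$ and makes both contradictions go through for all $\chi$, provided $\mu_0$ is chosen large relative to $\delta,\chi,\epsilon$.
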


In what follows we apply this to trees $T^V_F$, and so that half-trees that here would be
$(T^V_F)_{p,\sigma}$ correspond to what we wrote as $T^V_{p,\sigma}$ in
Section \ref{subsec:subdivision}. We will use both notations, and hope  not to confuse the
reader.

\subsubsection*{Common refinements from close components}

We are now ready to prove the refinement statement.  We note that this is the main point at which we use the full power of Theorem \ref{thm:stable tree}, which provides not only that each of the pairs of trees $T^V_F, T^V_{F'}$ have Hausdorff close images in $\CC(V)$, but that they are identical on almost every component, and that their different components can be cut into large pieces where they are coarsely identical. 

\begin{proposition}{prop:almost bijection of p}
There exists $M_2 = M_2(|F|, \mathfrak S)>0$, such that if
$M > M_2$, there exists $R>0$ and 
\begin{enumerate}
\item Subsets $\pt_0 \subset \pt$ and $\pt'_0 \subset \pt'$ with
\begin{itemize}
\item $\pt_0(V)$ or $\pt'_0(V)\neq\emptyset$ only if $V\in\UU(F)\cap \UU(F')$, and
\item $|\pt\ssm \pt_0|, |\pt' \ssm \pt'_0| < R$, and
\end{itemize}
\item A bijection $j: \pt_0 \rightarrow \pt'_0$ with $d_V(p, j(p)) < 2M/3$ for any $p \in \pt_0(V)$.
\item For every $p\in\pt_0$ a bijection $j_p$ between the half-trees defined by $p$ and those defined by $j(p)$ with the following property. For any $V$ and $p,q\in\pt_0(V)$, if the half-tree $(T^V_F)_{p,\sigma}$ contains $q$, then $j_p((T^V_F)_{p,\sigma})$ contains $j(q)$.
\item Moreover, let $f,f'$ be so that either $f\in F$, $f'\in F'$, and $d_{\XX}(f,f')\leq 1$, or $f=f'=\rho^U_V$ for some $U, V\in\UU(F)\cap \UU(F')$ with $U\propnest V$ or $U \pitchfork V$. If $\beta^V_F(f)\in (T^V_F)_{p,\sigma}$, then $\beta^V_{F'}(f')\in j_p((T^V_F)_{p,\sigma})$.
\end{enumerate}
\end{proposition}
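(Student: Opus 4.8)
The plan is to transfer the structural stability of the stable trees from Theorem \ref{thm:stable tree} into stability of the evenly-spaced subdivisions $\pt = \wp(F)$ and $\pt' = \wp(F')$. First I would restrict attention to a single color $i$ and a single domain $V$: by Proposition \ref{prop:bounded involved} (involved domains) together with Proposition \ref{prop:stable family}, the collections $\UU(F)$ and $\UU(F')$ agree outside of a bounded set, and moreover for all but boundedly many $V \in \UU(F) \cap \UU(F')$ the projection data $F^V, \YY^V$ and $F'^V, \YY'^V$ satisfy the hypotheses of the ``furthermore'' part of Theorem \ref{thm:stable tree} (with $g$ the identity and $N = N_1$ from Proposition \ref{prop:bounded involved}). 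We put all of $\pt_V(F)$ and $\pt'_V(F')$ into the removed sets $\pt \ssm \pt_0$, $\pt' \ssm \pt'_0$ whenever $V$ lies in the bounded ``bad'' set of domains; since each tree $T^V_F$ has bounded branching (part \ref{item:branching} of Theorem \ref{thm:stable tree}), the number of edges, hence the number of subdivision points contributed by a single domain, is... not bounded, but these domains are only boundedly many, so we need a different accounting for those — actually, for a bad domain $V$ we simply discard \emph{all} its points, which is fine only if the number of bad domains times (something) is bounded; the correct move is that the number of bad domains is bounded, and for \emph{each} bad domain we discard all its points, but this could be unbounded. So instead I would only discard points near the parts where the trees differ, and keep the rest — this is exactly what Theorem \ref{thm:stable tree} lets us do, so the argument should go through domain by domain uniformly, with the ``bad domain'' contribution absorbed because on a bad domain we can still compare the two trees coarsely and the total discarded remains bounded by a count involving $|F|$ and $N_1$ only.

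For a fixed good domain $V$, Theorem \ref{thm:stable tree} gives subsets $T_s \subset T_e^V(F,\YY)$ and $T'_s \subset T_e^V(F',\YY')$ whose complements have at most $L$ components of diameter at most $L$, with a bijective correspondence between the components of $T_s$ and $T'_s$ under which all but $L$ components are \emph{literally identical} (and arise from identical components of $T_e^V$), and the remaining $L$ are within Hausdorff distance $L$. I would then define $\pt_0(V)$ to consist of those $p \in \pt_V(F)$ lying in an identical component of $T_s$, at distance (in the tree) more than, say, $100M$ from the boundary of that component, and symmetrically for $\pt'_0(V)$. On an identical component the subdivision operator $\wp_{M,M'}$ produces \emph{identical} evenly-spaced point sets — here it is crucial that $\wp_{M,M'}$ is a fixed deterministic operator depending only on the abstract metric tree, and identical components come from identical components of $T_e^V$, so their $\wp$-subdivisions match exactly — giving the identity bijection $j$ on those points with $d_V(p,j(p)) = 0$. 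On the boundedly many ``moved'' components of $T_s$, which are intervals within edges and within Hausdorff distance $L$ of the corresponding $T'_s$-component, I use a nearest-point matching of the evenly-spaced points; since both are evenly spaced with spacing $M$ and the two intervals fellow-travel, this matching is a bijection (after discarding the $O(L/M)$ extreme points on each, which is $O(1)$ once $M \gg L$) moving points by at most $L + O(\chi) < 2M/3$, using the quasi-isometric embedding constant $\chi$ of $T^V_F$ in $\CC(V)$. Counting: the discarded points per good domain are those outside $T_s$ (bounded, since $\le L$ components each of diameter $\le L$ contribute $\le L(L/M + 1)$ points), those near boundaries of identical $T_s$-components ($O(L)$ per such component, and there are $\le L$ of them plus the moved ones), and the extreme points on moved components ($O(1)$ each, $\le L$ of them) — all bounded by a function of $|F|, \mathfrak S$; summing over the boundedly many domains where any discarding happens (namely bad domains plus domains where $T^V_F \ne T^V_{F'}$, which by Claims 1–2 in the proof of Theorem \ref{thm:stable tree} is bounded in terms of $|F|, N_1, \delta$) gives the global bound $R$.

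Items (3) and (4) are then extracted from Lemma \ref{lem:half-trees}. For $p \in \pt_0(V)$ sitting in an identical component, the half-trees $T^V_{p,\pm}$ and $T^V_{j(p),\pm}$ (with $j(p) = p$ there) agree on that component and on all identical components; the containment relations ``$T^V_{p,\sigma}$ contains $q$'' for $q \in \pt_0(V)$ then transfer verbatim. For $p$ in a moved component, I apply Lemma \ref{lem:half-trees} with $T_1 = T^V_F$, $T_2 = T^V_{F'}$, $u_i$ the quasi-isometric embeddings into $\CC(V)$ (Hausdorff close by Theorem \ref{thm:stable tree}\ref{item:qi}), $p_1 = p$, $p_2 = j(p)$, and $\mu$ a suitable multiple of $M$: this pins down a consistent labeling $j_p$ of the two half-tree pairs so that $(T^V_F)_{p,+}$ maps into an $\ep$-neighborhood of $(T^V_{F'})_{j(p),+}$ and stays $\mu$-far from the other; since the point $q$ (resp.\ the point $\beta^V_{F'}(f')$) lands at distance $\gg \mu$ from $j(p)$ — because in item (4) $\rho^Z_V(q)$ or the relevant projection lies $\kappa$-close to a leaf by Proposition \ref{prop:rhos_close_to_leaves} while $j(p)$ is far from all leaves, and $\beta^V_F$ coarsely preserves the side — the side of $q$ relative to $j(p)$ is forced to match the side of $q$ relative to $p$. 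The main obstacle I anticipate is precisely the bookkeeping in item (4): one must control $\beta^V_F(\rho^Z_V(q))$ versus $\beta^V_{F'}(\rho^Z_V(q))$ (or the $F$- vs.\ $F'$-projections of $f \approx f'$) when the relevant \emph{ambient} projection $\rho^Z_V$, $\pi_V(f)$, etc.\ may itself have moved by a bounded amount (Theorem \ref{thm:BBFS}), and then ensure this bounded ambient movement, after applying the closest-point projections $\beta^V_F$ and $\beta^V_{F'}$ to two Hausdorff-close but not identical trees, still lands on the correct side of the wall at $p$ — this needs $M$ chosen large compared to all of $\delta, \chi, \kappa, \ep, \theta$ and the density constant $D$, and is the step where the hypotheses $M > M_2(|F|,\mathfrak S)$ get their real content.
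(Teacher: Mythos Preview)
Your overall architecture matches the paper's: work domain by domain, use Proposition \ref{prop:bounded involved} to isolate the boundedly many involved domains, and on those apply the ``furthermore'' part of Theorem \ref{thm:stable tree} to align the evenly-spaced subdivisions. The matching on Hausdorff-close edge-components and the use of Lemma \ref{lem:half-trees} for the half-tree bijections are also the paper's moves.

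There is, however, a genuine gap in your treatment of domains $V \in \UU(F)\symdiff\UU(F')$. You correctly worry that discarding \emph{all} of $\pt_V(F)$ could be unbounded, and then propose to ``compare the two trees coarsely'' on such a $V$. But if $V \in \UU(F)\setminus\UU(F')$ there is no tree $T^V_{F'}$ to compare to, and Theorem \ref{thm:stable tree} does not apply. The fix is simpler than what you attempt: if $V \notin \UU(F')$ then $\diam_V(\pi_V(F')) \le K$, and since $d_{Haus}(F,F') \le 1$ and $\pi_V$ is coarsely Lipschitz, $\diam_V(\pi_V(F))$ is uniformly bounded as well. Hence $T^V_F$ has bounded diameter and carries only boundedly many subdivision points, so discarding all of $\pt_V(F)$ is harmless. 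This is exactly what the paper does in one line, and it is the missing observation in your proposal.

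Two smaller points. First, for \emph{uninvolved} $V \in \UU(F)\cap\UU(F')$ the input data $(F^V,\YY^V)$ and $({F'}^V,{\YY'}^V)$ are literally equal, so by the determinism of the stable-tree and $\wp$ constructions one has $T^V_F = T^V_{F'}$ and $\pt(V) = \pt'(V)$ exactly, with zero discarding; you only need Theorem \ref{thm:stable tree} on the boundedly many \emph{involved} domains. Your route through the stable-tree theorem on every domain still works, but it obscures why the global discard stays bounded. Second, on a moved component your ``nearest-point matching'' of evenly-spaced points must be taken to be \emph{order-preserving} (the paper is explicit about this), since item (3) for $p,q$ lying in the same moved edge-component is exactly the statement that the bijection respects the linear order along that edge; a naive nearest-point map need not have this property without trimming the ends first.
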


\begin{proof} 
We work in each $V\in \mathfrak S$ separately, constructing $\pt_0(V)$ and $\pt'_0(V)$ and
the bijection, and combine the results.

First, if $V \in \UU(F) \hspace{.025in} \symdiff  \hspace{.05in} \UU(F')$, then $\diam_V(F)$ and $\diam_V(F')$ are uniformly bounded, and we set $\pt_0(V) = \pt'_0(V) = \emptyset$.  By Proposition \ref{prop:stable family}, there are boundedly many such $V$, and hence this involves deleting at most boundedly many subdivision points.

Next, if $V \in \UU(F) \cap \UU(F')$ is not involved in the transition from $F$ to $F'$
(see Definition \ref{defn:involved}), then all of the relevant data is constant.  Hence,
by our formalism for choosing subdivisions (see the definition of $\wp$ in Subsection \ref{subsec:subdivision}), we
have $\pt(V) = \pt'(V)$, and we set $\pt_0(V) = \pt'_0(V) = \pt(V)$. 

Hence we may restrict our attention to a fixed $V \in \UU(F) \cap \UU(F')$ for which either $\pi_V(F) \neq \pi_V(F')$ or $\UU^V(F) \neq \UU^V(F')$ (or both).  We note that Proposition \ref{prop:bounded involved} bounds the number of such $V$ solely in terms of $n$ and $\mathfrak S$.

Fix such a $V$. Recall that within $\CC(V)$  we have $F^V = \pi_V(F)$ and $\YY^V =
\{\rho^Y_V| Y \in \UU^V(F)\}$, and similarly for ${F'}^V$ and ${\YY'}^V$. 
By Proposition \ref{prop:bounded involved},  $\# \left(\YY^V \symdiff {\YY'}^V\right) < N_1$, where $N_1= N_1(\mathfrak S,k)>0$.

Recall that Theorem \ref{thm:stable tree} provides a constant $L = L(k, \mathfrak S)>0$ and the following:
\begin{enumerate}
\item Stable trees with decompositions $T^V_F = T_c(F^V, \YY^V) \cup T_e(F^V, \YY^V)$ and
  $T^V_{F'} = T_c({F'}^V, {\YY'}^V) \cup T_e({F'}^V, {\YY'}^V)$; we write these as
  $T^V_c(F), T^V_e(F)$ and $T^V_c(F'), T^V_e(F')$ for short. 
\item Two stable subsets $T_s \subset T^V_e(F)$ and $T'_s \subset T^V_e(F')$ so that
\begin{enumerate}
\item $T_s$ and $T'_s$ are contained in the interiors of the edges of $T^V_e(F)$ and $T^V_e(F')$, respectively;
\item The complements $T^V_e(F) \ssm T_s$ and $T^V_e(F') \ssm T'_s$ each have at most $L$ components, each of which has diameter at most $L$;
\item There is a bijective correspondence between the components of $T_s$ and $T'_s$;
\item This bijective correspondence is the identity on all but at most $L$ components, with identical components of $T_s, T'_s$ coming from identical components of $T_e^V(F)$ and $T_e^V(F')$; 
\item The remaining components of $T_s$ are within Hausdorff distance $L$ of their corresponding components in $T'_s$.
\end{enumerate}
\end{enumerate}

We assume  $M >  \max(4L,4\mu_0, 4\kappa)$, where
$\mu_0$ is given by Lemma \ref{lem:half-trees} when $\epsilon = L$
and the quasi-isometry constants of the trees match those for $T^V_F$ and $T^V_{F'}$, 
and $\kappa$ is the constant in Proposition \ref{prop:rhos_close_to_leaves}.

Consider the sets of subdivision points $\pt_1(V) = \pt(V) \cap T_s$ and $\pt'_1(V) =
\pt'(V) \cap T'_s$ which are contained in the stable subsets.  Since $\pt(V) \subset
T^V_e(F)$ and $\pt'(V) \subset T^V_e(F')$, items (2a) and (2b) and our choice of subdivision width $M>4L$ imply that
$\pt(V) \ssm \pt_1(V)$ and $\pt'(V) \ssm \pt'_1(V)$ both have cardinality bounded above by
$L$.

By item (2d), the induced subdivisions $\pt_1(V)$ and $\pt'_1(V)$ agree on all but at most
$L$ components of $T_s$ and $T'_s$, respectively, as these components are segments in the
components of $T^V_e(F)$ and $T^V_e(F')$ which are equal and hence have
the same subdivisions by our setup (see Subsection \ref{subsec:subdivision}).

On the remaining $L$ components we can make bounded adjustments.
Let $e$ and $e'$ be edge components of $T_s$ and $T'_s$ related by the correspondence.
The closest subdivision point in $e$ to an endpoint is at most $M'/2$ away and at least $M$,
by definition of the subdivisions, and similarly for $e'$. 
Since $d_{Haus}(e,e') < L$ by item (2e), and $e$ and $e'$ are quasi-isometrically embedded with multiplicative constant 1 and additive constant depending on $(\XX,\mathfrak S)$ and $n$ (Theorem \ref{thm:stable tree} \ref{item:qi2}), the difference between the number of subdivision points in $e$ and $e'$ is bounded in terms of $M,M',L, (\XX,\mathfrak S)$, and $n$.

We denote $\pt_1(e)=\pt_1(V)\cap e$, and similarly for $\pt'_1(e')$. Note that these sets are naturally ordered once we choose endpoints of $e$ and $e'$, and we order $e$ and $e'$ by declaring those endpoints to be minimal. We choose endpoints of $e$ and $e'$ within distance bounded in terms of $L$ and the constants of the quasi-isometric embeddings of $e$ and $e'$ in $\CC(V)$. We will assume that $M$ is larger than this bound.

After deleting from $\pt_1(e) \cup \pt'_1(e')$
a number of points bounded by this constant (all of which occur near the endpoints of $e,e'$) and using the fact that $\pt_1(e), \pt'_1(e')$ are $M$-evenly spaced, we can obtain refinements $\pt_0(e)$ and $\pt'_0(e')$, which admit
an order-preserving (with respect to the aforementioned order) bijection $j_e:\pt_0(e) \rightarrow
\pt'_0(e')$ which satisfies  
$$d_V(p, j_e(p)) < L + M/2 +\zeta< 2M/3,$$
where $e' \to \CC(V)$ is a $(1,\zeta)$-quasi-isometric embedding by Theorem \ref{thm:stable tree}\ref{item:qi2}, and we have chosen $M$ sufficiently large to guarantee the inequality.  Indeed, for each $p$ we find a nearest point in $e'$, which is at most $L$ away, and then
move along $e'$ at most $M/2 + \zeta$ to a point of $\pt'_0(e')$. For later purposes, we can assume that points in $\pt_0(e)$ do not lie within $2M$ of the endpoints of $e$, and similarly for $\pt'_0(e')$. 

If we set $ \pt_0(V) = \bigcup_{e \in \pi_0(T_s)} \pt_0(e)$ and define $\pt'_0(V)$
similarly, then the $j_e$ maps combine to give a bijection $j_V: \pt_0(V) \rightarrow
\pt'_0(V)$ which moves points by distance at most $2M/3$, as required for item (2) of the
proposition.

To define the map $j_p$ between half-trees we use Lemma \ref{lem:half-trees}. Namely, we pair the half-tree $(T^V_F)_{p,\sigma}$ with the half-tree $(T^V_{F'})_{j_V(p),\tau}$ that contains $(T^V_F)_{p,\sigma}(2M)$ in its $L$-neighborhood.

For item (3) of the proposition, let $p,q\in \pt_0(V)$.  There are two
cases.

Suppose first that $p,q$ lie in the same edge-component $e$ of $T_s$. Recall that we chose
the bijection $j_e$ to be order-preserving, with respect to the order along $e$ and $e'$
determined by choosing endpoints $e^-\in e$ and $(e')^-\in e'$ which are a small distance
(less than $M$) apart to be minimal in the orders. Let $(T^V_F)_{p,\sigma}$ denote
the half-tree of $p$ containing $e^-$ and let $(T^V_{F'})_{j_V(p),\tau}$ denote
the half-tree of $j_V(p)$ containing $(e')^-$. Since we have $2M$ spacing now between $p$
and the endpoints of $e$, $e^-$ is in $(T^V_F)_{p,\sigma}(2M)$, and Lemma
\ref{lem:half-trees} says that there is exactly one half-tree at $j_V(p)$ which comes within $M$ of
$(T^V_F)_{p,\sigma}(2M)$. Since $e^-$ is within $M$ of $(e')^-$, it follows that 
$(T^V_{F'})_{j_V(p),\tau}$ is in fact the paired half-tree provided by Lemma
\ref{lem:half-trees}, which is our $j_p((T^V_F)_{p,\sigma})$.  We conclude that $q<p$ in the
order along $e$ if and only if $q\in (T^V_F)_{p,\sigma}$ and
$j_V(q) < j_V(p)$ along $e'$ if and only if $j_V(q)\in j_p((T^V_F)_{p,\sigma})$. Since
$j_e$  is order-preserving, (3) follows in this case.

Suppose now that $p,q$ do not lie in the same edge-component of $T_s$.
Suppose $q$ is contained in the half-tree $(T^V_F)_{p,\sigma}$, and let $(T^V_{F'})_{q,\tau}=j_p((T^V_F)_{p,\sigma})$.
In this case we have that $q$ lies in $(T^V_F)_{p,\sigma}(2M)$, rather than just in
$(T^V_F)_{p,\sigma}$. By Lemma \ref{lem:half-trees} there is only one half-tree of
$T^V_{F'}$ at $j_V(p)$ that comes within $M$ of $(T^V_F)_{p,\sigma}(2M)$, and said half-tree
must be $(T^V_{F'})_{j_V(p),\tau}$. Since $j_V(q)$ lies within $M$ of $q$, we have $j_V(q)\in
(T^V_{F'})_{j_V(p),\tau}$, as required.  

The argument for part (4) of the proposition is essentially the same as the argument for the second case of part (3), since all we used there is that the point $q$ of $T^V_F$ is not close to $p$, but it is close to a corresponding point in $T^V_{F'}$, and the analogous properties hold in all the listed cases.

\end{proof}

\subsection{Refinements give isomorphic cube complexes}

Consider the refined subdivisions $\pt_0$ and $\pt'_0$ for $F$ and $F'$, respectively,
that are produced by Proposition \ref{prop:almost bijection of p}.  These are $(M,M'')$
spaced subdivisions (though no longer evenly spaced) so each of the sets of data $(F,
\pt_0)$ and $(F', \pt'_0)$---and their associated collections of stable trees---can be
plugged into our cubulation machine to produce cube complexes $\QQ_{F, \pt_0}$ and
$\QQ_{F', \pt'_0}$, respectively.  We also assume that $M > \max{M_1, M_2}$, where $M_1, M_2$ are the constants from Lemma \ref{lem:halfspaces_intersect} and Proposition \ref{prop:almost bijection of p}, respectively, along with our other base assumptions about $M$.

Our next result says that these cube complexes are abstractly isomorphic and admit coarsely compatible quasi-isometric embeddings into $\XX$.  Using Proposition \ref{point deletion}, we will be able to conclude that the right hand side of Diagram \ref{eq:Phi diagram} from Theorem \ref{thm:stable_cubulations} commutes. 

\begin{proposition}{prop:isomorphism of cc}
There exists $M_3 = M_3(|F|, \mathfrak S)>0$, such that if
$M > M_3$, there exists $B>0$ and a cubical isomorphism $\hat{h}:\QQ_{F, \pt_0} \rightarrow \QQ_{F', \pt'_0}$ so that the diagram
     \begin{equation}\label{Phi diagram}
  \begin{tikzcd}
   \QQ_{F} \arrow[ddrr,"\Phi_{F}", bend left=40] \arrow[dr,"h \hspace{.075in}" left] &  \\
    &\QQ_{F, \pt_0} \arrow[dr,"\Phi_0 \hspace{.2in}" below]\arrow[dd, "\hat{h}"] \\
    & & \XX\\
    & \QQ_{F', \pt'_0} \arrow[ur,"\Phi'_0"] \\
    \QQ_{F'}\arrow[uurr,"\hspace{.2in} \vspace{.1in} \Phi_{F'}" below, bend right=40] \arrow[ur,"h'"] & \\
  \end{tikzcd}
  \end{equation}
commutes up to error $B$.
\end{proposition}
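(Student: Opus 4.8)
The plan is to build the cubical isomorphism $\hat h$ from the bijection $j:\pt_0\to\pt'_0$ and the half-tree bijections $j_p$ provided by Proposition \ref{prop:almost bijection of p}, and to verify the hypotheses of Lemma \ref{lem:halfspaces_bijection} using Lemma \ref{lem:halfspaces_intersect}. Concretely, recall that the $0$--cubes of $\QQ_{F,\pt_0}$ and $\QQ_{F',\pt'_0}$ are coherent canonical orientations of the walls $\mathcal L^V_p$, $p\in\pt_0$, and $\mathcal L^{V}_{q}$, $q\in\pt'_0$, respectively, and that each wall is the pair of halfspaces $W^V_{p,\pm}=(\beta^V_F)^{-1}(T^V_{p,\pm})\cap H_\theta(F)$. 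Define the map on halfspaces by sending $W^V_{p,\sigma}$ to the halfspace of $j(p)$ whose half-tree is $j_p(T^V_{p,\sigma})$; this preserves complements (since $j_p$ is a bijection of the two half-trees of $p$ with the two half-trees of $j(p)$) and is a bijection $\mathcal H_{\QQ_{F,\pt_0}}\to\mathcal H_{\QQ_{F',\pt'_0}}$ by construction. By Lemma \ref{lem:halfspaces_bijection}, to conclude that $\hat h$ is a cubical isomorphism it remains only to check that this bijection preserves \emph{disjointness} of halfspaces, equivalently (taking complements) that it preserves the intersection relation among all pairs of halfspaces.

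The heart of the argument is therefore to show, for two halfspaces $W^V_{p,\sigma}$ and $W^Z_{q,\tau}$ in $\QQ_{F,\pt_0}$, that they intersect if and only if their images intersect. This is where Lemma \ref{lem:halfspaces_intersect} does the work: it characterizes intersection purely in terms of the combinatorial position of half-trees, of the projections $\rho^V_Z$, $\rho^Z_V(q)$, and of the relation between $V$ and $Z$ in $\mathfrak S$ ($\orth$, $=$, $\transverse$, $\propnest$). The relations $V\orth Z$, $V=Z$, $V\transverse Z$, $V\propnest Z$ are intrinsic to $\mathfrak S$ and so are unchanged when we pass from $(F,\pt_0)$ to $(F',\pt'_0)$; thus it suffices to check that each of the five geometric conditions in Lemma \ref{lem:halfspaces_intersect} is preserved by $j$ and $j_p$. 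Condition (1) ($V\orth Z$) is automatic. Condition (2) ($V=Z$, $T^V_{p,\sigma}\cap T^V_{q,\tau}\neq\emptyset$) is exactly part (3) of Proposition \ref{prop:almost bijection of p}: it says that containment relations ``$q\in T^V_{p,\sigma}$'' are transported by $j_p$, and since intersection of two half-trees in a tree is equivalent to a containment among $\{p,q\}$ and the half-trees (one side contains the other's defining point, or they share an edge — as exploited in the proof of Lemma \ref{lem:halfspaces_intersect}, case $V=Z$), this is precisely what part (3) guarantees; one must spell out the finitely many sub-cases of how two half-trees in a tree can meet. Conditions (3), (4), (5) all assert that some half-tree ($T^Z_{q,\tau}$ or $T^V_{p,\sigma}$) contains a point of the form $\beta^Z_F(\rho^V_Z)$ or $\beta^V_F(\rho^Z_V(q))$; these are transported by part (4) of Proposition \ref{prop:almost bijection of p} applied to the pair $f=f'=\rho^U_V$ (for the appropriate $U,V$ among the domains in $\UU(F)\cap\UU(F')$), together with part (3) again for the dependence of $\rho^Z_V(q)$ on $q$ — note that $j(q)$ is within $2M/3$ of $q$ and $M\geq M_1$ is large, so bounded geodesic image (as used inside Lemma \ref{lem:halfspaces_intersect}) controls how $\rho^Z_V(\cdot)$ changes. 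Assembling these gives that $\hat h$ is a cubical isomorphism, for $M>M_3:=\max(M_1,M_2)$ (plus the earlier standing lower bounds on $M$).

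For the commutativity of Diagram \ref{Phi diagram} up to bounded error: the maps $\Phi_0=\Phi_{F,\pt_0}$ and $\Phi'_0=\Phi_{F',\pt'_0}$ are the quasi-isometric embeddings of Theorem \ref{thm:qi_to_cube_cplx}, applied to the subdivisions $\pt_0$, $\pt'_0$ (which are $(M,M'')$--subdivisions, valid since $M''\geq M'_0(M)$). The two outer triangles, namely $\Phi_F\simeq \Phi_0\circ h$ and $\Phi_{F'}\simeq\Phi'_0\circ h'$ up to error $K'$, are exactly Proposition \ref{point deletion} (with $n\leq R$ the number of deleted points from Proposition \ref{prop:almost bijection of p}). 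For the central square $\Phi_0\simeq\Phi'_0\circ\hat h$, recall from Theorem \ref{thm:qi_to_cube_cplx}(2) that $\Phi_0(x)$ is characterized, up to $\xi$, by its coordinates $b_Y(x)=\bigcap_{p\in\pt_0}S_{p,Y}(x)$ in each $T^Y_F$, and similarly $\Phi'_0(\hat h(x))$ by $b'_Y(\hat h(x))=\bigcap_{q\in\pt'_0}S'_{q,Y}(\hat h(x))$ in $T^Y_{F'}$. The key point is that for each $Y\in\UU(F)\cap\UU(F')$ the trees $T^Y_F$ and $T^Y_{F'}$ have images in $\CC(Y)$ within Hausdorff distance controlled by Theorem \ref{thm:stable tree}, and that under $\hat h$ the defining orientation is transported via $j$ and the half-tree bijection $j_p$, which by Proposition \ref{prop:almost bijection of p}(2)--(3) moves subdivision points by at most $2M/3$ and matches half-trees compatibly; hence $\beta^Y_{F'}(x'(j(p)))$ is uniformly close (in $\CC(Y)$) to $\beta^Y_F(x(p))$ for the relevant points, so $b'_Y(\hat h(x))$ and $b_Y(x)$ coarsely coincide in each $\CC(Y)$ (and both are uniformly bounded by Theorem \ref{thm:qi_to_cube_cplx}(1); for $Y\notin\UU(F)\cap\UU(F')$ the coordinate is bounded anyway). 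By the Distance Formula (Theorem \ref{thm:distance_formula}), $d_\XX(\Phi_0(x),\Phi'_0(\hat h(x)))$ is then bounded by some $B=B(|F|,\mathfrak S)$, as the two points have coarsely equal coordinates in every $\CC(Y)$ — this last step is the main obstacle, as it requires carefully tracking how the bounded Hausdorff-closeness of the $T^Y_F,T^Y_{F'}$ in $\CC(Y)$ (which has multiplicative QI constant possibly $>1$, recall Figure \ref{fig:pathologies}) combines with the matching of subdivision points and the $D$--density statements to give genuinely close $b_Y$ coordinates. Taking $B$ to absorb $K'$, $\xi$, and the Distance Formula constants completes the proof. \qed
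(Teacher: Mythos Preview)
Your proposal is correct and follows essentially the same approach as the paper: construct $\hat h$ from the bijections $j$ and $j_p$ of Proposition~\ref{prop:almost bijection of p}, verify the hypotheses of Lemma~\ref{lem:halfspaces_bijection} by running through the cases of Lemma~\ref{lem:halfspaces_intersect}, and establish commutativity via Proposition~\ref{point deletion} for the outer triangles and a coordinate-wise comparison of the $b_Y$ plus the Distance Formula for the middle square. One minor imprecision: in case~(5) your citation of ``part~(3) again for the dependence of $\rho^Z_V(q)$ on $q$'' is not quite right (part~(3) concerns subdivision points in the same domain), but you correctly identify the actual tool---bounded geodesic image applied to the short path from $q$ to $j(q)$ in $\CC(Z)$, which keeps $\rho^Z_V(q)$ and $\rho^Z_V(j(q))$ close---and this is exactly what the paper does.
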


\begin{proof}

We will define the required cubical isomorphism $\hat{h}$ and then prove that the middle
triangle commutes up to bounded error.  This suffices for the proposition because
Proposition \ref{point deletion} says that the top and bottom triangles commute up to bounded
error. 

In order to use Lemma \ref{lem:halfspaces_bijection} to define an isomorphism between
$\QQ_{F,\pt_0}$ and $\QQ_{F',\pt'_0}$, we need a bijection between the corresponding
collections of half-spaces which preserves complements and disjointness.

Let $\WW_{F, \pt_0}$ and $\WW_{F', \pt'_0}$ be the sets of half-spaces and construct a bijection $\iota: \WW_{F, \pt_0}\rightarrow \WW_{F', \pt'_0}$ as follows.  
By Proposition \ref{prop:almost bijection of p}, we need only consider
$V \in \UU(F) \cap \UU(F')$.  Any $p\in \pt_0(V)$ is contained in an edge of $T^V_F$ (in fact in
$T^V_e$) and is at least $M$ from its endpoints.

Let $j:\pt_0 \rightarrow \pt'_0$ be the bijection provided by
Proposition \ref{prop:almost bijection of p}, together with the corresponding maps $j_p$ pairing the half-trees at $p\in\pt_0$ with those at $j(p)$.

To define $\iota$, let $p\in\pt_0$ and $\sigma\in\{\pm\}$, and if
$j_p(T^V_{p,\sigma})=T^V_{j_V(p),\tau}$ we define $\iota(W^V_{p,\sigma})=W^V_{j_V(p),\tau}$.
It is straight-forward to confirm that $\iota$ respects complementation as in condition
(1) of Lemma \ref{lem:halfspaces_bijection}.

We now confirm condition (2) of Lemma \ref{lem:halfspaces_bijection} for $\iota$ by using the various characterizations of half-space intersections given in Lemma \ref{lem:halfspaces_intersect}.  We remark that the figures from that proof are again relevant.

Let $p \in \pt_0(V)$ and $q \in \pt_0(Z)$ for $Z, V \in \UU(F) \cap \UU(F')$, and suppose
that the half-spaces corresponding to the half-trees $T^V_{p, \sigma}$ and $T^Z_{q, \tau}$
intersect non-trivially, where $\sigma, \tau \in \{\pm\}$.  There are five cases, up to switching the roles of $V$ and $Z$.
\par\smallskip

\textbf{Case $Z \orth V$:} This case follows immediately from the construction and Lemma \ref{lem:halfspaces_intersect}(1), since all relevant pairs of half-spaces intersect in this case.
\par\smallskip

\textbf{Case $Z=V$:} In this case, Lemma \ref{lem:halfspaces_intersect}(2) implies that
$T^V_{p, \sigma} \cap T^V_{q, \tau} \neq \emptyset$ (recall Figures \ref{fig:V=Z1} and
\ref{fig:V=Z2}).

In particular, up to switching the roles of $p$ and $q$, we have $q\in T^V_{p, \sigma}$. But then, in view of (3) of Proposition \ref{prop:almost bijection of p}, $j(q)\in j_p(T^V_{p, \sigma})$. In particular we have $j_p(T^V_{p, \sigma}) \cap j_q(T^V_{q, \tau}) \neq \emptyset$, so that the corresponding half-spaces intersect, again by Lemma \ref{lem:halfspaces_intersect}(2).

\par\smallskip

\textbf{Case $Z \pitchfork V$:} In this case, Lemma \ref{lem:halfspaces_intersect}(3)
implies that, up to switching the roles of $Z$ and $V$, that $(T^V_F)_{p,\sigma}$ contains
$\beta^V_F(\rho^Z_V)$ (recall Figure \ref{fig:VtransZ}).  It follows immediately from part (4) of Proposition \ref{prop:almost bijection of p} that $\beta^V_{F'}(\rho^Z_V) \in j_p(T^V_{p,\sigma})$, as required.

\par\smallskip

\textbf{Case $V \propnest Z$:} By Lemma \ref{lem:halfspaces_intersect}, there are two subcases, up to switching the roles of $V$ and $Z$: (a) when $T^Z_{q, \tau}$ contains $\beta^Z_F(\rho^V_Z)$, and (b) when $T^V_{p, \sigma}$ contains $\beta^V_F(\rho^Z_V(q))$.

In case (a), since $V\in \UU(F)\cap\UU(F')$ part (4) of Proposition \ref{prop:almost bijection of p} immediately gives that $\beta^Z_{F'}(\rho^V_Z)$ lies in $j_q(T^Z_{q, \tau})$.

Suppose now that (b) holds. We prove that $\beta^V_{F'}(\rho^Z_V(j(q)))$ is contained in $j_p(T^V_{p, \sigma})$ (recall Figure \ref{fig:VnestZ2}).  Recall from Proposition \ref{prop:rhos_close_to_leaves} that $\rho^Z_V(q)$ lies close to some $\pi_V(f)$ with bound in terms of $\mathfrak S$ and $|F|$.  Since $d_V(q,j(q))< 2M/3$ and both $q$ and $j(q)$ are at least $M-2K$ from $\rho^V_Z$, with the $2K$ coming from the facts that $T_c^Z(F)$ is within Hausdorff distance $K$ of the $\YY^Z$ and the edge of $T_e^Z(F)$ containing $q$ is $(1,K)$-quasi-isometrically embedded in $\CC(Z)$ (Theorem \ref{thm:stable tree}\ref{item:bijection}).  Choosing $M$ sufficiently large, we can guarantee that any geodesic in $\CC(V)$ between $p$ and $j_V(p)$ avoids the $\kappa_0$-neighborhood of $\rho^V_Z$, for $\kappa_0$ the constant of the bounded geodesic image property, which then bounds $d_V(\rho^Z_V(q),\rho^Z_V(j_Z(q))) < \kappa_0$.  Hence $\rho^Z_V(j_Z(q))$ is close to both $\rho^Z_V(q)$ and a leaf of $T^V_{F'}$, and thus $\beta^V_{F'}(\rho^Z_V(j(q))) \subset j_p(T^V_{p, \sigma})$, as required.

Since the wallspace map $\iota$ satisfies the conditions of Lemma \ref{lem:halfspaces_bijection}, we obtain a cubical isomorphism $\hat{h}: \QQ_{F, \pt_0} \rightarrow \QQ_{F', \pt'_0}$.

\par\smallskip

\begin{figure}
\includegraphics[width=1\textwidth]{./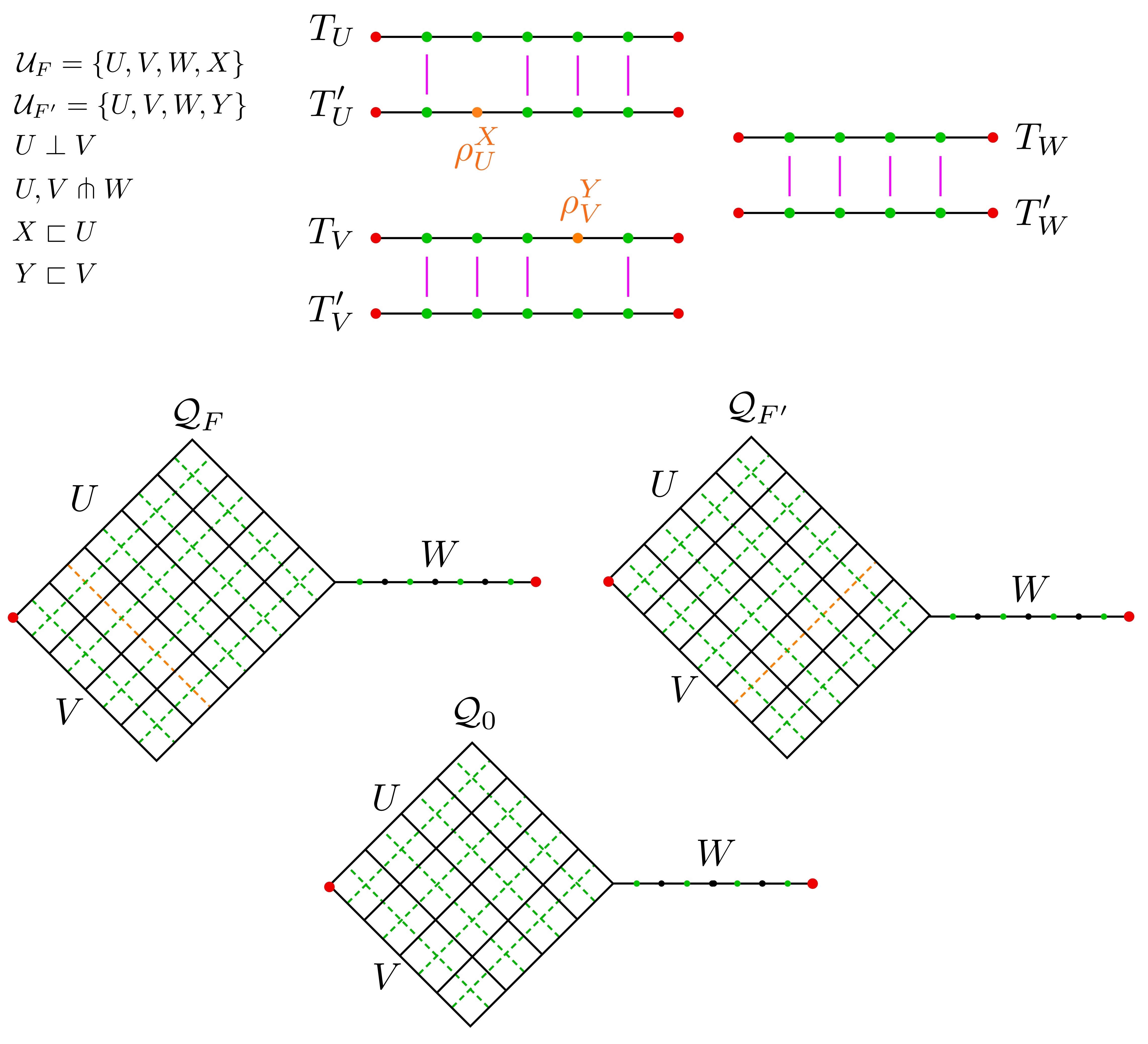}
\caption{A simple example of how subdivision bijections determine hyperplane deletions.  The relevance of $X$ for $F'$ but not $F$ requires deleting a subdivision point to obtain the bijection $j_U: \pt_U \to \pt'_U$ (indicated in pink), and similarly with $Y$ for $j_V:\pt_V \to \pt'_V$.  Deleting subdivision points results in hyperplane deletions when passing from $\QQ_F$ and $\QQ_{F'}$ to $\QQ_0$.  Note that since $X, Y \notin \UU_F \cap \UU_{F'}$, neither domain determines any subdivision points.}\label{fig:trees to cubes}

\end{figure}

\textbf{Commutativity:} It remains to prove that $\Phi_{F, \pt_0}: \QQ_{F, \pt_0} \rightarrow \XX$ and $\Phi_{F', \pt'_0} \circ h: \QQ_{F, \pt_0} \rightarrow \XX$ are the same up to a bounded error depending only on $k$ and the ambient HHS structure.

By the distance formula (Theorem \ref{thm:distance_formula}), it suffices to show that for each $0$-cube $x \in \QQ_{F, \pt_0}$, its respective images $\Phi_{F, \pt_0}(x)$ and $\Phi_{F', \pt'_0} \circ h(x)$ have coarsely the same projections to $\CC(V)$ for each $V \in \UU(F) \cap \UU(F')$. 

Recall from the end of Subsection \ref{subsec:constr} that the maps $\Phi_{F, \pt_0}$ and $\Phi_{F', \pt'_0}$ are defined domain-wise by intersecting certain collections of half-trees of $T^V_F$ and $T^V_{F'}$ for each $V \in \mathfrak S$, and hence the same is true for $\Phi_{F', \pt'_0} \circ h$.  By chasing the relevant definitions, of $h$ and $j$ especially, we see that the collections of  half-trees involved in the definition of $\Phi_{F,\pt_0}(x)$ and $\Phi_{F', \pt'_0} \circ h(x)$ are in bijection with each other, with corresponding half-trees lying within bounded Hausdorff distance depending only on $k$ and the ambient HHS structure.  Hence their intersections in $T^V_{F}$ and $T^V_{F'}$ coarsely coincide, as required.  This completes the proof of the proposition.
\end{proof}

\subsection{Proof of Theorem \ref{thm:stable_cubulations}}\label{subsec:finish cubulations}

Let $F, F'$ be as in the statement.

The CAT(0) cube complexes $\QQ_F, \QQ_{F'}$ are constructed in Subsection \ref{subsec:constr}, using subdivisions $\pt=\wp(F), \pt'=\wp(F')$ within each of the relevant stable trees produced in Section \ref{sec:stable trees}. Items (1) and (2) are proven in \cite{BHS:quasi}, as recalled in Theorem \ref{thm:qi_to_cube_cplx} above.

For (3), we may define a map $\psi_F:F \to \QQ_F$ as in \cite[Proof of Theorem 2.1]{BHS:quasi}.  For each $f \in F$, let $\psi_F(f)$ be the orientation of the walls on hull $H_{\theta}(F)$ obtained by choosing, for each wall $(W^V_{p,+},W^V_{p,-})$, the halfspace containing $f$.  We define $\psi_{F'}$ similarly.  That $\psi_F, \psi_{F'}$ satisfies (3) now follows from Theorem \ref{thm:qi_to_cube_cplx}.

We now prove the stability statements. We will only consider the case that $g$ is the identity for the same reason as in Remark \ref{rem:equivariance}, namely that what we have to prove is that the choices that we made along the way only affect the output in the way predicted by the statement, and such choices can be ``translated'' by automorphisms. The choices we are referring to are those of the trees $T^V_F$, of the evenly spaced subdivisions, and of points projecting coarsely in specified places in the various $\mathcal C(Y)$, as in Theorem \ref{thm:qi_to_cube_cplx}.\ref{item:realization}.

Let $\pt_0 \subset \pt$ and $\pt'_0 \subset \pt'$ be the refinements provided by Proposition \ref{prop:almost bijection of p}, where at most $N$ subdivision points are deleted.

The cube complex $\QQ_0$ as in the statement of Theorem \ref{thm:stable_cubulations} can
be taken to be either of the cube complexes $\QQ_{F,\pt_0}$ or $\QQ_{F', \pt'_0}$ that are
produced by Proposition \ref{prop:isomorphism of cc}.  So take $\QQ_0 = \QQ_{F',\pt'_0}$ and
let $\Phi_0: \QQ_0 \rightarrow \XX$ be the map $\Phi_{F', \pt'_0}:\QQ_{F',\pt'_0} \to \XX$
given by Proposition \ref{prop:isomorphism of cc}.
Finally, let $\eta' \equiv h':\QQ_{F'} \to \QQ_0$ be as
in Proposition \ref{prop:isomorphism of cc} and define $\eta: \QQ_{F} \to
\QQ_0$ by
$\eta = \hat{h} \circ h$.

By Proposition \ref{prop:isomorphism of cc}, these maps each satisfy the required properties and the right part of Diagram \ref{eq:Phi diagram} commutes up to bounded error, as required.

To see exact commutativity of the left part of the diagram, it remains to prove that $\eta \circ \psi_F \circ
\iota_F = \eta' \circ \psi_{F'} \circ \iota_{F'}$.  Recall that $\psi_F: F \rightarrow \QQ_F$
is defined by sending $f$ to the coherent orientation on the wallspace defined by $\pt$
which, for each $p \in \pt(V)$, chooses the half-tree of $T^V_F\ssm p$ containing $\beta^V_F(f)$, 
for each $V \in \UU(F)$. Since $h$ is a hyperplane deletion map, $h\circ \psi_F$ makes, for $p\in
\pt_0$, the same choice as $\psi_F$.

Fix $f\in F$ (the argument for $f\in F'$ is similar).  Then the two sides of the equation are coherent orientations on the wallspace
defining $\QQ_{F',\pt'_0}$, and we have to check that they coincide on every
halfspace. Pick $p\in \pt_0$ and let $p' = j_V(p)$, the map defined in Proposition
\ref{prop:almost bijection of p}. 
As above the orientation 
of $h \circ \psi_F \circ \iota_F(f) = h \circ \psi_F (f)$
on the wall associated to $p$
is the one that chooses the half-tree of
$T^V_{F}\ssm p$, call it $(T^V_F)_{p,+}$,  that contains $\beta^V_F(f)$.
The map $\hat h$, by the construction in Proposition \ref{prop:isomorphism of cc},
takes this to the orientation that chooses the half-tree  of $p'$ in $T^V_{F'}$
given by $j_p((T^V_F)_{p,+})$ (where $j_p$ is the bijection of half-trees provided by Proposition \ref{prop:almost bijection of p}).
Letting $f'=\iota_{F'}(f)$ we have
 $d_\XX(f',f)\leq 1$, so part (4) of
Proposition \ref{prop:almost bijection of p} tells us that
$\beta^V_{F'}(f') \in j_p((T^V_F)_{p,+})$, which means that
$j_p((T^V_F)_{p,+})$ is the half-tree selected by $\eta'\circ\psi_{F'}(f')$.
Thus we conclude that $\eta(\psi_F(\iota_F(f))) = \eta'(\psi_{F'}(\iota_{F'}(f)))$. 

This completes the proof of Theorem \ref{thm:stable_cubulations}.\qed

\section{Generalizing normal paths to find barycenters} \label{sec:normal paths}

In this section, we describe a variation on the ``normal paths'' construction due to Niblo-Reeves \cite{NibloReeves}.  We remark that they used these normal paths to build a biautomatic structure for any cubical group, with the paths playing the central role of the bicombing in that structure.

For the case of two points, our construction gives a ``symmetrized'' version of the Niblo-Reeves construction. The main difference with their construction, however, is that ours is adapted to allow for multiple points, which we need for our barycenter application (Theorem \ref{thm:barycenter main}).

\par\smallskip

The reader may want to refer to Subsection \ref{subsec:cube complexes} for the various notions and notations relating to cube complexes that we will use throughout this section

Let $X$ be a  CAT(0) cube complex, $\HH$ its set of hyperplanes, and $f:P \rightarrow X^{(0)}$ a (not necessarily injective) map from a
finite set $P$ into the vertices of $X$.  Roughly, we will find a barycenter for the set $f(P)$ in $X$ by an iterative sequence
of contractions which behaves stably under hyperplane deletions. The
main result of this section, and the only statement from this section that we need to prove our main theorems, is the following:

\begin{theorem}{thm:stable contraction}
Let $X$ be a  CAT(0) cube complex, and let $\mathcal H$ be its set of hyperplanes.

For each $f:P\to X^{(0)}$, where $P$ is a finite set, there is a finite sequence $\{f_i:P \to
X^{(0)}: i=0,\ldots,n=n_f\}$ with the following properties
\begin{enumerate}
 \item\label{item:ending_diam} $f_0=f$ and $\diam_\infty(f_n(P))\le 1$,
 \item\label{item:step_size} For each $p\in P$ and $0\leq i\leq n-1$ we have
   $d_\infty(f_i(p),f_{i+1}(p))\le 1$,

 \item\label{item:contraction_geod} For each $p$, there is an $\ell^1$-geodesic going through the vertices $f_0(p)$, $f_1(p)$, $\dots, f_n(p)$ in this order,
  \item\label{item:bary_separation} No hyperplane separates every point of $f(P)$ from a point in $f_n(P)$,
\item\label{item:same_image} if $g:Q\to P$ is surjective, then $f_i\circ g=(f\circ g)_i$ for all $i$,
 \item\label{item:deletion_fellow} if $G$ is any
hyperplane of $X$ then the hyperplane deletion map $\res_{\HH\ssm G}:X\rightarrow X(\HH \ssm G)$ satisfies 
$$|n_f-n_{\res_{\HH\ssm G}\circ f}|\leq 1 \indent \text{and} \indent 
d_\infty (  \res_{\HH\ssm G}(f_i(p)), (\res_{\HH\ssm G}\circ f)_i(p) ) \le 1,
$$
\end{enumerate}
\end{theorem}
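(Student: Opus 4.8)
The plan is to produce $f_0,f_1,\dots$ by iterating a single \textbf{contraction step}, stopping once the configuration is small. Fix an auxiliary well-order $\preceq$ on $X^{(0)}$. For a finite map $h:P\to X^{(0)}$ write $S=h(P)$ for the occupied multiset (so $|S|=|P|$), and for a hyperplane $H$ call a halfspace $\mathfrak h$ of $H$ \emph{dominant for $h$} if $\mathfrak h$ contains more than half of $S$, or contains exactly half of $S$ and wins a tiebreak determined by $\preceq$ applied to the \emph{initial} positions $h(P)$ grouped by current side (the tiebreak is set up precisely so that it is never reversed by later moves). For a vertex $v$, let $\mathcal H_v(h)$ be the set of hyperplanes $H$ dual to an edge at $v$ whose halfspace \emph{not} containing $v$ is dominant for $h$. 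Then $f_{i+1}(p)$ is obtained from $f_i(p)$ by moving across the cube spanned by $\mathcal H_{f_i(p)}(f_i)$ toward the dominant corner of each such hyperplane, and we stop with $n_f=i$ as soon as $\diam_\infty(f_i(P))\le 1$; this gives $f_0=f$. The first point to check is that $\mathcal H_v(h)$ really spans a cube based at $v$: its hyperplanes pairwise cross, because if $H_1,H_2$ have dominant far halfspaces $D_1,D_2$ then $D_1\cap D_2$ still meets $S$ (here the tiebreak rules out the contrary), so some $w\in S$ is separated from $v$ by both $H_1$ and $H_2$; the median of $w$ with the two neighbours of $v$ dual to $H_1,H_2$ then spans a square at $v$ through $H_1\cap H_2$, and the flag condition on the link of $v$ gives the cube. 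Moving diagonally across it yields $d_\infty(f_i(p),f_{i+1}(p))\le 1$, which is \ref{item:step_size}.

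Two structural observations come next. Every $H\in\mathcal H_v(h)$ separates $S$ into nonempty halves (the dominant side has $\ge|S|/2\ge 1$ points, the other contains $v$), hence is a hyperplane of the convex hull $\mathrm{hull}(f(P))$ once we know inductively that $f_i(P)\subseteq\mathrm{hull}(f(P))$; crossing it keeps $v$ in every halfspace containing $f_i(P)$, so $f_{i+1}(P)\subseteq\mathrm{hull}(f_i(P))\subseteq\mathrm{hull}(f(P))$. Since $f(P)$ is finite, $\mathrm{hull}(f(P))$ is a finite cube complex, so the whole construction takes place inside it; and $f_n(P)\subseteq\mathrm{hull}(f(P))$ is exactly the statement that no hyperplane separates all of $f(P)$ from a point of $f_n(P)$, i.e. \ref{item:bary_separation}. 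Functoriality \ref{item:same_image} is then an induction on $i$: the contraction rule for $p$ reads off only $f_i(p)$, the set of vertices occupied by $f_i(P)$, the proportions $|\mathfrak h\cap f_i(P)|/|P|$, and the $\preceq$-data of the initial positions grouped by current side — all invariant under precomposition with a surjection $g:Q\to P$.

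Now the geodesic property and termination. The crux is that a point never recrosses a hyperplane: if $f_i(p)$ crosses $H$ into its dominant side $D$, then $D$ stays dominant at every later step — points only ever cross \emph{into} dominant halfspaces, so $|D\cap f_j(P)|$ is non-decreasing for $j\ge i$, and the initial-position tiebreak is never reversed — so $H\notin\mathcal H_{f_j(p)}(f_j)$ for all $j>i$. Hence for each $p$ the vertices $f_0(p),\dots,f_n(p)$ lie in order along an $\ell^1$-geodesic, proving \ref{item:contraction_geod}; in particular no configuration recurs, and since everything stays in the finite complex $\mathrm{hull}(f(P))$ the process halts. A \textbf{progress lemma} — whose proof shows that if $\diam_\infty(f_i(P))>1$ and \emph{no} point moved, then two disjoint (parallel) hyperplanes of $\mathrm{hull}(f_i(P))$ would each have to carry at least half of the configuration on their outer side, which is impossible for disjoint halfspaces once the tiebreak is accounted for — shows the process cannot halt before $\diam_\infty(f_i(P))\le 1$. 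This gives \ref{item:ending_diam}.

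The main obstacle is the stability statement \ref{item:deletion_fellow}. Fix a hyperplane $G$, give $X(\HH\ssm G)$ the well-order induced from $\preceq$ (a collapsed pair of vertices taking the smaller label), and prove by a single induction on $i$ that $d_\infty\bigl(\res_{\HH\ssm G}(f_i(p)),(\res_{\HH\ssm G}\circ f)_i(p)\bigr)\le 1$ for all $p$, then deduce $|n_f-n_{\res_{\HH\ssm G}\circ f}|\le 1$. The leverage is that $\res_{\HH\ssm G}$ collapses only the carrier $G\times[0,1]$: it moves each vertex by at most $1$ in $d_\infty$, it is a bijection from $\{H\in\HH:H\ne G\}$ onto the hyperplanes of $X(\HH\ssm G)$, and — because collapsing $G$'s carrier never pushes a vertex across any $H\ne G$ — for each such $H$ it preserves the two-sided partition of the configuration and (with the induced labelling) the dominance decision. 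So the two contraction processes call for the \emph{same} cube moves at each step, \emph{except} for the single question of whether $G$ itself is crossed; since $G$'s carrier has $\ell^1$-width $1$, this confines the discrepancy between the processes to a $d_\infty$-ball of radius $1$ at every step and prevents it from accumulating. The parts I expect to absorb most of the work are (i) controlling the finitely many tie-decisions whose $\preceq$-data could be disturbed by the merging of vertices across $G$, and (ii) the bookkeeping of the at-most-one-step timing offset, so that the bound $d_\infty\le 1$ is genuinely maintained through the steps where one process crosses $G$ and the other does not.
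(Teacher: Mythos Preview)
There is a concrete error in your verification of property~(5). The proportions $|\mathfrak h\cap f_i(P)|/|P|$ are \emph{not} invariant under precomposition with a surjection: take $P=\{p_1,p_2\}$ with $f(p_1)=a$, $f(p_2)=b$ distinct vertices on a line, and $g:\{q_1,q_2,q_3\}\to P$ with $g(q_1)=p_1$ and $g(q_2)=g(q_3)=p_2$. Every hyperplane between $a$ and $b$ sees a $1{:}1$ tie for $f$ (resolved by your tiebreak, say toward $a$) but a strict $1{:}2$ majority for $f\circ g$ toward $b$, so the dominant vertex is $a$ for $f$ and $b$ for $f\circ g$; the two processes run in opposite directions and $(f\circ g)_1\ne f_1\circ g$. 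Any majority-vote rule on the multiset has this defect. The paper's construction is genuinely different and avoids the issue: it identifies \emph{transitional} hyperplanes---those to which every point on one side is adjacent, but not every point on the other---and pushes the adjacent side across. That rule depends only on the \emph{set} $f(P)$ of occupied vertices, never on multiplicities, so (5) is immediate.

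Your stability sketch for~(6) also has a real gap. Once $\res_{\HH\ssm G}(f_i(p))$ and $f'_i(p)$ differ by a hyperplane, the sets of hyperplanes \emph{adjacent} to the two current positions differ as well, so at the next step the cube moves disagree on hyperplanes other than $G$; the assertion that the processes ``call for the same cube moves except for whether $G$ is crossed'' is false as stated, and a genuine mechanism is needed to keep the discrepancy from exceeding~$1$. The paper handles this with a commuting-diagram analysis of how $\move$ and $\delete_G$ interact, driven by a level-shift lemma showing that for every hyperplane $H\ne G$ the first step at which $H$ becomes extremal changes by at most~$1$ under deletion of~$G$. Something analogous would be required even for a repaired version of your construction; the appeal to ``$G$'s carrier has $\ell^1$-width~$1$'' does not by itself prevent accumulation, and your stopping rule $\diam_\infty\le 1$ gives no direct handle on $|n_f-n_{f'}|\le 1$ either.
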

Recall that $d_{\infty}$ is the metric generated by the sup-metric on each cube in the ambient complex.

The proof of Theorem \ref{thm:stable contraction} occurs in parts over the remainder of this section.  We tie them together in Subsection \ref{subsec:proof of stable contraction} below.

\medskip

We will mostly ignore the ambient cube complexes and focus on the hyperplane set $\HH$ and regard maps $f$ as above as maps associating to $p\in P$ an
orientation on $\HH$.  For each pair $(f,\HH)$ we consider a number of
operations.

\par\smallskip

First, let $\HH_f$ be the set of hyperplanes of $X$ that \emph{separate}
$f(P)$.  That is, $\HH_f$ is the set of hyperplanes $H \in \HH$ for which there exist $p, p' \in P$ so that $f(p)$ and $f(p')$ are on different sides of $H$.

Let
$$\Trim(f,\HH) = (\res_{\HH_f}(f),\HH_f)$$
be the restriction to $\HH_f$.  Note that
the quotient complex $X(\HH_f)$ actually embeds in
$X(\HH)$, and that it is finite even if $X(\HH)$ is not.

If $G$ is a collection of mutually crossing hyperplanes we let
$$
\delete_G(f,\HH) = (\res_{\HH\ssm G}(f),\HH\ssm G)
$$
This ``deletion map'' corresponds to composing $f$ with the quotient
by $G$, that is,
$$\res_{\HH\ssm G} \circ f: P \rightarrow X(\HH \ssm G).$$
We will also write $\res_{\HH\ssm G}(f)$ as $f|_{\HH\ssm G}$,
in a slight abuse of notation.

\subsection{Extremal and transitional hyperplanes}

As stated above, our generalized normal paths give a series of contractions of our set $f(P)$ to a bounded diameter set in $X$.  This is accomplished by iteratively jumping the points of $f(P)$ (and their subsequent contracted images) over a sequence of hyperplanes in $\HH_f$.  Thus we are lead to understand which hyperplanes are next in line to be jumped.

\begin{definition}{}
Let $X$ be a cube complex and $\HH$ its set of hyperplanes.

\begin{itemize}

\item A point $p \in X$ is \textbf{\emph{adjacent}} to a hyperplane $H \in \HH$ if there are no hyperplanes separating $p$ from $H$.

\item A hyperplane $H\in \HH_f$ is \textbf{\em extremal} if on one side of $H$
every point of $f(P)$ is adjacent to $H$ in $X$.  We let $\Ext(f,\HH)\subset \HH_f$ denote the set of extremal
hyperplanes.
\item A hyperplane $H \in \Ext(X,f)$ is \textbf{\em transitional} if it 
is extremal and on one side of $H$ {\em not} every point of $f(P)$ is
adjacent to $H$; we let $\Trans(f,\HH)\subset \Ext(f,\HH)$ be the set
of transitional hyperplanes.
\end{itemize}
\end{definition}

\begin{figure}
\includegraphics[width=0.25\textwidth]{./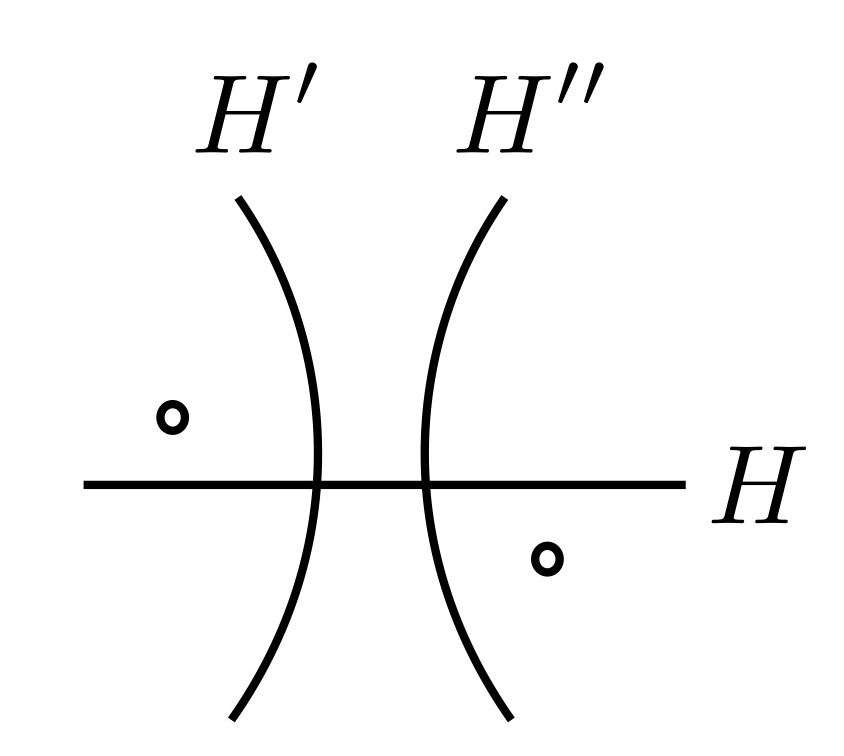}
\caption{$H$ is extremal but not transitional, while $H'$ and $H''$ are both transitional. }\label{fig:extnottrans}
\end{figure}

If $H$ is transitional, we write $P = P_0(H) \sqcup P_1(H) = P_0 \sqcup P_1$ where $f(P_0)$ is
adjacent to $H$ on one side, $f(P_1)$ is on the other side,  and at least one point of $f(P_1)$ is not
adjacent to $H$.

We note that $\Ext(f,\HH)$ is always nonempty when $P$ is nonempty. In fact, for any $p\in P$, it is readily shown that $H$ is extremal whenever $H$ is a hyperplane in $\HH_f$ so that the number of hyperplanes separating $f(p)$ from $H$ is maximal.

Moreover, for any hyperplane $H \in \Ext(f,\HH) \ssm \Trans(f,\HH)$, every point of $f(P)$ must be adjacent to $H$.

\begin{lemma}{lem:trans hp}
The set $f(P)$ is contained in a single cube of $X$ if and only if $\Trans(f,\HH) = \emptyset$.
\end{lemma}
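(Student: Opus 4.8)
The plan is to prove both directions by relating ``being contained in a single cube'' to ``every point of $f(P)$ being adjacent to every hyperplane in $\HH_f$,'' and then using the observation (recorded just above the lemma) that $\Ext(f,\HH)$ is always nonempty when $P\neq\emptyset$, together with the defining distinction between extremal and transitional hyperplanes.

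First I would handle the forward direction. Suppose $f(P)$ lies in a single cube $C$ of $X$. Then for any hyperplane $H\in\HH_f$, $H$ separates two points of $f(P)\subset C$, so $H$ must be one of the midcube-hyperplanes of $C$; in particular no hyperplane separates any vertex of $C$ from $H$, so every point of $f(P)$ is adjacent to $H$ on \emph{both} sides. Hence no $H\in\HH_f$ can satisfy the defining condition of a transitional hyperplane (which requires that on one side \emph{not} every point of $f(P)$ is adjacent to $H$), so $\Trans(f,\HH)=\emptyset$.

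For the converse, suppose $\Trans(f,\HH)=\emptyset$. If $P$ is empty the statement is vacuous, so assume $P\neq\emptyset$; after applying $\Trim$ we may assume $\HH=\HH_f$, and I would first dispose of the trivial case $\HH_f=\emptyset$, in which $f(P)$ is a single vertex. Otherwise, the key claim is that every point of $f(P)$ is adjacent to every hyperplane in $\HH_f$ on both sides. Indeed, take any $H\in\HH_f$; since $\Ext(f,\HH)$ is nonempty and contains, for any fixed $p\in P$, a hyperplane maximizing the number of hyperplanes separating it from $f(p)$, but since $\Trans=\emptyset$ every extremal hyperplane has \emph{every} point of $f(P)$ adjacent to it on both sides. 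To propagate this to \emph{all} of $\HH_f$, I would argue by induction on the number of hyperplanes separating $f(p_0)$ from $H$ for a fixed basepoint $p_0$: if $H$ is not already handled, pick an extremal $H_0$ (which is adjacent to all of $f(P)$), delete it (or rather pass to the ``inner'' halfspace side), and observe that $H$ becomes extremal in the smaller complex while adjacency of all points of $f(P)$ to $H$ in $X$ is detected by the absence of \emph{any} separating hyperplane — which is inherited. Once every $H\in\HH_f$ has all of $f(P)$ adjacent on both sides, the standard fact about CAT(0) cube complexes — that a set of vertices each adjacent to every hyperplane separating them lies in a single cube — finishes the proof (this is the statement that a ``cube'' is exactly a maximal set of pairwise-crossing hyperplanes together with a vertex adjacent to all of them, using that the $\HH_f$ are pairwise crossing precisely because no vertex of $f(P)$ is separated from any of them).

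The main obstacle I expect is the converse direction: making precise the inductive ``peeling off extremal hyperplanes'' argument and checking that after $\Trim$ the hypothesis $\Trans=\emptyset$ forces \emph{all} hyperplanes of $\HH_f$ to be crossed pairwise and to have all of $f(P)$ on their carriers. Concretely, the delicate point is to show that $\HH_f$ is a collection of \emph{mutually crossing} hyperplanes (so that by finiteness it is the hyperplane set of a single cube) — this should follow because if two hyperplanes $H,H'\in\HH_f$ did not cross, one of them would separate the other from some point of $f(P)$, contradicting universal adjacency, but I would want to verify the orientation bookkeeping carefully. Everything else (the forward direction, the trivial cases, the appeal to the nonemptiness of $\Ext$) is routine.
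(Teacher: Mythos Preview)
Your forward direction is fine and matches the paper's. For the converse, however, your inductive ``peeling off extremal hyperplanes'' scheme is both more complicated than needed and not quite well-formed as stated: after deleting an extremal $H_0$ you would need to check that $\Trans$ remains empty in the new configuration (this is true, but requires the fact that $H_0\in\Ext\setminus\Trans$ crosses every other hyperplane of $\HH_f$, which is Lemma~\ref{Ext Trans structure} and not yet available), and your claim that ``$H$ becomes extremal in the smaller complex'' after one deletion is not justified---it only becomes extremal after enough deletions, so the induction needs to be set up more carefully.

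The paper instead argues by contrapositive in one step, using the stronger form of the observation you cite: if $f(P)$ is not contained in a single cube, there exist $p\in P$ and $H\in\HH_f$ with $f(p)$ not adjacent to $H$; now choose $p$ and $H$ so that the number of hyperplanes separating $f(p)$ from $H$ is \emph{maximal}. By the observation just before the lemma, this $H$ is extremal, and since the maximal count is positive, $f(p)$ is not adjacent to $H$, so $H$ is transitional. No induction, no deletion, no auxiliary lemmas. You had all the ingredients for this argument already; the point is simply to run the maximization with the specific non-adjacent pair in hand rather than trying to prove universal adjacency directly.
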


\begin{proof}
First of all, we observe that $f(P)$ being contained in a single cube is equivalent to $\HH_f$ being mutually crossing.  Moreover, it is clear from the definitions that if $\HH_f$ is mutually crossing, then $\Trans(f,\HH) = \emptyset$.

\par\smallskip 

In the other direction, we suppose that $f(P)$ is not contained in a single cube and then produce a transitional hyperplane. The fact that $f(P)$ is not contained in a single cube implies that there exists $p\in P$ and a hyperplane $H\in \HH_f$ with $f(p)$ not adjacent to $H$.

Choose now $p\in P$ and $H\in\HH_f$ so that the number of hyperplanes separating $f(p)$ from $H$ is maximal; the argument above implies that the number of these hyperplanes is positive, so that $f(p)$ is not adjacent to $H$. As observed before the lemma, it follows that $H$ is extremal, and since $f(p)$ is not adjacent to $H$, we have that $H$ is in fact transitional, as required.
 \end{proof}

\subsection{The move sequence}

We now build our sequence of contractions, which we call moves.

\par\smallskip

Roughly speaking, a \textbf{{move}} is an operation on $(f,\HH)$ in which, for every $H\in
\Trans(f,\HH)$, the points of $P_0(H)$ cross $H$ to the opposite
side. The resulting pair $(f_1,\HH) = \move(f,\HH)$ is a map for which
$\HH_{f_1} = \HH_f \ssm \Trans(f,\HH)$, so that the image of the new map $f_1(P)$  is
strictly contained within the subcomplex spanned by $f(P)$.

\realfig{onemove}{An example of a single move. $f(P)$ consists of three points and $f_1(P)$, in matching
  colors, are on the other side of the transitional hyperplanes, which are indicated in orange.}

To define $f_1$, we need the following observation:

\begin{lemma}{}\label{lem:adjacent to hyperplane}
 
For each $p\in P$, the set 
$$J(p) = \{H\in \Trans(f,\HH): p\in
P_0(H)\}$$
is mutually crossing.
\end{lemma}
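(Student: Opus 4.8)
The plan is to argue by contradiction, using a pair of non-crossing transitional hyperplanes in $J(p)$ and exploiting the fact that adjacency of $f(p)$ to a hyperplane $K$ forbids any third hyperplane from separating $f(p)$ from $K$. Recall the basic cubical facts that two distinct hyperplanes either cross or are disjoint, and that if $H,H'$ are disjoint then each lies in a single open halfspace of the other (equivalently, exactly one of the four halfspace intersections $H^{\pm}\cap H'^{\pm}$ is empty); these are standard and can be cited from Sageev's notes \cite{Sageev:pcmi}. If $J(p)$ has at most one element the claim is trivial, so I would fix distinct $H,H'\in J(p)$ and assume they do not cross, aiming for a contradiction.

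I would first set up notation: for a transitional hyperplane $K$, write $K^+$ for the ``extremal'' halfspace (the side all of whose points of $f(P)$ are adjacent to $K$), so that $f(P_0(K))\subset K^+$, $f(P_1(K))\subset K^-$, and some point of $f(P_1(K))$ fails to be adjacent to $K$. Since $p\in P_0(H)\cap P_0(H')$, the point $f(p)$ lies in $H^+\cap H'^+$ and is adjacent to both $H$ and $H'$. The first substantive step is to place $H$ and $H'$ relative to each other: as they are disjoint, $H$ lies in one open halfspace of $H'$; were it $H'^-$, then $H'$ (distinct from $H$ and not crossing $H$) would separate $f(p)$ from $H$, contradicting the adjacency of $f(p)$ to $H$. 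Hence $H\subset H'^+$, and symmetrically $H'\subset H^+$.

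Next I would pass from the hyperplanes to their halfspaces: $H^-$ is convex and disjoint from $H'$ (since $H'\subset H^+$), so $H^-\subset H'^+$ or $H^-\subset H'^-$; because $\overline{H^-}$ contains $H$ and $H\subset H'^+$, the second option is impossible, giving $H^-\subset H'^+$. Now transitionality of $H$ produces $q\in P_1(H)$ with $f(q)$ not adjacent to $H$, hence $f(q)\in H^-\subset H'^+$, so $q\in P_0(H')$ and $f(q)$ is adjacent to $H'$. On the other hand $f(q)\in H^-$ while $H'\subset H^+$, so $f(q)$ and $H'$ lie on opposite sides of $H$; since $H\neq H'$ and $H$ does not cross $H'$, the hyperplane $H$ separates $f(q)$ from $H'$, forcing $f(q)$ to be non-adjacent to $H'$. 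This contradiction shows $H$ and $H'$ cross, and since the pair was arbitrary, $J(p)$ is mutually crossing.

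The only point requiring care is the bookkeeping with the elementary cubical facts — that disjoint hyperplanes sit in single halfspaces of one another and that the closures and halfspace inclusions behave as claimed — but these are routine and standard; no genuinely hard step remains, as the entire argument is a short chain of applications of the definition of \emph{adjacent}.
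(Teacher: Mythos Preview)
Your proof is correct and follows the same contradiction-via-adjacency strategy as the paper; you are in fact more explicit about the case analysis, handling the residual configuration $H^-\cap H'^-=\emptyset$ (where neither hyperplane separates $f(p)$ from the other) via the auxiliary point $q\in P_1(H)$. One minor remark: you invoke transitionality of $H$ to produce $q$ with $f(q)$ not adjacent to $H$, but you only ever use $f(q)\in H^-$, which already follows from $H\in\HH_f$.
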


\begin{proof}
Suppose by contradiction that $H_1,H_2\in J(p)$ and that
$H_1$ does not cross $H_2$. Let $s_1$ be the side of $H_1$ on which
$f(P_0(H_1))$ lies (and hence is adjacent) and let $s_2$ be the 
corresponding  side for $H_2$. Since $p\in P_0(H_1)\cap P_0(H_2)$, we have that $s_1$ and $s_2$ intersect. Since $H_1$ does not cross $H_2$, up to swapping indices we have that $H_2$ lies in $s_1$ and separates $H_1$ from $f(P_0(H_2))$, which contains $p$ and thus contradicts the choice of $s_1$.
\end{proof}

We can now define $f_1(p)$.  By Lemma \ref{lem:adjacent to hyperplane}, $f(p)$ is the corner of a unique maximal cube each of whose midcubes is contained in some hyperplane of $J(p)$, and we can choose $f_1(p)$ to be the diagonally opposite
corner.  Equivalently, $f_1(p)$ can be defined as the point obtained by flipping the orientation that
$f(p)$ gives to all the hyperplanes in $J(p)$.  Either definition gives the new map $f_1$.

Note that there may be $p$ for which $J(p)=\emptyset$, and for these $f_1(p) = f(p)$.  Moreover, since the
operation moves all points of $P_0(H)$ across $H$ for each $H\in
\Trans(f,\HH)$, we have the following consequence:

\begin{lemma}{} \label{lem:separators after move}
We have $\HH_{f_1}  = \HH_f \ssm \Trans(f, \HH)$.
\end{lemma}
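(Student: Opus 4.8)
\textbf{Proof plan for Lemma \ref{lem:separators after move}.}

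The plan is to compare the set of hyperplanes separating $f_1(P)$ to the set separating $f(P)$, using the explicit description of $f_1$ as the ``flip'' of $f$ on the sets $J(p)$. First I would record the basic observation that $f_1$ differs from $f$ only along hyperplanes in $\Trans(f,\HH)$: by definition $f_1(p)$ is obtained from $f(p)$ by flipping the orientation on exactly the hyperplanes of $J(p)\subseteq \Trans(f,\HH)$, so for every $H\notin\Trans(f,\HH)$ the orientation $f_1(p)$ gives to $H$ equals that given by $f(p)$. Consequently, for $H\notin \Trans(f,\HH)$, the points $f_1(p),f_1(p')$ lie on the same side of $H$ if and only if $f(p),f(p')$ do; in particular $H$ separates $f_1(P)$ iff it separates $f(P)$. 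This gives $\HH_{f_1}\cap(\HH\ssm\Trans(f,\HH)) = \HH_f\cap(\HH\ssm\Trans(f,\HH)) = \HH_f\ssm\Trans(f,\HH)$, since $\Trans(f,\HH)\subseteq\HH_f$.

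It remains to show that no $H\in\Trans(f,\HH)$ separates $f_1(P)$; this is the step I expect to be the only real content. Fix such an $H$ and write $P = P_0(H)\sqcup P_1(H)$ as in the definition, with $f(P_0(H))$ adjacent to $H$ on side $s$. I would argue that after the move every point of $f_1(P)$ lies on the side of $H$ opposite to $s$. For $p\in P_0(H)$: since $f(p)$ is adjacent to $H$, $H\in J(p)$, so $f_1(p)$ is the corner of the maximal $J(p)$-cube at $f(p)$ diagonally opposite $f(p)$; flipping the orientation on $H$ (among others) moves $f(p)$ across $H$, so $f_1(p)$ is on the far side of $H$ from $s$. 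For $p\in P_1(H)$: here $f(p)$ already lies on the side of $H$ opposite $s$; I must check that $H\notin J(p)$ so that $f_1(p)$ stays on that side. Indeed $J(p)$ consists of transitional hyperplanes $H'$ with $p\in P_0(H')$, and $p\in P_1(H)$ means $p\notin P_0(H)$, so $H\notin J(p)$; hence $f_1(p)$ has the same orientation on $H$ as $f(p)$, i.e.\ lies on the side opposite $s$. Thus all of $f_1(P)$ lies strictly on one side of $H$, so $H\notin\HH_{f_1}$.

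Combining the two paragraphs: $\HH_{f_1}$ contains no hyperplane of $\Trans(f,\HH)$, and outside $\Trans(f,\HH)$ it coincides with $\HH_f$. Since $\Trans(f,\HH)\subseteq\HH_f\subseteq\HH$, this yields $\HH_{f_1} = \HH_f\ssm\Trans(f,\HH)$, as claimed. The main obstacle, such as it is, is bookkeeping the orientations carefully in the $p\in P_1(H)$ case — one has to make sure that flipping on $J(p)$ does not inadvertently flip the orientation on $H$, which is exactly the content of the disjointness $P = P_0(H)\sqcup P_1(H)$ together with the definition of $J(p)$.
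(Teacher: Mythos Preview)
Your proof is correct and follows the same approach as the paper, which treats the lemma as an immediate consequence of the definition of the move (the paper's entire justification is the sentence ``since the operation moves all points of $P_0(H)$ across $H$ for each $H\in \Trans(f,\HH)$''). You have simply written out carefully what the paper leaves to the reader, in particular the check that for $p\in P_1(H)$ the flip on $J(p)$ does not touch $H$.
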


Now we are ready to consider the {\em move sequence} of $(f,\HH)$. 
For $i\ge 0$ define
$$
(f_i,\HH) = \move^i(f,\HH).
$$

\begin{lemma}{lem:move maps stabilize}
The move sequence $\{f_i\}$ is eventually constant, with constant image all contained in one cube.
\end{lemma}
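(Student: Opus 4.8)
\textbf{Proof plan for Lemma \ref{lem:move maps stabilize}.}

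The plan is to show that the move sequence eventually stabilizes by tracking the strictly decreasing quantity $|\HH_{f_i}|$, and then invoking Lemma \ref{lem:trans hp} to identify the terminal state. First I would observe that by Lemma \ref{lem:separators after move} we have $\HH_{f_{i+1}} = \HH_{f_i} \ssm \Trans(f_i,\HH)$, so $\HH_{f_{i+1}} \subseteq \HH_{f_i}$ for all $i$. Since $X$ is a \textbf{finite} CAT(0) cube complex, $\HH = \HH_X$ is finite, hence $\HH_{f_0} = \HH_f$ is finite, and the nested sequence $\HH_{f_0} \supseteq \HH_{f_1} \supseteq \cdots$ of subsets of a finite set must eventually stabilize: there is some $i_0$ with $\HH_{f_i} = \HH_{f_{i_0}}$ for all $i \ge i_0$. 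In particular $\Trans(f_{i_0},\HH) = \HH_{f_{i_0}} \ssm \HH_{f_{i_0+1}} = \emptyset$.

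The next step is to show that once $\Trans(f_i,\HH) = \emptyset$, both the hyperplane set and the map itself are frozen. If $\Trans(f_i,\HH) = \emptyset$ then for every $p \in P$ the set $J(p) = \{H \in \Trans(f_i,\HH) : p \in P_0(H)\}$ is empty, so by the definition of the move, $f_{i+1}(p) = f_i(p)$ for every $p$; thus $f_{i+1} = f_i$ and inductively $f_j = f_i$ for all $j \ge i$. Combining with the previous paragraph, the move sequence is eventually constant, stabilizing at $f_{i_0}$ (or possibly earlier, at the first index where $\Trans$ becomes empty).

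Finally, I would identify the terminal image. At the stabilized map $f_{i_0}$ we have $\Trans(f_{i_0},\HH) = \emptyset$, so Lemma \ref{lem:trans hp} applies directly and gives that $f_{i_0}(P)$ is contained in a single cube of $X$. Since the map $f \mapsto f(P)$ has image exactly this set at all sufficiently large $i$, the constant image of the move sequence is contained in one cube, as claimed. I do not anticipate a serious obstacle here: the whole argument is a finiteness/monotonicity bookkeeping combined with the two already-established structural lemmas; the only point requiring a moment of care is confirming that $\Trans(f_i,\HH)=\emptyset$ is genuinely equivalent to the move acting trivially (which is immediate from the definitions of $J(p)$ and $f_{i+1}$) rather than merely being implied by it, so that stabilization of $\HH_{f_i}$ and stabilization of $f_i$ occur at the same index.
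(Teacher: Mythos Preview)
Your proposal is correct and follows essentially the same approach as the paper: use Lemma \ref{lem:separators after move} to see that $\HH_{f_i}$ strictly decreases until $\Trans(f_i,\HH)=\emptyset$, then invoke Lemma \ref{lem:trans hp}. One small remark: the argument only needs that $\HH_f$ is finite (which follows from $P$ being finite, since any two vertices in a CAT(0) cube complex are separated by finitely many hyperplanes), not that $X$ itself is finite.
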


\begin{proof} By Lemma \ref{lem:separators after move}, the set $\HH_{f_i}$ becomes strictly smaller for each $i$
  as long as $\Trans(f_i,\HH) \ne \emptyset$, so that after a finite number of
  steps we must have $\Trans(f_n,\HH)=\emptyset$, and thereafter $f_i$ are all the same, and their images are all contained in a cube by Lemma \ref{lem:trans hp}. 
\end{proof}

\realfig{fullmove}{An example of a move sequence terminating in all points lying in a
  single cube. Concentric circles around a vertex indicate, in this case, a point $p$ with
  $f_i(p) = f_{i+1}(p)$. }

Our main result about this sequence is its stability under the
operation $\delete_G$. This is part of Theorem \ref{thm:stable contraction}, which
we rephrase here in our new language:

\begin{proposition}{fellow travel}
  Let $G$ be a mutually crossing set in $\HH_f$.
  Let 
  $$(f_i,\HH) = \move^i(f,\HH)$$
  and
  $$(f'_i,\HH\ssm G) = \move^i(\delete_G(f,\HH))$$
  be the move sequences for $(f,\HH)$ and $\delete_G(f,\HH)$.
 Then for each $p\in P$,
$$
d_\infty(\res_{\HH\ssm G}\circ f_i(p),f'_i(p)) \le 1.
$$

\end{proposition}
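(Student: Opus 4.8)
The plan is to prove Proposition \ref{fellow travel} by induction on $i$, tracking how the transitional hyperplanes of $(f,\HH)$ and $(\delete_G(f,\HH))$ differ. I would first observe that deleting the mutually-crossing set $G$ from $\HH$ changes the position of a point only along the carriers of hyperplanes in $G$: concretely, for any map $e:P\to X^{(0)}$, the point $\res_{\HH\ssm G}(e(p))$ is obtained from $e(p)$ by collapsing the $G$-coordinates, so $d_\infty(\res_{\HH\ssm G}(e(p)), e(p))\le 1$ when we think of both as living in the common ambient complex via the embedding $X(\HH\ssm G)\hookrightarrow X(\HH)$ (using that $G$ is mutually crossing, all the $G$-coordinates of $e(p)$ are simultaneously flippable and hence span a single cube). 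The base case $i=0$ is then exactly this statement.

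For the inductive step, the key is to compare $\Trans(f_i,\HH)$ with $\Trans(f'_i,\HH\ssm G)$. First I would show that $\Trans(f'_i,\HH\ssm G)$ and $\Trans(f_i,\HH)\ssm G$ differ in a controlled way: a hyperplane $H\notin G$ is extremal for $f_i$ in $X(\HH)$ essentially iff it is extremal for $f'_i$ in $X(\HH\ssm G)$, because adjacency of a point to $H$ is detected by hyperplanes separating them, and deleting the mutually-crossing family $G$ can only remove from that separating set hyperplanes of $G$, each of which crosses $H$ only if... — here one must be a little careful, since a hyperplane of $G$ could a priori separate some $f_i(p)$ from $H$. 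The cleanest route is: (a) note $\HH_{f'_i} = \HH_{f_i}\ssm G$ plus possibly $O(1)$ discrepancy caused by the coarse relation $d_\infty(\res_{\HH\ssm G} f_i(p), f'_i(p))\le 1$; (b) show that the set of points of $P_0(H)$ for $H$ transitional is the same up to bounded perturbation, so that the new maps $f_{i+1}(p)$ and $f'_{i+1}(p)$ are each obtained by flipping orientations on sets of hyperplanes $J_i(p)$ and $J'_i(p)$ whose symmetric difference is contained in $G$ together with $O(1)$ extra hyperplanes near $f_i(p)$. Since a single move changes a point by at most $1$ in the $d_\infty$ metric (each $J(p)$ is mutually crossing by Lemma \ref{lem:adjacent to hyperplane}, so the point moves across one diagonal of a cube), and since the two moves differ only in crossing a bounded number of extra mutually-crossing hyperplanes, one gets $d_\infty(\res_{\HH\ssm G}\circ f_{i+1}(p), f'_{i+1}(p)) \le d_\infty(\res_{\HH\ssm G}\circ f_i(p), f'_i(p)) + (\text{bounded})$, and the point is to see this bounded quantity is actually controlled so the distance stays $\le 1$ for all $i$ rather than accumulating.

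The honest way to keep the constant at exactly $1$, rather than letting it grow, is to prove a stronger invariant by induction, namely that $\res_{\HH\ssm G}\circ f_i$ and $f'_i$ are related by an orientation flip along a subset of $G$ together with the constraint that they differ on at most the hyperplanes of $G$ that are "already collapsed"; in other words, for each $p$, $f'_i(p)$ and $\res_{\HH\ssm G}(f_i(p))$ determine the same orientation on all of $\HH\ssm G$ except possibly those few hyperplanes that cross a current transitional hyperplane. Then in $X(\HH\ssm G)$ these lie in a common cube and the $d_\infty$ distance is $\le 1$. I would set up the bookkeeping so that at each step the set of hyperplanes on which the two agree is nondecreasing after accounting for the deletion — this mirrors Lemma \ref{lem:separators after move} ($\HH_{f_{i+1}} = \HH_{f_i}\ssm \Trans(f_i,\HH)$), so both move sequences are "shrinking their separating sets in lockstep modulo $G$."

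The main obstacle I anticipate is precisely controlling the interaction between $G$ and the \emph{transitional} (as opposed to merely extremal) hyperplanes: a hyperplane can fail to be transitional for $f_i$ but become transitional for $f'_i$ after deletion, because collapsing $G$ can change whether \emph{every} point of $f(P)$ on one side is adjacent to $H$. Handling this requires showing that such a change affects only hyperplanes crossing members of $G$ (hence living in a bounded-dimensional cube with them) and that the consequent discrepancy in $P_0(H)$ is confined to points $p$ whose images already sit in that cube — so the extra flips $f'_i$ performs relative to $\res_{\HH\ssm G}\circ f_i$ are all within a single cube and contribute at most $1$ to $d_\infty$. Once this localization is established, the induction closes and gives the stated bound; combined with the earlier bullets of Theorem \ref{thm:stable contraction} (the geodesic property \ref{item:contraction_geod} and step-size bound \ref{item:step_size}, both immediate from the move construction and Lemma \ref{lem:adjacent to hyperplane}) and Lemma \ref{lem:move maps stabilize} for termination, this yields $|n_f - n_{\res_{\HH\ssm G}\circ f}|\le 1$ as well, since the two shrinking processes terminate within one step of each other.
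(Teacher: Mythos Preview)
Your approach---direct induction on $i$ with an invariant bounding $d_\infty(\res_{\HH\ssm G}\circ f_i(p),f'_i(p))$---differs from the paper's, and the gap you yourself flag (``rather than letting it grow'') is real and not closed by what you wrote. The invariant you propose is not well-posed: $\res_{\HH\ssm G}\circ f_i$ and $f'_i$ are both orientations on $\HH\ssm G$, so they cannot ``differ on hyperplanes of $G$''; and the alternative formulation ``except possibly those hyperplanes that cross a current transitional hyperplane'' does not name a mutually crossing family, so it does not by itself give $d_\infty\le 1$. Without a precise invariant, the step-by-step comparison of $\Trans(f_i,\HH)$ with $\Trans(f'_i,\HH\ssm G)$ can produce, at successive steps, non-crossing discrepancies that accumulate.

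The paper does not induct on $i$. Instead it proves, for each fixed $i$ and $p$, that the set of $H\in\HH_f\ssm G$ separating $\res_{\HH\ssm G}(f_i(p))$ from $f'_i(p)$ is mutually crossing---exactly your target---but by a contradiction argument that hinges on the \emph{timing} function $e_H(f,\HH)$ (the first step at which $H$ becomes extremal). The key technical input is Lemma~\ref{time shift}: for $H\notin G$, $e_H(f,\HH)$ and $e_H(\delete_G(f,\HH))$ differ by at most $1$. This is itself proven by induction on $|\HH_f|$ (not on $i$), using the commutation diagram of Lemma~\ref{long commute} to pass from $(f,\HH)$ to $\move(f,\HH)$ while replacing $G$ by a new mutually crossing set $G'$. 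With the time-shift bound in hand, two non-crossing separating hyperplanes $H_1,H_2$ force inconsistent inequalities on their extremality times, giving the contradiction. This timing control is the missing idea in your sketch; your comparison of transitional sets at a single step does not see it.
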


The proposition is in fact a generalization of what we need, since it deals with a mutually crossing set.

\realfig{move-delete}{An illustration of Proposition \ref{fellow travel}, where $P$ has
  two points.}

\subsection{Proof of stability of move sequences}
We begin by studying the structure of the extremal and transitional
hyperplanes for a pair $(f,\HH)$, and the way in which they are
affected by hyperplane deletions.

\begin{lemma}{Ext Trans structure}
Every $J\in \Ext(f,\HH) \ssm \Trans(f,\HH)$ crosses every
hyperplane in $\HH_f\ssm \{J\}$.
\end{lemma}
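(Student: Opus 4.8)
\textbf{Proof plan for Lemma \ref{Ext Trans structure}.}

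The plan is to argue by contradiction: suppose $J \in \Ext(f,\HH) \ssm \Trans(f,\HH)$ but there is some $J' \in \HH_f \ssm \{J\}$ that does not cross $J$. First I would record the key structural fact about $J$: since $J$ is extremal but not transitional, \emph{every} point of $f(P)$ is adjacent to $J$ (this is noted in the text just before Lemma \ref{lem:trans hp}); in particular no hyperplane of $\HH_f$ separates any $f(p)$ from $J$. Now because $J'$ does not cross $J$, one of the two halfspaces bounded by $J'$ — say the one \emph{not} containing $J$ — is a halfspace that is disjoint from $J$; call it $h'$. I would then observe that any $f(p)$ lying in $h'$ is separated from $J$ by $J'$, which contradicts adjacency of $f(p)$ to $J$. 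Hence no point of $f(P)$ lies in $h'$, i.e.\ all of $f(P)$ lies in the other halfspace of $J'$.

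But this says precisely that $J'$ does not separate $f(P)$, contradicting $J' \in \HH_f$. So every $J' \in \HH_f \ssm \{J\}$ must cross $J$, which is the claim.

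The one point requiring a little care — and the only real obstacle — is making the ``halfspace disjoint from $J$'' step precise in the cubical setting: for two hyperplanes $J, J'$ that do not cross, exactly one of the four halfspace-pairs $(\OL{J} \text{ or } \OR J) \cap (\OL{J'} \text{ or } \OR{J'})$ is empty, and the corresponding halfspace of $J'$ (the one on the ``far side'' of $J'$ from $J$) contains no point adjacent to $J$, since $J'$ itself separates that halfspace from $J$. This is a standard fact about CAT(0) cube complexes (e.g.\ from Sageev's notes \cite{Sageev:pcmi}), and I would either cite it or spell it out in one line using the description of hyperplanes via edge-equivalence classes. Everything else is just chasing the definitions of adjacency, extremal, and transitional, together with the definition $\HH_f = \{H : H \text{ separates } f(P)\}$.
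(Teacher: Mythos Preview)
Your proof is correct and follows essentially the same approach as the paper's own proof. The paper argues more tersely: since $J$ is extremal but not transitional, $f(P)$ is adjacent to $J$ on both sides, so no other hyperplane can separate any $f(p)$ from $J$, hence every $H\in\HH_f\ssm\{J\}$ crosses $J$; your argument just unpacks this last implication via an explicit contradiction.
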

\begin{proof}
Since $J$ is extremal but not transitional, $f(P)$ is
adjacent to it on both sides (see Figure \ref{fig:extnottrans}). This means that no other hyperplane
can separate any $f(p)$ from $J$, and this implies that every $H\in \HH_f\ssm\{J\}$
crosses $J$.
\end{proof}

The next lemma explains how the extremal and transitional hyperplanes
change after a deletion step:

\begin{lemma}{Ext Trans Delete}
Let $G$ be a  mutually crossing hyperplane set in $\HH_f$. Then
\begin{equation}\label{ext to ext}
\Ext(f,\HH) \ssm G  \  \subset \ \Ext(\delete_G(f,\HH)),
\end{equation}
and
\begin{equation}\label{same difference}
\Ext(\delete_G(f,\HH)) \ssm \Ext(f,\HH) = \Trans(\delete_G(f,\HH)) \ssm \Trans(f,\HH) 
\end{equation}
Moreover, if $G \intersect \Ext(f,\HH) = \emptyset$, then
\begin{equation}\label{ext equality}
  \Ext(f,\HH) = \Ext(\delete_G(f,\HH))
\end{equation}
and
\begin{equation}\label{trans equality}
  \Trans(f,\HH) = \Trans(\delete_G(f,\HH)).
\end{equation}
\end{lemma}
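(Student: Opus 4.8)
The plan is to phrase everything in terms of a single notion. Call a side $s$ of a hyperplane $H\in\HH_f$ \emph{good} (for the pair $(f,\HH)$) if every point of $f(P)$ lying on side $s$ is adjacent to $H$, and \emph{bad} otherwise. Since $H\in\HH_f$ has exactly two sides, each carrying a point of $f(P)$, we have $H\in\Ext(f,\HH)$ if and only if $H$ has a good side, and $H\in\Trans(f,\HH)$ if and only if $H$ has one good side and one bad side. First I would record three elementary observations: (i) $\HH_{\res_{\HH\ssm G}(f)}=\HH_f\ssm G$, since $\res_{\HH\ssm G}$ only forgets the orientations on $G\subseteq\HH_f$; (ii) if a hyperplane separates a vertex $v$ from $H_0$, then it separates $v$ from everything that $H_0$ separates $v$ from --- the usual halfspace-convexity argument, since otherwise $v$ and that object would lie in a common halfspace of the first hyperplane while $H_0$ lies in the complementary one, which is impossible; (iii) deleting hyperplanes only destroys separations, so a good side stays good under $\delete_G$, and any hyperplane of $\HH\ssm G$ that separated $v$ from $H$ still does so afterwards.

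Given these, \eqref{ext to ext} is immediate from (i) and (iii): a good side of $J\in\Ext(f,\HH)\ssm G$ remains good after deletion. For \eqref{same difference} I would prove the two inclusions separately. For ``$\supseteq$'': if $J\in\Trans(\delete_G(f,\HH))$ lay in $\Ext(f,\HH)\ssm\Trans(f,\HH)$, then by Lemma \ref{Ext Trans structure} it would cross everything in $\HH_f\ssm\{J\}$, hence everything in $(\HH_f\ssm G)\ssm\{J\}$, making it extremal-but-not-transitional after deletion --- a contradiction; so any $J$ in the right-hand side of \eqref{same difference} is not extremal for $(f,\HH)$, which gives ``$\supseteq$''. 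The inclusion ``$\subseteq$'' is where the mutual-crossing hypothesis on $G$ enters: if $J\in\Ext(\delete_G(f,\HH))\ssm\Ext(f,\HH)$ were extremal-but-not-transitional after deletion, then again by Lemma \ref{Ext Trans structure} both sides of $J$ would be good after deletion but bad before; on each side a point of $f(P)$ witnessing badness is separated from $J$ by a hyperplane that must lie in $G$ (else it would survive and spoil goodness after deletion), and that hyperplane lies in that side's halfspace of $J$ --- so we would obtain two elements of $G$ lying in the two disjoint halfspaces of $J$, which cannot cross, contradicting that $G$ is mutually crossing. Hence $J$ is transitional after deletion, and visibly not before, which is ``$\subseteq$''.

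The main work, and the step I expect to be the real obstacle, is \eqref{ext equality} and \eqref{trans equality}, i.e.\ the case $G\cap\Ext(f,\HH)=\emptyset$. I would prove the key claim: for every $J\in\HH_f\ssm G$, each side of $J$ is good for $(f,\HH)$ if and only if it is good for $\delete_G(f,\HH)$. The forward implication is (iii); for the converse I argue the contrapositive. Suppose a side $s$ of $J$ is bad for $(f,\HH)$, witnessed by $f(p)$ on side $s$ not adjacent to $J$ in $X$, and let $S_p\ne\emptyset$ be the set of hyperplanes of $\HH$ separating $f(p)$ from $J$. If $S_p\not\subseteq G$, a surviving member of $S_p$ witnesses that $s$ is bad for $\delete_G(f,\HH)$ and we are done. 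If $S_p\subseteq G$, then $S_p$ is mutually crossing, so by (ii) no hyperplane separates $f(p)$ from any $H_0\in S_p$ (such a hyperplane would lie in $S_p$ and hence cross $H_0$, which is impossible); that is, $f(p)$ is adjacent to each $H_0\in S_p$. Now fix $H_0\in S_p\subseteq G$; since $H_0\notin\Ext(f,\HH)$ it is not extremal, so its side containing $f(p)$ --- which lies inside the halfspace $s$ of $J$ --- is bad for $(f,\HH)$: some $f(r)$ on that side is not adjacent to $H_0$ in $X$, witnessed by a hyperplane $K_0$. Then $K_0\notin G$, because $K_0$ does not cross $H_0$ and so cannot be a second member of the mutually crossing set $G$; and $H_0$ separates $f(r)$ from $J$, so by (ii) $K_0$ separates $f(r)$ from $J$. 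Since $K_0$ survives the deletion and $f(r)$ lies on side $s$ of $J$, the side $s$ is bad for $\delete_G(f,\HH)$, proving the claim. Finally \eqref{ext equality} and \eqref{trans equality} follow: by (i) together with $G\cap\Ext(f,\HH)=\emptyset$, both extremal sets and both transitional sets are subsets of the common index set $\HH_f\ssm G$, and on that set membership in $\Ext$ (respectively $\Trans$) is determined by the good/bad pattern of the two sides, which the claim shows is unchanged by $\delete_G$.
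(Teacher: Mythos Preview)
Your proof is correct and follows essentially the same approach as the paper's: both arguments analyze how the adjacency pattern on the two sides of a hyperplane changes under deletion of $G$, exploiting that $G$ is mutually crossing to prevent $G$ from affecting both sides of a given $J$. Your good/bad-side framework is a clean way to package this, and your unified claim for \eqref{ext equality} and \eqref{trans equality} (that the good/bad pattern of each $J\in\HH_f\ssm G$ is unchanged) is slightly stronger than what the paper proves directly; the paper instead reuses its analysis from \eqref{same difference} to observe that any $G$-hyperplane responsible for turning a bad side good would itself be extremal, reaching the contradiction one step earlier. These are contrapositives of one another. One minor remark: in your ``$\supseteq$'' direction for \eqref{same difference}, invoking Lemma~\ref{Ext Trans structure} is correct but unnecessary --- you could simply note that $J\in\Ext(f,\HH)\ssm\Trans(f,\HH)$ means both sides are good, and your observation (iii) already gives that both sides stay good after deletion.
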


\begin{proof}
The inclusion \eqref{ext to ext} is clear from the definitions.

For \eqref{same difference}, if $J\in \Ext(\delete_G(f,\HH)) \ssm \Ext(f,\HH)$, then $f(P)$ is not
adjacent to $J$ on either side, but on at least one side the only
hyperplanes separating $J$ from $f(P)$ are in $G$. In fact this
happens on exactly one side  since $G$
is a mutually crossing set and its members cannot be separated by
$J$. In particular this means $J\in \Trans(\delete_G(f,\HH))$, and
therefore $J\in\Trans(\delete_G(f,\HH)) \ssm \Trans(f,\HH)$ since
$\Trans(f,\HH) \subset \Ext(f,\HH)$.
This situation is indicated in Figure \ref{Ext-Trans}(a).

Conversely if $J\in \Trans(\delete_G(f,\HH)) \ssm \Trans(f,\HH)$, then either
$f(P)$ is not adjacent to $J$ on either side, in which case we are in
the same situation as above, or $f(P)$ is adjacent to $J$ on both
sides. But in the latter case this adjacency remains true after
deletion of $G$, which contradicts
$J\in\Trans(\delete_G(f,\HH))$.

\realfig{Ext-Trans}{Changes caused by deleting $G$. In (a), $J$ is not
  in $\Ext(f,\HH)$ but is in $\Trans(\delete_G(f,\HH))$. In (b), $J$
  is in $\Ext\ssm\Trans$ both before and after. In (c), $J$ is in
  $\Trans(f,\HH)$ but not in $\Trans(\delete_G(f,\HH))$. }

We conclude that 
$J\in \Ext(\delete_G(f,\HH))\ssm \Ext(f,\HH)$
if and only if 
$J\in \Trans(\delete_G(f,\HH)) \ssm \Trans(f,\HH)$, which gives
(\ref{same difference}).

In the description of
$J\in \Ext(\delete_G(f,\HH))\ssm \Ext(f,\HH)$, we note that the
hyperplanes of $G$ separating $J$ from $f(P)$ cannot themselves be
separated from $f(P)$ (on the side not containing $J$) by any other
hyperplanes, because this would contradict
$J\in\Ext(\delete_G(f,\HH))$. Thus those hyperplanes are themselves
extremal. We conclude that, if $G\intersect \Ext(f,\HH) = \emptyset$,
then 
$ \Ext(\delete_G(f,\HH))\ssm \Ext(f,\HH) = \emptyset$, giving
(\ref{ext equality}).

 Finally to show (\ref{trans equality}) when $G\intersect \Ext(f,\HH) =
 \emptyset$, note first that (\ref{same difference}) and (\ref{ext
  equality}) imply that $\Trans(\delete_G(f,\HH)) \subset
 \Trans(f,\HH)$. Now if $J\in \Trans(f,\HH) \ssm \Trans(\delete_G(f,\HH))$, then on the side of $J$ where
 $f(P)$ is not adjacent there must only be hyperplanes of $G$
 separating $J$ from $f(P)$, whose deletion makes $f(P)$ adjacent on
 that side (see Figure \ref{Ext-Trans}(c)). But this contradicts the assumption that $G \cap \Ext(f,\HH) = \emptyset$.
 \end{proof}

We can now obtain the following lemma, which in the simplest case
shows that moves and deletions commute.
\begin{lemma}{easy commute}
 In the notation of Proposition \ref{fellow travel}, if $G \cap \Ext(f, \HH) = \emptyset$, then the following
 diagram commutes:
$$  
\begin{tikzcd}
(f,\HH) \arrow[r,"\delete_G"] \arrow[d,"\move"] &  (f',\HH\ssm G) \arrow[d,"\move"]
\\
(f_1,\HH) \arrow[r,"\delete_{G}"] & (f_1',\HH\ssm G)
\end{tikzcd}
$$
 
\end{lemma}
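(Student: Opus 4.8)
The plan is to exploit the fact that the move operation $\move$ is completely determined by two pieces of data attached to $(f,\HH)$: the set $\Trans(f,\HH)$ of transitional hyperplanes, and, for each $p\in P$, the mutually crossing family $J(p)=\{H\in\Trans(f,\HH):p\in P_0(H)\}$ — recall that $f_1(p)$ is obtained from $f(p)$ simply by flipping its orientation on exactly the hyperplanes of $J(p)$. So commutativity of the square reduces to the assertion $\res_{\HH\ssm G}(f_1)=f'_1$, and this will follow once we verify that passing from $(f,\HH)$ to $\delete_G(f,\HH)$ changes neither $\Trans$ nor any of the families $J(p)$. Here the hypothesis $G\cap\Ext(f,\HH)=\emptyset$ together with $\Trans\subset\Ext$ is used already at the outset: no hyperplane of $G$ is transitional, so $f$ and $\res_{\HH\ssm G}(f)$ assign the same orientation to every transitional hyperplane, and (since $\res_{\HH\ssm G}$ merely restricts orientations) each point of $P$ lies on the same side of each non-deleted hyperplane before and after deletion.

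First I would invoke Lemma \ref{Ext Trans Delete}, equation (\ref{trans equality}): from $G\cap\Ext(f,\HH)=\emptyset$ we get $\Trans(f,\HH)=\Trans(\delete_G(f,\HH))=:\TT$, and moreover $\TT\cap G=\emptyset$ (as $\TT\subseteq\Ext(f,\HH)$), so $\TT\subseteq\HH\ssm G$. Next I would show that for every $H\in\TT$ the labeled partition $P=P_0(H)\sqcup P_1(H)$ is the same whether computed from $f$ or from $\res_{\HH\ssm G}(f)$. The underlying partition of $P$ into the two sides of $H$ is literally unchanged, since $H\notin G$ and deletion does not move points across $H$; so the only issue is which side is labeled $s_0$ (the adjacent side). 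Deleting hyperplanes can only remove separators, hence preserves adjacency of points to $H$; therefore the side of $H$ on which all of $f(P)$ is adjacent remains all-adjacent after deletion. Since $H$ is still transitional afterwards, it still has a \emph{unique} non-adjacent side, which must then be the opposite side — it cannot have swapped. Hence $P_0(H)$ is unchanged, and consequently each $J(p)=\{H\in\TT:p\in P_0(H)\}$ is unchanged.

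Finally, $f_1$ and $f'_1$ are obtained by flipping, respectively, the orientations of $f$ and of $\res_{\HH\ssm G}(f)$ on the same mutually crossing family $J(p)\subseteq\TT\subseteq\HH\ssm G$, for each $p\in P$. Because this flipping takes place entirely among hyperplanes that survive the deletion, it commutes with the restriction map $\res_{\HH\ssm G}$, so $\res_{\HH\ssm G}(f_1)(p)$ equals $\res_{\HH\ssm G}(f)(p)$ flipped on $J(p)$, which is precisely $f'_1(p)$. This gives $\res_{\HH\ssm G}(f_1)=f'_1$, i.e. $\delete_G(\move(f,\HH))=\move(\delete_G(f,\HH))$, which is the asserted commutativity. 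I expect the one point needing a little care to be the bookkeeping around the labeling $P_0$ versus $P_1$ — confirming the adjacent side cannot swap under deletion — since everything else is a direct unwinding of the definitions of $\move$ and $\delete_G$ together with the already-established Lemma \ref{Ext Trans Delete}.
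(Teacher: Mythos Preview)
Your proof is correct and follows essentially the same approach as the paper: invoke Lemma \ref{Ext Trans Delete} (equation (\ref{trans equality})) to get $\Trans(f,\HH)=\Trans(\delete_G(f,\HH))$, then observe that for each $H\in\Trans$ the adjacent side $P_0(H)$ is unchanged by the deletion, so the move operation flips exactly the same hyperplanes for each $p$. Your write-up is more explicit about why the adjacent-side labeling cannot swap, whereas the paper simply asserts that ``the deletion of $G$ does not affect this''; the substance is the same.
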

\begin{proof}
By Lemma \ref{Ext Trans Delete}, we have
$    \Trans(f,\HH) = \Trans(\delete_G(f,\HH)).$ This means
that the $\move$ operation
on both $(f,\HH)$ and $(f',\HH\ssm G)$ affects exactly the same set of
hyperplanes, and in exactly the same way. That is,
for $J\in \Trans(f,\HH)$, the subset $P_0\subset P$ whose $f$-image is
on the adjacent side of $J$ is also the subset whose $f'$-image is on
the adjacent side, since the deletion of $G$  does not affect this.
The lemma follows. 
\end{proof}

Now we consider the general situation, where some hyperplanes of $G$
may be in $\Ext(f,\HH)$. 

\begin{lemma}{long commute}
  Let $G$ be a mutually crossing set in $\HH$ and define

$$
  G' = \left(G \union \Trans(\delete_G(f,\HH))\right) \ssm \Trans(f,\HH)
$$
and
$$
K = \Trans(f,\HH) \ssm (G \union \Trans(\delete_G(f,\HH))).
$$
Then $G'$ is a mutually crossing set,  every hyperplane $H\in K$ crosses all hyperplanes of
 $\HH_f\ssm (G\cup \{H\})$, and there exists a map $g:P \rightarrow X\left(\HH_f \ssm (G \cup G' \cup K)\right)$ so that the following diagram commutes:
 $$
 \begin{tikzcd}
   (f,\HH) \arrow[rr,"\delete_G"] \arrow[d,"\move"] & & (f',\HH\ssm G) \arrow[d,"\move"]
   \\
   (f_1,\HH) \arrow[r,"\delete_{G'}"] &     (f_1|_{\HH\ssm G'},
   \HH\ssm G') \arrow[bend right=30,ddr,"\Trim"]
       & (f_1',\HH\ssm G) \arrow[d,"\delete_K"] \\
& &     (f_1'|_{\HH\ssm (G\union K)}, \HH\ssm (G\union K))
   \arrow[d,"\Trim" ] \\
   & & (g,\HH_f \ssm (G\union G' \union K) )
 \end{tikzcd}
 $$
\end{lemma}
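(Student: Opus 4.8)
The plan is to push the hyperplane bookkeeping of Lemma \ref{Ext Trans Delete} one step further. Throughout, abbreviate $B=\Trans(\delete_G(f,\HH))$ and $C=\Trans(f,\HH)$, so that $G'=(G\cup B)\ssm C$ and $K=C\ssm(G\cup B)$. Since $B\subseteq\HH_{\delete_G(f,\HH)}=\HH_f\ssm G$ we have $B\subseteq\HH_f$ and $B\cap G=\emptyset$, while $C\subseteq\HH_f$; hence $G'$ is the disjoint union of $G\ssm C\subseteq G$ and $B\ssm C$, and by \eqref{same difference} we have $B\ssm C=\Ext(\delete_G(f,\HH))\ssm\Ext(f,\HH)$.

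First I would prove $G'$ is mutually crossing (which, together with the claim about $K$, is exactly what legitimizes the maps $\delete_{G'}$ and $\delete_K$ appearing in the diagram). The key structural input, extracted from the proof of Lemma \ref{Ext Trans Delete}, is: for each $L\in B\ssm C$ there is a single side of $L$ on which every hyperplane separating $f(P)$ from $L$ lies in $G$ (it is a single side because $L$ cannot separate two members of the mutually crossing set $G$), at least one such separator exists (as $L\notin\Ext(f,\HH)$), and any such separator is itself in $\Ext(f,\HH)$ (otherwise a further hyperplane between it and $f(P)$ would separate $f(P)$ from $L$, hence lie in $G$, yet be disjoint from it — contradicting that $G$ is mutually crossing). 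Now suppose $J\ne J'$ lie in $G'$ but are disjoint, with complementary regions $R_J$ (side of $J$ away from $J'$), $R_0$, $R_{J'}$; since $J,J'\in\HH_f$, both $R_J$ and $R_{J'}$ meet $f(P)$. If $J,J'\in G\ssm C$ they cross because $G$ is mutually crossing. If $J,J'\in B\ssm C$: a point of $f(P)\cap R_{J'}$ has $J'$ separating it from $J$, so if the distinguished side of $J$ were the one containing $J'$ we would get $J'\in G$, impossible; hence the distinguished sides of $J$ and $J'$ are $R_J$ and $R_{J'}$, their $G$-separators lie strictly inside $R_J$ and $R_{J'}$, are therefore distinct and disjoint, and both lie in $G$ — contradicting mutual crossing of $G$. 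If $J\in G\ssm C$ and $J'\in B\ssm C$: a point of $f(P)\cap R_J$ is separated from $J'$ by $J$, so $J$ is a $G$-separator of $J'$ from $f(P)$, whence $J\in\Ext(f,\HH)$ by the structural fact; since $J\notin C=\Trans(f,\HH)$, Lemma \ref{Ext Trans structure} forces $J$ to cross every hyperplane of $\HH_f\ssm\{J\}$, in particular $J'$ — a contradiction.

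Next, for $K=C\ssm(G\cup B)$: any $H\in K$ lies in $\Trans(f,\HH)\subseteq\Ext(f,\HH)$ and not in $G$, hence lies in $\Ext(\delete_G(f,\HH))$ by \eqref{ext to ext}, but not in $B=\Trans(\delete_G(f,\HH))$; so $H\in\Ext(\delete_G(f,\HH))\ssm\Trans(\delete_G(f,\HH))$, and Lemma \ref{Ext Trans structure} applied to the pair $\delete_G(f,\HH)$ gives that $H$ crosses every hyperplane of $\HH_{\delete_G(f,\HH)}\ssm\{H\}=\HH_f\ssm(G\cup\{H\})$, which is the stated conclusion; in particular $K$ is mutually crossing.

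Finally, for the commutativity of the diagram I would track the separating set along each arrow, using that each $\move$ removes exactly the transitional hyperplanes from the separating set (Lemma \ref{lem:separators after move}) and that deletions only shrink separating sets. One finds that both bottom composites, just before the last $\Trim$, have separating set $\HH_f\ssm(G\cup B\cup C)$, which is contained in $\HH_f\ssm(G\cup G'\cup K)$; so each $\Trim$ lands in a subcomplex of $X(\HH_f\ssm(G\cup G'\cup K))$, and $g$ is defined to be this common composite. To see the two composites agree as maps $P\to X(\HH_f\ssm(G\cup G'\cup K))$, observe that a hyperplane $H$ surviving to the bottom lies in neither $\Trans(f,\HH)$ nor $\Trans(\delete_G(f,\HH))$, so neither $\move$ (on $f$) nor $\move$ (on $\delete_G(f,\HH)$) flips any point's orientation on $H$; every other arrow merely restricts orientations; hence for each $p\in P$ the orientation of $g(p)$ on $H$ equals the orientation of $f(p)$ on $H$, computed identically along both branches, so the square commutes on the nose. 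The main obstacle is the mutual-crossing verification for $G'$ — specifically the mixed case $J\in G\ssm C$, $J'\in B\ssm C$ — whose clean resolution rests on recognizing that a member of $G$ separating $f(P)$ from a newly-extremal hyperplane must itself be extremal, which is precisely where Lemma \ref{Ext Trans structure} re-enters.
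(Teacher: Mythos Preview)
Your proof is correct and follows essentially the same approach as the paper: the three-case analysis for $G'$ (both in $G$, both newly extremal, mixed) matches the paper's, your treatment of $K$ via Lemma~\ref{Ext Trans structure} applied to $\delete_G(f,\HH)$ is identical, and your commutativity argument by directly tracking that surviving hyperplanes $H\notin B\cup C$ are never flipped by either $\move$ is the concrete unpacking of the paper's identities $\Trim\circ\move=\delete_{\Trans}\circ\Trim$ and $\Trim\circ\delete_G=\delete_G\circ\Trim$. One small note: your parenthetical justification for uniqueness of the distinguished side of $L\in B\ssm C$ is not quite the right reason (uniqueness comes from $L\in\Trans(\delete_G)$, not from $G$ being mutually crossing), and both you and the paper are slightly loose about the codomain, since the actual trimmed hyperplane set is $\HH_f\ssm(G\cup B\cup C)$, which can be strictly smaller than $\HH_f\ssm(G\cup G'\cup K)$ --- but neither issue affects correctness.
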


\begin{proof}
First we prove that $G'=\{G'_1,\ldots,G'_k\}$ is a mutually crossing set. Suppose that
$G'_i$ and $G'_j$  do not cross.
Then they cannot both be in $G$ by hypothesis.

\realfig{Gprime}{The contradictions arising from hyperplanes in $G'$
  not being mutually crossing.}

Assume first that $G'_i,G'_j$ are in 
$\Trans(\delete_G(f,\HH))\ssm\Trans(f,\HH)$, which is the same as
$\Ext(\delete_G(f,\HH))\ssm\Ext(f,\HH)$ by Lemma \ref{Ext Trans
  Delete}.
Let $s_i$ be the side of $G'_i$ that contains $G'_j$, and
define $s_j$ similarly (Figure \ref{Gprime}(a)). Then $f'(P)$ cannot be adjacent to $G'_i$ on
the side $s_i$, since part of $f'(P)$ is separated from $G'_i$ by $G'_j$. Therefore,
since $G'_i \in \Ext(\delete_G(f,\HH))$, $f'(P)$ must be adjacent to
$G'_i$ on the opposite side, $\bar s_i$. Similarly 
$f'(P)$ must be adjacent to
$G'_j$ on the opposite side, $\bar s_j$.
Note that $\bar s_i$ and $\bar s_j$ are disjoint. On the other
hand since $G'_i$ and $G'_j$ are not in $\Ext(f,\HH)$, there must be
some $G_k\in G$ in $\bar s_i$ separating $G'_i$ from some point of $f(P)$,
and similarly 
$G_l\in G$ in $\bar s_j$ separating $G'_j$ from some point of $f(P)$. But
this is not possible since $G_k$ crosses $G_l$.

Now assume $G'_i\in \Ext(\delete_G(f,\HH))\ssm \Ext(f,\HH)$ and $G'_j\in G\ssm\Trans(f,\HH)$
 (Figure \ref{Gprime}(b)). If $G'_j\in \Ext(f,\HH) \ssm \Trans(f,\HH)$
then it crosses $G'_i$ by Lemma \ref{Ext Trans structure}, so
we may assume $G'_j\notin \Ext(f,\HH)$.
Define $s_i$ as before. Now since $G'_j\notin \Ext(f,\HH)$, on the
side of $G'_j$ contained in $s_i$, there must be another hyperplane $H$
separating $G'_j$ from a point of $f(P)$. This $H$ cannot be in $G$
since $G'_j$ crosses $H$, so it is not deleted and hence $f'(P)$ is
not adjacent to $G'_i$ on the $s_i$ side. As above there must
therefore be a $G_k\in G$ on the $\bar s_i $ side. This $G_k$ cannot
cross $G'_j$, again a contradiction.

To see that each hyperplane of $K$ crosses all other hyperplanes of $\HH_f\ssm G$,
note that $K$ is contained in $ \Ext(\delete_G(f,\HH)) \ssm
\Trans(\delete_G(f,\HH))$ by part (\ref{ext to ext}) of Lemma \ref{Ext Trans Delete},
and then use Lemma \ref{Ext Trans structure}.

To finish the argument, we claim that all we have to check
is that the set of hyperplanes that are either transitional for a
$\move$ operation or deleted along each side
of the diagram is the same. This is because of the following relations
which follow directly from the definitions: 
$$
\Trim\circ\move (f,\HH) = \delete_{\Trans(f,\HH)} \circ \Trim (f,\HH)
$$
and
$$
\Trim\circ\delete_G = \delete_G\circ\Trim. 
$$
With these relations, we can simplify each side of the diagram to a single $\Trim
\circ \delete_{V}$ where $V$ is the union of hyperplanes from all the
deletion and Move steps on that side. 

Thus, comparing the left side of the diagram with the top arrow and
right side, it remains to check that
\begin{equation}\label{hyperplane sets}
\Trans(f,\HH) \union G' \stackrel{?}{=} G \union \Trans(\delete_G(f,\HH)) \union K.
\end{equation}
But, using the definitions of $G'$ and $K$, we see that both sides are
equal to $G \union \Trans(\delete_G(f,\HH)) \union \Trans(f,\HH)$.
\end{proof}

Every hyperplane $H \in \HH_f$ becomes extremal at some point along the move sequence.  We want to understand how deletions affect when this occurs.  Note that when a hyperplane $H \in \HH_f$ becomes extremal, it need not become transitional, and what happens can change with the deletion of a nearby hyperplane.

For a hyperplane $H\in \HH_f$, define
$$
e_H(f,\HH)
$$
to be the first index $i$ such that $H\in \Ext(f_i,\HH)$.

\begin{lemma}{time shift}
  For any mutually crossing set $G\subset \HH_f$, and any
  $H\in \HH_f\ssm G$, we have
\begin{equation}\label{time shift eq}
  e_H(f,\HH) = e_H(\delete_G(f,\HH)) + \delta
\end{equation}
  for some $\delta \in \{0,1\}$. 
\end{lemma}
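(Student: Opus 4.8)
\textbf{Proof plan for Lemma \ref{time shift}.}

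The plan is to compare the two move sequences $(f_i,\HH)$ and $(f'_i,\HH\ssm G)$ step by step, using the commutative diagram of Lemma \ref{long commute} and the structural results of Lemma \ref{Ext Trans Delete}, and to track when the given hyperplane $H$ first becomes extremal on each side. Set $(f_0,\HH)=(f,\HH)$ and $(f'_0,\HH\ssm G)=\delete_G(f,\HH)$, and write $G_i$, $K_i$ for the mutually crossing sets playing the role of $G$, $K$ in Lemma \ref{long commute} at stage $i$ (so $G_0 = G$). The key point I would establish by induction on $i$ is that, after $i$ moves, the pair $(f'_i,\HH\ssm G)$ differs from $\delete_{G_i}(f_i,\HH)$ only by further deletions of hyperplanes that are already extremal (equivalently: there is a mutually crossing set $\mathcal D_i \subset \Ext(f_i,\HH)$ and a $\Trim\circ\delete$ identification as in the bottom of the Lemma \ref{long commute} diagram) — so that from the point of view of \emph{which non-deleted hyperplanes are extremal}, the two sequences stay in lockstep. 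This is precisely what relations like $\Trim\circ\move = \delete_{\Trans}\circ\Trim$ and $\Trim\circ\delete_G = \delete_G\circ\Trim$ from the proof of Lemma \ref{long commute} buy us.

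Granting that, the argument for \eqref{time shift eq} is as follows. By \eqref{ext to ext} of Lemma \ref{Ext Trans Delete}, deleting a mutually crossing set can only make hyperplanes extremal \emph{earlier}, not later, so $e_H(\delete_G(f,\HH)) \le e_H(f,\HH)$; hence $\delta \ge 0$. For the upper bound $\delta \le 1$ I would argue that the only way deleting $G$ can accelerate the appearance of $H$ in $\Ext$ is by at most one move: at the stage $i = e_H(\delete_G(f_i,\HH))$ where $H$ first becomes extremal in the deleted sequence, $H\notin\Ext(f_i,\HH)$ means (by \eqref{same difference}) that $H\in\Ext(\delete_{G_i}(f_i,\HH))\ssm\Ext(f_i,\HH) = \Trans(\delete_{G_i}(f_i,\HH))\ssm\Trans(f_i,\HH)$, so there is a nonempty mutually crossing subset of $G_i$ (on exactly one side of $H$, using that $G_i$ is mutually crossing and its members are not separated by $H$) whose members are themselves extremal and whose removal exposes $H$. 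After one further move of the \emph{undeleted} sequence $(f_i,\HH)$, every extremal hyperplane that is transitional gets crossed/removed from $\HH_{f_{i+1}}$; in particular the hyperplanes of $G_i$ standing between $H$ and $f(P)$ either get jumped over or become irrelevant, and $H$ becomes extremal for $f_{i+1}$. The bookkeeping that makes ``one move suffices'' precise is: the hyperplanes of $G_i$ separating $H$ from $f_i(P)$ on its non-adjacent side form a mutually crossing set all of whose members are extremal (shown in the proof of Lemma \ref{long commute}), and a single move removes all currently-transitional extremal hyperplanes, after which $H$ has the maximal-separation property characterizing extremality.

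I expect the main obstacle to be the induction keeping the two sequences ``in lockstep modulo extremal deletions'' — i.e. verifying that at every stage the discrepancy between $(f'_i,\HH\ssm G)$ and the $G$-deletion of $(f_i,\HH)$ is again a deletion of a mutually crossing set of extremal hyperplanes, with the time-index drift never exceeding $1$. Lemma \ref{long commute} gives exactly one stage of this (it produces the new mutually crossing set $G'$ and the auxiliary set $K$ and the commuting square), so the real content is iterating it and checking that $G'$ remains a subset of the extremal hyperplanes of the next pair and that $K$ accounts exactly for the ``lost'' transitional hyperplanes; this is where one must be careful that no \emph{new} drift accumulates beyond the single step. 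Once that invariant is in hand, the two inequalities $0 \le \delta \le 1$ drop out of \eqref{ext to ext} and \eqref{same difference} together with the single-move analysis above, completing the proof. \qed
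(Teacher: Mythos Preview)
Your proposal identifies the right ingredients --- Lemma \ref{Ext Trans Delete} for the inequality $e_H(\delete_G(f,\HH))\le e_H(f,\HH)$, and Lemma \ref{long commute} for the step-by-step comparison --- and your ``one move suffices'' analysis of the base case is essentially the paper's. But the inductive organization you sketch is more fragile than necessary, and the specific invariant you propose does not hold as stated.

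You want to maintain, for each $i$, that $(f'_i,\HH\ssm G)$ differs from $\delete_{G_i}(f_i,\HH)$ by deleting a mutually crossing set $\mathcal D_i\subset\Ext(f_i,\HH)$. But Lemma \ref{long commute} does not give this. After one step it produces a new mutually crossing set $G'$ (to play the role of your $G_1$) and an auxiliary set $K$; the diagram equates $\Trim\circ\delete_{G'}(f_1,\HH)$ with $\Trim\circ\delete_K(f'_1,\HH\ssm G)$. There is no claim that $G'\subset\Ext(f_1,\HH)$, and in general it is not: elements of $G'\cap G$ can be non-extremal in $(f,\HH)$ and remain non-extremal after one move, since the move only crosses hyperplanes in $\Trans(f,\HH)$ and some $G$-hyperplanes blocking $H'$ from $f(P)$ may themselves lie in $G\ssm\Trans(f,\HH)\subset G'$. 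So your invariant ``$\mathcal D_i$ extremal'' breaks already at $i=1$, and with it the clean picture of ``lockstep modulo extremal deletions''.

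The paper sidesteps this by inducting on $|\HH_f|$ rather than on the time index. The base cases $e_H(f,\HH)=0$ and $e_H(\delete_G(f,\HH))=0$ are handled directly (the latter is exactly your ``one move suffices'' observation, with the key point that the relevant $G$-hyperplanes are \emph{transitional}, not merely extremal, because $H$ sits on their far side). For the inductive step, one applies Lemma \ref{long commute} once; since $\Trim$ does not change $e_H$, and $\delete_K$ does not change $e_H$ either (each element of $K$ crosses every other hyperplane of $\HH_f\ssm G$, hence is irrelevant to anyone else's extremality), the diagram reduces the question to $e_H(f_1,\HH)$ versus $e_H(\delete_{G'}(f_1,\HH))$ with $|\HH_{f_1}|<|\HH_f|$ and $G'$ mutually crossing. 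The inductive hypothesis applies directly --- no extremality of $G'$ is needed, only that it is mutually crossing, which is exactly what Lemma \ref{long commute} provides.
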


\begin{proof}
If $e_H(f,\HH) = 0$ then $H$ is already in $\Ext(f,\HH)$, which
implies $H\in\Ext(\delete_G(f,\HH))$ by Lemma \ref{Ext Trans Delete}
(\ref{ext to ext}). The equality (\ref{time shift eq}) follows with $\delta=0$. 

Thus we may assume $e_H(f,\HH) > 0$. Suppose that
$e_H(\delete_G(f,\HH))=0$. This means that $H \in
\Ext(\delete_G(f,\HH)) \ssm \Ext(f,\HH)$,
which means (as in the proof of Lemma \ref{long commute}) that there are some elements $G_i$
of $G$ which separate $H$ from $f(P)$ on one side $s_i$, so that
$f(P)$ is adjacent to $G_i$ on the $s_i$ side. But $f(P)$ is not
adjacent to $G_i$ on the other side because $H$ is there, which means
$G_i\in \Trans(f,\HH)$.  But this implies that, in $\move(f,\HH)$,
all $G_i$ as above are no longer in the set of separating hyperplanes, and hence
$H\in \Ext(\move(f,\HH))$, so $e_H(f,\HH) = 1$. This gives (\ref{time
  shift eq}) with $\delta = 1$.

From now on we can assume $e_H(f,\HH)>0$ and $e_H(\delete_G(f,\HH))>0$, 
and prove the statement by induction on the cardinality of $\HH_f$
(the case $|\HH_f|=2$ is easy, and already covered by the previous paragraphs).

Let $(f',\HH') = \delete_G(f,\HH)$, $(f_1,\HH) = \move(f,\HH)$,
and $(f'_1,\HH') = \move (f',\HH')$.
By definition (since $e_H(f,\HH)>0$ and $e_H(f',\HH)>0$) we have
$$e_H(f,\HH)= e_H(f_1,\HH)+1$$
and
$$e_H(f',\HH') =  e_H(f'_1,\HH') +1.$$
Thus it will suffice to prove
\begin{equation}\label{time shift step}
e_H(f_1,\HH) = e_H(f'_1,\HH') + \delta.
\end{equation}

  Consider the warmup case when $G\intersect \Ext(f,\HH) = \emptyset$.  
By Lemma \ref{easy commute},
$$
(f'_1,\HH') = \delete_G(f_1,\HH).
$$
Since $|\HH_{f_1}| < |\HH_f|$, the inductive hypothesis gives us
$$
e_H(f_1,\HH) = e_H(f'_1,\HH') +\delta
$$
for $\delta = 0$ or $1$, proving (\ref{time shift step}) and hence
(\ref{time shift eq}).

Now in the general case, we use the diagram of Lemma \ref{long commute}.
Note first that the value of $e_H$ is not affected by a $\Trim$
operation. This is because $\Trim$ does not affect the set $\HH_f$, or
the membership in $\Ext$ or $\Trans$.

The value of $e_H$ is also unaffected by the $\delete_K$ arrow on the
right side. This is because each hyperplane $H \in K$ crosses
every hyperplane in $\HH_f \ssm (G \cup \{H\})$, which implies that any hyperplane in $K$ cannot affect the membership in $\Ext$ or $\Trans$ of any other hyperplane in $\HH_f \ssm G$. Therefore, we see that $\delete_K$ commutes with
the $\move$ sequence on $(f'_1,\HH_f\ssm G)$.

The remaining arrow is labeled by $\delete_{G'}$, and $G'$ is a
mutually crossing set. Thus by induction we know
$$
e_H(f_1,\HH) = e_H(\delete_{G'}(f_1,\HH)) + \delta
$$
for $\delta = 0$ or $1$. Again the equality (\ref{time shift step})
follows. 
\end{proof}

We are now ready to prove the stability result for move sequences.

\subsubsection*{Proof of Proposition \ref{fellow travel}}

We need to prove the following statement: For each $i$ and $p \in P$, the set of $H\in \HH_f\ssm
G$ such that $H$ separates $f_i(p)$ from $f'_i(p)$ is mutually
crossing.

Note that when a sequence $(f_i(p))$ crosses a hyperplane $H$, it can
only happen in the transition from $f_j$ to $f_{j+1}$ where
$j=e_H(f,\HH)$. Moreover it must be that $H\in \Trans(f_j,\HH)$, and
that $f_j(p)$ is on the side of $H$ where $f_j(P)$ is adjacent.
If the sequence $(f_i(p))$ does not cross $H$, then $H$ will either remain in $\HH_{f_i}$ for
all $i\ge j$, or it will be crossed only by points on the other side
and not by $f_i(p)$.   The same holds for the sequence $(f'_i)$. 

Now suppose that $H_1,H_2$ separate $f_i(p)$ from $f'_i(p)$, for some $i$, and
$H_1$ does not cross $H_2$. Since $f_0(p)$ and $f'_0(p)$ are on the same side of
both hyperplanes we may assume that $H_1$ separates them from
$H_2$. Hence there exists a (different) time $i$ at which one of them is still on the original side of
$H_1$, whereas the other has crossed both hyperplanes; fix such $i$.

Suppose first that $f'_i(p)$ is the one which lies on the other side of $H_2$. Let
$j=e_{H_1}(f',\HH)$. Then since $H_1$ does not cross $H_2$, $f'_{j+1}(p)$ has not
yet crossed $H_2$. Thus we must have
$$
j = e_{H_1}(f',\HH) < e_{H_2}(f',\HH) < i
$$
and $H_1\in \Trans(f'_j,\HH)$, with $f'_j(p)$ on the side where
$f'_j(P)$ is adjacent to $H_1$.

Now $k=e_{H_1}(f,\HH) \le j+1$ by Lemma \ref{time shift}.  Since $j+1 < i$ and
$f_i(p)$ has not crossed $H_1$, it must be that the side of $H_1$ where
$f_k(P)$ is adjacent is the one opposite from $f_k(p)$, the one
containing $H_2$.

But this means that $H_2$ can no longer be in $\HH_{f_k}$, which can
only be if $e_{H_2}(f,\HH) \le k-1 \le j$. Thus, again by
Lemma \ref{time shift}, $e_{H_2}(f',\HH) \le j$, which is a
contradiction.

We conclude that $H_1$ does not cross $H_2$, which is what we wanted.

The case where $f_i(p)$ crosses the hyperplanes and $f'_i(p)$ does not
is handled similarly.  The main difference is that in this case, instead of using the inequality $e_{H_1}(f,\HH) \le e_{H_1}(f',\HH)+1$, we use $e_{H_1}(f',\HH) \le e_{H_1}(f,\HH)$, which creates a ``$+1$'' there, which is then lost in the other application of Lemma \ref{time shift}.

\qed

\subsection{Completing the proof of Theorem \ref{thm:stable contraction}}\label{subsec:proof of stable contraction}

Property (\ref{item:ending_diam}) follows from Lemma \ref{lem:trans hp}, while property (\ref{item:step_size}) follows from the construction, where for both properties we use the fact that cubes have diameter $1$ in the $d_\infty$ metric. Property (\ref{item:same_image}) is also easily seen to hold by construction, and more specifically it follows from the fact that the sets $\Ext(f,\HH)$ and $\Trans(f,\HH)$ only depend on the image of $f$ (and hence that this will remaining true throughout the move sequence).

For property (\ref{item:contraction_geod}), observe that for a fixed $p \in P$, that no hyperplane $H$ separates $f_i(p)$ from $f_{i+1}(p)$ for two different values of $i$.  It follows that any combinatorial path obtained by concatenating a choice of geodesics from $f_i(p)$ to $f_{i+1}(p)$ for $0 \leq i < n$ is an $\ell^1$-geodesic in $X$.  This proves (\ref{item:contraction_geod}).

For property (\ref{item:bary_separation}), by definition of the contraction sequence, the hyperplanes that separate any given $f(p)$ from $f_n(q)$ for $p,q \in P$ are contained in $\HH_f$.  On the other hand, a hyperplane $H \in \HH_f$ cannot separate every point in $f(P)$ from any fixed vertex of $X$, because there are elements of $f(P)$ on both sides of $H$, by definition of $\mathcal H_f$. Hence, there can be no hyperplane separating $f(P)$ from a point in $f_n(P)$.  This gives property (\ref{item:bary_separation}).

Property (\ref{item:deletion_fellow}) is a direct consequence of Proposition \ref{fellow travel} and Lemma \ref{lem:move maps stabilize}.  This completes the proof.\qed

\section{Proofs of main theorems} \label{sec:main proofs}

We are now almost ready to prove our main theorems, Theorem \ref{thm:barycenter main} and Theorem \ref{thm:bicombing main}.  The main bit of work here is Proposition \ref{prop:stable contractions in HHS}, which compiles our preceding stability results into a useful form for our current purposes.

\begin{proposition}{prop:stable contractions in HHS}
Let $(\cuco X, \mathfrak S)$ be a $G$-colorable HHS for $G <  \mathrm{Aut}(\mathfrak S)$.
 For any $k \in \mathbb N$, there exists $K_3 = K_3(k, \mathfrak S)>0$ so that the following holds:

Suppose that $F, F' \subset \cuco X$ are finite subsets satisfying $|F|, |F'| \leq k$, let $g\in G$,
 and suppose that $d_{Haus}(gF,F')\leq 1$. Choose any map $\iota_{F}:F\sqcup F'\to F$ so that $\iota_{F}(f)=f$ if $f\in F$ and $d_\XX(g\iota_F(f),f)\leq 1$ if $f\in F'$. Also, choose a map $\iota_{F'}:F\sqcup F'\to F'$ such that $\iota_{F'}(f)=f$ if $f\in F'$ and $d_\XX(gf,\iota_{F'}(f))\leq 1$ if $f\in F$. Consider
\begin{itemize}
\item The cube complexes $\QQ_F, \QQ_{F'}$ produced by Theorem \ref{thm:stable_cubulations} with associated maps $\Phi_F, \Phi_{F'}$ to $\cuco X$, and $\psi_F,\psi_{F'}$ from $F,F'$ to $\QQ_F, \QQ_{F'}$ ;
\item The sequences of contractions $\{(\psi_F)_i=\psi_i\}_{i\leq n_{\psi_F}}$ and $\{(\psi_{F'})_i=\psi'_i\}_{i\leq n_{\psi_{F'}}}$ produced by Theorem \ref{thm:stable contraction}.  Set $n_{\psi_F}=n_F$ and $n_{\psi_{F'}}=n_{F'}$.
\end{itemize}

Then 
\begin{enumerate}
\item $|n_F - n_{F'}| < K_3$, and \label{item:bound on n difference}
\item For each $i \in \left\{1, \dots, \max\{n_F, n_{F'}\}\right\}$ and any $f \in F\sqcup F'$, we have \label{item:contraction distance}
$$d_{\cuco X}\left(g\circ\Phi_F \circ \psi_i(\iota_F(f)), \Phi_{F'} \circ \psi'_i(\iota_{F'}(f))\right) \leq K_3,$$
\item $\diam_{\cuco X}(\Phi_F(\psi_{n_F}(F)) < K_3$. \label{item:diameter bound}
\end{enumerate}
\end{proposition}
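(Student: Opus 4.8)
The plan is to deduce all three items by combining the stable cubulation theorem (Theorem~\ref{thm:stable_cubulations}) with the stable contraction theorem (Theorem~\ref{thm:stable contraction}), using the hyperplane deletion maps $\eta,\eta'$ as the bridge. The key point is that $\eta$ and $\eta'$ delete at most $N=N(k,\mathfrak S)$ hyperplanes, so we can interpolate between the move sequences on $\QQ_F$, $\QQ_0$, and $\QQ_{F'}$ by deleting one hyperplane at a time and applying the stability estimates of Theorem~\ref{thm:stable contraction}(\ref{item:deletion_fellow}) a bounded number of times.

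First I would set up the three relevant maps into the $0$-skeleta of cube complexes: $\psi_F\circ\iota_F : F\sqcup F'\to \QQ_F^{(0)}$, $\psi_{F'}\circ\iota_{F'}: F\sqcup F'\to \QQ_{F'}^{(0)}$, and (as the common target) the map $\eta\circ\psi_F\circ\iota_F = \eta'\circ\psi_{F'}\circ\iota_{F'} : F\sqcup F'\to\QQ_0^{(0)}$ — these agree exactly by the last sentence of Theorem~\ref{thm:stable_cubulations}. Write $\eta$ as a composition of at most $N$ single-hyperplane deletions $\res_{\HH\ssm G}$. By Theorem~\ref{thm:stable contraction}(\ref{item:deletion_fellow}), each such deletion changes the length of the move sequence by at most $1$ and moves each $i$-th contraction image a $d_\infty$-distance at most $1$; iterating $N$ times and using Theorem~\ref{thm:stable contraction}(\ref{item:same_image}) to pass freely between $f$ defined on $P=F\sqcup F'$ or on its image, we get $|n_{\psi_F\circ\iota_F} - n_{\eta\circ\psi_F\circ\iota_F}|\le N$ and $d_\infty\big(\res_\eta(\psi_i(\iota_F(f))),\, (\eta\circ\psi_F\circ\iota_F)_i(f)\big)\le N$ for all $i$ (here $\res_\eta$ denotes the composite deletion map; for indices $i$ beyond the shorter sequence we use that the sequences are eventually constant, Lemma~\ref{lem:move maps stabilize}). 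The same estimates hold with $F'$, $\eta'$ in place of $F$, $\eta$. Combining the two and using the triangle inequality in $\QQ_0$ (with the $d_1$--$d_\infty$ comparison of Lemma~\ref{lem:different metrics}, since the dimension is bounded by the maximal number of pairwise orthogonal domains), I get $|n_F-n_{F'}|\le 2N$, which is item~(\ref{item:bound on n difference}) with $K_3\ge 2N$.

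For item~(\ref{item:contraction distance}) I would push the above cube-complex estimates into $\XX$ via the quasi-isometric embeddings. The images $\Phi_F\circ\psi_i(\iota_F(f))$ and $\Phi_0\circ(\eta\circ\psi_F\circ\iota_F)_i(f)$ differ by at most $K + K\cdot(\text{bound on }d_1\text{-distance between }\res_\eta(\psi_i(\iota_F(f)))\text{ and }(\eta\circ\psi_F\circ\iota_F)_i(f))$, using that $\Phi_F$ is a $(K,K)$-quasi-isometry commuting with $\Phi_0\circ\eta$ up to error $K$ (the right-hand square of Diagram~\eqref{eq:Phi diagram}), and the $d_\infty$-estimate above converted to a $d_1$-estimate by Lemma~\ref{lem:different metrics}. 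Applying the analogous bound for $F'$, $\eta'$, and noting $g\circ\Phi_F$ commutes with $\Phi_0\circ\eta$ up to $K$ by Diagram~\eqref{eq:Phi diagram}, the triangle inequality yields $d_\XX\big(g\circ\Phi_F\circ\psi_i(\iota_F(f)),\ \Phi_{F'}\circ\psi'_i(\iota_{F'}(f))\big)\le K_3$ for a constant $K_3=K_3(k,\mathfrak S)$ absorbing all of $K$, $N$, and the metric-comparison constant. Finally, item~(\ref{item:diameter bound}) is immediate: by Theorem~\ref{thm:stable contraction}(\ref{item:ending_diam}), $\diam_\infty(\psi_{n_F}(F))\le 1$, hence $\diam_1(\psi_{n_F}(F))$ is bounded by Lemma~\ref{lem:different metrics}, and applying the $(K,K)$-quasi-isometry $\Phi_F$ gives $\diam_\XX(\Phi_F(\psi_{n_F}(F)))\le K_3$.

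\textbf{Main obstacle.} The only delicate point is bookkeeping the indices: the move sequences for $\psi_F$, $\psi_0:=\eta\circ\psi_F\circ\iota_F$, and $\psi_{F'}$ have different (but boundedly different) lengths, so "the $i$-th term" must be interpreted using the eventual-constancy of the sequences (Lemma~\ref{lem:move maps stabilize}), and one must check that the per-deletion error of $1$ in Theorem~\ref{thm:stable contraction}(\ref{item:deletion_fellow}) does not compound badly across the $\le N$ deletions — it does not, since each deletion contributes an \emph{additive} $1$ in the $d_\infty$-metric, so the total is $\le N$, independent of the sequence length. Everything else is a routine chase through Diagram~\eqref{eq:Phi diagram} and the quasi-isometry constants.
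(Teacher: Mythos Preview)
Your proposal is correct and follows essentially the same approach as the paper: both arguments use the common cube complex $\QQ_0$ and the exact equality $\eta\circ\psi_F\circ\iota_F=\eta'\circ\psi_{F'}\circ\iota_{F'}$ from Theorem~\ref{thm:stable_cubulations}, then iterate Theorem~\ref{thm:stable contraction}(\ref{item:deletion_fellow}) over the at most $N$ deleted hyperplanes (together with item~(\ref{item:same_image})) to bound the discrepancy in sequence lengths and in the $d_\infty$-positions, and finally push everything into $\XX$ via the coarse commutativity of Diagram~\eqref{eq:Phi diagram} and the $(K,K)$-quasi-isometry constants. Your version is in fact slightly more explicit about the one-hyperplane-at-a-time iteration (which implicitly uses that single-hyperplane deletion maps are $1$-Lipschitz in $d_\infty$) and about handling indices beyond the shorter sequence via eventual constancy; the paper states these steps more tersely but the content is the same.
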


More visually, item \ref{item:contraction distance} says that the following diagram coarsely commutes:

  \begin{equation}
  \begin{tikzcd}
    & F\arrow[rr,"\psi_i=(\psi_F)_i"] & &\QQ_F \arrow[dr,"g\circ\Phi_F"]  &  \\
    F\sqcup F' \arrow[ur,"\iota_{F}"]\arrow[dr,"\iota_{F'}\ \ \ " below] & & & & \XX \\
    & F'\arrow[rr,"\psi'_{i}=(\psi_{F'})_i"]& &\QQ_{F'}\arrow[ur,"\ \ \ \Phi_{F'}" below] & \\
  \end{tikzcd}
  \end{equation}

\begin{proof}
We will use the output and notation of Theorem \ref{thm:stable_cubulations}, and in particular the CAT(0) cube complex $\mathcal Y_0$ obtained from both $\mathcal Y_F$ and $\mathcal Y_{F'}$ by collapsing at most $N = N(k, \mathfrak S)>0$ hyperplanes, with hyperplanes collapse maps $h,h'$.

We will also use the notation of Theorem \ref{thm:stable contraction}, in particular the notation $\{f_i:i\leq n_f\}$ for the sequence of maps starting with $f$ and ending with a map with bounded image.

We have $\psi:=h\circ\psi_F\circ\iota_F=h'\circ\psi_{F'}\circ\iota_{F'}$, as stated in Theorem \ref{thm:stable_cubulations}. By Theorem \ref{thm:stable contraction}, composing, say, $\psi_F$ with a hyperplane deletion map affects the length of the corresponding sequence of maps by at most 1. In particular, we have $|n_F-n_\psi|\leq N$ and, similarly, $|n_{F'}-n_\psi|\leq N$ (notice that $n_{\psi_F}=n_{\psi_F\circ \iota_F}$ by Theorem \ref{thm:stable contraction}(\ref{item:same_image}), and a similar statement holds for $F'$). Hence, conclusion \ref{item:bound on n difference} holds for any $K_3$ larger than $2N$.

\par\smallskip

We now prove conclusion \eqref{item:contraction distance}. By Theorem \ref{thm:stable_cubulations}, Diagram \eqref{Phi diagram} commutes with error at most $K = K(k, \mathfrak S)$. For convenience, we reproduce the diagram here:

  \begin{equation}
  \begin{tikzcd}
    & F\arrow[r,"\psi_F"]&\QQ_F \arrow[dr,"g\circ\Phi_F"] \arrow[d,"\eta" left] &  \\
    F\sqcup F' \arrow[ur,"\iota_{F}"]\arrow[dr,"\iota_{F'}\ \ \ " below] & &\QQ_0 \arrow[r,"\Phi_0"] & \XX \\
    & F'\arrow[r,"\psi_{F'}"]&\QQ_{F'}\arrow[ur,"\ \ \ \Phi_{F'}" below] \arrow[u,"\eta'"] & \\
  \end{tikzcd}
  \end{equation}

For any $f\in F\sqcup F'$ we have
$$d_{\cuco X}(g\circ\Phi_F\circ(\psi_F)_i(\iota_F(f)),\Phi_0\circ \eta\circ(\psi_F)_i(\iota_F(f)))\leq K.$$
By Theorem \ref{thm:stable contraction}, we have $d_{\infty}(\eta\circ(\psi_F)_i(\iota_F(f)), (\eta\circ\psi_F)_i(\iota_F(f)))\leq N$, and hence $$d_{\cuco X}(\Phi_0\circ \eta\circ(\psi_F)_i(f),\Phi_0\circ (\eta\circ\psi_F)_i(f))\leq K'=K'(k, \mathfrak S)$$
since $\Phi_0$ is a quasi-isometric embedding with controlled constants (and the dimension of $\mathcal Y_0$ is bounded in terms of $\mathfrak S$ by Theorem \ref{thm:stable_cubulations}, so that the $\ell^\infty$ and $\ell^1$ metrics on it are uniformly quasi-isometric).  The triangle inequality then gives

$$d_{\cuco X}(g\circ \Phi_{F}\circ(\psi_{F})_i(\iota_{F}(f)),\Phi_0\circ(\eta\circ\psi_{F})_i(\iota_{F}(f)))\leq K'+K.$$

Similarly, we get
$$d_{\cuco X}(\Phi_{F'}\circ(\psi_{F'})_i(\iota_{F'}(f)),\Phi_0\circ(\eta'\circ\psi_{F'})_i(\iota_{F'}(f)))\leq K'+K.$$
By Theorem \ref{thm:stable contraction}(\ref{item:same_image}), we have $(\eta'\circ\psi_{F'})_i(\iota_{F'}(f))=(\eta'\circ\psi_{F'}\circ\iota_{F'})_i(f)= (\eta\circ\psi_F\circ \iota_F)_i(f)$, and hence conclusion \eqref{item:contraction distance} holds for any $K_3$ larger than $2(K'+K)$.

Finally, to prove \eqref{item:diameter bound}, we now bound the diameter of $\Phi_F(\psi_{n_F}(F)$. Similarly to above, $\Phi_F$ is a quasi-isometric embedding with constants controlled in terms of $k,\mathfrak S$ even when we endow $\QQ_F$ with the $\ell^\infty$ metric. Since $\diam_{\QQ_F}(\psi_{n_F}(F)) \leq 1$ (in the $\ell^\infty$-metric) by Theorem \ref{thm:stable contraction}, this gives the required bound on $\diam_{\cuco X}(\Phi_F(\psi_{n_F}(F))$.
\end{proof}

\subsection{Barycenters: Proof of Theorem \ref{thm:barycenter main}}\label{subsec:barycenters thm}

Our next goal is to prove Theorem \ref{thm:barycenter main}.  To do so, we'll need the precise definition of stable barycenter:

\begin{definition}{barycenter}
  For a metric space $X$ a \textbf{\em stable barycenter map} for $k$ points is a map $\tau:X^k
  \to X$ which is
  \begin{itemize}
    \item  \textbf{\em Permutation invariant}, meaning  $\tau\circ\pi = \tau$ for any $\pi:X^k\to X^k$
      that is a permutation of the factors.
    \item \textbf{\em Coarsely Lipschitz}, meaning there exists $\kappa_1>0$ such that for $x,x'\in X^k$
      $$ d_X(\tau(x),\tau(x')) \le \kappa_1 d_{X^k}(x,x') + \kappa_1.$$
  \end{itemize}
  We further say that $\tau$ is \textbf{\em coarsely equivariant} with respect to a group
  $\Gamma$ acting on $X$ by isometries if there exists $\kappa_1>0$ such that for all $g\in \Gamma$
  $$
  d_X(g\tau(x),\tau(gx)) \le \kappa_1,
  $$
  where $\Gamma$ acts on $X^k$ diagonally.
\end{definition}

We now prove that colorable HHSes admit stable coarsely equivariant barycenters, with the following version slightly more general than Theorem \ref{thm:barycenter main}:

\begin{theorem}{thm:barycenter general}
Let $(\cuco X, \mathfrak S)$ be a $G$-colorable HHS for $G < \mathrm{Aut}(\mathfrak S)$.  Then $\cuco X$ admits coarsely
$G$-equivariant stable barycenters for $k$ points, for any $k\ge 1$. Moreover, the coarse barycenter of a set $F$ is contained in the hierarchical hull of $F$.
\end{theorem}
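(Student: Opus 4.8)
The plan is to assemble the barycenter map directly from the stable cubulation (Theorem \ref{thm:stable_cubulations}) and the stable contraction sequence on cube complexes (Theorem \ref{thm:stable contraction}), using the compiled form of both given in Proposition \ref{prop:stable contractions in HHS}. First I would define, for a finite set $F\subseteq\cuco X$ with $|F|\le k$, the point
$$\tau(F) = \Phi_F(\psi_{n_F}(f_0)) \in \cuco X,$$
where $f_0\in F$ is any chosen element, $\psi_F:F\to\QQ_F^{(0)}$ is the map from Theorem \ref{thm:stable_cubulations}, and $\psi_{n_F}=(\psi_F)_{n_F}$ is the terminal map of the contraction sequence from Theorem \ref{thm:stable contraction}. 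By item \eqref{item:diameter bound} of Proposition \ref{prop:stable contractions in HHS} (equivalently, property \eqref{item:ending_diam} of Theorem \ref{thm:stable contraction} together with the fact that $\Phi_F$ is a controlled quasi-isometric embedding), the image $\Phi_F(\psi_{n_F}(F))$ has diameter bounded by $K_3=K_3(k,\mathfrak S)$, so the choice of $f_0$ only affects $\tau(F)$ by a bounded amount; in particular $\tau$ is well-defined up to uniformly bounded error, which is all a coarse barycenter requires. Permutation invariance is essentially automatic: the cube complex $\QQ_F$, the map $\Phi_F$, and the set $\psi_F(F)$ depend only on $F$ as a set, not on any ordering, and by property \eqref{item:same_image} of Theorem \ref{thm:stable contraction} the contraction sequence depends only on the image $\psi_F(F)$; since we then take the value at an arbitrary point of $F$, reshuffling the $k$-tuple changes nothing (up to the bounded ambiguity above). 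Containment in the hull $H_\theta(F)$ is immediate because $\Phi_F$ maps into $H_\theta(F)$ by Theorem \ref{thm:stable_cubulations}(2), and more precisely we should note that $\psi_{n_F}(F)$ lies in the combinatorial hull of $\psi_F(F)$ inside $\QQ_F$ (each move stays inside the subcomplex spanned by the current point set), so the image lands in a controlled neighborhood of $hull(F)$ — then we enlarge $\theta$ accordingly, or simply invoke hierarchical quasiconvexity of the hull to reabsorb the bounded error.

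The substantive point is the coarse-Lipschitz estimate, and more generally coarse $G$-equivariance, and this is exactly where Proposition \ref{prop:stable contractions in HHS} does the work. It suffices to treat the case $d_{Haus}(gF,F')\le 1$ (with $g=1$ for the Lipschitz statement, general $g\in G$ for equivariance), since an arbitrary pair of $k$-tuples at bounded distance can be connected by a bounded-length chain of such elementary perturbations, with the number of steps bounded in terms of the distance — here one uses that moving a single point of a $k$-tuple by distance $1$ in $\cuco X$ is an instance of $d_{Haus}\le 1$, and a general displacement splits into such unit steps along a discretized path (the HHS is a quasigeodesic space, so this is legitimate up to adjusting constants). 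Given $F,F'$ with $d_{Haus}(gF,F')\le 1$, choose the maps $\iota_F,\iota_{F'}$ as in the proposition; then item \eqref{item:bound on n difference} gives $|n_F-n_{F'}|<K_3$, and item \eqref{item:contraction distance} applied at $i=\min\{n_F,n_{F'}\}$ (and then using property \eqref{item:step_size} of Theorem \ref{thm:stable contraction}, which bounds how far a point moves after it has already stabilized, together with property \eqref{item:bary_separation} showing the terminal cubes are a controlled distance from the full contraction) yields
$$d_{\cuco X}\bigl(g\,\Phi_F(\psi_{n_F}(f_0)),\ \Phi_{F'}(\psi_{n_{F'}}(f_0'))\bigr) \le K_4(k,\mathfrak S)$$
for appropriate basepoints $f_0,f_0'$, hence $d_{\cuco X}(g\,\tau(F),\tau(F'))$ is bounded by a constant depending only on $k$ and $\mathfrak S$ once we absorb the $\diam$-ambiguity above. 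Taking $g=1$ gives the coarse-Lipschitz bound (with the chaining argument supplying the multiplicative constant $\kappa_1$), and taking general $g$ gives coarse $G$-equivariance with additive constant $\kappa_1$.

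The main obstacle I expect is the careful bookkeeping in passing from the $i=\min\{n_F,n_{F'}\}$ comparison supplied directly by Proposition \ref{prop:stable contractions in HHS}\eqref{item:contraction distance} to a comparison of the actual terminal points $\psi_{n_F}(f_0)$ and $\psi_{n_{F'}}(f_0')$: one must check that after a point of $P$ has reached its stable position it never moves again (true, since once $\Trans$ no longer involves that point's transitional hyperplanes it is fixed — property \eqref{item:step_size} bounds single steps, and property \eqref{item:contraction_geod}, that the $f_i(p)$ trace an $\ell^1$-geodesic, prevents oscillation), so the difference $|n_F-n_{F'}|<K_3$ contributes only a bounded extra displacement in the $\ell^\infty$ metric, which $\Phi_F$ (a controlled quasi-isometry even for $\ell^\infty$, by Lemma \ref{lem:different metrics} and the dimension bound) converts into bounded displacement in $\cuco X$. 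The other mild subtlety is ensuring the basepoints match up, i.e.\ that $\iota_F,\iota_{F'}$ can be chosen so that $\iota_F(f)$ and $\iota_{F'}(f)$ are genuine partners under $g$ for a common $f\in F\sqcup F'$; this is exactly the hypothesis structure built into Theorem \ref{thm:stable_cubulations} and Proposition \ref{prop:stable contractions in HHS}, so it is only a matter of invoking it correctly. None of this requires new ideas beyond what is already established; it is an assembly argument.
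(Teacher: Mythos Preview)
Your proposal is correct and follows essentially the same approach as the paper. One minor simplification worth noting: the paper applies Proposition \ref{prop:stable contractions in HHS}\eqref{item:contraction distance} at $i=\max\{n_F,n_{F'}\}$ rather than at the minimum, using that the shorter sequence is eventually constant (so $\psi'_{n_F}=\psi'_{n_{F'}}$ when $n_F\ge n_{F'}$)---this sidesteps entirely the terminal-point bookkeeping you flag as the main obstacle.
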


\begin{proof}
We use the notation from the statement of Proposition \ref{prop:stable contractions in HHS}.

To define a barycenter $\tau(f_1,\dots,f_k)$, we consider $F=\{f_i\}$, set $x_F = \Phi_F(\psi_{n_F}(F))$, and let $\tau(f_1,\dots,f_k)$ be an arbitrary point in $x_F$; we make the choice depending on the set $F$ only, so that permutation invariance is achieved.

This choice does not matter for our purposes since $\diam_{\cuco X}(\Phi_F(\psi_{n_F}(F)) < K_3$ by Proposition \ref{prop:stable contractions in HHS}.

\par\smallskip

Now suppose that $(f'_1,\dots,f'_k)$ is so that $d_{Haus}(\{f'_i\},\{f_i\})\leq 1$, and set $F'=\{f'_i\}$. Without loss of generality, assume that $n_F \geq n_{F'}$, where we note that $n_F - n_{F'} < K_3$ by part \eqref{item:bound on n difference} of Proposition \ref{prop:stable contractions in HHS}.  Part \eqref{item:contraction distance} of Proposition \ref{prop:stable contractions in HHS} now implies that for any $f\in F\sqcup F'$ we have

$$d_{\cuco X}(\Phi_F(\psi_{n_F}(\iota_F(f)), \Phi_{F'}(\psi'_{n_{F}}(\iota_{F'}(f))) < K_3.$$

But $\psi'_{n_{F'}} = \psi'_{n_{F}}$ since $n_F \geq n_{F'}$, so we can conclude that

$$d_{\cuco X}(\Phi_F(\psi_{n_F}(\iota_F(f)), \Phi_{F'}(\psi'_{n_{F'}}(\iota_{F'}(f)))) < K_3.$$

Finally, the fact that $\diam_{\cuco X}(\Phi_F(\psi_{n_F}(F)) < K_3$ and $\diam_{\cuco X}(\Phi_{F'}(\psi'_{n_{F'}}(F')) < K_3$ gives that

$$\diam_{\cuco X}(x_F \cup x_{F'}) < 3K_3.$$

Setting $\kappa_1 = 3K_3$, we get that $\tau$ is $\kappa_1$-coarsely Lipschitz.

\par\smallskip

Finally, coarse equivariance follows similarly, applying Proposition \ref{prop:stable contractions in HHS} with $F'=gF$, as follows. First, as above we can assume $n_F\geq n_{gF}$, for otherwise we can swap the roles of $F$ as $gF$, by considering the automorphism $g^{-1}$. We still have $n_F-n_{gF}<K_3$. Part \eqref{item:contraction distance} of Proposition \ref{prop:stable contractions in HHS} implies that for any $f\in F\sqcup gF$ we have
$$d_{\cuco X}(g\circ\Phi_F(\psi_{n_F}(\iota_F(f)), \Phi_{gF}(\psi'_{n_{gF}}(\iota_{gF}(f))) < K_3.$$
As above, we conclude that
$$\diam_{\cuco X}(g (x_F) \cup x_{gF}) < 3K_3,$$
which completes the proof.
\end{proof}

\subsection{Bicombability: Proof of Theorem \ref{thm:bicombing main}} \label{subsec:bicombings thm}

We begin with the formal definition of bicombing which is appropriate for our context; see \cite{AB95}.  In the following definition, we adopt the convention that if $\phi:[0,a]\rightarrow X$ is a map, then we trivially extend $\phi$ by $\phi(t) = \phi(a)$ for all $t >a$.

\begin{definition}{}\label{defn:bicombing}
A \textbf{\emph{discrete, bounded, quasi-geodesic bicombing}} of a metric space $X$ consists of a family of discrete paths $\{\Omega_{x,y}\}_{x,y \in X}$ and a constant $\kappa_2>0$ satisfying the following:
\begin{enumerate}
\item \textbf{\emph{Quasi-geodesic}}: For any $x,y \in X$ with $d = d_{X}(x,y)$, there exists $n_{x,y} \leq \kappa_2d + \kappa_2$ so that the path $\Omega_{x,y}:\{0, \dots, n_{x,y}\} \rightarrow \cuco X$ is an $(\kappa_2,\kappa_2)$-quasi-isometric embedding with $\Omega_{x,y}(0)=x$ and $\Omega_{x,y}(n_{x,y}) = y$; and
\item \textbf{\emph{Fellow-traveling}}: If $x',y' \in X$ with $d' = d_X(x',y')$ and $d_X(x,x'), d_X(y,y') \leq 1$, then for all $t \in \{0, \dots, \max\{n_{x,y}, n_{x',y'}\}\}$, we have
$$d_X(\Omega_{x,y}(t), \Omega_{x',y'}(t)) \leq \kappa_2.$$
\end{enumerate}

\begin{itemize}

\item In addition, we say that $\{\Omega_{x,y}\}_{x,y \in X}$ is \textbf{\emph{$\Gamma$-coarsely equivariant}} with respect to a group $\Gamma < \mathrm{Isom}(X)$ if for any $g \in \Gamma$ and $x,y \in X$ and $t \in \{0, \dots, \max\{n_{x,y}, n_{x',y'}\}\}$, we have
$$d_{X}(g \cdot \Omega_{x,y}(t), \Omega_{g \cdot x, g\cdot y}(t)) < \kappa_2.$$
\end{itemize}
\end{definition}

Finally, we recall the following definition from \cite{HHS_II}, which was inspired by the the paths constructed in \cite{MM_II}:

\begin{definition}{defn:hierarchy path}
For $D\geq 1$, a path $\gamma$ in $\cuco X$ is a \emph{\textbf{$D$--hierarchy path}} if
 \begin{enumerate}
  \item $\gamma$ is a $(D,D)$--quasigeodesic,
  \item for each $W\in\mathfrak S$, $\pi_W\circ\gamma$ is an unparametrized $(D,D)$--quasigeodesic.
  \end{enumerate}
  \end{definition}

We can now prove that colorable HHSes admit discrete, bounded, quasi-geodesic, coarsely equivariant bicombings by hierarchy paths. 

\begin{figure}
\includegraphics[width=1\textwidth]{./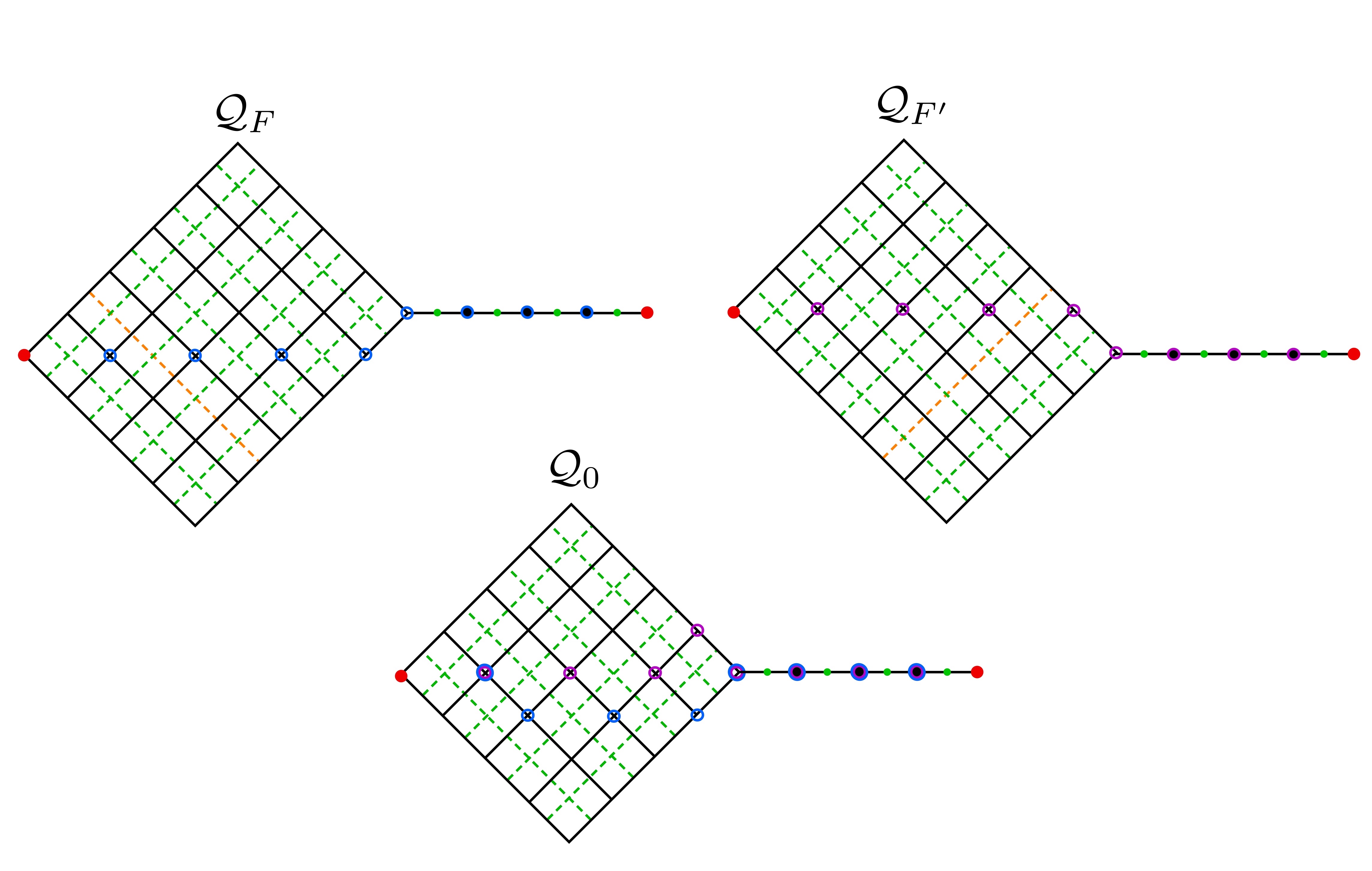}
\caption{A simple example of bicombing paths, building on the hierarchical setup from Figure \ref{fig:trees to cubes}.  Deleting the orange hyperplanes from $\QQ_F$ and $\QQ_{F'}$ results in perturbing the contraction paths in $\QQ_0$.}\label{fig:bicombing example}
\end{figure}

\begin{theorem}{thm:bicombing general}
Let $(\cuco X, \mathfrak S)$ be a $G$-colorable HHS with $G < \mathrm{Aut}(\mathfrak S)$.  Then there exists $D>0$ so that $(\cuco X, \mathfrak S)$ admits a coarsely $G$-equivariant, discrete, bounded, quasi-geodesic bicombing by $D$-hierarchy paths.
\end{theorem}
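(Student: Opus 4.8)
The plan is to build the bicombing $\{\Omega_{x,y}\}$ directly out of the machinery already assembled: for a pair $F=\{x,y\}$, take the stable cubulation $\QQ_F$ of $\thk F$ from Theorem \ref{thm:stable_cubulations}, run the contraction sequence of Theorem \ref{thm:stable contraction} applied to $\psi_F\colon F\to\QQ_F^{(0)}$, and define $\Omega_{x,y}$ to be the concatenation of the image under $\Phi_F$ of the $\ell^1$-geodesic tracing $\psi_0(x),\psi_1(x),\dots,\psi_{n_F}(x)$ (contracting $x$ toward the barycentric cube) followed by the reverse of the $\ell^1$-geodesic tracing $\psi_0(y),\psi_1(y),\dots,\psi_{n_F}(y)$. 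Concretely, first I would fix, for the two-point set $F$, a combinatorial path $\gamma_F(x)$ in $\QQ_F$ through $\psi_0(x),\dots,\psi_{n_F}(x)$ given by Theorem \ref{thm:stable contraction}(\ref{item:contraction_geod}), do the same for $y$, splice them at the barycentric cube (bridging the at-most-$K_3$ gap between $\Phi_F(\psi_{n_F}(x))$ and $\Phi_F(\psi_{n_F}(y))$, by Proposition \ref{prop:stable contractions in HHS}(\ref{item:diameter bound}), with a bounded-length path), reparametrize by arclength, and set $\Omega_{x,y}=\Phi_F\circ(\text{that path})$, trivially extended past its endpoint as in the convention of Definition \ref{defn:bicombing}. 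Since $\Phi_F$ is a $(K,K)$-quasi-isometric embedding with constants depending only on $k=2$ and $\mathfrak S$, and the spliced cubical path is an $\ell^1$-geodesic up to a bounded error from the splice, $\Omega_{x,y}$ is automatically a $(D,D)$-quasigeodesic with $n_{x,y}\le D\,d_\XX(x,y)+D$; the length bound $n_{x,y}$ comes from $n_F\simmult d_{\QQ_F}(\psi_0(x),\psi_{n_F}(x))$ together with the quasi-isometry $\Phi_F$ and Theorem \ref{thm:HHS-hull} relating $\QQ_F$ to the hull.

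Next I would verify the hierarchy-path property. Since $\Phi_F$ is $K$-median and the $\ell^1$-geodesics in a CAT(0) cube complex project to unparametrized quasigeodesics in each factor (because the cube complex maps into $\XX$ with the hierarchical structure and $\ell^1$-geodesics are monotone across hyperplanes), for each $W\in\mathfrak S$ the composition $\pi_W\circ\Omega_{x,y}$ is an unparametrized $(D,D)$-quasigeodesic; the only subtlety is at the splice, which contributes a bounded error and does not destroy the unparametrized quasigeodesic property. This gives that $\Omega_{x,y}$ is a $D$-hierarchy path in the sense of Definition \ref{defn:hierarchy path}, with $D$ depending only on $\mathfrak S$.

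For fellow-traveling and coarse equivariance I would invoke Proposition \ref{prop:stable contractions in HHS} essentially verbatim: given $x',y'$ with $d_\XX(x,x'),d_\XX(y,y')\le 1$ and setting $F=\{x,y\}$, $F'=\{x',y'\}$ (and $g=1$ for fellow-traveling, $F'=gF$ for equivariance), part (\ref{item:contraction distance}) says that for every $i$ and every $f\in F\sqcup F'$ the points $g\circ\Phi_F(\psi_i(\iota_F f))$ and $\Phi_{F'}(\psi'_i(\iota_{F'} f))$ are within $K_3$, and part (\ref{item:bound on n difference}) bounds $|n_F-n_{F'}|$; since for a two-point set the $\iota$-maps just match $x\leftrightarrow x'$, $y\leftrightarrow y'$, this says precisely that $\Omega_{x,y}(t)$ and $\Omega_{x',y'}(t)$ stay within a uniform constant $\kappa_2$ for all $t$ (using the convention that shorter paths are extended constantly), after one checks that the $K_3$-bounded splices at the two barycentric cubes are themselves within bounded distance of each other — which follows from Proposition \ref{prop:stable contractions in HHS}(\ref{item:diameter bound}) and (\ref{item:contraction distance}). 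Taking $\kappa_2$ to be a suitable multiple of $\max(K,K_3,D)$ yields all the constants. Symmetry $\Omega_{x,y}$ versus $\Omega_{y,x}$ is handled by making a choice of ordering depending only on the unordered pair, exactly as permutation-invariance was handled for barycenters. I expect the main obstacle to be purely bookkeeping: carefully matching the two half-paths at the barycentric cube so that the resulting concatenation is genuinely a uniform quasigeodesic (not just piecewise), and ensuring that when $F$ and $F'$ produce contraction sequences of different lengths the ``trivial extension'' convention lines up the tails correctly so that the fellow-traveling bound of Proposition \ref{prop:stable contractions in HHS} transfers without loss. Once the splice is set up cleanly, every quantitative claim reduces to an application of Theorem \ref{thm:stable_cubulations}, Theorem \ref{thm:stable contraction}, and Proposition \ref{prop:stable contractions in HHS}, with constants depending only on $\mathfrak S$.
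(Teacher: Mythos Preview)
Your approach is essentially the paper's: define $\omega_{x,y}$ in $\QQ_F$ by following $\psi_i(x)$ for $i\le n_F$ and then $\psi_{2n_F-i}(y)$, push forward by $\Phi_F$, and use Proposition \ref{prop:stable contractions in HHS} for fellow-traveling and equivariance, and the $K$-median property of $\Phi_F$ for the hierarchy-path conclusion.

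Two points where your sketch diverges from what the paper actually does are worth flagging. First, the splice step is not pure bookkeeping: the paper shows the concatenated path is a genuine $\ell^1$-geodesic by invoking Theorem \ref{thm:stable contraction}(\ref{item:bary_separation}), which guarantees that no hyperplane separates $\{\psi(x),\psi(y)\}$ from the terminal cube, hence the concatenation crosses each hyperplane at most once. You should cite this rather than hope it falls out. Second, do not reparametrize by arclength. The paper parametrizes directly by the move index $i\in\{0,\dots,2n_F\}$, and this is exactly the parameter for which Proposition \ref{prop:stable contractions in HHS}(\ref{item:contraction distance}) gives a pointwise comparison; reparametrizing by $\ell^1$-arclength introduces a dimension-dependent but variable speed and makes the fellow-traveling argument messier. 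With the index parametrization, the paper's fellow-traveling proof is a short four-case analysis according to where $t$ sits relative to $n_{x',y'}\le n_{x,y}\le 2n_{x',y'}\le 2n_{x,y}$, combining (\ref{item:bound on n difference}) and (\ref{item:contraction distance}) with the quasi-geodesic estimate.
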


\begin{proof}
Let $(\cuco X, \mathfrak S)$ be a colorable HHS.  We again use the notation from the statement of Proposition \ref{prop:stable contractions in HHS}, where now $F= \{x,y\}$ and $F' = \{x',y'\}$ with $d_{\cuco X}(x,x') \leq 1$ and $d_{\cuco X}(y,y') \leq 1$.  We make a blanket observation that $k=2$ and so the constant $K_3$ in Proposition \ref{prop:stable contractions in HHS} depends only on $(\cuco X, \mathfrak S)$.

Coarse equivariance can be obtained using the argument below, setting $F'=g F$.  We omit the details for readability.
\par\smallskip

{\bf Construction of the bicombing paths.} Let $\psi = \psi_0$ and define a map $\omega_{x,y}: \{0, \dots, 2n_{x,y}\} \rightarrow \QQ_F$ by

$$\omega_{x,y}(i) = \begin{cases}
\psi_i(x) & i \in \{0, \dots, n_{x,y}\}\\
 \psi_{2n_{x,y}-i}(y) \ & i \in \{n_{x,y}+1,\dots,  2n_{x,y}\}
\end{cases}$$

We claim that $\omega_{x,y}$ is a $(C,C)$-quasi-geodesic in the $\ell^1$-metric on $\QQ_F$, for some uniform $C$. First, the points $\omega_{x,y}(0),\dots, \omega_{x,y}(n_{x,y})$ appear on an $\ell^1$-geodesic $\gamma_1$ from $\omega_{x,y}(0)$ to $\omega_{x,y}(n_{x,y})$ in the given order by Theorem \ref{thm:stable contraction}(\ref{item:contraction_geod}), and the same holds for $\omega_{x,y}(n_{x,y}+1),\dots, \omega_{x,y}(2n_{x,y})$ for some $\ell^1$-geodesic $\gamma_2$ from $\omega_{x,y}(n_{x,y}+1)$ to $\omega_{x,y}(2n_{x,y})$. Moreover, consecutive $\omega_{x,y}(i)$ are uniformly close to each other, since they are at distance at most $1$ in the $\ell^\infty$-metric, which is uniformly quasi-isometric to the $\ell^1$-metric with constant only depending on the dimension of $\QQ_F$, which in turn only depends on $\mathfrak S$.

Let $\gamma$ be the concatenation of $\gamma_1$, an $\ell^1$-geodesic from $\omega_{x,y}(n_{x,y})$ to $\omega_{x,y}(n_{x,y}+1)$, and $\gamma_2$.  Since no hyperplane can separate $\{x,y\}$ from $\psi_{n_{x,y}}(x)$ or $\psi_{n_{x,y}}(y)$, again by Theorem \ref{thm:stable contraction}, we see that $\gamma$ crosses each hyperplane at most once, and is therefore an $\ell^1$-geodesic. Since $\omega_{x,y}(n_{x,y})$ and $\omega_{x,y}(n_{x,y}+1)$ are just opposite corners of a cube, we see that the $\omega_{x,y}(i)$ appear along an $\ell^1$-geodesic in the given order, and with uniformly spaced gaps. This shows that $\omega_{x,y}$ is a $(C,C)$-quasi-geodesic in the $\ell^1$-metric, for $C$ depending only on the dimension of $\QQ_F$ and hence only on $\mathfrak S$.

It follows then that the composition 
$$\Omega_{x,y} = \Phi_F \circ \omega_{x,y}: \{0, \dots, 2n_{x,y}\} \rightarrow \cuco X$$
is a $(K_4,K_4)$-quasi-geodesic in $\cuco X$ with $K_4 = K_4(\mathfrak S)$. We can perturb it a uniformly bounded amount at the endpoints to make sure that the endpoints are $x$ and $y$; with a slight abuse of notation we still denote the perturbation $\Omega_{x,y}$ and the quasi-isometry constants $K_4$.

\par\smallskip

This proves that Definition \ref{defn:bicombing}(1) holds for the family $\left\{\Omega_{x,y}\right\}_{x,y \in \cuco X}$.

\par\smallskip

{\bf Fellow-travelling.} We now prove the fellow-traveling condition in Definition \ref{defn:bicombing}(2) holds.  Once again adopting our previous notation, we want to prove that there exists $\kappa_2 = \kappa_2(\cuco X, \mathfrak S)>0$ so that for any $t \in \{0, \dots, \max\{n_{x,y}, n_{x',y'}\}\}$, we  have
\begin{equation}\label{eqn:bicombing inequality}
d_{\cuco X}(\Omega_{x,y}(t), \Omega_{x',y'}(t)) < \kappa_2.
\end{equation}

Without loss of generality, suppose that $n_{x,y} \geq n_{x',y'}$ and recall that Proposition \ref{prop:stable contractions in HHS}(1) gives that $\delta = n_{x,y} - n_{x',y'} < K_3$, where $K_3$ depends only on $(\cuco X, \mathfrak S)$.  There are four cases to consider:

\begin{itemize}
\item[(i)] When $0 \leq i \leq n_{x',y'}$, where $\Omega_{x,y}$ and $\Omega_{x',y'}$ are defined using $x$ and $x'$, respectively;
\item[(ii)]When $n_{x',y'} < j \leq n_{x,y}$, where $\Omega_{x,y}$ is defined using $x$ whereas $\Omega_{x',y'}$ is defined using $y'$;
\item[(iii)] When $n_{x,y} < q \leq 2n_{x',y'}$, where both $\Omega_{x,y}$ and $\Omega_{x',y'}$ are nonconstant and defined using $y$ and $y'$, respectively;
\item[(iv)] When $2n_{x',y'} <r \leq 2n_{x,y}$, when $\Omega_{x,y}$ is nonconstant but $\Omega_{x',y'}(j) = \Omega_{x',y'}(2n_{x',y'})$ for all such $j$.
\end{itemize}

In what follows, we will repeatedly use the fact that $\Omega_{x,y}$ and $\Omega_{x',y'}$ are $(K_4, K_4)$-quasi-geodesics.  Also, set $K_5= K_4 \cdot (2\delta) + K_4$. 

In case (i), equation \eqref{eqn:bicombing inequality} follows immediately from Proposition \ref{prop:stable contractions in HHS}(2) with $\kappa_2 = K_3$.
\par\smallskip

In case (ii), we have
$$d_{\cuco X}(\Omega_{x,y}(n_{x',y'}), \Omega_{x,y}(j)) < K_5$$
and
$$d_{\cuco X}(\Omega_{x',y'}(n_{x',y'}), \Omega_{x',y'}(j)) < K_5,$$
while Proposition \ref{prop:stable contractions in HHS}(2) gives 
$$d_{\cuco X}(\Omega_{x,y}(j+2\delta), \Omega_{x',y'}(j)) < K_3,$$
so the triangle inequality implies that \eqref{eqn:bicombing inequality} holds in this case with $\kappa_2 = 2K_5 + K_3$.
\par\smallskip

In case (iii), we have
$$d_{\cuco X}(\Omega_{x,y}(q), \Omega_{x,y}(q+2\delta)) < K_5$$
and Proposition \ref{prop:stable contractions in HHS}(2) provides
$$d_{\cuco X}(\Omega_{x,y}(q+2\delta), \Omega_{x',y'}(q)) < K_3$$
so that the triangle inequality implies that \eqref{eqn:bicombing inequality} holds with $\kappa_2 = K_3+ K_5$.
\par\smallskip

Finally, in case (iv), we have
$$d_{\cuco X}(\Omega_{x,y}(2n_{x',y'}), \Omega_{x,y}(2n_F)) < K_5$$
and Proposition \ref{prop:stable contractions in HHS}(2) provides
$$d_{\cuco X}(\Omega_{x,y}(2n_{x',y'}), \Omega_{x',y'}(2n_{x',y'})) < K_3.$$
Since $\Omega_{x',y'}(2n_F) = \Omega_{x',y'}(2n_{x',y'})$ by convention, the triangle inequality implies that \eqref{eqn:bicombing inequality} holds with $\kappa_2 = K_3 + K_5$.

\par\smallskip
Hence we may set $\kappa_2 = 2K_5 + K_3$ to complete the proof of the fellow traveling condition in Definition \ref{defn:bicombing}(2).  This completes the proof that these paths gives a bicombing.

\par\smallskip

{\bf Hierarchy paths.} To finish the proof, we now show that $\Omega_{x,y}$ is a $D$--hierarchy path for some $D = D(\mathfrak S)>0$ (Definition \ref{defn:hierarchy path}). We will use that $\Phi_F$ is a $K$--median map (Theorem \ref{thm:stable_cubulations}-(2));
we now recall what this means.

In a CAT(0) cube complex $\QQ$ one can define a map $m_\QQ:\QQ^3\to\QQ$ (called median), and the only property of this map that we need here is that if $x,y,z$ appear in this order along an $\ell^1$ geodesic, then $m(x,y,z)=y$. Also, in an HHS $\cuco X$, one can define a map $m_{\cuco X}:\cuco X^3\to \cuco X$ with the property that there is a constant $M= M(\mathfrak S)>0$ so that for each $x,y,z\in\cuco X$ and $V\in\mathfrak S$ we have that $\pi_V(m(x,y,z))$ lies $M$--close to all geodesics connecting pairs of distinct elements of $\{\pi_V(x),\pi_V(y),\pi_V(z)\}$; see \cite{HHS_II} for details. Finally, $\Phi_F$ being $K$--median means that for all $x,y,z\in\QQ_F$ we have
$$d_{\cuco X}(\Phi_F(m_{\QQ_F}(x,y,z))\ ,\ m_{\cuco X}(\Phi_F(x), \Phi_F(y),\Phi_F(z)))\leq K.$$ From this, we see that there exists $M' = M'(\mathfrak S)>0$ so that for all $x,y\in\cuco X$, $0\leq i\leq j\leq k\leq 2n_{x,y}$, and $V\in\mathfrak S$, we have that $\pi_V(\Omega_{x,y}(j))$ lies $M'$--close to geodesics connecting $\pi_V(\Omega_{x,y}(i))$ and $\pi_V(\Omega_{x,y}(k))$. This suffices to prove that $\Omega_{x,y}$ is a hierarchy path.  This completes the proof.
\end{proof}

\bibliographystyle{alpha}
\bibliography{bary}

\end{document}